\numberwithin{equation}{section}
\def\namedlabel#1#2{\begingroup
    #2%
    \def\@currentlabel{#2}%
    \phantomsection\label{#1}\endgroup
}
\def\eps {\varepsilon}
\newcommand{\R}{\mathbb R}
\newcommand{\C}{\mathbb C}
\newcommand{\vt}{\widetilde{\mathbf{w}}}
\newcommand{\ri}{\mathrm{i}}
\newcommand{\re}{\mathrm{e}}
\newcommand{\de}{\mathrm{d}}
\newcommand{\ub}{\mathbf{u}}
\newcommand{\vb}{\mathbf{w}}
\newcommand{\gb}{\mathbf{g}}
\newcommand{\zb}{\bar{\zeta}}
\newcommand{\zt}{\mathbf{z}}
\renewcommand{\Re}{\mathrm{Re}}
\renewcommand{\Im}{\mathrm{Im}}
\newtheorem{theorem}{Theorem}[section]
\newtheorem{proposition}[theorem]{Proposition}
\newtheorem{corollary}[theorem]{Corollary}
\newtheorem{lemma}[theorem]{Lemma}
\newtheorem{remark}[theorem]{Remark}
\theoremstyle{definition}
\newcommand{\xx}{{\zeta}}
\newcommand{\xt}{{\bar{\zeta}}}
\newcommand{\NT}{\widetilde{\mathcal{N}}}
\newcommand{\Non}{\mathcal{N}}
\newcommand{\El}{\mathcal{L}}
\title{Nonlinear stability of periodic wave trains in the FitzHugh-Nagumo system against fully nonlocalized perturbations}
\author{Joannis Alexopoulos$^*$, and Bj\"orn de Rijk\thanks{Department of Mathematics, Karlsruhe Institute of Technology, Englerstra\ss e 2, 76131 Karlsruhe, Germany; \texttt{joannis.alexopoulos@kit.edu}, \texttt{bjoern.de-rijk@kit.edu}}}
\begin{document}
\maketitle

\begin{abstract}
Recently, a nonlinear stability theory has been developed for wave trains in reaction-diffusion systems relying on pure $L^\infty$-estimates. In the absence of localization of perturbations, it exploits diffusive decay caused by smoothing together with spatio-temporal phase modulation. In this paper, we advance this theory beyond the parabolic setting and propose a scheme designed for general dissipative semilinear problems. We present our method in the context of the FitzHugh-Nagumo system. The lack of parabolicity and localization complicates mode filtration in $L^\infty$-spaces using the Floquet-Bloch transform. Instead, we employ the inverse Laplace representation of the semigroup generated by the linearization to uncover high-frequency damping, while leveraging a novel link to the Floquet-Bloch representation for the smoothing low-frequency part. Another challenge arises in controlling regularity in the quasilinear iteration scheme for the modulated perturbation. We address this by extending the method of nonlinear damping estimates to nonlocalized perturbations using uniformly local Sobolev norms.

\paragraph*{Keywords.} Periodic waves; nonlinear stability; fully nonlocalized perturbations; FitzHugh-Nagumo system; inverse Laplace transform; Floquet-Bloch analysis; uniformly local Sobolev spaces;  Cole-Hopf transform\\
\textbf{Mathematics Subject Classification (2020).} 35B10; 35B35; 35K57; 44A10
\end{abstract}

\section{Introduction}

We study the nonlinear stability of traveling periodic waves against bounded, fully nonlocalized perturbations in the FitzHugh-Nagumo (FHN) system
\begin{align}
\label{FHN}
\begin{split}
\partial_t u &= u_{xx} + u(1-u)(u-\mu) - v,\\
\partial_t v &= \varepsilon(u-\gamma v - \mu),
\end{split}
\end{align}
with $x \in \mathbb{R}, t\geq 0$ and parameters $\mu \in \R$ and $\gamma, \varepsilon > 0$. The FHN system was originally proposed
as a simplification of the Hodgkin-Huxley model describing signal propagation in nerve fibers~\cite{FIT,murray,NAG}. Mathematically, system~\eqref{FHN} is a coupling between a scalar bistable reaction-diffusion equation and a linear ordinary differential equation and is thereby one of the simplest\footnote{We note that Sturm-Liouville theory implies that all periodic traveling waves in real scalar reaction-diffusion equations are unstable.} models, which can, and does, exhibit stable spatio-temporal patterns. In fact, exploiting the slow-fast structure of system~\eqref{FHN} arising for $0 < \varepsilon \ll 1$, a large variety of (spectrally) stable patterns and nonlinear waves have been rigorously constructed using tools from geometric singular perturbation theory, such as fast traveling pulses~\cite{HAS,Jones,JOE,Yan}, pulses with oscillatory tails~\cite{CARISA,CART}, periodic wave trains~\cite{CarterScheel,Eszter,SOT} and pattern-forming fronts~\cite{CarterScheel} connecting such pulse trains to the homogeneous rest state $(\mu,0)$.

Due to its remarkably rich dynamics, yet simple structure, the FHN system is widely recognized as a paradigmatic model for far-from-equilibrium patterns in excitable and oscillatory media. It has, in small variations, been employed across various scientific disciplines to explain phenomena such as the onset of turbulence in fluids~\cite{Barkley}, oxidation processes on platinum surfaces~\cite{Oxidation1,Oxidation2}, and heart arrhythmias~\cite{Cardiac}.

The simplest and most fundamental spatio-temporal patterns exhibited by~\eqref{FHN} are periodic traveling waves, or \emph{wave trains}. Writing~\eqref{FHN} as a degenerate reaction-diffusion system
\begin{align} \label{FHNsys}
\partial_t \mathbf{u} = D \mathbf{u}_{xx} + F(\mathbf{u}), \qquad D = \begin{pmatrix}
    1 & 0 \\ 0 & 0
\end{pmatrix}, \qquad F(\mathbf{u}) = \begin{pmatrix}
    u(1-u)(u-\mu) - v \\ \varepsilon(u-\gamma v - \mu)
\end{pmatrix},
\end{align}
in $\mathbf{u} = (u,v)^\top$, wave trains are solutions to~\eqref{FHNsys} of the form $\mathbf{u}_0(x,t) = \phi_0(x-c_0 t)$ with smooth periodic profile function $\phi_0 \colon \R \to \R^2$ and propagation speed $c_0 \in \R$. Upon switching to the co-moving frame $\zeta = x - c_0 t$, in which system~\eqref{FHNsys} reads 
\begin{align} \label{FHN_co}
\partial_t \mathbf{u} = D \mathbf{u}_{\zeta\zeta} + c_0 \mathbf{u}_\zeta + F(\mathbf{u}),
\end{align}
we find that $\phi_0$ is a stationary solution to~\eqref{FHN_co}.

Wave-train solutions to~\eqref{FHNsys} have been constructed in the oscillatory regime with $0 < \mu < \frac12$ and $0 < \varepsilon \ll \gamma \ll 1$, as well as in the excitable regime with $\mu < 0$ and $0 < \varepsilon \ll \gamma \ll 1$, using geometric singular perturbation theory and blow-up techniques, see~\cite{CarterScheel,SOT} and Remark~\ref{scaling}. The associated profile functions consist of steep jumps interspersed with long transient states, where the profile varies slowly. Accordingly, these wave trains correspond to highly nonlinear far-from-equilibrium patterns. It has recently been argued theoretically and demonstrated numerically~\cite{CarterScheel} that some of these wave trains are selected by compactly supported perturbations of the unstable rest state $(\mu,0)$ in the oscillatory regime and, thus, play a pivotal role in pattern formation away from onset.

In this paper, we focus on the dynamical, or nonlinear, stability of wave trains as solutions to~\eqref{FHNsys}. The nonlinear stability theory for wave trains in spatially extended dissipative problems such as~\eqref{FHNsys} has been rapidly developing over the past decades. The general approach is to first linearize the system about the wave train, obtain bounds on the $C_0$-semigroup generated by the linearization and then close a nonlinear argument by iterative estimates on the associated Duhamel formulation. A standard issue is that the linearization is a periodic differential operator acting on an unbounded domain, which possesses continuous spectrum touching the imaginary axis at the origin due to translational invariance. The lack of a spectral gap prevents, in contrast to the case of a finite domain with periodic boundary conditions, exponential convergence of the perturbed solution towards a translate of the original profile. 

To overcome this issue a common strategy is to decompose the semigroup generated by the linearization in a diffusively decaying low-frequency part and an exponentially damped high-frequency part, cf.~\cite{JONZ}. The critical diffusive behavior caused by translational invariance can then be captured by introducing a spatio-temporal phase modulation, whose leading-order behavior is given by a viscous Hamilton-Jacobi equation~\cite{DSSS}. The modulated perturbation obeys a quasilinear equation depending only on \emph{derivatives} of the phase modulation, which thus satisfy a perturbed Burgers' equation. Observing that small, sufficiently localized initial data in a (perturbed) viscous Burgers' equation decay diffusively, cf.~\cite[Theorem~1]{UECN} or~\cite[Theorem~4]{BKL}, suggests that the critical dynamics in a nonlinear iteration scheme, tracking the modulated perturbation variable and derivatives of the phase, can be controlled. This observation has led to a series of nonlinear stability results of wave trains against localized perturbations in general (nondegenerate) reaction-diffusion systems~\cite{SAN3,JONZ,JONZNL,JUN,JUNNL} relying on renormalization group theory~\cite{SAN3}, pointwise estimates~\cite{JUN,JUNNL} or $L^1$-$H^k$-estimates~\cite{JONZ,JONZNL} to close the nonlinear iteration. We note that, since only derivatives of the phase enter in the nonlinear iteration and thus need to be localized, one could allow for a nonlocalized phase modulation, cf.~\cite{SAN3,JONZNL,JUNNL}. With the aid of periodic-coefficient damping estimates to obtain high-frequency resolvent bounds and control regularity in the quasilinear iteration scheme, the method employing $L^1$-$H^k$-estimates could be extended beyond the parabolic setting to general dissipative semilinear problems (and some quasilinear problems) such as the St.~Venant equations~\cite{STVenant1,STVenant2}, the Lugiato-Lefever equation~\cite{LLEperiod,ZUM22} and the FHN system~\cite{BjoernAvery}. 

Recently, a novel approach was developed~\cite{HDRS22,BjoernMod} to establish nonlinear stability of wave trains in (nondegenerate) reaction-diffusion systems, which employs pure $L^\infty$-estimates to close the nonlinear iteration, thereby lifting all localization assumptions on perturbations. In contrast to previous methods, diffusive decay cannot be realized by giving up localization, but emanates from smoothing action of the analytic semigroup generated by the linearization about the wave train. The Cole-Hopf transform is then applied to the equation for the phase to eliminate the critical Burgers'-type nonlinearity, which cannot be readily controlled by diffusive smoothing.

In this paper, we extend the approach developed in~\cite{HDRS22,BjoernMod} beyond the parabolic framework by proving nonlinear stability of wave trains in the FHN system~\eqref{FHNsys} against $C_{\mathrm{ub}}$-perturbations. The incomplete parabolicity of~\eqref{FHNsys} in combination with lack of localization of perturbations presents novel challenges in our analysis. These challenges involve the decomposition of the $C_0$-semigroup and the control of regularity. We explain the main ideas on how to address these challenges in~\S\ref{sec:strategy} after we have stated our main result in~\S\ref{section_main_result}.

\begin{remark} \label{scaling}
{\upshape 
Let $\mu < 0$, $\gamma \geq 0$ and $\varepsilon > 0$, so that we are in the excitable regime. Upon rescaling time, space, the variables $u$ and $v$, and the system parameters $\varepsilon$, $\mu$ and $\gamma$ by setting
\begin{align*}
\tilde{x} = (1-\mu) x, \quad \tilde{t} = (1-\mu)^2 t, \quad \tilde{u} = \frac{u - \mu}{1-\mu}, \quad  \tilde{v} = \frac{v}{(1-\mu)^3},\\\tilde{\varepsilon} = \frac{\varepsilon}{(1-\mu)^4}, \quad \tilde{\gamma} = (1-\mu)^2\gamma, \quad \tilde{\mu} = -\frac{\mu}{1-\mu}, \qquad \qquad
\end{align*}
we arrive at the equivalent formulation
\begin{align} \label{FHNrescaled}
\begin{split}
\partial_{\tilde{t}} \tilde{u} &= \tilde{u}_{\tilde{x}\tilde{x}} + \tilde{u}(1-\tilde{u})(\tilde{u}-\tilde{\mu}) - \tilde{v},\\
\partial_{\tilde{t}} \tilde{v} &= \tilde{\varepsilon}(\tilde{u}-\tilde{\gamma} \tilde{v}),
\end{split}
\end{align}
of the FHN system~\eqref{FHN}. Here, we have $\tilde{\mu} \in (0,1)$, $\tilde{\gamma} \geq 0$ and $\tilde{\varepsilon} > 0$. We note that the formulation~\eqref{FHNrescaled} of the FHN system has been used in the existence and spectral stability analysis of wave trains and traveling pulses in the excitable regime, cf.~\cite{CARISA, Eszter,Jones, JOE, SOT, Yan}.
}    
\end{remark}

\subsection{Assumptions on the wave train and its spectrum}

Here, we formulate the hypotheses for our main result. The first hypothesis concerns the existence of the wave train.
\begin{itemize}
\item[\namedlabel{assH1}{\upshape (H1)}] There exist a speed $c_0 \in \R$ and a period $T > 0$ such that~\eqref{FHNsys} admits a wave-train solution $\ub_0(x,t)=\phi_0(x - c_0 t)$, where the profile function $\phi_0 \colon \R \to \R^2$ is nonconstant, smooth and $T$-periodic.
\end{itemize}
We note that wave-train solutions have been shown to exist, i.e.,~\ref{assH1} has been verified, in the excitable regime with $\mu < 0 \leq \gamma \ll 1$ and $0 < \varepsilon \ll 1$, cf.~\cite{SOT}, and in the oscillatory regime with $0 < \mu < \frac12$ and $0 < \varepsilon \ll \gamma \ll 1$, cf.~\cite{CarterScheel}.

Next, we specify our spectral assumptions on the wave train $\ub_0$. Linearizing~\eqref{FHN_co} about its stationary solution $\phi_0$ yields the $T$-periodic differential operator $\El_0 \colon D(\El_0) \subset C_{\mathrm{ub}}(\R) \to C_{\mathrm{ub}}(\R)$ given by
\begin{align} \label{deflinearization}
\El_0 \vb = D \vb_{\zeta\zeta} + c_0 \vb_\zeta + F'(\phi_0) \vb
\end{align}
with domain $D(\El_0) = C_{\mathrm{ub}}^2(\R) \times C_{\mathrm{ub}}^1(\R)$, where $C_{\mathrm{ub}}^m(\R)$ denotes for $m \in \mathbb{N}_0$ the space of bounded and uniformly continuous functions, which are $m$ times differentiable and whose $m$ derivatives are also bounded and uniformly continuous. We endow $C_{\mathrm{ub}}^m(\R)$ with the standard $\smash{W^{m,\infty}}$-norm, so that it is a Banach space.

The spectrum of $\El_0$ is determined by the family of Bloch operators
\begin{align*}
\El(\xi) \vb = D\left(\partial_\zeta + \ri \xi\right)^2 \vb + c_0 \left(\partial_\zeta + \ri \xi\right) \vb + F'(\phi_0)\vb, \qquad \xi \in \C
\end{align*}
posed on $L_{\mathrm{per}}^2(0,T)$ with domain $D(\El(\xi)) = H_{\mathrm{per}}^2(0,T) \times H_{\mathrm{per}}^1(0,T)$. Since $\El(\xi)$ has compact resolvent, its spectrum consists of isolated eigenvalues of finite multiplicity. The spectrum of $\El_0$ can then be characterized as
\begin{align} \label{Blochspecdecomp}
\sigma(\El_0) = \bigcup_{\xi \in \left[-\tfrac{\pi}{T},\tfrac{\pi}{T}\right)} \sigma(\El(\xi)),
\end{align}
cf.~\cite{GARD}. We require that the following standard \emph{diffusive spectral stability} assumptions, cf.~\cite{BjoernMod,JONZ,SAN3,SCHN97}, are satisfied.
\begin{itemize}
\setlength\itemsep{0em}
\item[\namedlabel{assD1}{\upshape (D1)}] We have $\sigma(\El_0)\subset\{\lambda\in\C:\Re(\lambda)<0\}\cup\{0\}$;
\item[\namedlabel{assD2}{\upshape (D2)}] There exists a constant $\theta>0$ such that for any $\xi\in[-\frac{\pi}{T},\frac{\pi}{T})$ we have $\Re\,\sigma(\El(\xi))\leq-\theta \xi^2$;
\item[\namedlabel{assD3}{\upshape (D3)}] $0$ is a simple eigenvalue of $\El(0)$.
\end{itemize}
The main result of~\cite{SpectralFHN} establishes diffusive spectral stability of wave trains in~\eqref{FHN} in the oscillatory regime $(3 - \sqrt{5})/6 < \mu < \frac12$ and $0 < \varepsilon \ll \gamma \ll 1$. On the other hand, a spectral analysis of wave trains in the excitable regime with $\mu < 0$, $\gamma = 0$ and $0 < \varepsilon \ll 1$ can be found in~\cite{Eszter}.\footnote{Although the spectral assumptions~\ref{assD1} and~\ref{assD3} are verified in~\cite{Eszter}, we emphasize that the fact that $\gamma = 0$ yields a lack of damping in the second component of~\eqref{FHN}, causing the spectrum of the linearization to asymptote to $\ri \R$ at infinity. In particular, the spectrum is not bounded away from the imaginary axis away from $0$ and the assumption~\ref{assD2} does not hold, prohibiting diffusive spectral stability.}

It is a consequence of translational invariance that $0$ is an eigenvalue of the Bloch operator $\El(0)$ with associated eigenfunction $\phi_0'$. Assumption~\ref{assD3} then states that the kernel of $\El(0)$ is spanned by $\phi_0'$. In this case $0$ is also a simple eigenvalue of the adjoint operator $\El(0)^*$. We denote by $\smash{\widetilde{\Phi}_0} \in H^2_{\mathrm{per}}(0,T) \times H_{\mathrm{per}}^1(0,T)$ the corresponding eigenfunction satisfying 
\begin{align*} 
\big\langle \widetilde{\Phi}_0,\phi_0'\big\rangle_{L^2(0,T)} = 1.\end{align*}

An application of the implicit function theorem in combination with Assumption~\ref{assD3} readily yields that the wave train can be continued with respect to the wavenumber, cf.~\cite[Section~4.2]{DSSS}. 

\begin{proposition} \label{prop:family}
Assume~\ref{assH1} and~\ref{assD3}. Then, there exists a constant $r_0 \in (0,1)$ and smooth functions $\phi \colon \R \times [1-r_0,1+r_0] \to \R^2$ and $\omega \colon [1-r_0,1+r_0] \to \R$ with $\phi(\cdot;1) = \phi_0$ and $\omega(1) = c_0$ such that $\phi(\cdot;k)$ is $T$-periodic and
$$\ub_k(x,t) = \phi(k x - \omega(k) t;k)$$
is a solution to~\eqref{FHNsys} for each wavenumber $k \in [1-r_0,1+r_0]$. By shifting the wave train if necessary, we can arrange for
\begin{align*} 
\big\langle \widetilde{\Phi}_0,\partial_k \phi(\cdot;1)\big\rangle_{L^2(0,T)} = 0.\end{align*}
\end{proposition}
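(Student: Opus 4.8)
The plan is to apply the implicit function theorem (IFT) to the \emph{profile equation}, augmented by a phase (anchoring) condition that removes the translational degeneracy; this is the scheme of~\cite[Section~4.2]{DSSS}, adapted to the degenerate diffusion matrix $D$. Substituting the ansatz $\ub_k(x,t) = \phi(kx - \omega t;k)$ into~\eqref{FHNsys} and using $\partial_t \ub_k = -\omega\phi'$ and $\partial_x^2 \ub_k = k^2\phi''$, one sees that $\ub_k$ solves~\eqref{FHNsys} if and only if the $T$-periodic function $\phi(\cdot;k)$ satisfies the profile equation
$$ k^2 D\phi'' + \omega\phi' + F(\phi) = 0 . $$
For $k = 1$, $\omega = c_0$ this is precisely the stationary equation for $\phi_0$ under~\eqref{FHN_co}, which holds by~\ref{assH1}. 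It therefore suffices to solve this equation for $(\phi,\omega)$ near $(\phi_0,c_0)$ as $k$ varies near $1$, subject to a normalization that fixes the phase.

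\emph{Functional setting.} Put $X := \Hper{2}(0,T) \times \Hper{1}(0,T)$ and $Y := \Lper{2}(0,T) \times \Lper{2}(0,T)$, which is the natural Bloch-operator setting for the degenerate system, and define
$$ \mathcal G \colon X \times \R \times \R \to Y \times \R, \qquad \mathcal G(\phi,k,\omega) := \Bigl( k^2 D\phi'' + \omega\phi' + F(\phi),\ \bigl\langle \widetilde\Phi_0,\, \phi - \phi_0 \bigr\rangle_{L^2(0,T)} \Bigr). $$
The map $\mathcal G$ is of class $C^\infty$: the first two terms are polynomial in $(\phi,k,\omega)$, the Nemytskii operator $\phi \mapsto F(\phi)$ is smooth from $X$ into $Y$ because $F$ is a polynomial and $\Hper{2}(0,T) \hookrightarrow C^1_{\mathrm{per}}$, $\Hper{1}(0,T) \hookrightarrow C^0_{\mathrm{per}}$ with $\Hper{1}(0,T)$ a Banach algebra, and the last component is affine and bounded. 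By the previous paragraph $\mathcal G(\phi_0,1,c_0) = (0,0)$.

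\emph{Invertibility of the linearization.} The Fréchet derivative of $\mathcal G$ in $(\phi,\omega)$ at $(\phi_0,1,c_0)$ is the bordered operator
$$ L \colon X \times \R \to Y \times \R, \qquad L(\vb,\nu) = \bigl( \El(0)\vb + \nu\phi_0',\ \langle \widetilde\Phi_0, \vb\rangle_{L^2(0,T)} \bigr), $$
where $\El(0)$ is the Bloch operator at $\xi = 0$. Since $\El(0)$ has compact resolvent it is Fredholm of index $0$; by~\ref{assD3} one has $\ker \El(0) = \Span\{\phi_0'\}$, whence $\Range \El(0) = \{\,g \in Y : \langle \widetilde\Phi_0, g\rangle_{L^2(0,T)} = 0\,\}$, the annihilator of the (simple) adjoint kernel $\Span\{\widetilde\Phi_0\}$. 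Using $\langle \widetilde\Phi_0, \phi_0'\rangle_{L^2(0,T)} = 1$, the operator $L$ is bijective: if $L(\vb,\nu) = 0$, then pairing $\El(0)\vb = -\nu\phi_0'$ with $\widetilde\Phi_0$ forces $\nu = 0$, hence $\vb \in \Span\{\phi_0'\}$, and $\langle\widetilde\Phi_0,\vb\rangle = 0$ then gives $\vb = 0$; conversely, given $(g,a) \in Y \times \R$ one sets $\nu := \langle \widetilde\Phi_0, g\rangle_{L^2(0,T)}$ so that $g - \nu\phi_0' \in \Range \El(0)$, solves $\El(0)\vb_\ast = g - \nu\phi_0'$, and adds the multiple of $\phi_0'$ needed to meet the scalar equation. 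Boundedness of $L^{-1}$ follows from the bounded inverse theorem. The IFT now yields (after shrinking if necessary) a radius $r_0 \in (0,1)$ and $C^\infty$ maps $k \mapsto \phi(\cdot;k) \in X$ and $k \mapsto \omega(k) \in \R$ on $[1-r_0,1+r_0]$ with $\phi(\cdot;1) = \phi_0$, $\omega(1) = c_0$ and $\mathcal G(\phi(\cdot;k),k,\omega(k)) = 0$.

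\emph{Conclusion and main obstacle.} Unravelling the profile equation shows $\ub_k(x,t) = \phi(kx - \omega(k)t;k)$ solves~\eqref{FHNsys}, and periodicity of $\phi(\cdot;k)$ is built into $X$. Smoothness of $\zeta \mapsto \phi(\zeta;k)$ follows by bootstrapping the profile equation — using $\varepsilon>0$ and, in the case that $\omega(k)$ vanishes, the fact that the second component then becomes algebraic — and running the same IFT in the spaces $\Hper{m}(0,T) \times \Hper{m-1}(0,T)$ together with uniqueness gives joint smoothness of $(\zeta,k) \mapsto \phi(\zeta;k)$. Differentiating the anchoring identity $\langle\widetilde\Phi_0,\phi(\cdot;k) - \phi_0\rangle_{L^2(0,T)} = 0$ at $k = 1$ yields the asserted normalization $\langle\widetilde\Phi_0,\partial_k\phi(\cdot;1)\rangle_{L^2(0,T)} = 0$; alternatively one runs the IFT with any phase condition and then replaces $\phi(\cdot;k)$ by a $k$-dependent translate $\phi(\cdot + s(k);k)$ with $s$ smooth and $s'(1)$ chosen to enforce it. I expect the only genuine subtlety to be the translational degeneracy: the unbordered $\El(0)$ has one-dimensional kernel and cokernel, so the heart of the argument is checking that bordering it with the column $\phi_0'$ and the row $\langle\widetilde\Phi_0,\cdot\rangle$ produces an isomorphism, which is exactly where~\ref{assD3} enters via $\langle\widetilde\Phi_0,\phi_0'\rangle = 1 \ne 0$; the degenerate diffusion $D$ merely dictates the asymmetric spaces $\Hper{2}(0,T)\times\Hper{1}(0,T)$ but is otherwise harmless within the Bloch-operator framework~\eqref{Blochspecdecomp}.
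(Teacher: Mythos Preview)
Your proposal is correct and follows precisely the approach the paper indicates: it cites~\cite[Section~4.2]{DSSS} and says the result ``readily'' follows from the implicit function theorem combined with~\ref{assD3}, without spelling out details. Your bordered IFT argument in $\Hper{2}(0,T)\times\Hper{1}(0,T)$, using the simple eigenvalue hypothesis to invert the linearization and the phase condition to secure the normalization $\langle\widetilde\Phi_0,\partial_k\phi(\cdot;1)\rangle=0$, is exactly the scheme the paper has in mind.
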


The curve $\omega \colon [1-r_0,1+r_0] \to \R$ from Proposition~\ref{prop:family} describes the relationship between the temporal frequency $\omega(k)$ and the wavenumber $k$ of the $T/k$-periodic wave train $\ub_k$ and is called the \emph{nonlinear dispersion relation}. 

Because the Bloch operators $\El(\xi)$ depend analytically on the Floquet exponent $\xi$ and $0$ is a simple eigenvalue of $\El(0)$ by Hypothesis~\ref{assD3}, it follows by standard analytic perturbation theory, see e.g.~\cite{Kato}, that the $0$-eigenvalue can be continued to a simple eigenvalue $\lambda_c(\xi)$ of $\El(\xi)$ for $\xi$ close to $0$. The curve $\lambda_c(\xi)$ is analytic and necessarily touches the imaginary axis in a quadratic tangency by Hypothesis~\ref{assD2}. Using Lyapunov-Schmidt reduction, the eigenvalue $\lambda_c(\xi)$, as well as the associated eigenfunction, can be expanded in $\xi$ about $\xi = 0$, cf.~\cite[Section~4.2]{DSSS} or~\cite[Section~2]{johnson_whitham}.\footnote{For the purpose of our current analysis, it suffices to expand the eigenvalue $\lambda_c(\xi)$ up to second order and the associated eigenvector up to first order. We refer to Remark~\ref{rem:expansion} for further details.} We record these facts in the following result.

\begin{proposition} \label{prop:speccons}
Assume~\ref{assH1} and~\ref{assD1}-\ref{assD3}. There exist a constant $C>0$, open balls $V_1,V_2 \subset \C$ centered at $0$ and an analytic function $\lambda_c \colon V_1 \to \C$ such that the following assertions hold. 
\begin{itemize}
\setlength\itemsep{0em}
\item[(i)] $\lambda_c(\xi)$ is a simple eigenvalue of $\El(\xi)$ for each $\xi \in V_1$. An associated eigenfunction $\Phi_\xi$ of $\El(\xi)$ lies in $H_{\mathrm{per}}^m(0,T)$ for any $m \in \mathbb{N}_0$, is analytic in $\xi$, satisfies $\Phi_0 = \phi_0'$ and fulfills
\begin{align*}
 \big\langle \widetilde{\Phi}_0,\Phi_\xi\big\rangle_{L^2(0,T)} = 1
\end{align*}
for $\xi \in V_1$. 
\item[(ii)] It holds $\sigma(\El_0) \cap V_2 = \{\lambda_c(\xi) : \xi \in V_1 \cap \R\} \cap V_2$.
\item[(iii)] The complex conjugate $\overline{\lambda_c(\xi)}$ is a simple eigenvalue of the adjoint $\El(\xi)^*$ for any $\xi \in V_1$. An associated eigenfunction $\widetilde{\Phi}_\xi$ lies in $H_{\mathrm{per}}^m(0,T)$ for any $m \in \mathbb{N}_0$, is smooth in $\xi$ and satisfies
\begin{align*}
 \big\langle \widetilde{\Phi}_\xi,\Phi_\xi\big\rangle_{L^2(0,T)} = 1
\end{align*}
for $\xi \in V_1$. 
\item[(iv)] We have 
\begin{align*}
\lambda_c'(\xi) = 2 \ri \big\langle \widetilde{\Phi}_\xi, D\left(\partial_\zeta + \ri \xi\right) \Phi_\xi\big\rangle_{L^2(0,T)} + \ri c_0 
\end{align*}
and the expansions
\begin{align} \label{e:eig_exp}
\left|\lambda_c(\xi) + \ri c_g\xi + d \xi^2\right| \leq C |\xi|^3, \qquad \left\|\Phi_\xi - \phi_0' - \ri \xi \partial_k \phi(\cdot;1)\right\|_{H^m(0,T)} \leq C |\xi|^2,
\end{align}
hold for $\xi \in V_1$ with coefficients 
\begin{align} \label{coefflindisp}
\begin{split}
c_g &= - 2 \big\langle \widetilde{\Phi}_0,D \phi_{0}''\big\rangle_{L^2(0,T)} - c_0 = \omega'(1) - c_0 \in \R, \\ d &= \big\langle \widetilde{\Phi}_0, D\phi_{0}' + 2 D\partial_{\zeta k} \phi(\cdot;1)\big\rangle_{L^2(0,T)} > 0.
\end{split}
\end{align}
\end{itemize}
\end{proposition}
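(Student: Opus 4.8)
The plan is to prove Proposition~\ref{prop:speccons} by standard analytic perturbation theory for the holomorphic family $\El(\xi)$, following the Lyapunov-Schmidt reduction of~\cite[Section~4.2]{DSSS}, and then to identify the expansion coefficients by differentiating the eigenvalue equation. First I would invoke Hypothesis~\ref{assD3}: since $0$ is a simple eigenvalue of $\El(0)$ and $\xi \mapsto \El(\xi)$ is an analytic family of type (A) (the $\xi$-dependence enters only through bounded zeroth- and first-order perturbations of the fixed operator $D\partial_\zeta^2 + c_0\partial_\zeta + F'(\phi_0)$), Kato's theory~\cite{Kato} yields an open ball $V_1$ and an analytic continuation $\lambda_c(\xi)$ of the simple $0$-eigenvalue, together with an analytic choice of eigenfunction $\Phi_\xi \in H^2_{\mathrm{per}}(0,T)\times H^1_{\mathrm{per}}(0,T)$ with $\Phi_0 = \phi_0'$. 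Elliptic regularity (bootstrapping in the first component, which satisfies a second-order ODE, while the second component is algebraically slaved) upgrades $\Phi_\xi$ to $H^m_{\mathrm{per}}(0,T)$ for every $m$, with analytic dependence preserved. Normalizing by $\big\langle\widetilde\Phi_0,\Phi_\xi\big\rangle_{L^2} = 1$ is possible near $\xi=0$ since this pairing equals $1$ at $\xi = 0$; this gives (i). For (iii) one repeats the argument for the adjoint family $\El(\xi)^*$, whose simple eigenvalue $\overline{\lambda_c(\xi)}$ continues the $0$-eigenvalue with eigenfunction $\widetilde\Phi_0$ at $\xi=0$; the biorthogonality $\big\langle\widetilde\Phi_\xi,\Phi_\xi\big\rangle_{L^2}\neq 0$ near $0$ (it is $1$ at $\xi=0$) lets us renormalize to get $\big\langle\widetilde\Phi_\xi,\Phi_\xi\big\rangle_{L^2}=1$. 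Smoothness rather than analyticity of $\widetilde\Phi_\xi$ is because complex conjugation is only real-analytic.

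For (ii), the spectral mapping~\eqref{Blochspecdecomp} together with Hypothesis~\ref{assD1}-\ref{assD2} confines $\sigma(\El_0)$ near $0$ to a small neighborhood of the imaginary axis; by upper semicontinuity of the spectrum and the fact that for $\xi$ bounded away from $0$ (but in a compact set) we have $\Re\,\sigma(\El(\xi))\leq-\theta\xi^2<0$, only the curve $\xi\mapsto\lambda_c(\xi)$, $\xi\in V_1\cap\R$, can contribute to $\sigma(\El_0)$ in a sufficiently small ball $V_2$ centered at $0$; shrinking $V_1,V_2$ ensures $\lambda_c$ is the unique branch and $\lambda_c(\xi)\in V_2$ forces $\xi\in V_1\cap\R$. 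This is essentially the standard "critical curve" argument, cf.~\cite{JONZ,DSSS}.

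The computational heart is (iv). Differentiate the eigenvalue relation $\El(\xi)\Phi_\xi = \lambda_c(\xi)\Phi_\xi$ in $\xi$ and pair against $\widetilde\Phi_\xi$; using $\big\langle\widetilde\Phi_\xi,(\El(\xi)-\lambda_c(\xi))\,\partial_\xi\Phi_\xi\big\rangle_{L^2}=0$ and $\partial_\xi\El(\xi) = 2\ri D(\partial_\zeta+\ri\xi) + \ri c_0$ gives $\lambda_c'(\xi) = \big\langle\widetilde\Phi_\xi,\partial_\xi\El(\xi)\Phi_\xi\big\rangle_{L^2} = 2\ri\big\langle\widetilde\Phi_\xi, D(\partial_\zeta+\ri\xi)\Phi_\xi\big\rangle_{L^2} + \ri c_0$, which is the stated formula. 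Evaluating at $\xi=0$ yields $\lambda_c'(0) = 2\ri\big\langle\widetilde\Phi_0,D\phi_0''\big\rangle_{L^2} + \ri c_0 =: -\ri c_g$; the identification $c_g = \omega'(1)-c_0$ comes from differentiating the profile equation for $\ub_k$ from Proposition~\ref{prop:family} at $k=1$ and pairing with $\widetilde\Phi_0$. Taylor expansion of the analytic $\lambda_c$ gives $|\lambda_c(\xi)+\ri c_g\xi+d\xi^2|\leq C|\xi|^3$ with $d = -\tfrac12\lambda_c''(0)$; a second differentiation of the eigenvalue relation, combined with the first-order expansion $\Phi_\xi = \phi_0' + \ri\xi\,\partial_k\phi(\cdot;1) + O(\xi^2)$ (itself obtained by differentiating the eigenvalue relation once, solving the resulting inhomogeneous problem on $\mathrm{Range}(\El(0))$, and matching with the $k$-derivative of the profile equation, using the normalization $\big\langle\widetilde\Phi_0,\partial_k\phi(\cdot;1)\big\rangle_{L^2}=0$ from Proposition~\ref{prop:family}), produces the formula for $d$ in~\eqref{coefflindisp}; positivity $d>0$ is exactly Hypothesis~\ref{assD2} (the real part of the analytic curve $\lambda_c$ satisfies $\Re\,\lambda_c(\xi)\leq-\theta\xi^2$, so $\Re(-d\xi^2)\leq-\theta\xi^2$ to leading order, whence $d\geq\theta>0$; that $d$ is real follows since $\lambda_c(\xi)+\ri c_g\xi$ is real to quadratic order by reflection considerations or directly from the formula). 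The $H^m$-bound on $\Phi_\xi-\phi_0'-\ri\xi\partial_k\phi(\cdot;1)$ follows from analyticity of $\Phi_\xi$ in $H^m_{\mathrm{per}}$ together with the identification of its first-order term.

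I expect the main obstacle to be the bookkeeping in (iv): correctly identifying $\partial_\xi\Phi_\xi|_{\xi=0}$ with $\ri\,\partial_k\phi(\cdot;1)$ (which requires comparing the Lyapunov-Schmidt expansion of the eigenfunction with the wavenumber-family from Proposition~\ref{prop:family} and using the shift normalization there), and then carefully assembling the second-order term to extract the stated closed form for $d$ — including verifying that $c_g$ and $d$ are real, which uses that $\El(\xi)$ has real coefficients so that $\overline{\lambda_c(\xi)} = \lambda_c(-\bar\xi)$. Everything else is routine analytic perturbation theory and elliptic bootstrapping.
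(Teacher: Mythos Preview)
Your proposal is correct and follows exactly the approach the paper indicates: the paper does not give a self-contained proof of this proposition but simply invokes standard analytic perturbation theory~\cite{Kato} for the simple eigenvalue $0$ of $\El(0)$ and refers to the Lyapunov--Schmidt computations in~\cite[Section~4.2]{DSSS} and~\cite[Section~2]{johnson_whitham} for the expansions in~(iv). Your outline fills in precisely these details, including the key identification $\partial_\xi\Phi_\xi|_{\xi=0}=\ri\,\partial_k\phi(\cdot;1)$ via the normalization $\langle\widetilde\Phi_0,\partial_k\phi(\cdot;1)\rangle_{L^2}=0$ from Proposition~\ref{prop:family}, which is what makes the closed-form expressions for $c_g$ and $d$ work out.
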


The function $\lambda_c$ in Proposition~\ref{prop:speccons} is called the \emph{linear dispersion relation}. The coefficient $c_g$ in~\eqref{e:eig_exp} is the \emph{group velocity} of the wave train and provides the speed at which perturbations are transported along the wave train (in the frame moving with the speed $c_0$), cf.~\cite{DSSS}. We make the generic assumption that the wave train has nonzero group velocity. By reversing space $x \to -x$ in~\eqref{FHNsys} we may then without loss of generality assume that the group velocity is negative. 
\begin{itemize}
\item[\namedlabel{assH2}{\upshape (H2)}] Assuming, in accordance with Hypothesis~\ref{assD3}, that $0$ is a simple eigenvalue of $\El(0)$, the group velocity $c_g$, defined in~\eqref{coefflindisp}, is negative.
\end{itemize}

On the linear level, the interpretation of Assumptions~\ref{assD1}-\ref{assD3} and~\ref{assH2} is that perturbations decay diffusively and are transported to the left along the wave train, i.e.,~there is an \emph{outgoing diffusive mode} at the origin, cf.~\cite[Section~2.1]{BjoernAvery}. In~\cite{CarterScheel}, it was shown that the group velocity of the wave trains is negative in the oscillatory regime with $0 < \mu < \frac12$ and $0 < \varepsilon \ll \gamma \ll 1$. 

Another important consequence of Assumption~\ref{assH2} is that the linear dispersion relation $\lambda_c$ is invertible in the point $\xi = 0$. Hence, for $|\lambda|$ sufficiently small, the periodic eigenvalue problem $(\El_0 - \lambda) \vb = 0$ has a single Floquet exponent converging to $0$ as $\lambda \to 0$. In our stability analysis we exploit this fact to relate the inverse Laplace representation of the low-frequency part of the semigroup generated by $\El_0$ with the Floquet-Bloch representation, see~\S\ref{sec:FloquetBloch}.

\subsection{Main result}
\label{section_main_result}

We are now ready to present our main result, which establishes Lyapunov stability of diffusively spectrally stable wave trains in the FHN system against $C_{\mathrm{ub}}$-perturbations. Furthermore, it yields convergence of the perturbed solution towards a modulated wave train, where the phase modulation can be approximated by a solution of a viscous Hamilton-Jacobi equation.

\begin{theorem} \label{t:mainresult}
Assume~\ref{assH1},~\ref{assH2} and~\ref{assD1}-\ref{assD3}. Fix a constant $K > 0$. Then, there exist constants $\alpha,\epsilon_0, M > 0$ such that, whenever $\vb_0 \in C_{\mathrm{ub}}^3(\R) \times C_{\mathrm{ub}}^2(\R)$ satisfies
\[
E_0 := \left\|\vb_0\right\|_{L^\infty} < \epsilon_0, \qquad \left\|\vb_0\right\|_{C_{\mathrm{ub}}^3 \times C_{\mathrm{ub}}^2} < K,
\]
there exist a smooth function $\psi \in C^\infty\big([0,\infty) 
\times \R,\R\big)$ with $\psi(0) = 0$ and $\psi(t) \in C_{\mathrm{ub}}^m(\R)$ for each $m \in \mathbb{N}_0$ and $t \geq 0$ and a unique classical global solution 
\begin{align} \label{e:regu}
\ub \in C\big([0,\infty),C_{\mathrm{ub}}^3(\R) \times C_{\mathrm{ub}}^2(\R)\big) \cap C^1\big([0,\infty),C_{\mathrm{ub}}^1(\R)\big)
\end{align}
to~\eqref{FHN_co} with initial condition $\ub(0) =\phi_0 + \vb_0$, which obey the estimates
\begin{align} 
\label{e:mtest10}
\left\|\ub(t)-\phi_0\right\|_{L^\infty} &\leq ME_0, \\
\label{e:mtest11}
\left\|\ub(t)-\phi_0(\cdot+\psi(\cdot,t))\right\|_{L^\infty} &\leq \frac{ME_0}{\sqrt{1+t}},\\
\label{e:mtest12}
\left\|\ub(t)-\phi_0\left(\cdot + \psi(\cdot,t)\left(1+\psi_\xx(\cdot,t)\right);1+\psi_\xx(\cdot,t)\right)\right\|_{L^\infty} &\leq M E_0 \frac{\log(2+t)}{1+t}
\end{align}
and
\begin{align}  \label{e:mtest2}
\begin{split}
\|\psi(t)\|_{L^\infty} \leq ME_0, \qquad \left\|\psi_\xx(t)\right\|_{L^\infty}&, \|\partial_t \psi(t)\|_{L^\infty}
\leq \frac{ME_0}{\sqrt{1+t}}, \\ 
\|\psi_{\xx\xx}(t)\|_{C_{\mathrm{ub}}^4}, \|\partial_t \psi_{\xx}(t)\|_{C_{\mathrm{ub}}^3} &\leq ME_0 \frac{\log(2+t)}{1+t}
\end{split}
\end{align}
for all $t \geq 0$. Moreover, there exists a unique classical global solution $\breve{\psi} \in C\big([0,\infty),C_{\mathrm{ub}}^2(\R)\big) \cap \smash{C^1\big([0,\infty),C_{\mathrm{ub}}(\R)\big)}$ with initial condition $\smash{\breve{\psi}(0)} = \smash{\widetilde{\Phi}_0^*\vb_0}$ of the viscous Hamilton-Jacobi equation
\begin{align}
\partial_t \breve\psi = d\breve\psi_{\xx\xx} - c_g \breve\psi_{\xx} + \nu \breve\psi_\zeta^2 \label{e:HamJac}
\end{align}
with coefficients~\eqref{coefflindisp} and
\begin{align} \label{e:nuexpress}
\begin{split}
\nu =  -\tfrac{1}{2}\omega''(1) &= \big\langle \widetilde{\Phi}_0, D\left(\phi_0'' + 2 \partial_{\zeta kk} \phi(\cdot;1)\right) + \tfrac12 F''(\phi_0)\big(\partial_k \phi(\cdot;1), \partial_k \phi(\cdot;1)\big)\big\rangle_{L^2(0,T)}\\
&\qquad - 2 \big\langle \widetilde{\Phi}_0,D \phi_{0}''\big\rangle_{L^2(0,T)} \big\langle 
\widetilde{\Phi}_0, \partial_{\zeta k} \phi(\cdot;1)\big\rangle_{L^2(0,T)},
\end{split}
\end{align}
satisfying
\begin{align}
\label{e:mtest3}
t^{\frac{j}{2}}\left\|\partial_\xx^j \left(\psi(t) - \breve{\psi}(t)\right)\right\|_{L^\infty} \leq M E_0\left(E_0^\alpha + \frac{\log(2+t)}{\sqrt{1+t}}\right)
\end{align}
for $j = 0,1$ and $t \geq 0$.
\end{theorem}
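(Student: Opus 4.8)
The plan is to follow the modulational framework of~\cite{HDRS22,BjoernMod}, adapting each step to accommodate the degenerate diffusion of~\eqref{FHNsys}. First I would set up the nonlinear iteration. Write the perturbed solution as $\ub(\zeta,t) = \phi_0(\zeta + \psi(\zeta,t)) + \vt(\zeta,t)$ and derive the quasilinear equation satisfied by the modulated perturbation $\vt$ together with a closure equation for $\psi$; the latter is chosen (via projection onto $\widetilde\Phi_0$ in the inverse-Laplace/Floquet-Bloch decomposition) so that only derivatives $\psi_\zeta$, $\psi_{\zeta\zeta}$, $\psi_t$ enter the $\vt$-equation and so that $\psi_\zeta$ obeys a perturbed Burgers equation. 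Apply the Cole-Hopf transform to the $\psi$-equation to remove the critical quadratic nonlinearity $\nu\psi_\zeta^2$, reducing it to a forced linear heat-type equation whose solution operator has the required $L^\infty\to L^\infty$ decay with rate $(1+t)^{-1/2}$ on one derivative, $(1+t)^{-1}\log(2+t)$ on two; for this one needs the semigroup decomposition and bounds which I will take as available from the preceding linear analysis (they are the content of~\S\ref{sec:strategy} and the sections it points to).

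Next I would run a two-track estimate. Define template functions
\[
\eta(t) = \sup_{0\le s\le t}\Big( \|\vt(s)\|_{L^\infty} \sqrt{1+s} + \|\psi_\zeta(s)\|_{L^\infty}\sqrt{1+s} + \|\psi(s)\|_{L^\infty} + \cdots \Big),
\]
with the higher-derivative norms weighted by $(1+s)/\log(2+s)$ as in~\eqref{e:mtest2}. Feeding the Duhamel formulas through the semigroup bounds and the Cole-Hopf representation, and using that the nonlinearities are at least quadratic in $(\vt,\psi_\zeta,\psi_{\zeta\zeta},\psi_t)$, one obtains an inequality $\eta(t)\lesssim E_0 + \eta(t)^2$, which for $E_0<\epsilon_0$ small closes to $\eta(t)\lesssim E_0$; this yields~\eqref{e:mtest10}--\eqref{e:mtest2} once one checks that the unmodulated difference $\ub(t)-\phi_0$ stays $O(E_0)$ (it does not decay, only the modulated differences do), that $\ub(t)-\phi_0(\cdot+\psi)$ picks up the extra $(1+t)^{-1/2}$ from $\|\vt\|_{L^\infty}$, and that the second-order modulated expansion in~\eqref{e:mtest12} absorbs the remaining error at rate $(1+t)^{-1}\log(2+t)$ by Taylor-expanding $\phi$ in $k$ about $k=1$ and using the eigenfunction expansion~\eqref{e:eig_exp}. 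Finally~\eqref{e:mtest3} follows by writing $\psi - \breve\psi$ via Duhamel against the heat-type semigroup: $\breve\psi$ solves exactly the Cole-Hopf-linearizable equation~\eqref{e:HamJac} with the matched leading coefficients $d$, $c_g$, $\nu$, so the difference is driven only by the higher-order (cubic and semigroup-remainder) terms, whence the $E_0(E_0^\alpha + (1+t)^{-1/2}\log(2+t))$ bound; the $E_0^\alpha$ accounts for the irreducible mismatch between the full nonlinear $\psi$-dynamics and its quadratic truncation.

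The main obstacle is controlling regularity in the quasilinear $\vt$-equation without parabolic smoothing in the $v$-component. In the parabolic setting of~\cite{HDRS22,BjoernMod} one simply bootstraps spatial derivatives off the analytic semigroup; here the second equation of~\eqref{FHNsys} has no diffusion, so $\vt$ and its derivatives must be propagated by a separate energy argument. The fix, as the introduction signals, is a nonlinear damping estimate in uniformly local Sobolev norms: one shows $\frac{d}{dt}\|\vt(t)\|_{H^s_{\mathrm{ul}}}^2 \le -\kappa\|\vt(t)\|_{H^s_{\mathrm{ul}}}^2 + C(\|\vt\|_{L^\infty} + \|\psi_\zeta\|_{L^\infty}+\cdots)\big(\|\vt\|_{H^s_{\mathrm{ul}}}^2 + \text{lower order}\big)$ for suitable $s$ (matching the $C^3\times C^2$ regularity in the hypotheses), exploiting the strictly dissipative structure of $F'(\phi_0)$ in the $v$-slot together with the parabolic smoothing in the $u$-slot. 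Combined with the $L^\infty$-decay of $\vt$ from the semigroup track, this damping estimate keeps the $H^s_{\mathrm{ul}}$-norm bounded (and transfers decay to the top derivatives as recorded in~\eqref{e:mtest2}), thereby closing the iteration. A secondary technical point is that, since the temporal Green's function is not integrable, every place where one would invoke the Floquet-Bloch representation must instead be routed through the inverse Laplace representation for high frequencies and linked to Floquet-Bloch only for $|\xi|$ small, which is precisely why Hypothesis~\ref{assH2} (invertibility of $\lambda_c$ at $\xi=0$) is needed.
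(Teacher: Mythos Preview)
Your overall architecture matches the paper's, but two points are conflated and would cause trouble as written.

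First, the paper does \emph{not} run the Duhamel iteration on the forward-modulated perturbation $\ub-\phi_0(\cdot+\psi)$. It uses the \emph{inverse}-modulated perturbation $\vb(\zeta,t)=\ub(\zeta-\psi(\zeta,t),t)-\phi_0(\zeta)$ (and its residual $\zt=\vb-\partial_k\phi(\cdot;1)\psi_\zeta$) for the Duhamel estimates, because it is this choice that makes the phase definition~\eqref{e:intpsi} cancel the principal low-frequency part $(\phi_0'+\partial_k\phi\,\partial_\zeta)S_p(t)$ of the semigroup cleanly and yields the quasilinear structure~\eqref{e:modpertbeq}. The forward-modulated perturbation obeys a \emph{semilinear} equation but with decay too slow to close on its own. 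The paper therefore runs a two-variable scheme: inverse-modulated $\zt$ for decay via Duhamel, and the \emph{modified} forward-modulated variable $\mathring{\zt}$ (see~\eqref{e:defvforw}) for regularity via the uniformly-local damping estimate; the two are linked by a norm-equivalence lemma (Lemma~\ref{lem:equivalence}). Your single-track formulation with $\vt=\ub-\phi_0(\cdot+\psi)$ as ``the'' modulated perturbation misses this split, and calling its equation quasilinear is not right.

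Second, your template inequality $\eta\lesssim E_0+\eta^2$ would force smallness of $\eta(0)$ in the full $C^3\times C^2$ norm, contradicting the statement that only $\|\vb_0\|_{L^\infty}$ needs to be small. The paper gets $\eta\le C(E_0+\eta^{6/5})$: the exponent $6/5$ comes from applying Gagliardo--Nirenberg in the damping estimate (Proposition~\ref{p:damping}), which produces a factor $\|\mathring{\zt}\|_{L^\infty}^{1/5}$ multiplying the $H^3_{\mathrm{ul}}\times H^2_{\mathrm{ul}}$ energy; combined with the $L^\infty$ decay of $\mathring{\zt}$ this gives $\|\zt\|_{C^2\times C^1}\lesssim \eta_1^{1/5}\log(2+s)/(1+s)$, and the extra $1/5$ is exactly what allows the initial $C^3\times C^2$ norm to be merely bounded by $K$.
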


We compare Theorem~\ref{t:mainresult} with earlier nonlinear stability results~\cite{BjoernMod,HDRS22} of wave trains in nondegenerate reaction-diffusion systems against $C_{\mathrm{ub}}$-perturbations. First of all, we retrieve the same diffusive decay rates as in the reaction-diffusion case. It is argued in~\cite[Section~6.1]{BjoernMod} that these decay rates are sharp (up to possibly a logarithm). Second, we do require more regular initial data than in~\cite{BjoernMod}, where initial conditions $\vb_0$ in $C_{\mathrm{ub}}(\R)$ are considered. The reason is as follows. The lack of parabolic smoothing naturally leads one to consider initial data $\vb_0$ from the domain $C_{\mathrm{ub}}^2(\R) \times C_{\mathrm{ub}}^1(\R)$ of the diffusion-advection operator $\El_0$, so that the perturbed solution $\ub(t)$ of the semilinear evolution problem~\eqref{FHN_co} with initial condition $\ub(0) = \phi_0 + \vb_0$ is classical. Moreover, we lose one additional degree of regularity due to the embedding of uniformly local Sobolev spaces in $C_{\mathrm{ub}}$-spaces, cf.~\cite[Section~8.3.1]{SU17book}, which are used to obtain a nonlinear damping estimate to control regularity in the scheme, see~\S\ref{sec:strategy} below for more details. We emphasize that we only require our initial data to be \emph{bounded} in $(C_{\mathrm{ub}}^3 \times C_{\mathrm{ub}}^2)$-norm and, similar as in~\cite{BjoernMod}, to be small in $L^\infty$-norm. This contrasts with earlier nonlinear stability results~\cite{LLEperiod,STVenant1,STVenant2} of wave trains in semilinear (nonparabolic) problems and is due to the use of Gagliardo-Nirenberg interpolation in the nonlinear damping estimate, see Remark~\ref{rem:gagl} for more details.

The approximation of the phase modulation $\psi(t)$ by a solution to the viscous Hamilton-Jacobi equation~\eqref{e:HamJac} was also found in the reaction-diffusion case in~\cite{BjoernMod}. Thus, independent of the precise structure and smoothing properties of the underlying system, the viscous Hamilton-Jacobi equation arises as governing equation for the phase modulation, whose coefficients are fully determined by the first and second-order terms in the expansion of the linear and nonlinear dispersion relations. We refer to~\cite{DSSS} for further details. Important to note is that once the diffusive spectral stability assumptions are violated, e.g.~due to the presence of additional conservation laws, the governing equation of the phase modulation can change, cf.~\cite{JNRZ14}.

\subsection{Strategy of proof and main challenges} \label{sec:strategy}

We prove Theorem~\ref{t:mainresult} by extending the $L^\infty$-theory, which was recently developed in~\cite{HDRS22,BjoernMod} and applied to establish nonlinear stability of wave trains in reaction-diffusion systems against $C_{\mathrm{ub}}$-perturbations, beyond the parabolic setting. Here, we outline the strategy of proof and explain how we address the novel challenges arising due to incomplete parabolicity.

To prove Theorem~\ref{t:mainresult}, we wish to control the perturbation $\vt(t) = \ub(t) - \phi_0$ over time, which obeys the semilinear equation
\begin{align}
\left(\partial_t - \El_0\right)\vt = \NT(\vt), \label{e:umodpert}
\end{align}
where $\El_0$ is the linearization of~\eqref{FHN_co} about $\phi_0$ given by~\eqref{deflinearization} and $\NT(\vt)$ is the nonlinear residual given by
\begin{align*}
\NT(\vt) &= F(\phi_0+\vt) - F(\phi_0) - F'(\phi_0) \vt.
\end{align*}
We will establish that $\El_0$ generates a $C_0$-semigroup $\re^{\El_0t}$, which, due to the fact that $\El_0$ has spectrum up to the imaginary axis $\ri \R$, does not exhibit decay as an operator on $C_{\mathrm{ub}}(\R)$, thus obstructing a standard nonlinear stability argument. 

In earlier works~\cite{JONZ,JUN,SAN3}, considering the nonlinear stability of wave trains in reaction-diffusion systems against localized perturbations, this issue was addressed by employing its Floquet-Bloch representation to decompose the semigroup generated by the linearization and introducing a spatio-temporal phase modulation to capture the critical diffusive behavior. More precisely, one considers the \emph{inverse-modulated perturbation}
\begin{align} \vb(\zeta,t) = \ub(\xx - \psi(\xx,t),t) - \phi_0(\xx), \label{e:defv}\end{align}
where the spatio-temporal phase modulation $\psi(\xx,t)$ is determined a posteriori. The inverse-modulated perturbation satisfies a \emph{quasilinear} equation of the form
\begin{align} \label{e:pertbeq}
(\partial_t - \El_0)\left(\vb + \phi_0'\psi - \psi_\xx \vb\right) = N\left(\vb, \vb_\xx,\vb_{\xx\xx},\psi_\xx, \partial_t \psi,\psi_{\xx\xx}, \psi_{\xx\xx\xx}\right),
\end{align}
where $N$ is nonlinear in its variables. One decomposes the semigroup $\re^{\El_0 t}$ into a principal part of the form $\phi_0'S_p(t)$, where $S_p(t)$ decays diffusively, and a residual part exhibiting higher order temporal decay. Finally, one chooses the phase modulation $\psi(t)$ in~\eqref{e:defv} in such a way that it captures the most critical contributions in the Duhamel formulation of~\eqref{e:pertbeq}, allowing one to close a nonlinear iteration argument in $\psi_\xx,\psi_t$ and $\vb$. The leading-order dynamics of the phase modulation $\psi$ is then given by a viscous Hamilton-Jacobi equation, cf.~\cite{DSSS} and Remark~\ref{rem:expansion}.

The above approach has successfully been extended to the nonlinear stability analysis of periodic traveling waves against $L^2$-localized perturbations in nonparabolic dissipative problems such as the St.~Venant equations~\cite{STVenant1,STVenant2} and the Lugiato-Lefever equation~\cite{LLEperiod} using resolvent estimates and the Gearhart-Pr\"uss theorem to render exponential decay of the high-frequency part of the $C_0$-semigroup. 

In the nonlinear stability analysis of wave trains in reaction-diffusion systems against $C_{\mathrm{ub}}$-perturbations in~\cite{BjoernMod}, the decomposition was carried out on the level of the temporal Green's function, which is $C^2$ and exponentially localized, thus circumventing an application of the Floquet-Bloch transform to nonlocalized functions, which is only defined in the sense of tempered distributions. This leads to an explicit representation of the low-frequency part of the semigroup as in~\cite{JONZ} and control on the high-frequency part by pointwise Green's function estimates established in~\cite{JUN}. 

For nonelliptic operators, such as $\El_0$, the temporal Green's function is typically a distribution, complicating a potential decomposition via the Floquet-Bloch transform. We address this challenge by taking inspiration from~\cite{BjoernAvery} and employing its inverse Laplace representation, given by the complex inversion formula
\begin{align} \label{laplace}
\re^{\El_0 t} \vb = \lim_{R \to \infty} \frac{1}{2 \pi \ri} \int_{\eta - \ri R}^{\eta + \ri R} \re^{\lambda t} (\lambda - \El_0)^{-1} \vb \, \de \lambda
\end{align}
with $\eta, t > 0$ and $\vb \in D(\El_0)$, to decompose the semigroup. By partitioning and deforming the integration contour in~\eqref{laplace}, we write the semigroup as the sum of a high- and low-frequency part. Here, we associate the high-frequency part of the semigroup with pieces of the deformed contour integral where $|\Im(\lambda)|\gg  1$, i.e., where~$\re^{\lambda t}$ rapidly oscillates, and the low-frequency part of the semigroup with pieces of the deformed contour integral where $|\lambda| \ll  1$. 

As the space of perturbations $C_{\mathrm{ub}}(\R)$ does not admit any Hilbert-space structure, we cannot rely on the Gearhart-Pr\"uss theorem (or leverage the sectoriality of the linearization) to establish a spectral mapping property. Therefore, we instead use the expansion of the resolvent as a Neumann series for $\lambda \in \C$ with $|\Im(\lambda)| \gg 1$, which was established in~\cite{BjoernAvery}, to control the high-frequency part of the semigroup. The leading-order terms in the Neumann series expansion of resolvent are not absolutely integrable over the high-frequency parts of the contour in~\eqref{laplace} and, thus, the question of how to control these terms is not straightforward. Here, we cannot resort to the arguments in~\cite{BjoernAvery} which rely on test functions, since these are not dense in $C_{\mathrm{ub}}(\R)$. Instead, we identify the critical terms in the Neumann series expansion of $(\lambda - \El_0)^{-1}$ as products of resolvents of simple diffusion and advection operators. The corresponding terms in the inverse Laplace formula then correspond to \emph{convolutions} of the $C_0$-semigroups generated by these diffusion and advection operators. As far as the authors are aware, the observation that the complex inversion formula holds for convolutions of $C_0$-semigroups is novel and is therefore of its own interest, cf.~\cite{HAA}. All in all, we obtain that the high-frequency part of the semigroup is exponentially decaying on $C_{\mathrm{ub}}(\R)$. 

To render decay of the low-frequency part of the semigroup one must rely on diffusive smoothing in the case of nonlocalized perturbations. The diffusive decay rates of the low-frequency part are not strong enough to control the critical nonlinear term $\nu (\psi_\xx)^2$ in the perturbed viscous Hamilton-Jacobi equation satisfied by $\psi$. In~\cite{BjoernMod}, this difficulty has been addressed by further decomposing the low-frequency part of the semigroup via its Floquet-Bloch representation and relating its principal part to the convective heat semigroup $\smash{\re^{(d\partial_\xx^2 - c_g\partial_\xx)t}}$, which allows to apply the Cole-Hopf transform to eliminate the critical $(\psi_\xx)^2$-term.

Here, we link the inverse Laplace representation of the low-frequency part with the Floquet-Bloch representation from~\cite{BjoernMod} modulo exponentially decaying terms, while exploiting the nonzero group velocity of the wave train, cf.~Assumption~\ref{assH2}. This allows us to harness the decomposition of and estimates on the low-frequency part of the semigroup from~\cite{BjoernMod}. We emphasize that, to the authors' knowledge, such a link has not been established before and is interesting in its own right. 

After applying the Cole-Hopf transform to the equation of the phase modulation $\psi$ to eliminate the critical nonlinear term, the decay of all remaining linear and nonlinear terms is strong enough to close a nonlinear iteration argument in $\psi_\xx,\psi_t$ and $\vb$. Yet, the equation for the inverse-modulated perturbation is quasilinear and an apparent loss of derivatives must be addressed to control regularity in the nonlinear argument. This is a standard issue in the nonlinear stability of wave trains and it has been recognized that, as long as the underlying equation is semilinear, such a loss of derivatives can be addressed by considering the unmodulated perturbation or to the so-called \emph{forward-modulated perturbation}
\begin{align} \label{e:defforwregular}
\mathring{\vb}(\zeta,t) = \ub(\xx,t) - \phi_0(\xx + \psi(\xx,t)), 
\end{align}
which measures the deviation from the modulated wave train, cf.~\cite{ZUM22}. Both the unmodulated perturbation $\vt(t)$ and the forward-modulated perturbation $\mathring{\vb}(t)$ obey a semilinear equation in which no derivatives are lost, yet where decay is too slow to close an independent iteration scheme. However, by relating $\vt(t)$ (or $\mathring{\vb}(t)$) to the inverse-modulated perturbation $\vb(t)$ regularity can be controlled in the nonlinear iteration scheme. Regularity control can then be obtained by showing that $\vt(t)$ (or $\mathring{\vb}(t)$) obeys a so-called \emph{nonlinear damping estimate}~\cite{JNRZ14,ZUM22}, which is an energy estimate bounding the $H^m$-norm of the solution for some $m \in \mathbb{N}$ in terms of the $H^m$-norm of its initial condition and the $L^2$-norm of the solution. A nonlinear damping estimate for the forward-modulated perturbation has been derived in the setting of the FHN system in~\cite[Proposition~8.6]{BjoernAvery}.

A second option is to control regularity by deriving tame estimates on derivatives of $\vt(t)$ (or $\mathring{\vb}(t)$) via its Duhamel formulation~\cite{LLEperiod,BjoernMod,RS21}. In the absence of parabolic smoothing, the advantage of using nonlinear damping estimates is that they yield sharp bounds on derivatives and typically require less regular initial data, as can for instance be seen by comparing~\cite[Theorem~6.2]{ZUM22} with~\cite[Theorem~1.3]{LLEperiod}. In the case of nonlocalized perturbations, one has so far been compelled to the second approach using tame estimates, cf.~\cite{BjoernMod,RS21}, since the lack of localization prohibits the use of $L^2$-energy estimates. Motivated by the possibility to accommodate less regular initial data, we control regularity in this work by extending the method of nonlinear damping estimates to uniformly local Sobolev norms, see~\cite[Section~8.3.1]{SU17book}, which allow for nonlocalized perturbations. On top of that, we work with a slightly modified version of the forward-modulated perturbation given by
\begin{align}
\begin{split}
 \mathring{\zt}(\zeta,t) &:= \ub(\xx,t) - \phi(\xx + \psi(\xx,t)(1+\psi_\xx(\xx,t));1+\psi_\xx(\xx,t))\\
 &= \mathring{\vb}(\xx,t) + \phi_0(\xx + \psi(\xx,t)) - \phi(\xx + \psi(\xx,t)(1+\psi_\xx(\xx,t));1+\psi_\xx(\xx,t))\\
 &= \widetilde{\vb}(\xx,t) + \phi_0(\xx) - \phi(\xx + \psi(\xx,t)(1+\psi_\xx(\xx,t));1+\psi_\xx(\xx,t)), 
 \end{split}
 \label{e:defvforw}
\end{align}
which again satisfies a semilinear equation in which no derivatives are lost and is well-defined as long as $\|\psi_\xx(t)\|_{L^\infty}$ is sufficiently small, cf.~Proposition~\ref{prop:family}. The reason is that $\mathring{\zt}(t)$ and its derivatives exhibit stronger decay than $\mathring{\vb}(t)$, cf.~\cite[Corollary~1.4]{BjoernMod}. Having sharper bounds on derivatives, it is no longer necessary to move derivatives in the Duhamel formulation from the nonlinearity to the slowly decaying principal low-frequency part $S_p(t)$ of the semigroup as in~\cite{BjoernMod}. This provides a significant simplification with respect to~\cite{BjoernMod} as the computation and estimation of commutators between the operators $S_p(t)$ and $\partial_\xx^m, m \in \mathbb{N}$, is no longer necessary. 

Thus, using uniformly local Sobolev norms\footnote{We note that uniformly local Sobolev norms have also been used in other works, e.g.~\cite{GALSLI}, to make energy estimate methods available in $L^\infty$-spaces.}, we obtain a nonlinear damping estimate for the modified forward-modulated perturbation $\mathring{\zt}(t)$ and our nonlinear iteration scheme can also be closed from the perspective of regularity. This then leads to the proof of Theorem~\ref{t:mainresult}.

\begin{remark} \label{rem:expansion}
It was already observed in~\cite{DSSS} that the coefficients of the viscous Hamilton-Jacobi equation~\eqref{e:HamJac}, governing the leading-order phase dynamics, can be expressed in terms of the coefficients of the second-order expansion of the linear and nonlinear dispersion relations $\lambda_c(\xi)$ and $\omega(k)$, cf.~Propositions~\ref{prop:family} and~\ref{prop:speccons} and identity~\eqref{e:nuexpress}. In the current setting of fully nonlocalized perturbations~\cite{BjoernMod}, it is important to identify the leading-order Hamilton-Jacobi dynamics of the phase modulation as this allows for an application of the Cole-Hopf transform to eliminate the most critical nonlinear term. In contrast, in the nonlinear stability analyses~\cite{JONZ,JUN,LLEperiod,STVenant1,STVenant2} of wave trains against localized perturbations, it is not necessary to determine the leading-order phase dynamics explicitly. The derivation of the viscous Hamilton-Jacobi equation in the current setting can be found in~\S\ref{sec:derihamjac} and exploits the characterization of the first-order term in the expansion of the eigenfunction $\Phi_\xi$ 
as the derivative of the family of wave trains $\phi(\cdot;k)$, established in Proposition~\ref{prop:family}, with respect to the wavenumber $k$, cf.~Proposition~\ref{prop:speccons}. 
\end{remark}

\begin{remark} 
The nonlinear damping estimate, used in the proof of Theorem~\ref{t:mainresult}, leads to estimates on \emph{derivatives} of the (modulated) perturbation. Specifically, we can replace the $L^\infty$-norms in estimates~\eqref{e:mtest10}-\eqref{e:mtest12} by $(C_{\mathrm{ub}}^2 \times C_{\mathrm{ub}}^1)$-norms upon substituting $E_0$ by its fractional power $\smash{E_0^{\frac15}}$.\footnote{In fact, we can also take $\alpha = \frac15$ in~\eqref{e:mtest3}.}

Here, the occurrence of the fractional power is a consequence of the use of Gagliardo-Nirenberg interpolation in the nonlinear damping estimate, see Remark~\ref{rem:gagl}. In addition, we note that, although our initial perturbation $\vb_0$ lies in $C_{\mathrm{ub}}^3(\R) \times C_{\mathrm{ub}}^2(\R)$, we do not control the associated norm in our nonlinear stability analysis, since we lose one degree of regularity by embedding of uniformly local Sobolev spaces in $C_{\mathrm{ub}}$-spaces. Nevertheless, by considering more regular initial data in Theorem~\ref{t:mainresult}, it is possible to track higher-order derivatives in the nonlinear argument. More precisely, taking $m \in \mathbb{N}$ and $\vb_0 \in C_{\mathrm{ub}}^{m+3}(\R) \times C_{\mathrm{ub}}^{m+2}(\R)$ with $\|\vb_0\|_{C_{\mathrm{ub}}^{m+3} \times C_{\mathrm{ub}}^{m+2}} < K$ in Theorem~\ref{t:mainresult}, we find
\begin{align*}
\ub \in C\big([0,\infty),C_{\mathrm{ub}}^{m+3}(\R) \times C_{\mathrm{ub}}^{m+2}(\R)\big) \cap C^1\big([0,\infty),C_{\mathrm{ub}}^{m+1}(\R)\big).
\end{align*}
and the estimates~\eqref{e:mtest10}-\eqref{e:mtest2} can be upgraded to
\begin{align*} 
\left\|\ub(t)-\phi_0\right\|_{C_{\mathrm{ub}}^{m+2} \times C_{\mathrm{ub}}^{m+1}} &\leq ME_0^{\alpha_m}, \\
\left\|\ub(t)-\phi_0(\cdot+\psi(\cdot,t))\right\|_{C_{\mathrm{ub}}^{m+2} \times C_{\mathrm{ub}}^{m+1}} &\leq \frac{ME_0^{\alpha_m}}{\sqrt{1+t}},\\
\left\|\ub(t)-\phi_0\left(\cdot + \psi(\cdot,t)\left(1+\psi_\xx(\cdot,t)\right);1+\psi_\xx(\cdot,t)\right)\right\|_{C_{\mathrm{ub}}^{m+2} \times C_{\mathrm{ub}}^{m+1}} &\leq M E_0^{\alpha_m} \frac{ \log(2+t)}{1+t},
\end{align*}
where $\alpha_m > 0$ depends on $m$ only, and
\begin{align*}
\|\partial_t \psi_\xx(t)\|_{C_{\mathrm{ub}}^{3+m}}
\leq \frac{ME_0}{\sqrt{1+t}}, \qquad \ \left\|\psi_{\xx\xx}(t)\right\|_{C_{\mathrm{ub}}^{4+m}} \leq ME_0 \frac{\log(2+t)}{1+t}
\end{align*}
for all $t \geq 0$. For the sake of clarity of exposition and in order to reduce the amount of technicalities, we have chosen to only consider $(C_{\mathrm{ub}}^3 \times C_{\mathrm{ub}}^2)$-regular initial data only in our nonlinear stability analysis. 
\end{remark}

\subsection{Outline}

This paper is organized as follows. In~\S\ref{sec:resol}, we analyze the resolvent associated with the linearization $\El_0$ of~\eqref{FHN_co} about the wave train. In~\S\ref{sec_lin} we decompose the $C_0$-semigroup $\re^{\El_0 t}$ and derive associated estimates with the aid of the inverse Laplace representation and establish a Floquet-Bloch representation for its critical low-frequency part. In~\S\ref{sec:iteration}, we set up our nonlinear iteration scheme and derive a nonlinear damping estimate. We close the nonlinear argument and prove our main result, Theorem~\ref{t:mainresult}, in~\S\ref{sec:nonlstab}. We conclude in~\S\ref{sec:disc} by discussing the wider applicability of our method to achieve nonlinear stability of wave trains against fully nonlocalized perturbations in semilinear dissipative problems. Appendix~\ref{sec:laplace} is devoted to background material on the vector-valued Laplace transform. In particular, we prove that its complex inversion formula holds for convolutions of $C_0$-semigroups. Finally, we relegate the derivation of the equation for the modified forward-modulated perturbation to Appendix~\ref{app:derivationforward}. 

\paragraph*{Notation.} Let $S$ be a set, and let $A, B \colon S \to \R$. Throughout the paper, the expression ``$A(x) \lesssim B(x)$ for $x \in S$'', means that there exists a constant $C>0$, independent of $x$, such that $A(x) \leq CB(x)$ holds for all $x \in S$. 

\paragraph*{Acknowledgments.}  This project is funded by the Deutsche Forschungsgemeinschaft (DFG, German Research Foundation) -- Project-ID 491897824.

\section{Resolvent analysis}
\label{sec:resol}

This section is devoted to the study of the resolvent and serves as preparation to derive pure $L^\infty$-estimates on the high- and low-frequency components of the semigroup given by~\eqref{laplace}. That is, we collect and prove properties of $(\lambda-\El_0)^{-1}$ in the regimes $|\Im(\lambda)| \gg 1$ and $|\lambda|\ll 1$. Our refined low-frequency analysis of the resolvent is the starting point to link the inverse Laplace representation to the Floquet-Bloch representation of the low-frequency part of the semigroup. 

\subsection{Low-frequency resolvent analysis and decomposition}

We consider the resolvent problem
\begin{align} \label{e:resolventprob}
(\El_0 - \lambda) \vb = \gb
\end{align}
with $\vb = (u,v)^\top$ and $\gb \in C_{\mathrm{ub}}(\R)$ for $\lambda$ in a small ball $B(0,\delta) \subset \C$ of radius $\delta > 0$ centered at the origin. We proceed as in~\cite{BjoernAvery} and write~\eqref{e:resolventprob} as a first-order system
\begin{align}
\label{e:firstorder}
\psi' = A(\zeta;\lambda) \psi + G
\end{align}
in $\psi = (u,u_\zeta,v)^\top$ with inhomogeneity $G = (0,\gb)^\top$ and coefficient matrix
\begin{align*}
A(\zeta;\lambda) = \begin{pmatrix}
0 & 1 & 0 \\
\lambda - f'(u_0) & -c_0 & 1 \\
-\frac{\varepsilon}{c_0} & 0 & \frac{\varepsilon \gamma + \lambda}{c_0}
\end{pmatrix},
\end{align*}
where $u_0$ is the first-component of the wave train $\phi_0 = (u_0,v_0)^\top$ and $f(u) = u(1-u)(u-\mu)$ is the cubic nonlinearity in the FHN system~\eqref{FHN}. 

The coefficient matrix $A(\cdot;\lambda)$ is $T$-periodic for each $\lambda \in \C$. Thus, we can apply Floquet theory, cf.~\cite[Section~2.1.3]{KapitulaPromislow}, to establish a $T$-periodic change of coordinates, which is locally analytic in $\lambda$, converting the homogeneous problem
\begin{align}
\psi' = A(\zeta;\lambda)\psi \label{e:firstorderhom}
\end{align}
into a constant-coefficient system. 

\begin{proposition} \label{p:Floquet0} Assume~\ref{assH1}. For $\delta > 0$ sufficiently small, there exist maps $Q \colon \R \times B(0,\delta) \to \C^{3 \times 3}$ and $M \colon B(0,\delta) \to \C^{3 \times 3}$ such that the evolution $T(\zeta,\bar{\zeta};\lambda)$ of~\eqref{e:firstorderhom} can be expressed as
\begin{align*}
T(\zeta,\zb;\lambda) = Q(\zeta;\lambda)^{-1} \re^{M(\lambda)(\zeta - \zb)} Q(\zb;\lambda).
\end{align*}
Here, $Q(\cdot;\lambda)$ is smooth and $T$-periodic for each $\lambda \in B(0,\delta)$. Moreover, $M$ and $Q(\zeta;\cdot)$ are analytic for each $\zeta \in \R$.
\end{proposition}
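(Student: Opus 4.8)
The plan is to reduce the assertion to the standard Floquet normal form for the $T$-periodic linear system~\eqref{e:firstorderhom} and then upgrade it to analyticity in $\lambda$ using analytic perturbation theory for the monodromy matrix. First I would fix $\zeta = 0$ as base point and introduce the fundamental matrix solution $\Psi(\zeta;\lambda)$ of~\eqref{e:firstorderhom} with $\Psi(0;\lambda) = \Id$, so that the evolution operator is $T(\zeta,\zb;\lambda) = \Psi(\zeta;\lambda)\Psi(\zb;\lambda)^{-1}$. Since $A(\zeta;\lambda)$ is entire in $\lambda$ and continuous (indeed smooth) in $\zeta$, the solution of the linear ODE depends analytically on $\lambda$ and smoothly on $\zeta$; in particular the monodromy matrix $\mathcal{M}(\lambda) := \Psi(T;\lambda)$ is analytic in $\lambda$ on all of $\C$ and invertible (its determinant is $\exp\!\big(\int_0^T \operatorname{tr} A(\zeta;\lambda)\,\de\zeta\big) \neq 0$).

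The key step is to take a locally analytic logarithm of the monodromy matrix. Set $\mathcal{M}_0 := \mathcal{M}(0)$. One can always choose \emph{some} matrix logarithm $M_0$ with $\re^{T M_0} = \mathcal{M}_0$; then for $\lambda$ in a sufficiently small ball $B(0,\delta)$, since $\mathcal{M}(\lambda)$ stays close to $\mathcal{M}_0$ and $\mathcal{M}_0$ is invertible, the principal-branch logarithm relative to $M_0$ is well defined and analytic, giving an analytic map $M \colon B(0,\delta) \to \C^{3\times 3}$ with $\re^{T M(\lambda)} = \mathcal{M}(\lambda)$ for all $\lambda \in B(0,\delta)$. (Concretely, one writes $M(\lambda) = M_0 + \tfrac{1}{2\pi\ri T}\oint \log(z)\,(z - \mathcal{M}(\lambda))^{-1}\de z$ along a suitable contour encircling $\sigma(\mathcal{M}_0)$ but avoiding a branch cut, or invokes the holomorphic functional calculus; analyticity in $\lambda$ follows since $\lambda \mapsto (z - \mathcal{M}(\lambda))^{-1}$ is analytic for $z$ off the spectrum.) With $M$ in hand I would then \emph{define}
\begin{align*}
Q(\zeta;\lambda) := \re^{M(\lambda)\zeta}\,\Psi(\zeta;\lambda)^{-1}.
\end{align*}
By construction $Q(\cdot;\lambda)$ is smooth in $\zeta$ (it solves a linear ODE with smooth coefficients), and it is analytic in $\lambda$ on $B(0,\delta)$ as a product of analytic matrix-valued maps. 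It is $T$-periodic because $Q(\zeta + T;\lambda) = \re^{M(\lambda)T}\re^{M(\lambda)\zeta}\,\big(\Psi(\zeta;\lambda)\mathcal{M}(\lambda)\big)^{-1} = \re^{M(\lambda)\zeta}\,\Psi(\zeta;\lambda)^{-1} = Q(\zeta;\lambda)$, using $\Psi(\zeta+T;\lambda) = \Psi(\zeta;\lambda)\mathcal{M}(\lambda)$ and $\re^{M(\lambda)T} = \mathcal{M}(\lambda)$. Finally, unwinding the definitions,
\begin{align*}
Q(\zeta;\lambda)^{-1}\re^{M(\lambda)(\zeta-\zb)}Q(\zb;\lambda)
= \Psi(\zeta;\lambda)\re^{-M(\lambda)\zeta}\re^{M(\lambda)(\zeta-\zb)}\re^{M(\lambda)\zb}\Psi(\zb;\lambda)^{-1}
= \Psi(\zeta;\lambda)\Psi(\zb;\lambda)^{-1} = T(\zeta,\zb;\lambda),
\end{align*}
which is the claimed representation.

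The main obstacle — really the only nonroutine point — is the analytic choice of the matrix logarithm $M(\lambda)$: a global logarithm of $\mathcal{M}(\lambda)$ need not exist or be single-valued, so one must restrict to a small ball $B(0,\delta)$ and argue that on it a consistent branch can be fixed; this is exactly why $\delta$ appears in the statement and why only local (not global) analyticity in $\lambda$ is claimed. Everything else — smoothness and $\lambda$-analyticity of $\Psi$, invertibility of the monodromy, periodicity of $Q$ — is standard ODE theory (e.g.~\cite[Section~2.1.3]{KapitulaPromislow}). I would also remark that the construction is consistent with the eigenstructure used later: the Floquet exponents are the eigenvalues of $M(\lambda)$, and the spectral assumptions~\ref{assD1}--\ref{assD3} together with Hypothesis~\ref{assH2} guarantee that for small $\lambda$ exactly one eigenvalue of $M(\lambda)$ lies near $0$, which is what makes the subsequent low-frequency decomposition possible.
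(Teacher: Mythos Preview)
Your proof is correct and is precisely the standard Floquet normal-form construction; the paper does not give its own proof of this proposition but simply cites~\cite[Section~2.1.3]{KapitulaPromislow}, so you have supplied exactly the argument the authors are pointing to. The only substantive step, as you identify, is the local analytic choice of the matrix logarithm of the monodromy $\mathcal{M}(\lambda)$ via the holomorphic functional calculus, which forces the restriction to a small ball $B(0,\delta)$.
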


An eigenvalue $\nu(\lambda)$ of the monodromy matrix $M(\lambda)$ is called a \emph{spatial Floquet exponent}. It gives rise to a solution $\psi(\zeta;\lambda) = \re^{\nu(\lambda) \zeta} p(\zeta;\lambda)$ of~\eqref{e:firstorderhom}, where $p(\cdot;\lambda)$ is $T$-periodic. Thus, translating back to the eigenvalue problem $(\El_0 - \lambda) \vb = 0$, one readily observes that for each $\xi \in \C$ a point $\lambda \in B(0,\delta)$ is a (temporal) eigenvalue of the Bloch operator $\El(\xi)$ if and only if $\ri \xi$ is an eigenvalue of $M(\lambda)$. The spectral decomposition~\eqref{Blochspecdecomp} then implies that a point $\lambda \in B(0,\delta)$ lies in $\sigma(\El_0)$ if and only if $M(\lambda)$ has a purely imaginary eigenvalue. 

Proposition~\ref{prop:speccons} yields balls $V_1,V_2 \subset \C$ centered at $0$ and a holomorphic map $\lambda_c \colon V_1 \to \C$ such that $\El(\xi)$ has a simple eigenvalue $\lambda_c(\xi)$ for each $\xi \in V_1$ and it holds $\sigma(\El_0) \cap V_2 = \{\lambda_c(\xi) : \xi \in \R \cap V_1\} \cap V_2$. Since we have $\lambda_c'(0) = - \ri c_g \neq 0$ by Assumption~\ref{assH2}, the implicit function theorem implies, provided $\delta > 0$ is sufficiently small, that for each $\lambda \in B(0,\delta)$ the matrix $M(\lambda)$ possesses precisely one simple eigenvalue $\nu_c(\lambda)$ in $V_1$. 
These observations readily lead to the following proposition.

\begin{proposition} \label{p:Floquet} Assume~\ref{assH1},~\ref{assH2} and~\ref{assD1}-\ref{assD3}. There exist constants $C, \delta > 0$ and a holomorphic map $\nu_c \colon B(0,\delta) \to \C$ satisfying the following assertions.
\begin{itemize}
    \item[(i)] $\nu_c(\lambda)$ is a simple spatial Floquet exponent associated with the $T$-periodic first-order problem~\eqref{e:firstorderhom} for each $\lambda \in B(0,\delta)$. 
    \item[(ii)] A point $\lambda \in B(0,\delta)$ lies in $\sigma(\El_0)$ if and only if $\nu_c(\lambda)$ is purely imaginary.
    \item[(iii)] We have $\nu_c(\lambda_c(\xi)) = \ri \xi$ for each $\xi \in V_1$ such that $\lambda_c(\xi) \in B(0,\delta)$.
    \item[(iv)] The expansion
    \begin{align*}
        \left|\nu_c(\lambda) + \frac{1}{c_g} \lambda\right| \leq C|\lambda|^2
    \end{align*}
    holds for all $\lambda \in B(0,\delta)$. 
    \item[(v)] For $\lambda \in B(0,\delta)$ to the right of $\sigma(\El_0)$ we have $\Re(\nu_c(\lambda)) > 0$.
    \end{itemize}
\end{proposition}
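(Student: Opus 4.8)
The plan is to construct $\nu_c$ by locally inverting the linear dispersion relation $\lambda_c$ from Proposition~\ref{prop:speccons} and then to read off the five assertions from the expansions in that proposition together with the correspondence, recalled just above the statement, between temporal eigenvalues of the Bloch operators $\El(\xi)$ and spatial Floquet exponents of~\eqref{e:firstorderhom}. Concretely: by Proposition~\ref{prop:speccons}(iv) and Assumption~\ref{assH2} one has $\lambda_c(0)=0$ and $\lambda_c'(0)=-\ri c_g\neq0$, so, after shrinking the ball $V_1$ if necessary, $\lambda_c\colon V_1\to\C$ is injective with holomorphic inverse $\xi_c$ satisfying $\xi_c(0)=0$; I would set $\nu_c(\lambda):=\ri\,\xi_c(\lambda)$ and fix $\delta>0$ small with $\overline{B(0,\delta)}\subset\lambda_c(V_1)\cap V_2$. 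Assertion (iii) is then immediate, since $\xi_c\circ\lambda_c=\mathrm{id}$ on $V_1$, and (iv) follows by Taylor-inverting $\lambda_c(\xi)=-\ri c_g\xi-d\xi^2+O(\xi^3)$ to get $\xi_c(\lambda)=\tfrac{\ri}{c_g}\lambda+O(\lambda^2)$, whence $\nu_c(\lambda)=-\tfrac{1}{c_g}\lambda+O(\lambda^2)$ and the bound in (iv) holds on $B(0,\delta)$ (possibly shrinking $\delta$) with some constant $C>0$.

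For (i), since $\lambda_c(\xi)$ is an eigenvalue of $\El(\xi)$ by Proposition~\ref{prop:speccons}(i), a $T$-periodic eigenfunction of $\El(\xi)$ written in first-order form yields a solution $\re^{\ri\xi\zeta}(\text{periodic})$ of~\eqref{e:firstorderhom}, so $\nu_c(\lambda)=\ri\,\xi_c(\lambda)$ is an eigenvalue of the monodromy exponent matrix $M(\lambda)$ from Proposition~\ref{p:Floquet0}, i.e.\ a spatial Floquet exponent, for every $\lambda\in B(0,\delta)$. To obtain simplicity I would argue at $\lambda=0$ and propagate: Assumption~\ref{assD3} makes $0$ an algebraically simple eigenvalue of $\El(0)$, and via the standard identification of Jordan chains of $M(0)$ at $0$ with (generalized) eigenfunctions of $\El(0)$ at $0$ this makes $0$ algebraically simple for $M(0)$; since algebraically simple eigenvalues persist under small perturbations of a matrix, $\nu_c(\lambda)$ remains a simple eigenvalue of $M(\lambda)$ for $\lambda\in B(0,\delta)$ with $\delta$ small.

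Assertions (ii) and (v) then follow together. If $\nu_c(\lambda)$ is purely imaginary, then $\xi_c(\lambda)\in\R\cap V_1$, hence $\lambda=\lambda_c(\xi_c(\lambda))\in\sigma(\El_0)$ by Proposition~\ref{prop:speccons}(ii), using $B(0,\delta)\subset V_2$. Conversely, by~\eqref{Blochspecdecomp} and the correspondence a point $\lambda\in B(0,\delta)$ lies in $\sigma(\El_0)$ iff $M(\lambda)$ has a purely imaginary eigenvalue, so I need that for $\delta$ small the \emph{only} purely imaginary eigenvalue of $M(\lambda)$ can be $\nu_c(\lambda)$. At $\lambda=0$ this holds because any purely imaginary eigenvalue of $M(0)$ equals $\ri\xi^\ast$ for some $\xi^\ast\in[-\tfrac{\pi}{T},\tfrac{\pi}{T})$ with $0\in\sigma(\El(\xi^\ast))$, which by Assumption~\ref{assD2} forces $\xi^\ast=0$, and the algebraic simplicity of $0$ for $M(0)$ excludes a second lift of $0$; hence the remaining two eigenvalues of $M(0)$ lie off $\ri\R$, and by continuity so do their continuations over $B(0,\delta)$. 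This yields (ii). For (v): on $B(0,\delta)\setminus\sigma(\El_0)$ the function $\Re\,\nu_c$ is nonzero by (ii), hence of constant sign on each component; for $\delta$ small, $\sigma(\El_0)\cap B(0,\delta)$ is the real-analytic arc $\{\lambda_c(\xi):\xi\in\R\}\cap B(0,\delta)$ through $0$, tangent to $\ri\R$ with $\Re\,\lambda_c(\xi)=-d\xi^2+O(\xi^3)\le0$, so the region to the right of $\sigma(\El_0)$ in $B(0,\delta)$ is connected and contains a segment $(0,s_0)$ of the positive real axis, where $\nu_c(s)=-s/c_g+O(s^2)$ with $-1/c_g>0$ by Assumption~\ref{assH2}; thus $\Re\,\nu_c(s)>0$ there, and by constancy of sign $\Re\,\nu_c>0$ throughout.

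The genuinely delicate steps will be the simplicity claim in (i) and the uniqueness of the purely imaginary exponent in (ii): both require transferring the algebraic-multiplicity content of Assumptions~\ref{assD2}--\ref{assD3} from the Bloch operators $\El(\xi)$ to the $3\times3$ monodromy exponent matrix $M(\lambda)$, which needs a little care because the eigenvalues of $M(\lambda)$ are only determined modulo $\tfrac{2\pi\ri}{T}$. All remaining work is routine bookkeeping with the inverse function theorem and the expansions of Proposition~\ref{prop:speccons}, much of it already available from the discussion preceding the statement and from the analogous treatment in~\cite{BjoernAvery}.
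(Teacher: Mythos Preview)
Your proposal is correct and follows essentially the same approach as the paper, whose proof is the brief discussion immediately preceding the proposition: construct $\nu_c$ by locally inverting $\lambda_c$ via the implicit function theorem (using $\lambda_c'(0)=-\ri c_g\neq0$ from Assumption~\ref{assH2}) and then read off assertions (i)--(v) from the correspondence between temporal Bloch eigenvalues and spatial Floquet exponents of $M(\lambda)$. You supply considerably more detail than the paper's terse treatment and correctly identify the genuinely delicate points---transferring algebraic simplicity from $\El(0)$ to $M(0)$ and ensuring that the only near-imaginary eigenvalue of $M(\lambda)$ is $\nu_c(\lambda)$---both of which the paper leaves implicit.
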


Propositions~\ref{p:Floquet0} and~\ref{p:Floquet} imply that for $\lambda \in B(0,\delta)$ system~\eqref{e:firstorderhom} has an exponential dichotomy on $\R$ if and only if there are no purely imaginary Floquet exponents, which is the case precisely if $\lambda$ lies in the resolvent set $\rho(\El_0)$. Hence, taking $\lambda \in B(0,\delta) \cap \rho(\El_0)$ and letting $P^s(\lambda)$ and $P^u(\lambda)$ be the spectral projections onto the stable and unstable subspaces of $M(\lambda)$, we can express the spatial Green's function associated with~\eqref{e:firstorderhom} as
\begin{align*}
\mathcal{G}(\zeta,\zb;\lambda) = Q(\zeta;\lambda)^{-1} \re^{M(\lambda)(\zeta - \zb)} \left(P^s(\lambda) \mathbf{1}_{(-\infty,\zeta]}(\zb) - P^u(\lambda) \mathbf{1}_{[\zeta,\infty)}(\zb)\right) Q(\zb;\lambda)
\end{align*}
where $\mathbf{1}_{(-\infty,\zeta]}$ and $\mathbf{1}_{[\zeta,\infty)}$ are indicator functions. Introducing the matrices
\begin{align*}
\Pi_2 = \begin{pmatrix}
1 & 0 & 0 \\
0 & 0 & 1
\end{pmatrix}, \qquad \Pi_3 = \begin{pmatrix}
0 & 0 \\
1 & 0 \\
0 & c_0^{-1}
\end{pmatrix}. 
\end{align*}
to translate between the original formulation~\eqref{e:resolventprob} and the first-order formulation~\eqref{e:firstorder} of the resolvent problem, we find that the unique solution of~\eqref{e:resolventprob} is now given by
\begin{align*}
\left((\El_0-\lambda)^{-1} \gb\right)(\zeta) = \vb(\zeta;\lambda) = \int_\R \Pi_2 \mathcal{G}(\zeta,\zb;\lambda)  \Pi_3 \gb(\zb) \,\de \zb.
\end{align*}
By Proposition~\ref{p:Floquet} the spatial Floquet exponent $\nu_c(\lambda)$ is a simple eigenvalue of $M(\lambda)$ and all other spatial Floquet exponents are bounded away from $\ri \R$ for $\lambda \in B(0,\delta)$. Therefore, the spectral projection $P^{cu}(\lambda)$ of $M(\lambda)$ onto the eigenspace associated with $\nu_c(\lambda)$ is defined for all $\lambda \in B(0,\delta)$. For $\lambda \in B(0,\delta)$ to the right of $\sigma(\El_0)$ it holds $\Re(\nu_c(\lambda)) > 0$ and we can decompose $P^u(\lambda) = P^{uu}(\lambda) + P^{cu}(\lambda)$. This then leads to the desired resolvent decomposition for small $\lambda$.

\begin{proposition} \label{p:lowfreqresolvdecomp}
Assume~\ref{assH1},~\ref{assH2} and~\ref{assD1}-\ref{assD3}. There exist constants $C,\delta > 0$ and a holomorphic map $S_e^0 \colon B(0,\delta) \to \mathcal{B}\big(C_{\mathrm{ub}}(\R)\big)$ such that for $\lambda \in B(0,\delta)$, $\gb \in C_{\mathrm{ub}}(\R)$ and $\zeta \in \R$ we have
\begin{align} \label{e:resolventdecomplowfreq}
\begin{split}
\left((\El_0-\lambda)^{-1} \gb\right)(\zeta) &= -\int_\R \Pi_2 Q(\zeta;\lambda)^{-1} \re^{\nu_c(\lambda)(\zeta - \zb)} \mathbf{1}_{[\zeta,\infty)}(\zb) P^{cu}(\lambda) Q(\zb;\lambda) \Pi_3 \gb(\zb) \,\de \zb\\
&\qquad + \, \left(S_e^0(\lambda) \gb\right) (\zeta)
\end{split}
\end{align}
and it holds
\begin{align*}
\left\|S_e^0(\lambda)\gb\right\|_{L^\infty} \leq C \|\gb\|_{L^\infty}.
\end{align*}
\end{proposition}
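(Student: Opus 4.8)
The plan is to split the spatial Green's function $\mathcal{G}(\zeta,\zb;\lambda)$ from the resolvent representation quoted just before the proposition according to the spectral decomposition of the monodromy matrix $M(\lambda)$ into the piece carried by the critical spatial Floquet exponent $\nu_c(\lambda)$ and an exponentially localized remainder. The key observations are that the $\nu_c(\lambda)$-piece is exactly the explicit integral in~\eqref{e:resolventdecomplowfreq} and that the remainder, which defines $S_e^0(\lambda)$, makes sense and depends holomorphically on $\lambda$ on the whole ball $B(0,\delta)$ (after possibly shrinking $\delta$), thereby furnishing the holomorphic continuation of $(\El_0-\lambda)^{-1}$ past $\sigma(\El_0)$.

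First I would set up the spectral decomposition of $M(\lambda)$. By Proposition~\ref{p:Floquet}(i), $\nu_c(\lambda)$ is a simple eigenvalue of $M(\lambda)$ for each $\lambda\in B(0,\delta)$, so the associated rank-one spectral projection $P^{cu}(\lambda)$ is holomorphic on $B(0,\delta)$ and satisfies $M(\lambda)P^{cu}(\lambda)=\nu_c(\lambda)P^{cu}(\lambda)$; hence $\re^{M(\lambda)s}P^{cu}(\lambda)=\re^{\nu_c(\lambda)s}P^{cu}(\lambda)$ for all $s\in\R$. By the same proposition the remaining two Floquet exponents stay bounded away from $\ri\R$; shrinking $\delta$ and using continuity of $M$ on the compact set $\overline{B(0,\delta)}$, I would fix $\beta>0$ such that on $\Range\big(\Id-P^{cu}(\lambda)\big)$ the spectrum of $M(\lambda)$ lies in $\{|\Re(\cdot)|\ge\beta\}$, uniformly in $\lambda\in B(0,\delta)$. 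Since these two eigenvalues never cross $\ri\R$ as $\lambda$ ranges over the ball, I can split $\Id-P^{cu}(\lambda)=P^{ss}(\lambda)+P^{uu}(\lambda)$ into the holomorphic spectral projections onto the eigenvalues with $\Re<-\beta$ and with $\Re>\beta$, respectively. For $\lambda\in B(0,\delta)$ to the right of $\sigma(\El_0)$, where $\Re(\nu_c(\lambda))>0$ by Proposition~\ref{p:Floquet}(v), these agree with the dichotomy projections from the text, namely $P^s(\lambda)=P^{ss}(\lambda)$ and $P^u(\lambda)=P^{uu}(\lambda)+P^{cu}(\lambda)$.

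Next I would carry out the split of the Green's function. For $\lambda\in B(0,\delta)$ to the right of $\sigma(\El_0)$ I substitute $P^s=P^{ss}$ and $P^u=P^{uu}+P^{cu}$ into the formula for $\mathcal{G}$ and write $\mathcal{G}=\mathcal{G}_e-\mathcal{G}_{cu}$ with
\[
\mathcal{G}_{cu}(\zeta,\zb;\lambda)=Q(\zeta;\lambda)^{-1}\re^{\nu_c(\lambda)(\zeta-\zb)}\,\mathbf{1}_{[\zeta,\infty)}(\zb)\,P^{cu}(\lambda)\,Q(\zb;\lambda),
\]
\[
\mathcal{G}_e(\zeta,\zb;\lambda)=Q(\zeta;\lambda)^{-1}\re^{M(\lambda)(\zeta-\zb)}\big(P^{ss}(\lambda)\mathbf{1}_{(-\infty,\zeta]}(\zb)-P^{uu}(\lambda)\mathbf{1}_{[\zeta,\infty)}(\zb)\big)Q(\zb;\lambda),
\]
where I have used $\re^{M(\lambda)(\zeta-\zb)}P^{cu}(\lambda)=\re^{\nu_c(\lambda)(\zeta-\zb)}P^{cu}(\lambda)$. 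Defining $\big(S_e^0(\lambda)\gb\big)(\zeta):=\int_\R\Pi_2\mathcal{G}_e(\zeta,\zb;\lambda)\Pi_3\,\gb(\zb)\,\de\zb$, the integral of $\Pi_2\mathcal{G}_{cu}(\zeta,\zb;\lambda)\Pi_3\gb(\zb)$ over $\zb$ is precisely the first term of~\eqref{e:resolventdecomplowfreq}, so~\eqref{e:resolventdecomplowfreq} follows from the resolvent representation. Now I would establish the bound and the holomorphic extension: since $Q(\cdot;\lambda)$ is $T$-periodic and smooth and $Q(\zeta;\cdot),M(\cdot)$ are holomorphic, after shrinking $\delta$ we have $\sup\{\|Q(\zeta;\lambda)^{\pm1}\|:\zeta\in\R,\ \lambda\in\overline{B(0,\delta)}\}<\infty$ and likewise $P^{ss}(\lambda),P^{uu}(\lambda)$ are uniformly bounded; combined with the uniform spectral gap $\beta$ this yields
\[
\big\|\Pi_2\,\mathcal{G}_e(\zeta,\zb;\lambda)\,\Pi_3\big\|\le C\,\re^{-\beta|\zeta-\zb|},\qquad \zeta,\zb\in\R,\ \lambda\in B(0,\delta),
\]
where the right-hand side no longer refers to $\rho(\El_0)$: the expression defining $\mathcal{G}_e$ makes sense and is holomorphic in $\lambda$ on all of $B(0,\delta)$. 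The exponential localization then gives $S_e^0(\lambda)\gb\in C_{\mathrm{ub}}(\R)$ (continuity and uniform continuity following from those of $Q^{\pm1}$ and $\gb$ together with the integrable dominating factor $\re^{-\beta|\cdot|}$, via dominated convergence) and $\|S_e^0(\lambda)\gb\|_{L^\infty}\le C\|\gb\|_{L^\infty}\sup_{\zeta\in\R}\int_\R\re^{-\beta|\zeta-\zb|}\,\de\zb=\tfrac{2C}{\beta}\|\gb\|_{L^\infty}$, so $S_e^0(\lambda)\in\mathcal{B}\big(C_{\mathrm{ub}}(\R)\big)$ with a bound uniform over $B(0,\delta)$. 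Holomorphy of $\lambda\mapsto S_e^0(\lambda)$ follows because, for each fixed $\gb$, one may differentiate under the integral (the $\lambda$-derivatives of the kernel obey the same exponential bound on a slightly smaller ball by a Cauchy estimate), whence $\lambda\mapsto S_e^0(\lambda)\gb$ is holomorphic into $C_{\mathrm{ub}}(\R)$; together with the uniform operator bound this gives holomorphy of the operator-valued family.

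I expect the main obstacle to be the latter half of the first step: upgrading the qualitative, pointwise-in-$\lambda$ statement of Proposition~\ref{p:Floquet} that the non-critical Floquet exponents are bounded away from $\ri\R$ into a uniform spectral gap with uniformly bounded holomorphic projections $P^{ss}(\lambda),P^{uu}(\lambda)$ on the whole ball, and then checking that this holomorphic splitting is the one consistent with the exponential dichotomy wherever the latter exists. This identification is what makes the right-hand side of~\eqref{e:resolventdecomplowfreq} the correct holomorphic continuation of $(\El_0-\lambda)^{-1}$ across the curve $\{\lambda_c(\xi):\xi\in\R\cap V_1\}$, with the entire singularity of the resolvent on that curve — where the first, $\nu_c$-integral fails to converge because $\Re(\nu_c)=0$ — isolated into the explicit term, and with $S_e^0$ genuinely regular there.
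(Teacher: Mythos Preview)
Your proposal is correct and follows exactly the approach the paper takes: the paper itself does not give a formal proof block for this proposition, because the decomposition is essentially immediate from the preceding Floquet setup --- one splits $P^u(\lambda)=P^{uu}(\lambda)+P^{cu}(\lambda)$ in the Green's function $\mathcal{G}(\zeta,\zb;\lambda)$, the $P^{cu}$ piece yields the explicit integral in~\eqref{e:resolventdecomplowfreq}, and the $P^{ss},P^{uu}$ pieces, being uniformly hyperbolic on $B(0,\delta)$, define the holomorphic bounded remainder $S_e^0(\lambda)$. You have correctly filled in the details the paper elides, and your identified ``obstacle'' (uniform gap and holomorphy of $P^{ss},P^{uu}$) is indeed routine from Propositions~\ref{p:Floquet0}--\ref{p:Floquet} and compactness of $\overline{B(0,\delta)}$.
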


In order to later relate the inverse Laplace representation of the low-frequency part of the semigroup $\re^{\El_0 t}$ to its Floquet-Bloch representation, we prove the following technical lemma showing that the expression $\Pi_2 Q(\zeta;\lambda)^{-1} P^{cu}(\lambda) Q(\zb;\lambda) \Pi_3$ in~\eqref{e:resolventdecomplowfreq} can be written as a product of solutions of the eigenvalue problem $(\El_0 - \lambda) \vb = 0$ and its adjoint $(\El_0-\lambda)^* \vb = 0$. 

\begin{lemma}
\label{expansion_in_laplace}
Assume~\ref{assH1},~\ref{assH2} and~\ref{assD1}-\ref{assD3}. There exist a constant $\delta > 0$ and functions $\Psi, \tilde{\Psi} \colon \R \times B(0,\delta) \to \C^2$ satisfying
\begin{align} \label{e:spectralprojid}
\Pi_2 Q(\zeta;\lambda)^{-1} P^{cu}(\lambda) Q(\zb;\lambda) \Pi_3 = \Psi(\zeta;\lambda)\tilde{\Psi}(\zb;\lambda)^*
\end{align}
for $\zeta, \zb \in \R$ and $\lambda \in B(0,\delta)$. Moreover, $\Psi(\cdot;\lambda)$ and $\tilde{\Psi}(\cdot;\lambda)$ are smooth and $T$-periodic for each $\lambda \in B(0,\delta)$ and $\Psi(\zeta;\cdot)\tilde{\Psi}(\zb;\cdot)^*$ is analytic for each $\zeta,\zb \in \R$. Finally, we have
\begin{align} \label{e:spectralprojid2}
\Psi(\cdot;\lambda_c(\xi)) = \Phi_\xi, \qquad \lambda_c'(\xi) \tilde{\Psi}(\cdot;\lambda_c(\xi)) = \ri \widetilde{\Phi}_\xi
\end{align}
for $\xi \in V_1$ such that $\lambda_c(\xi) \in B(0,\delta)$, where $\Phi_\xi$ and $\widetilde{\Phi}_\xi$ are defined in Proposition~\ref{prop:speccons}.
\end{lemma}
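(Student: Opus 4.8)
The plan is to exploit the rank-one structure of the spectral projection $P^{cu}(\lambda)$ — which is available precisely because $\nu_c(\lambda)$ is a \emph{simple} eigenvalue of $M(\lambda)$ by Proposition~\ref{p:Floquet}(i) — and then to translate the resulting factorization back to the original second-order formulation via the matrices $\Pi_2$ and $\Pi_3$. Concretely, let $q(\zeta;\lambda)$ be a holomorphic choice of eigenvector of $M(\lambda)$ associated with $\nu_c(\lambda)$ and let $\tilde q(\zeta;\lambda)$ be the corresponding eigenvector of the adjoint $M(\lambda)^*$ associated with $\overline{\nu_c(\lambda)}$, normalized so that $\tilde q(\lambda)^* q(\lambda) = 1$; such choices exist locally and depend analytically on $\lambda$ by standard analytic perturbation theory (\cite{Kato}), since the eigenvalue is simple and the ball $B(0,\delta)$ is simply connected. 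Then $P^{cu}(\lambda) = q(\lambda)\tilde q(\lambda)^*$, and substituting into the left-hand side of~\eqref{e:spectralprojid} gives
\[
\Pi_2 Q(\zeta;\lambda)^{-1} P^{cu}(\lambda) Q(\zb;\lambda) \Pi_3 = \big(\Pi_2 Q(\zeta;\lambda)^{-1} q(\lambda)\big)\big(\Pi_3^* Q(\zb;\lambda)^* \tilde q(\lambda)\big)^*.
\]
This suggests setting $\Psi(\zeta;\lambda) := \Pi_2 Q(\zeta;\lambda)^{-1} q(\lambda)$ and $\tilde\Psi(\zb;\lambda) := \Pi_3^* Q(\zb;\lambda)^* \tilde q(\lambda)$, which are manifestly $T$-periodic in their first argument (since $Q(\cdot;\lambda)$ is $T$-periodic by Proposition~\ref{p:Floquet0}) and smooth in $\zeta$; the product $\Psi(\zeta;\cdot)\tilde\Psi(\zb;\cdot)^*$ is analytic because $Q(\zeta;\cdot)$, $q(\cdot)$ and $\tilde q(\cdot)$ all are.

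Next I would verify the identification~\eqref{e:spectralprojid2} with the eigenfunctions from Proposition~\ref{prop:speccons}. By construction of the Floquet coordinates, the function $\zeta \mapsto Q(\zeta;\lambda)^{-1}\re^{\nu_c(\lambda)\zeta} q(\lambda)$ solves the first-order homogeneous problem~\eqref{e:firstorderhom}, hence $\re^{-\nu_c(\lambda)\zeta}(\Pi_2$-projection of its first and third entries$)$ is a $T$-periodic solution of the eigenvalue problem $(\El_0 - \lambda)\vb = 0$ when $\lambda = \lambda_c(\xi)$ and $\nu_c(\lambda_c(\xi)) = \ri\xi$ by Proposition~\ref{p:Floquet}(iii) — i.e.~it is a Bloch eigenfunction for $\El(\xi)$ at the eigenvalue $\lambda_c(\xi)$. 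Since $\lambda_c(\xi)$ is simple, this solution must be a scalar multiple of $\Phi_\xi$; I would then pin down the scalar by choosing the normalization of $q(\lambda)$ so that $\langle\widetilde\Phi_0,\Psi(\cdot;\lambda_c(\xi))\rangle_{L^2(0,T)} = 1$, consistently with the normalization $\langle\widetilde\Phi_0,\Phi_\xi\rangle = 1$ in Proposition~\ref{prop:speccons}(i), which forces $\Psi(\cdot;\lambda_c(\xi)) = \Phi_\xi$. The analogous computation with the adjoint problem identifies $\tilde\Psi(\cdot;\lambda_c(\xi))$ with a multiple of $\widetilde\Phi_\xi$; the precise factor $\lambda_c'(\xi)/\ri$ in~\eqref{e:spectralprojid2} should drop out of the biorthogonality relation $\tilde q(\lambda)^*q(\lambda) = 1$ combined with $\langle\widetilde\Phi_\xi,\Phi_\xi\rangle = 1$, together with the identity $\lambda_c'(\xi) = 2\ri\langle\widetilde\Phi_\xi, D(\partial_\zeta+\ri\xi)\Phi_\xi\rangle + \ri c_0$ from Proposition~\ref{prop:speccons}(iv); matching the two normalization pairings forces exactly the claimed $\lambda_c'(\xi)$-weighting, since the natural pairing induced by the first-order reduction differs from the $L^2(0,T)$-pairing on the second-order level by a Wronskian-type factor that evaluates to $\lambda_c'(\xi)/\ri$.

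The main obstacle I anticipate is bookkeeping the normalizations: the factorization $P^{cu} = q\tilde q^*$ is only determined up to the rescaling $q \mapsto \kappa q$, $\tilde q \mapsto \kappa^{-1}\tilde q$ for any nonvanishing holomorphic $\kappa(\lambda)$, so the factors $\Psi,\tilde\Psi$ individually are not canonical — only their product is. Making~\eqref{e:spectralprojid2} hold on the nose requires choosing $\kappa$ so that $\Psi$ lands exactly on $\Phi_\xi$ along the curve $\lambda = \lambda_c(\xi)$, and then checking that the forced value of $\tilde\Psi$ is the one in the statement; this is where the factor $\lambda_c'(\xi)$ enters and must be extracted from a careful comparison of the biorthogonality pairing at the first-order level with the $L^2(0,T)$-pairing at the second-order level (the discrepancy being essentially a boundary/Wronskian term coming from the $\Pi_3$ embedding of the advective component). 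The remaining assertions — smoothness, $T$-periodicity in $\zeta$, and joint analyticity of the product in $\lambda$ — are routine consequences of the corresponding properties of $Q$, $q$ and $\tilde q$ already recorded in Propositions~\ref{p:Floquet0}, \ref{p:Floquet} and standard perturbation theory.
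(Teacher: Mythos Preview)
Your proposal is correct and follows essentially the same route as the paper: factor $P^{cu}(\lambda)=q(\lambda)\tilde q(\lambda)^*$ using simplicity of $\nu_c(\lambda)$, push through $\Pi_2 Q^{-1}$ and $\Pi_3^* Q^*$ to get the $T$-periodic factors, identify $\Psi(\cdot;\lambda_c(\xi))$ with $\Phi_\xi$ via the normalization against $\widetilde\Phi_\xi$, and extract the remaining scalar in front of $\widetilde\Phi_\xi$ by comparing the first-order biorthogonality $\tilde q^* q=1$ with the $L^2(0,T)$ pairing. The paper makes your ``Wronskian-type factor'' explicit: writing the $\C^3$-lifts $v_1,v_2$ in components of $\Phi_\xi,\widetilde\Phi_\xi$, integrating by parts, and invoking Proposition~\ref{prop:speccons}(iv) yields $\langle v_2,v_1\rangle_{L^2(0,T)}=c_0+2\langle\widetilde\Phi_\xi,D(\partial_\zeta+\ri\xi)\Phi_\xi\rangle=-\ri\lambda_c'(\xi)$, which is exactly the computation you anticipate but leave sketched.
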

\begin{proof}
Let $\lambda \in B(0,\delta)$. By Propositions~\ref{p:Floquet0} and~\ref{p:Floquet} the monodromy matrix $M(\lambda)$ has a simple eigenvalue $\nu_c(\lambda)$, provided $\delta > 0$ is sufficiently small. Let $w_1(\lambda)$ be an associated eigenvector. Moreover, let $\widetilde{w}_1(\lambda)$ be an eigenvector associated with the simple eigenvalue $\overline{\nu_c(\lambda)}$ of the adjoint matrix $M(\lambda)^*$. The spectral projection $P^{cu}(\lambda)$ onto the eigenspace of $M(\lambda)$ associated with $\nu_c(\lambda)$ is now given by
$$P^{cu}(\lambda) = \frac{ w_1(\lambda) \widetilde{w}_1(\lambda)^*}{\langle \widetilde{w}_1(\lambda), w_1(\lambda) \rangle}.$$ 
Since $\nu_c(\lambda)$ is simple for each $\lambda \in B(0,\delta)$, the map $P^{cu} \colon B(0,\delta) \to \C^{3 \times 3}$ is holomorphic by standard analytic perturbation theory~\cite[Section~II.1.4]{Kato}.

We define $\Psi, \tilde{\Psi} \colon \R \times B(0,\delta) \to \C^2$ by 
$$\Psi(\zeta;\lambda) = \Pi_2 v_1(\zeta;\lambda), \qquad v_1(\zeta;\lambda) := \frac{Q(\zeta;\lambda)^{-1}w_1(\lambda)}{\langle \widetilde{\Phi}_{-\ri \nu_c(\lambda)}, \Pi_2Q(\cdot;\lambda)^{-1}w_1(\lambda) \rangle_{L^2(0,T)}}$$
and
$$\tilde{\Psi}(\zeta;\lambda) = \Pi_3^* v_2(\zeta;\lambda), \qquad v_2(\zeta;\lambda) := \frac{Q(\zeta;\lambda)^* \widetilde{w}_1(\lambda)}{\langle w_1(\lambda),\widetilde{w}_1(\lambda)\rangle}\langle \Pi_2Q(\cdot;\lambda)^{-1}w_1(\lambda),\widetilde{\Phi}_{-\ri \nu_c(\lambda)}\rangle_{L^2(0,T)}.$$ 
Then, $\Psi(\cdot;\lambda)$ and $\tilde{\Psi}(\cdot;\lambda)$ are smooth and $T$-periodic for each $\lambda \in B(0,\delta)$ by Proposition~\ref{p:Floquet0}. One readily observes that~\eqref{e:spectralprojid} holds for $\zeta, \zb \in \R$ and $\lambda \in B(0,\delta)$. Moreover, since $Q(\zeta;\cdot)$, $Q(\zb;\cdot)$ and $P^{cu}$ are analytic by Proposition~\ref{p:Floquet0}, so is $\Psi(\zeta;\cdot)\tilde{\Psi}(\zb;\cdot)^*$ for each $\zeta,\zb \in \R$. 

Next, we observe that the evolution $T_{\mathrm{ad}}(\zeta,\zb;\lambda)$ of the adjoint problem
\begin{align} \label{e:firstorderadjoint}
\vartheta' = -A(\zeta;\lambda)^*\vartheta,
\end{align}
of~\eqref{e:firstorderhom} is given by $T_{\mathrm{ad}}(\zeta,\zb;\lambda) = T(\zb,\zeta;\lambda)^*$, where $T(\zeta,\zb;\lambda)$ is the evolution of~\eqref{e:firstorderhom}. So, since $\nu_c(\lambda)$ is an eigenvalue of $M(\lambda)$ with associated eigenvector $w_1(\lambda)$ and $-\overline{\nu_c(\lambda)}$ is an eigenvalue of $-M(\lambda)^*$ with associated eigenvector $\smash{\widetilde{w}_1(\lambda)}$, we obtain, by Proposition~\ref{p:Floquet0}, that $\psi(\zeta;\lambda) = \smash{\re^{\nu_c(\lambda) \zeta} v_1(\zeta;\lambda)}$ and $\vartheta(\zeta;\lambda) = \smash{\re^{-\overline{\nu_c(\lambda)} \zeta} v_2(\zeta;\lambda)}$ are solutions of~\eqref{e:firstorderhom} and~\eqref{e:firstorderadjoint}, respectively. Consequently, $\vb(\zeta;\lambda) =  \re^{\nu_c(\lambda) \zeta} \Psi(\zeta;\lambda)$ and $\smash{\widetilde{\vb}}(\zeta;\lambda) = \smash{\re^{-\overline{\nu_c(\lambda)} \zeta} \tilde{\Psi}(\zeta;\lambda)}$ solve the eigenvalue problems $(\El_0 - \lambda) \vb = 0$ and $(\El_0 - \lambda)^* \smash{\widetilde{\vb}} = 0$, respectively. Therefore, $\Psi(\cdot;\lambda), \smash{\tilde{\Psi}(\cdot;\lambda) \in H^2_{\mathrm{per}}(0,T)}$ are nontrivial solutions of the eigenvalue problems $(\El(-\ri \nu_c(\lambda)) -\lambda) \vb = 0$ and $(\El(-\ri \nu_c(\lambda)) -\lambda)^* \widetilde{\vb} = 0$, respectively. Now, let $\xi \in V_1$ be such that $\lambda_c(\xi) \in B(0,\delta)$. Then, we find with the aid of Proposition~\ref{p:Floquet} that $\Psi(\zeta;\lambda_c(\xi))$ and $\tilde{\Psi}(\zeta;\lambda_c(\xi))$ lie in $\ker(\El(\xi) - \lambda_c(\xi))$ and $\ker((\El(\xi) - \lambda_c(\xi))^*)$, respectively, which are spanned by $\Phi_\xi$ and $\widetilde{\Phi}_\xi$, respectively, by Proposition~\ref{prop:speccons}. Hence, on the one hand, the gauge condition $\smash{\langle \widetilde{\Phi}_\xi, \Psi(\zeta;\lambda_c(\xi))\rangle_{L^2(0,T)} = 1 = \langle \widetilde{\Phi}_\xi, \Phi_\xi\rangle_{L^2(0,T)}}$, cf.~Proposition~\ref{prop:speccons}, implies $\Phi_\xi = \Psi(\cdot;\lambda_c(\xi))$. On the other hand, there exists $\kappa_\xi \in \C \setminus \{0\}$ such that $\smash{\tilde{\Psi}(\cdot;\lambda_c(\xi)) = \kappa_\xi \widetilde{\Phi}_\xi}$. So, all that remains to show is that $\kappa_\xi = \ri/\lambda_c'(\xi)$.

First, using that $\psi(\zeta;\lambda) = \smash{\re^{\nu_c(\lambda) \zeta} v_1(\zeta;\lambda)}$ and $\vartheta(\zeta;\lambda) = \smash{\re^{-\overline{\nu_c(\lambda)} \zeta} v_2(\zeta;\lambda)}$ are solutions of~\eqref{e:firstorderhom} and~\eqref{e:firstorderadjoint}, respectively, and we have $\nu_c(\lambda_c(\xi)) = \ri \xi$ by Proposition~\ref{p:Floquet}, we obtain
\begin{align*}
v_1(\zeta;\lambda_c(\xi)) = \begin{pmatrix}
\Phi_{1,\xi} \\
\ri \xi \Phi_{1,\xi} + \Phi_{1,\xi}' \\
\Phi_{2,\xi}
\end{pmatrix}, \qquad 
v_2(\zeta;\lambda_c(\xi)) = \kappa_\xi \begin{pmatrix}
\left(c_0-\ri \xi\right)\widetilde{\Phi}_{1,\xi} - \widetilde{\Phi}_{1,\xi}' \\
\widetilde{\Phi}_{1,\xi} \\
c_0\widetilde{\Phi}_{2,\xi}
\end{pmatrix}.
\end{align*}
Finally, evoking Proposition~\ref{prop:speccons}, integrating by parts and using $1 = \langle \widetilde{\Phi}_\xi, \Phi_\xi\rangle_{L^2(0,T)}$, we arrive at
\begin{align*}
\kappa_\xi^{-1} &= \kappa_\xi^{-1}\big\langle v_2(\cdot;\lambda), v_1(\cdot;\lambda)\big\rangle_{L^2(0,T)}\\ 
&= 
\big\langle (c_0-\ri\xi) \widetilde{\Phi}_{1,\xi} - \widetilde{\Phi}_{1,\xi}', \Phi_{1,\xi}\big\rangle_{L^2(0,T)} + \big\langle \widetilde{\Phi}_{1,\xi}, \ri \xi \Phi_{1,\xi} + \Phi_{1,\xi}'\big\rangle_{L^2(0,T)} + \big\langle c_0 \widetilde{\Phi}_{2,\xi}, \Phi_{2,\xi}\big\rangle_{L^2(0,T)}\\
&= c_0 + 2\big\langle  \widetilde{\Phi}_\xi, D \left(\partial_\zeta + \ri \xi\right) \Phi_\xi\big\rangle_{L^2(0,T)} = -\ri\lambda_c'(\xi),
\end{align*}
which concludes the proof.
\end{proof}

\subsection{High-frequency resolvent analysis} 

We consider the resolvent $(\lambda-\El_0)^{-1}$ in the high-frequency regime. The spectrum of $\El_0$ away from the origin is by Proposition~\ref{prop:speccons} confined to the left-half plane with uniform distance from the imaginary axis, which allows us to deform the high-frequency parts of the integration contour in~\eqref{laplace} into the left-half plane away from the imaginary axis and the spectrum. Specifically, this leads us to consider the contours connecting $b\pm \ri\varpi_0$ with $b \pm \ri R$ for some $b<0$ and $R>\varpi_0>0$. Since these contours are unbounded as $R \rightarrow \infty$, we require a more refined understanding of the resolvent to secure exponential decay on the high-frequency contributions of the corresponding contour integrals.\footnote{Indeed, the naive bound $\|(\lambda - \El_0)^{-1}\| \lesssim \frac{1}{\Re \lambda}$, given by the Hille-Yosida theorem, is not strong enough.}  
The idea from~\cite{BjoernAvery} is to expand the resolvent $(\lambda - \El_0)^{-1}$ as a Neumann series in $\smash{|\Im(\lambda)|^{-\frac12}}$ for $|\Im(\lambda)| \gg 1$. It turns out that it suffices to explicitly identify the first three terms in this expansion, since a remainder of order $\smash{\mathcal{O}(|\Im(\lambda)|^{-\frac32})}$ is integrable. These three leading-order terms can be expressed as products of the resolvents of the simpler operators $\El_1 \colon C_{\mathrm{ub}}(\R) \subset C_{\mathrm{ub}}^2(\R) \to C_{\mathrm{ub}}(\R)$ and $\El_2 \colon C_{\mathrm{ub}}(\R) \subset C_{\mathrm{ub}}^1(\R) \to C_{\mathrm{ub}}(\R)$ given by
\begin{align*}
\El_1 = \partial_{\zeta\zeta}, \qquad \El_2 = c_0 \partial_\zeta - \varepsilon \gamma.
\end{align*}

Before stating the outcome of the expansion procedure in~\cite{BjoernAvery}, we provide the following standard result showing that $\El_1$ and $\El_2$ generate $C_0$-semigroups and providing bounds on their resolvents.

\begin{lemma} \label{lem:resolventsimple}
The operators $\El_1$ and $\El_2$ are closed, densely defined and generate $C_0$-semigroups on $C_{\mathrm{ub}}(\R)$. Morover, there exists a constant $M > 0$ such that for each $t \geq 0$, $\gb \in C_{\mathrm{ub}}(\R)$ and $\lambda \in \C \setminus \{0\}$ with $|\arg(\lambda)| \leq \frac{3\pi}{4}$ we have $\lambda \in \rho(\El_1)$ and
\begin{align*}
\|(\lambda - \El_1)^{-1}\gb\|_{L^\infty} \leq \frac{M}{|\lambda|} \|\gb\|_{L^\infty}, \qquad \|\re^{\El_1 t}\gb\|_{L^\infty} \leq M \|\gb\|_{L^\infty}.
\end{align*}
Finally, for each $t \geq 0$, $\gb \in C_{\mathrm{ub}}(\R)$ and $\lambda \in \C$ with $\Re(\lambda) > -\varepsilon\gamma$ it holds $\lambda \in \rho(\El_2)$ and
\begin{align*}
\|(\lambda - \El_2)^{-1}\gb\|_{L^\infty} \leq \frac{\|\gb\|_{L^\infty}}{\Re(\lambda) + \varepsilon\gamma}, \qquad \|\re^{\El_2 t}\gb\|_{L^\infty} \leq \re^{-\varepsilon\gamma t} \|\gb\|_{L^\infty}.
\end{align*}
\end{lemma}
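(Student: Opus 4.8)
The plan is to treat the two operators separately, since $\El_1 = \partial_{\zeta\zeta}$ and $\El_2 = c_0\partial_\zeta - \varepsilon\gamma$ are both constant-coefficient differential operators on $C_{\mathrm{ub}}(\R)$ with explicit semigroups, and to verify the hypotheses of the Hille--Yosida / Lumer--Phillips machinery by hand rather than invoking an abstract theorem. First I would deal with $\El_2$, which is the easier of the two. The operator $c_0\partial_\zeta$ generates the translation (shift) group $(\re^{c_0 t\partial_\zeta}\gb)(\zeta) = \gb(\zeta + c_0 t)$ on $C_{\mathrm{ub}}(\R)$; this is a standard strongly continuous group of isometries, strong continuity being exactly uniform continuity of $\gb$. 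Subtracting the bounded multiple $\varepsilon\gamma\,\mathrm{Id}$ multiplies the semigroup by the scalar $\re^{-\varepsilon\gamma t}$, yielding $\|\re^{\El_2 t}\gb\|_{L^\infty} \le \re^{-\varepsilon\gamma t}\|\gb\|_{L^\infty}$ directly. For the resolvent, when $\Re(\lambda) > -\varepsilon\gamma$ the scalar $\lambda + \varepsilon\gamma$ has positive real part, so I would either write the explicit formula $((\lambda-\El_2)^{-1}\gb)(\zeta) = \frac{1}{c_0}\int_{-\infty}^{\zeta}\re^{-(\lambda+\varepsilon\gamma)(\zeta-s)/c_0}\gb(s)\,\de s$ (with the sign of $c_0$ determining the direction of integration, or more cleanly use $(\lambda-\El_2)^{-1} = \int_0^\infty \re^{-\lambda t}\re^{\El_2 t}\,\de t$), and bound it by $\int_0^\infty \re^{-(\Re(\lambda)+\varepsilon\gamma)t}\,\de t\,\|\gb\|_{L^\infty} = \|\gb\|_{L^\infty}/(\Re(\lambda)+\varepsilon\gamma)$. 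Closedness and dense definedness of $\El_2$ on $D(\El_2) = C_{\mathrm{ub}}^1(\R)$ are immediate (differentiation is a closed operator and $C_{\mathrm{ub}}^1$ is dense in $C_{\mathrm{ub}}$, e.g.\ by mollification).

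For $\El_1 = \partial_{\zeta\zeta}$ I would use the Gaussian heat semigroup: $(\re^{\El_1 t}\gb)(\zeta) = \int_\R G_t(\zeta - s)\gb(s)\,\de s$ with $G_t(\zeta) = (4\pi t)^{-1/2}\re^{-\zeta^2/(4t)}$ for $t > 0$ and $\re^{\El_1 0} = \mathrm{Id}$. The contraction bound $\|\re^{\El_1 t}\gb\|_{L^\infty}\le \|\gb\|_{L^\infty}$ follows from $\int_\R G_t = 1$ and $G_t \ge 0$, so one may even take $M = 1$ for the semigroup estimate. Strong continuity on $C_{\mathrm{ub}}(\R)$ is the standard fact that convolution with an approximate identity converges uniformly for uniformly continuous bounded functions; that $\re^{\El_1 t}$ maps into $C_{\mathrm{ub}}^2(\R)$ for $t>0$ and that its generator is indeed $\partial_{\zeta\zeta}$ with domain $C_{\mathrm{ub}}^2(\R)$ is also classical. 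The nontrivial part is the sectorial resolvent bound $\|(\lambda-\El_1)^{-1}\gb\|_{L^\infty}\le \frac{M}{|\lambda|}\|\gb\|_{L^\infty}$ for all $\lambda \ne 0$ with $|\arg(\lambda)| \le \tfrac{3\pi}{4}$, i.e.\ on a sector of half-angle strictly larger than $\pi/2$. Here the plan is to use the explicit resolvent kernel: for $\Re(\sqrt\lambda) > 0$ (with the principal branch of the square root), one has $((\lambda-\El_1)^{-1}\gb)(\zeta) = \frac{1}{2\sqrt\lambda}\int_\R \re^{-\sqrt\lambda\,|\zeta - s|}\gb(s)\,\de s$, whence $\|(\lambda-\El_1)^{-1}\gb\|_{L^\infty} \le \frac{1}{2|\sqrt\lambda|}\cdot\frac{2}{\Re(\sqrt\lambda)}\|\gb\|_{L^\infty} = \frac{\|\gb\|_{L^\infty}}{|\lambda|}\cdot\frac{|\sqrt\lambda|}{\Re(\sqrt\lambda)}$. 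The ratio $|\sqrt\lambda|/\Re(\sqrt\lambda) = 1/\cos(\tfrac12\arg\lambda)$ is bounded on the sector $|\arg\lambda| \le \tfrac{3\pi}{4}$ by $1/\cos(3\pi/8)$, which fixes the constant $M$. One should double-check that $\lambda$ in this sector indeed lies in $\rho(\El_1)$ — equivalently that $\sqrt\lambda$ has strictly positive real part there, which is exactly the condition $|\arg\lambda| < \pi$ — so the whole sector is in the resolvent set and the kernel formula is valid; alternatively one can obtain the same bound from the Laplace-transform representation $(\lambda-\El_1)^{-1} = \int_0^\infty \re^{-\lambda t}\re^{\El_1 t}\,\de t$ after contour-rotating $t$ into the complex plane to exploit analyticity of the heat semigroup, but the explicit kernel is cleanest.

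The main obstacle, such as it is, is getting the sectorial bound for $\El_1$ with the correct uniform constant on the full sector of half-angle $3\pi/4$ rather than just a right half-plane; this is where one genuinely uses that the heat semigroup is analytic (not merely strongly continuous and contractive), and it is important to track the branch of $\sqrt\lambda$ carefully so that the exponential kernel $\re^{-\sqrt\lambda|\zeta-s|}$ is decaying. Everything else — closedness, density of domains, the group property and exponential bound for $\El_2$, strong continuity — is routine and I would dispatch it with brief references to standard semigroup theory (e.g.\ the fact that $\partial_\zeta$ generates the shift group and $\partial_{\zeta\zeta}$ the heat semigroup on $C_{\mathrm{ub}}(\R)$, together with the bounded-perturbation theorem for the $-\varepsilon\gamma$ term). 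I would also remark that the constant $M$ can be taken the same for the two heat-semigroup estimates by choosing $M = \max\{1, 1/\cos(3\pi/8)\}$.
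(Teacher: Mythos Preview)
Your proposal is correct. For $\El_2$ your argument is essentially the same as the paper's: both use that $\partial_\zeta$ generates the isometric translation group on $C_{\mathrm{ub}}(\R)$, and then obtain the resolvent bound from the Hille--Yosida estimate (the paper phrases the $c_0$-scaling and $-\varepsilon\gamma$-shift slightly differently but the content is identical).

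For $\El_1$ you take a genuinely different, more explicit route. The paper invokes an abstract result (Engel--Nagel, Corollary~II.4.9): since $\partial_\zeta$ generates a $C_0$-group, its square $\partial_{\zeta\zeta}$ automatically generates a bounded analytic semigroup, and the sectorial resolvent estimate is quoted from the proof of that corollary. You instead write down the explicit resolvent kernel $\frac{1}{2\sqrt\lambda}\re^{-\sqrt\lambda\,|\zeta-s|}$ and compute the $L^\infty$-bound by hand, obtaining the sharp constant $M=1/\cos(3\pi/8)$ on the sector. Your approach is more elementary and self-contained, and has the pedagogical virtue of making the sectorial constant explicit; the paper's approach is shorter and highlights the structural reason (square of a group generator) behind analyticity. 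Both are perfectly valid for a result of this standard nature.
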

\begin{proof}
The operator $\partial_\zeta$ generates the strongly continuous translational group on $C_{\mathrm{ub}}(\R)$ by~\cite[Proposition~II.2.10.1]{nagel}. Since translation preserves the $L^\infty$-norm, $\re^{\partial_\zeta t}$ is a group of isometries. Therefore, each $\lambda \in \C$ with $\Re(\lambda) > 0$ lies in $\rho(\partial_\zeta)$ and it holds $\|(\lambda - \partial_\zeta)^{-1}\gb\|_{L^\infty} \leq \Re(\lambda)^{-1}\|\gb\|_{L^\infty}$ for $\gb \in C_{\mathrm{ub}}(\R)$ by~\cite[Corollary~3.7]{nagel}. The bounds on $(\lambda -\El_2)^{-1}$ and $\re^{\El_2 t}$ now readily follow by rescaling space. Moreover, $\El_1$ generates a bounded analytic semigroup $\re^{\El_1 t}$ by~\cite[Corollary~II.4.9]{nagel} being the square of the operator $\partial_\zeta$. The resolvent estimate on $(\lambda - \El_1)^{-1}$ is stated in the proof of~\cite[Corollary~II.4.9]{nagel}. 
\end{proof}

Now, we state the high-frequency expansion of the resolvent $(\lambda - \El_0)^{-1}$ obtained in~\cite{BjoernAvery}.

\begin{proposition} \label{p:highfreqresolvdecomp}
Assume~\ref{assH1},~\ref{assH2} and~\ref{assD1}-\ref{assD3}. Let $b_0 > 0$. Then, there exist constants $C,\varpi_0 > 0$ such that we have $b + \ri \varpi \in \rho(\El_0)$ with
\begin{align*} 
(b + \ri \varpi - \El_0)^{-1} \gb = I^1_{b,\varpi}\gb + I^2_{b,\varpi}\gb + I^3_{b,\varpi}\gb + I^4_{b,\varpi}\gb,
\end{align*}
for all $\gb = (g_1,g_2)^\top \in C_{\mathrm{ub}}(\R)$ and $b, \varpi \in \R$ with $-\frac{3}{4} \varepsilon \gamma \leq b \leq b_0$ and $|\varpi| \geq \varpi_0$, where we denote
\begin{align*}
I^1_{b,\varpi}\gb 
= \begin{pmatrix}
(\ri\varpi - \mathcal{L}_1)^{-1} g_1 \\
(b + \ri\varpi - \mathcal{L}_2)^{-1} g_2
\end{pmatrix}, \qquad 
I^2_{b,\varpi}\gb  = \begin{pmatrix}
(\ri\varpi - \mathcal{L}_1)^{-1}(b + \ri\varpi - \mathcal{L}_2)^{-1} g_2 \\
-\varepsilon(b + \ri\varpi - \mathcal{L}_2)^{-1} (\ri\varpi - \mathcal{L}_1)^{-1} g_1
\end{pmatrix}
\end{align*}
and
\begin{align*}
I^3_{b,\varpi}\gb =\begin{pmatrix}
0 \\
-\varepsilon(b + \ri\varpi - \mathcal{L}_2)^{-1} (\ri\varpi - \mathcal{L}_1)^{-1} (b + \ri\varpi - \mathcal{L}_2)^{-1} g_2
\end{pmatrix},
\end{align*}
and the residual operator $I^4_{b,\varpi} \colon C_{\mathrm{ub}}(\R) \to C_{\mathrm{ub}}(\R)$ obeys the estimate
\begin{align*}
\left\|I^4_{b,\varpi}\gb\right\|_{L^\infty} \leq C |\varpi|^{-\frac32} \|\gb\|_{L^\infty}.
\end{align*}
\end{proposition}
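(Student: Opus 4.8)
The plan is to recall from~\cite{BjoernAvery} the decomposition of the first-order system~\eqref{e:firstorder} associated to the resolvent problem in the high-frequency regime and to extract from it the leading Neumann-series terms. First I would rescale: writing $\lambda = b + \ri\varpi$ with $|\varpi| \gg 1$ and introducing $\mu = |\varpi|^{-1/2}$ as a small parameter, the second-order scalar problem $u_{\zeta\zeta} + c_0 u_\zeta + (\lambda - f'(u_0))u = -g_1 + v$ (coupled to the algebraic-in-$\lambda$ equation for $v$) can be solved perturbatively. The point is that the operator $\ri\varpi - \El_1 = \ri\varpi - \partial_{\zeta\zeta}$ is invertible with norm $\mathcal{O}(|\varpi|^{-1})$ by Lemma~\ref{lem:resolventsimple} (since $|\arg(\ri\varpi)| = \pi/2 \leq 3\pi/4$), and $b + \ri\varpi - \El_2 = b + \ri\varpi - c_0\partial_\zeta + \varepsilon\gamma$ is invertible with norm $\mathcal{O}((\Re\lambda + \varepsilon\gamma)^{-1}) = \mathcal{O}(1)$ for $b \geq -\tfrac34\varepsilon\gamma$, again by Lemma~\ref{lem:resolventsimple}. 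The terms $c_0 u_\zeta$, $f'(u_0)u$, and the coupling $-v$ are then treated as perturbations: solving $(\ri\varpi - \El_1)u = -g_1 + (\text{lower order})$ by Neumann iteration produces a series whose $n$-th term gains a factor $\mathcal{O}(|\varpi|^{-1})$ from the resolvent of $\El_1$ against a derivative loss of at most $\mathcal{O}(|\varpi|^{1/2})$ (controlled using the smoothing $\|\partial_\zeta(\ri\varpi - \El_1)^{-1}\| \lesssim |\varpi|^{-1/2}$, which follows from Lemma~\ref{lem:resolventsimple} and interpolation), so each step genuinely gains $|\varpi|^{-1/2}$.

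The key steps, in order: (1) set up the $2\times 2$ system in $(u,v)$ after eliminating $u_\zeta$, identify $I^1_{b,\varpi}$ as the diagonal part $\big((\ri\varpi-\El_1)^{-1}g_1,\,(b+\ri\varpi-\El_2)^{-1}g_2\big)^\top$; (2) iterate once, feeding the $v$-equation's right-hand side $-\varepsilon(\ri\varpi-\El_1)^{-1}g_1$ (from the coupling $u \to v$) and the $u$-equation's right-hand side $(\ri\varpi-\El_1)^{-1}(b+\ri\varpi-\El_2)^{-1}g_2$ (from the coupling $v \to u$) back in, yielding $I^2_{b,\varpi}$; (3) iterate once more in the $v$-component only, where the residual coupling $v \to u \to v$ contributes $-\varepsilon(b+\ri\varpi-\El_2)^{-1}(\ri\varpi-\El_1)^{-1}(b+\ri\varpi-\El_2)^{-1}g_2$, giving $I^3_{b,\varpi}$; (4) bound everything of order four and higher in the iteration, i.e.\ terms carrying an extra factor $|\varpi|^{-3/2}$, by collecting them into $I^4_{b,\varpi}$ and using the geometric-series estimate together with the resolvent bounds of Lemma~\ref{lem:resolventsimple}. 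Throughout, the $f'(u_0)u$ and $c_0 u_\zeta$ contributions within the $u$-equation must also be tracked; they are absorbed either into $I^3$ (if they land at order $|\varpi|^{-3/2}$ after one more application of $(\ri\varpi-\El_1)^{-1}$) or into the remainder. One has to check that no term of order $|\varpi|^{-1}$ from these self-interactions is missed: the operator $c_0\partial_\zeta (\ri\varpi-\El_1)^{-1}$ has norm $\mathcal{O}(|\varpi|^{-1/2})$ and $f'(u_0)(\ri\varpi-\El_1)^{-1}$ has norm $\mathcal{O}(|\varpi|^{-1})$, so after the zeroth-order term $(\ri\varpi-\El_1)^{-1}g_1$, the next self-interaction term $c_0\partial_\zeta(\ri\varpi-\El_1)^{-2}g_1$ is already $\mathcal{O}(|\varpi|^{-3/2})$ and hence goes into $I^4$; this is why only the coupling terms survive at orders one through three.

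The main obstacle I anticipate is bookkeeping: the FHN system has a cross-coupling ($u$ drives $v$ through the $-v$ in the first equation and $v$ drives $u$ through $\varepsilon u$ in the second), so one must carefully organize the Neumann iteration as a $2\times 2$ operator-matrix recursion and verify that the enumerated terms $I^1, I^2, I^3$ exhaust all contributions down to and including order $|\varpi|^{-1}$ (the order of the \emph{non-integrable} tail one must peel off before the $\mathcal{O}(|\varpi|^{-3/2})$ remainder becomes absolutely integrable over the high-frequency contour). Since the statement explicitly defers the expansion procedure to~\cite{BjoernAvery}, the cleanest route is to invoke that reference for the algebraic identity and then re-derive only the operator-norm estimates on $I^1,\dots,I^4$ directly from Lemma~\ref{lem:resolventsimple}: for $I^1$, $\|I^1_{b,\varpi}\gb\|_{L^\infty} \lesssim (|\varpi|^{-1} + 1)\|\gb\|_{L^\infty} \lesssim \|\gb\|_{L^\infty}$ (bounded, as expected for a $C_0$-semigroup generator's resolvent on a vertical line shifted left); for $I^2$ and $I^3$, one uses that at least one factor of $(\ri\varpi-\El_1)^{-1}$ appears, giving $\mathcal{O}(|\varpi|^{-1})$, hence these are even integrable on their own; and for $I^4$ the $\mathcal{O}(|\varpi|^{-3/2})$ bound comes from summing the geometric tail. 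I would not reproduce the full iteration — I would cite~\cite[Section~\dots]{BjoernAvery} for the identity and present only the norm bounds, flagging that the decomposition is chosen precisely so that $I^1$ is the obstruction to integrability (to be handled later via the convolution-semigroup identity in Appendix~\ref{sec:laplace}) while $I^2+I^3+I^4$ is integrable along the deformed contour.
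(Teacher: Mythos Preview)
Your proposal is correct and lands on essentially the same approach as the paper: invoke \cite{BjoernAvery} for the Neumann-series expansion and the $|\varpi|^{-3/2}$ remainder bound. The paper is in fact even more terse than you anticipate---its entire proof is a one-line citation of \cite[Lemma~B.4]{BjoernAvery} (where the result is established for smooth $\gb$) followed by density of smooth functions in $C_{\mathrm{ub}}(\R)$; it does not re-derive any norm bounds on $I^1,I^2,I^3$, since those are not part of the proposition's assertions and are only used later via Lemma~\ref{lem:resolventsimple} when estimating the high-frequency semigroup contribution.
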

\begin{proof}
This result was proved in~\cite[Lemma~B.4]{BjoernAvery} for $\gb \in C^\infty(\R)$, which immediately yields the statement by density of $C^\infty(\R)$ in $C_{\mathrm{ub}}(\R)$. 
\end{proof}

\section{Semigroup decomposition and linear estimates}
\label{sec_lin}

In this section, we decompose the $C_0$-semigroup generated by the linearization $\El_0$ of~\eqref{FHN_co} about the wave train $\phi_0$ and establish corresponding estimates. To this end, we employ the complex inversion formula~\eqref{laplace} of the $C_0$-semigroup. We first deform and partition the integration contour in~\eqref{laplace}. The high-frequency contribution of the deformed integration contour lies fully in the open left-half plane. Thus, exponential decay of the associated part of the $C_0$-semigroup can be obtained with the aid of the high-frequency resolvent expansion established in Proposition~\ref{p:highfreqresolvdecomp}. 

For low frequencies, we employ the resolvent decomposition obtained in Proposition~\ref{p:lowfreqresolvdecomp} leading to a critical and residual low-frequency contribution of the contour integral. On the one hand, we shift the contour fully into the open left-half plane to render exponential decay of the residual low-frequency contribution. On the other hand, we relate the critical low-frequency contribution to its Floquet-Bloch representation by shifting the integration contour onto the critical spectral curve. This allows us to gather the relevant estimates on this critical part of the semigroup from~\cite{BjoernMod}. 

\subsection{Inverse Laplace representation}

We start by showing that $\El_0$ generates a $C_0$-semigroup on $C_{\mathrm{ub}}(\R)$ and represent its action by the complex inversion formula.

\begin{proposition} \label{p:semigroupgen}
Assume~\ref{assH1}. Let $k \in \mathbb N_0$. The operator $\El_0 \colon D(\El_0) \subset C_{\mathrm{ub}}^k (\R, \C^2) \to C_{\mathrm{ub}}^k (\R, \C^2)$ with domain $D(\El_0) = C_{\mathrm{ub}}^{k+2} (\R, \C) \times C_{\mathrm{ub}}^{k+1} (\R, \C)$ generates a strongly continuous semigroup $\re^{\El_0 t}$ on $C_{\mathrm{ub}}^k(\R, \C^2)$. Moreover, there exists $\eta > 0$ such that the integration contour $\Gamma_0^R$, which is depicted in Figure~\ref{fig:deform} and connects $\eta - \ri R$ to $\eta + \ri R$, lies in the resolvent set $\rho(\El_0)$ and the inverse Laplace representation 
\begin{align} \label{laplace2}
\re^{\El_0 t} \gb = \lim_{R \to \infty} \frac{1}{2 \pi \ri} \int_{\Gamma_0^R} \re^{\lambda t} (\lambda - \El_0)^{-1} \gb \,\de \lambda
\end{align}
holds for any $\gb \in D(\El_0)$ and $t > 0$, where the limit in~\eqref{laplace2} is taken with respect to the $C_{\mathrm{ub}}^k$-norm.  
\end{proposition}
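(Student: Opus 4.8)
The statement has two parts: (i) $\El_0$ generates a $C_0$-semigroup on each $C_{\mathrm{ub}}^k(\R,\C^2)$, and (ii) the action of this semigroup on the domain $D(\El_0)$ is represented by the complex inversion formula along a specific contour $\Gamma_0^R$. The plan is to treat generation first via a perturbation argument, and then establish the inversion formula by combining the abstract Hille--Phillips complex inversion theorem with the resolvent bounds assembled in \S\ref{sec:resol}.

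For part (i), the idea is to write $\El_0 = \El_0^{\mathrm{diff}} + B$, where $\El_0^{\mathrm{diff}}\vb = D\vb_{\zeta\zeta} + c_0\vb_\zeta - \diag(0,\varepsilon\gamma)\vb$ is the decoupled "principal" part (whose first component is the heat-advection operator $\El_1$ composed with a shift, and whose second component is the bounded-damped advection operator $\El_2$ of Lemma~\ref{lem:resolventsimple}) and $B = F'(\phi_0) + \diag(0,\varepsilon\gamma)$ is bounded on $C_{\mathrm{ub}}^k$ since $\phi_0$ is smooth and $T$-periodic, hence $F'(\phi_0) \in C_{\mathrm{ub}}^\infty$. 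The operator $\El_0^{\mathrm{diff}}$ is block-diagonal: its first block generates a (bounded analytic) $C_0$-semigroup on $C_{\mathrm{ub}}^k(\R,\C)$ — this follows from Lemma~\ref{lem:resolventsimple} together with the standard fact that differentiation commutes with the constant-coefficient semigroups, so the estimates lift to $C_{\mathrm{ub}}^k$ — and its second block generates a $C_0$-semigroup by the same lemma. Then $\El_0 = \El_0^{\mathrm{diff}} + B$ with $B$ bounded, so by the bounded perturbation theorem \cite{nagel} $\El_0$ generates a $C_0$-semigroup on $C_{\mathrm{ub}}^k(\R,\C^2)$, with the same domain $D(\El_0) = C_{\mathrm{ub}}^{k+2}\times C_{\mathrm{ub}}^{k+1}$ since $B$ is everywhere defined and bounded.

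For part (ii), I would first specify the contour: by Proposition~\ref{prop:speccons}, $\sigma(\El_0)$ is confined to $\{\Re\lambda < 0\} \cup \{0\}$ with the only spectrum near $\ri\R$ being the critical curve $\lambda_c(\xi)$ touching $0$ quadratically; away from a neighborhood of $0$ the spectrum lies at uniform distance $\geq \sigma_0 > 0$ from $\ri\R$ (using \ref{assD1}--\ref{assD2} and the high-frequency resolvent control of Proposition~\ref{p:highfreqresolvdecomp}, which in particular gives a resolvent set strip $-\tfrac34\varepsilon\gamma \leq \Re\lambda \leq b_0$, $|\Im\lambda| \geq \varpi_0$). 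This lets me define $\Gamma_0^R$ (as in Figure~\ref{fig:deform}) as a contour that runs along $\Re\lambda = \eta$ for small $|\Im\lambda|$ and then bends into the left half-plane to $\Re\lambda = b < 0$ for $|\Im\lambda| \geq \varpi_0$, ending at $\eta \pm \ri R$; for $\eta > 0$ small and appropriate $b \in (-\tfrac34\varepsilon\gamma, 0)$ this contour lies in $\rho(\El_0)$. Then I would invoke the Hille--Phillips/Arendt complex inversion theorem for $C_0$-semigroups (see Appendix~\ref{sec:laplace}): for $\gb \in D(\El_0)$ and $t > 0$,
\begin{align*}
\re^{\El_0 t}\gb = \lim_{R\to\infty} \frac{1}{2\pi\ri}\int_{\eta-\ri R}^{\eta+\ri R} \re^{\lambda t}(\lambda-\El_0)^{-1}\gb\,\de\lambda
\end{align*}
along the \emph{vertical} line $\Re\lambda = \eta$, the limit holding in $C_{\mathrm{ub}}^k$. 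Finally, I deform the vertical line to $\Gamma_0^R$ using Cauchy's theorem in the resolvent set: the two are homotopic through $\rho(\El_0)$, and the contributions of the horizontal connecting segments at height $\pm R$ vanish as $R\to\infty$ because on them $|\re^{\lambda t}|$ is bounded while the resolvent decays — here Proposition~\ref{p:highfreqresolvdecomp} gives $\|(\lambda-\El_0)^{-1}\gb\|_{L^\infty} \lesssim \|(\ri\varpi-\El_1)^{-1}g_1\|_{L^\infty} + \ldots$, and the leading terms $\|(\ri\varpi-\El_1)^{-1}\|$ are $O(|\varpi|^{-1})$ by Lemma~\ref{lem:resolventsimple}, while one uses that $\gb \in D(\El_0)$ to gain the extra decay $\|(\lambda - \El_0)^{-1}\gb\| = \|\lambda^{-1}\gb + \lambda^{-1}(\lambda-\El_0)^{-1}\El_0\gb\| = O(|\lambda|^{-1})$ needed to make the horizontal segments negligible in the limit.

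\textbf{Main obstacle.} The delicate point is the passage from the abstract inversion formula on the vertical line to the tilted contour $\Gamma_0^R$ while keeping track of the $C_{\mathrm{ub}}^k$-valued (rather than merely scalar) nature of the integrals and, crucially, justifying that the horizontal bridge segments drop out. The naive Hille--Yosida bound $\|(\lambda-\El_0)^{-1}\| \lesssim 1/\Re\lambda$ is too weak on a horizontal segment where $\Re\lambda$ is a fixed small number and $|\Im\lambda| = R \to \infty$; one genuinely needs the refined high-frequency resolvent structure of Proposition~\ref{p:highfreqresolvdecomp} — i.e., that modulo an $O(|\varpi|^{-3/2})$ remainder the resolvent is built from $(\ri\varpi-\El_1)^{-1}$ and $(b+\ri\varpi-\El_2)^{-1}$ — combined with applying it to $\gb \in D(\El_0)$ to extract an honest $O(R^{-1})$ (or better) decay bound uniform along the bridge. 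The rest — generation by bounded perturbation, lifting estimates to $C_{\mathrm{ub}}^k$ by commuting with $\partial_\zeta$, the homotopy argument in $\rho(\El_0)$ — is routine.
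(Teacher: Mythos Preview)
Your treatment of part (i) matches the paper's: write $\El_0$ as a bounded perturbation of the block-diagonal diffusion--advection operator and invoke the bounded perturbation theorem. (The paper uses $L_0 = D\partial_{\zeta\zeta} + c_0\partial_\zeta$ rather than your $\El_0^{\mathrm{diff}}$, but the difference is absorbed into the bounded perturbation.)

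For part (ii), however, you have misread the contour. The contour $\Gamma_0^R$ in the statement is simply the \emph{vertical segment} from $\eta - \ri R$ to $\eta + \ri R$ with $\eta > 0$ fixed --- compare the earlier display~\eqref{laplace} and the figure caption, where $\Gamma_0^R$ is called the ``original contour'' as opposed to the deformed one $\Gamma_4^R\cup\Gamma_3^R\cup\Gamma_2\cup\Gamma_1^R\cup\Gamma_5^R$. Consequently the paper's proof of part (ii) is a one-line citation of the standard complex inversion formula for $C_0$-semigroups \cite[Proposition~3.12.1]{arendt}: once $\El_0$ generates a $C_0$-semigroup, any $\eta$ larger than the growth bound gives a vertical line in $\rho(\El_0)$ on which \eqref{laplace2} holds for $\gb\in D(\El_0)$. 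No spectral information beyond~\ref{assH1}, no high-frequency expansion, and no contour deformation are needed here.

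The deformation argument you sketch --- bending into the left half-plane, controlling horizontal bridges via $(\lambda-\El_0)^{-1}\gb = \lambda^{-1}\gb + \lambda^{-1}(\lambda-\El_0)^{-1}\El_0\gb$ --- is precisely the content of the \emph{next} proposition (Proposition~\ref{deform}), which does assume~\ref{assD1}--\ref{assD2} and does invoke Proposition~\ref{p:highfreqresolvdecomp}. By folding that work into the present proposition you have (a) used hypotheses~\ref{assH2} and~\ref{assD1}--\ref{assD3} that the statement does not grant you, and (b) conflated two logically separate steps. Your ``main obstacle'' is genuine, but it belongs to Proposition~\ref{deform}, not here.
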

\begin{proof}
The operator $\El_0$ is a bounded perturbation of the diagonal diffusion-advection operator $L_0 = D \partial_{\zeta\zeta} + c_0 \partial_\zeta$ on $C_{\mathrm{ub}}^k (\R, \C^2)$ with dense domain $D(L_0) = C_{\mathrm{ub}}^{k+2} (\R, \C) \times C_{\mathrm{ub}}^{k+1} (\R, \C)$. The first component of $L_0$ is sectorial by~\cite[Corollary~3.1.9]{LUN} and thus generates an analytic semigroup, which is strongly continuous by~\cite[p.~34]{LUN}. On the other hand, the second component of $L_0$ generates the strongly continuous translational semigroup on $C_{\mathrm{ub}}^k(\R)$ by~\cite[Proposition~II.2.10.1]{nagel}. Since $\El_0$ is a bounded perturbation of $L_0$, $\El_0$ also generates a $C_0$-semigroup by~\cite[Theorem~III.1.3]{nagel}. The inverse Laplace representation, given by the complex inversion formula~\eqref{laplace2}, follows from~\cite[Proposition~3.12.1]{arendt}. 
\end{proof}

We note that standard semigroup theory provides sufficient control on the short-time behavior of the semigroup $\re^{\El_0 t}$. To distinguish between short- and long-time behavior, we introduce a smooth temporal cut-off function $\chi \colon [0,\infty) \to \R$ satisfying $\chi(t) = 0$ for $t \in [0,1]$ and $\chi(t) = 1$ for $t \in [2,\infty)$ and obtain the following short-time bound.

\begin{lemma} \label{l:shorttime}
Assume~\ref{assH1}. Consider $\El_0$ as an operator on $C_{\mathrm{ub}}(\R)$. There exist constants $C,\alpha > 0$ such that
\begin{align*}
\|(1-\chi(t))\re^{\El_0 t} \gb\| \leq C\re^{-\alpha t}
\end{align*}
holds for $\gb \in C_{\mathrm{ub}}(\R)$ and $t \geq 0$. 
\end{lemma}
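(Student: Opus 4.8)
The statement asserts a uniform exponential bound for the semigroup on the \emph{short-time} window $t\in[0,2]$ (where $1-\chi(t)$ is supported, with $1-\chi$ vanishing for $t\ge 2$). Since $\re^{\alpha t}$ is bounded above on $[0,2]$ by $\re^{2\alpha}$ and bounded below by $1$, the claim is \emph{equivalent} to the plain assertion that $t\mapsto \|\re^{\El_0 t}\gb\|_{L^\infty}$ is bounded on $[0,2]$ by $C\|\gb\|_{L^\infty}$ — in other words, this is nothing more than the standard Hille--Yosida bound $\|\re^{\El_0 t}\|\le M\re^{\omega t}$ valid for any $C_0$-semigroup, restricted to a compact time interval and then repackaged in the form $C\re^{-\alpha t}$ by absorbing constants. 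So the real content is: invoke that $\El_0$ generates a $C_0$-semigroup on $C_{\mathrm{ub}}(\R)$ (Proposition~\ref{p:semigroupgen}), apply the generation estimate, and rewrite.

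\textbf{Steps.} First I would recall from Proposition~\ref{p:semigroupgen} that $\El_0$ generates a strongly continuous semigroup $\re^{\El_0 t}$ on $C_{\mathrm{ub}}(\R)$. By the general theory of $C_0$-semigroups (e.g.~\cite[Proposition~I.5.5]{nagel}) there exist constants $\widetilde{M}\ge 1$ and $\omega\in\R$ such that
\begin{align*}
\|\re^{\El_0 t}\gb\|_{L^\infty} \leq \widetilde{M}\,\re^{\omega t}\,\|\gb\|_{L^\infty}, \qquad t\geq 0,\ \gb\in C_{\mathrm{ub}}(\R).
\end{align*}
Second, since the cut-off factor $1-\chi(t)$ vanishes for $t\geq 2$ and satisfies $|1-\chi(t)|\leq 1$ everywhere, for $t\in[0,2]$ we may estimate
\begin{align*}
\|(1-\chi(t))\re^{\El_0 t}\gb\|_{L^\infty} \leq \|\re^{\El_0 t}\gb\|_{L^\infty} \leq \widetilde{M}\,\re^{\omega t}\,\|\gb\|_{L^\infty} \leq \widetilde{M}\,\re^{2|\omega|}\,\|\gb\|_{L^\infty},
\end{align*}
while for $t\geq 2$ the left-hand side is zero. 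Third, pick any $\alpha>0$ (for concreteness $\alpha=1$) and set $C := \widetilde{M}\,\re^{2|\omega|}\,\re^{2\alpha}$. Then for all $t\in[0,2]$ we have $\re^{-\alpha t}\geq \re^{-2\alpha}$, hence $\widetilde{M}\,\re^{2|\omega|}\leq C\,\re^{-\alpha t}$, and for $t\geq 2$ the bound $0\leq C\re^{-\alpha t}$ is trivial. Combining the two regimes yields $\|(1-\chi(t))\re^{\El_0 t}\gb\|_{L^\infty}\leq C\re^{-\alpha t}\|\gb\|_{L^\infty}$ for all $t\geq 0$, which is the claimed estimate (note the displayed bound in the lemma is stated for $\|\gb\|_{L^\infty}\le 1$, or equivalently carries an implicit factor $\|\gb\|_{L^\infty}$).

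\textbf{Main obstacle.} Frankly there is no substantial obstacle: the lemma is a bookkeeping device that converts the trivial compact-interval bound into the exponentially-decaying form in which it will later be combined with the genuinely decaying high- and low-frequency long-time pieces of the semigroup. The only point requiring the slightest care is making sure the constants $\alpha$ and $C$ are chosen \emph{independently} of $\gb$ and $t$ — which is automatic here since $\widetilde M,\omega$ come from the generation theorem and depend only on $\El_0$ — and noting that the value of $\alpha$ is free, so one can later harmonize it with the decay rates appearing in the high-frequency estimates of~\S\ref{sec_lin}.
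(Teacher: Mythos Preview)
Your proof is correct and matches the paper's own argument essentially verbatim: the paper's proof simply cites \cite[Proposition~I.5.5]{nagel}, Proposition~\ref{p:semigroupgen}, and the fact that $1-\chi$ vanishes on $[2,\infty)$, which is exactly what you have written out in detail.
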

\begin{proof}
This follows immediately from~\cite[Proposition~I.5.5]{nagel}, Proposition~\ref{p:semigroupgen} and the fact that $1-\chi$ vanishes on $[2,\infty)$. 
\end{proof}

Next, we deform the integration contour $\Gamma_0^R$ in~\eqref{laplace2} using Cauchy's integral theorem and analyticity of the resolvent $\lambda \mapsto (\lambda-\El_0)^{-1}$ on $\rho(\El_0)$.

\begin{proposition} \label{deform}
Assume~\ref{assH1} and~\ref{assD1}-\ref{assD2}. Consider $\El_0$ as an operator on $C_{\mathrm{ub}}(\R)$ and let $\eta > 0$ be as in Proposition~\ref{p:semigroupgen}. For each $\varpi_0 > 0$ sufficiently large the integration contours $\Gamma_1^R$ and $\Gamma_3^R$, which are depicted in Figure~\ref{fig:deform} and connect $\ri \varpi_0 - \frac{3}{4} \varepsilon\gamma$ to $\ri R  - \frac{3}{4} \varepsilon\gamma$ and $-\ri R  - \frac{3}{4} \varepsilon\gamma$ to $-\ri \varpi_0  - \frac{3}{4} \varepsilon\gamma$, respectively, as well as the rectangular integration contour $\Gamma_2$, which connects $-\ri \varpi_0  - \frac{3}{4} \varepsilon\gamma$ via $-\ri \varpi_0 + \frac{\eta}{2}$ and $\ri \varpi_0 + \frac{\eta}{2}$ to $\ri \varpi_0 - \frac{3}{4} \varepsilon\gamma$, lie in the resolvent set $\rho(\El_0)$. Moreover, we have
\begin{align} \label{e:Cauchy1}
\begin{split}
\re^{\El_0 t} \gb &= \frac{\chi(t)}{2\pi \ri} \int_{\Gamma_2} \re^{\lambda t} (\lambda - \El_0)^{-1} \gb \,\de\lambda + \lim_{R \to \infty} \frac{\chi(t)}{2 \pi \ri} \int_{\Gamma_1^R \cup \Gamma_3^R} \re^{\lambda t} (\lambda - \El_0)^{-1} \gb \,\de\lambda\\ 
&\qquad + \, (1-\chi(t))\re^{\El_0 t} \gb
\end{split}
\end{align}
for $\gb \in D(\El_0)$ and $t \geq 0$.
\end{proposition}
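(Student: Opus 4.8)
\textbf{Proof plan for Proposition~\ref{deform}.}
The plan is to deform the vertical line contour $\Gamma_0^R$ from Proposition~\ref{p:semigroupgen} into the contour $\Gamma_2 \cup \Gamma_1^R \cup \Gamma_3^R$ using Cauchy's integral theorem, then take the limit $R \to \infty$. First I would verify that each of the contours $\Gamma_1^R$, $\Gamma_3^R$ and $\Gamma_2$ lies in $\rho(\El_0)$. For $\Gamma_1^R$ and $\Gamma_3^R$, which run along the vertical lines $\Re(\lambda) = -\tfrac34\varepsilon\gamma$ with $|\Im(\lambda)| \geq \varpi_0$, this is precisely the content of Proposition~\ref{p:highfreqresolvdecomp} (applied with, say, $b_0 = \eta$), provided $\varpi_0 > 0$ is chosen large enough. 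For the rectangular contour $\Gamma_2$: its right edge lies at $\Re(\lambda) = \tfrac{\eta}{2} > 0$, hence to the right of $\sigma(\El_0)$ by assumption~\ref{assD1} (the spectrum is contained in $\{\Re(\lambda) < 0\} \cup \{0\}$); its two horizontal edges lie at $|\Im(\lambda)| = \varpi_0$, and its left edge at $\Re(\lambda) = -\tfrac34\varepsilon\gamma$ with $|\Im(\lambda)| \leq \varpi_0$. By~\ref{assD1}--\ref{assD2} the only spectrum near the imaginary axis is the critical curve through the origin, which stays in $\{\Re(\lambda) \leq 0\}$ and, being tangent to $\ri\R$ at $0$, lies within $B(0,\delta)$ for $\varpi_0$ fixed; choosing $\varpi_0$ large and then noting that $\sigma(\El_0) \setminus B(0,\delta)$ is contained in $\{\Re(\lambda) \leq -\theta'\}$ for some $\theta' > 0$ (a consequence of~\ref{assD1}--\ref{assD2} together with the high-frequency confinement from Proposition~\ref{prop:speccons}/\ref{p:highfreqresolvdecomp}), one sees that $\Gamma_2$ avoids $\sigma(\El_0)$ entirely as long as $-\tfrac34\varepsilon\gamma$ is more negative than $-\theta'$ on the relevant range — or, more simply, one places the left edge of $\Gamma_2$ to the left of all of $\sigma(\El_0) \cap \{|\Im\lambda| \leq \varpi_0\}$, which is a compact set bounded away from the half-plane $\{\Re\lambda \geq 0\}$ except near $0$, and uses that the critical curve passes to the left of the right edge $\Re(\lambda) = \eta/2$.

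Next I would apply the Cauchy integral theorem on the closed rectangle $R_1^R$ with vertices $\eta \pm \ri R$ and $\tfrac{\eta}{2} \pm \ri\varpi_0$, hmm — more carefully, the region enclosed between $\Gamma_0^R$ (the original contour from $\eta - \ri R$ to $\eta + \ri R$) and the deformed contour $\Gamma_3^R \cup \Gamma_2 \cup \Gamma_1^R$ (from $\eta - \ri R$ down and around to $\eta + \ri R$). I would check that this region lies entirely in $\rho(\El_0)$: it is the union of the two ``high-frequency'' rectangles bounded by $\{-\tfrac34\varepsilon\gamma \leq \Re\lambda \leq \eta\}$, $\{\varpi_0 \leq |\Im\lambda| \leq R\}$ — which lie in $\rho(\El_0)$ by Proposition~\ref{p:highfreqresolvdecomp} — together with the rectangle $\{\tfrac{\eta}{2} \leq \Re\lambda \leq \eta\}$, $\{|\Im\lambda| \leq \varpi_0\}$, which lies to the right of $\sigma(\El_0)$ by~\ref{assD1}, and the rectangle bounded by $\Gamma_2$ itself on the left, which we just argued is spectrum-free. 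Since $\lambda \mapsto \re^{\lambda t}(\lambda - \El_0)^{-1}\gb$ is analytic on $\rho(\El_0)$ (as a $C_{\mathrm{ub}}$-valued map, the resolvent being analytic there and $\re^{\lambda t}$ entire), Cauchy's theorem gives, for each fixed $R > \varpi_0$,
\begin{align*}
\frac{1}{2\pi\ri}\int_{\Gamma_0^R} \re^{\lambda t}(\lambda-\El_0)^{-1}\gb\,\de\lambda = \frac{1}{2\pi\ri}\int_{\Gamma_2} \re^{\lambda t}(\lambda-\El_0)^{-1}\gb\,\de\lambda + \frac{1}{2\pi\ri}\int_{\Gamma_1^R \cup \Gamma_3^R} \re^{\lambda t}(\lambda-\El_0)^{-1}\gb\,\de\lambda,
\end{align*}
where the integral over $\Gamma_2$ does not depend on $R$. (One must also observe that the orientation conventions in Figure~\ref{fig:deform} are consistent, so no sign is picked up.) Taking $R \to \infty$ and invoking Proposition~\ref{p:semigroupgen}, the left-hand side converges in $C_{\mathrm{ub}}$ to $\re^{\El_0 t}\gb$; hence the limit of the right-hand side exists and equals $\re^{\El_0 t}\gb$, which — since the $\Gamma_2$-term is $R$-independent — forces the limit $\lim_{R\to\infty}\int_{\Gamma_1^R \cup \Gamma_3^R}$ to exist as well. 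Finally, multiplying through by $\chi(t)$ and adding and subtracting, I would write $\re^{\El_0 t}\gb = \chi(t)\re^{\El_0 t}\gb + (1-\chi(t))\re^{\El_0 t}\gb$ and substitute the contour representation into the first term to obtain exactly~\eqref{e:Cauchy1}. The representation~\eqref{laplace2} is only asserted for $t > 0$, but on the support of $\chi$ we have $t \geq 1 > 0$, so this is fine; for $t \in [0,1]$ the identity~\eqref{e:Cauchy1} reduces to $\re^{\El_0 t}\gb = (1-\chi(t))\re^{\El_0 t}\gb$, which holds trivially since $\chi$ vanishes there.

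The main obstacle I expect is the bookkeeping around the contour geometry and the limit: one has to be careful that the ``enclosed region'' between $\Gamma_0^R$ and the deformed contour genuinely lies in $\rho(\El_0)$ for \emph{all} $R > \varpi_0$ (not just that the contours do), which requires combining the high-frequency resolvent bound of Proposition~\ref{p:highfreqresolvdecomp} with the spectral assumptions~\ref{assD1}--\ref{assD2} to rule out any spectrum in the strip $\{-\tfrac34\varepsilon\gamma \leq \Re\lambda \leq \eta\}$ away from a neighbourhood of the origin — and then to confirm that the critical spectral curve through $0$, which by~\ref{assD2} is tangent to $\ri\R$ and curves into the left half-plane, stays strictly to the left of the right edge $\Re(\lambda) = \eta/2$ of $\Gamma_2$ and strictly to the right of its left edge $\Re(\lambda) = -\tfrac34\varepsilon\gamma$; possibly $\eta$ must be shrunk (compatibly with Proposition~\ref{p:semigroupgen}) and $\varpi_0$ enlarged to make this work. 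The remaining subtlety is purely technical: justifying that the Cauchy integral theorem applies to the $C_{\mathrm{ub}}(\R)$-valued holomorphic integrand, which follows from the scalar case by pairing with elements of the dual (or directly from the vector-valued Cauchy theorem, e.g.~\cite[Section~3.12]{arendt}), and checking that the contribution of the short horizontal segments at $\Im\lambda = \pm R$ used to close the rectangles — if one chooses to close them — is already subsumed, since here we deform directly without needing such segments (the endpoints $\eta \pm \ri R$ of $\Gamma_0^R$ coincide with the endpoints of $\Gamma_3^R \cup \Gamma_2 \cup \Gamma_1^R$).
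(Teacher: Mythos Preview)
Your overall strategy is right, but there is a concrete geometric error that creates a genuine gap. You assert at the end that ``the endpoints $\eta \pm \ri R$ of $\Gamma_0^R$ coincide with the endpoints of $\Gamma_3^R \cup \Gamma_2 \cup \Gamma_1^R$''. They do not: $\Gamma_1^R$ terminates at $-\tfrac34\varepsilon\gamma + \ri R$ and $\Gamma_3^R$ starts at $-\tfrac34\varepsilon\gamma - \ri R$, whereas $\Gamma_0^R$ runs along $\Re\lambda = \eta$. To close the contour you must therefore insert horizontal segments $\Gamma_4^R$ and $\Gamma_5^R$ at height $\Im\lambda = \pm R$ connecting $\Re\lambda = \eta$ to $\Re\lambda = -\tfrac34\varepsilon\gamma$, and you must show that their contribution vanishes as $R\to\infty$. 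This is exactly the step the paper carries out: one writes
\[
(\lambda-\El_0)^{-1}\gb = \tfrac{1}{\lambda}\big((\lambda-\El_0)^{-1}\El_0\gb + \gb\big),
\]
uses the uniform-in-$R$ resolvent bound on $\Gamma_4^R\cup\Gamma_5^R$ coming from Proposition~\ref{p:highfreqresolvdecomp} and Lemma~\ref{lem:resolventsimple}, and observes that the segments have bounded length while $|\lambda|^{-1}\to 0$. Note that this is precisely where the hypothesis $\gb\in D(\El_0)$ enters; without it the vanishing is not obvious, since the crude bound $\|(\lambda-\El_0)^{-1}\|\lesssim 1$ alone gives an $O(1)$ contribution on each horizontal segment.

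A secondary point: your description of $\Gamma_2$ is slightly off. It is a three-sided rectangular path (bottom, right, top), not four-sided; there is no ``left edge'' of $\Gamma_2$ to worry about. Correspondingly, the region enclosed between $\Gamma_0^R$ and the deformed contour, for $|\Im\lambda|\leq\varpi_0$, is just the strip $\tfrac{\eta}{2}\leq\Re\lambda\leq\eta$, which lies in $\rho(\El_0)$ immediately by~\ref{assD1}. You therefore do not need any of the more delicate reasoning about where the critical spectral curve sits relative to $-\tfrac34\varepsilon\gamma$; the origin is simply not enclosed.
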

\begin{proof}
Let $\gb \in D(\El_0)$ and $t > 0$. Let $R > \varpi_0$. Let $\Gamma_0^R$ be as in Proposition~\ref{p:semigroupgen}. Let $\Gamma_4^R$ and $\Gamma_5^R$ be the integration contours depicted in Figure~\ref{fig:deform} connecting $-\ri R + \eta$ to $-\ri R - \frac{3}{4} \varepsilon\gamma$ and $\ri R  - \frac{3}{4} \varepsilon\gamma$ to $\ri R + \eta$, respectively. Let $\Gamma^R$ be the closed contour consisting of $-\Gamma_0^R$, $\Gamma_1^R$, $\Gamma_2$, $\Gamma_3^R$, $\Gamma_4^R$ and $\Gamma_5^R$, so that $\Gamma^R$ is oriented clockwise, cf.~Figure~\ref{fig:deform}. By Assumption~\ref{assD1} and Proposition~\ref{p:highfreqresolvdecomp} $\Gamma^R$, as well as its interior, lies in $\rho(\El_0)$, provided $\varpi_0 > 0$ is large enough. Moreover, the map $\rho(\El_0) \to C_{\mathrm{ub}}(\R)$ given by $\lambda \mapsto \re^{\lambda t} (\lambda - \El_0)^{-1} \gb$ is analytic. Hence, Cauchy's integral theorem yields
\begin{align} \label{e:Cauchy}
0 = \int_{\Gamma^R} \re^{\lambda t} (\lambda - \El_0)^{-1} \gb \,\de \lambda.
\end{align}

We express the contribution of the complex line integral over $\Gamma_4^R \cup \Gamma_5^R$ as
\begin{align} \label{e:Cauchy0}
\int_{\Gamma_4^R \cup \Gamma_5^R} \re^{\lambda t} (\lambda - \El_0)^{-1} \gb \,\de \lambda = \int_{\Gamma_4^R \cup \Gamma_5^R} \frac{\re^{\lambda t}}{\lambda} \left((\lambda - \El_0)^{-1} \El_0 \gb + \gb\right) \, \de \lambda.
\end{align}
Lemma~\ref{lem:resolventsimple} and Proposition~\ref{p:highfreqresolvdecomp} yield an $R$-independent constant $C > 0$ such that we have the bound $\|(\lambda - \El_0)^{-1}\|_{\mathcal{B}(C_{\mathrm{ub}}(\R))} \leq C$ for $\lambda \in \Gamma_4^R \cup \Gamma_5^R$. Since the length of $\Gamma_4^R \cup \Gamma_5^R$ can be bounded by an $R$-independent constant $M > 0$, we find that~\eqref{e:Cauchy0} implies
\begin{align*}
\left\|\lim_{R \to \infty} \int_{\Gamma_4^R \cup \Gamma_5^R} \re^{\lambda t} (\lambda - \El_0)^{-1} \gb \,\de\lambda\right\|_{L^\infty} \leq \lim_{R \to \infty} \re^{\eta t} M \frac{C\|\El_0 \gb\|_{L^\infty} + \|\gb\|_{L^\infty}}{R} = 0.
\end{align*}
Combining the latter with Proposition~\ref{p:semigroupgen} and identity~\eqref{e:Cauchy}, we arrive at~\eqref{e:Cauchy1}, which concludes the proof.
\end{proof}

\begin{figure}[ht]
\centering
\begin{subfigure}[b]{.4\textwidth}
\centering
\includegraphics[width=\linewidth]{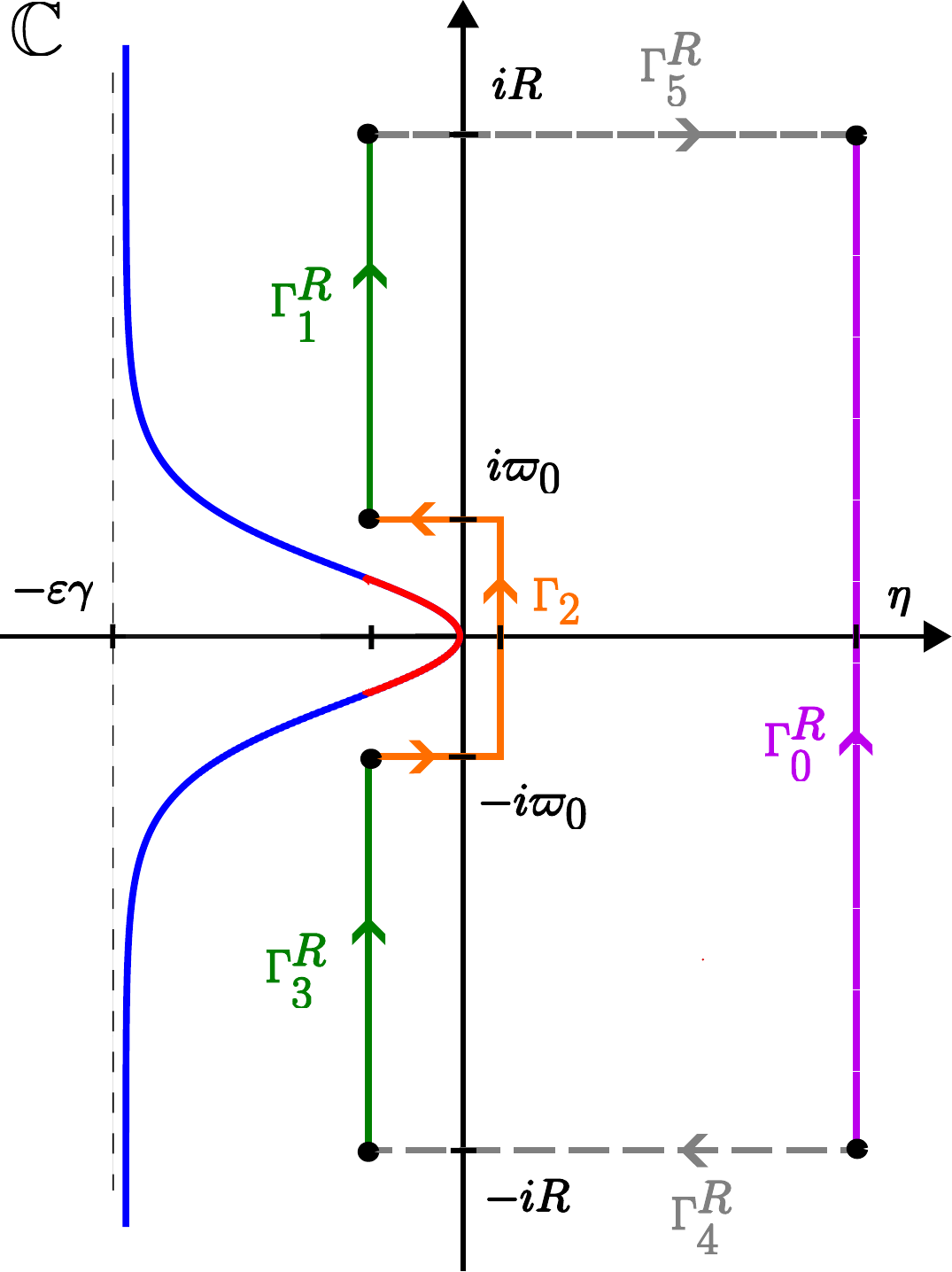}
\end{subfigure} \hspace{0.1\textwidth}
\begin{subfigure}[b]{.47\textwidth}
\centering
\includegraphics[width=\linewidth]{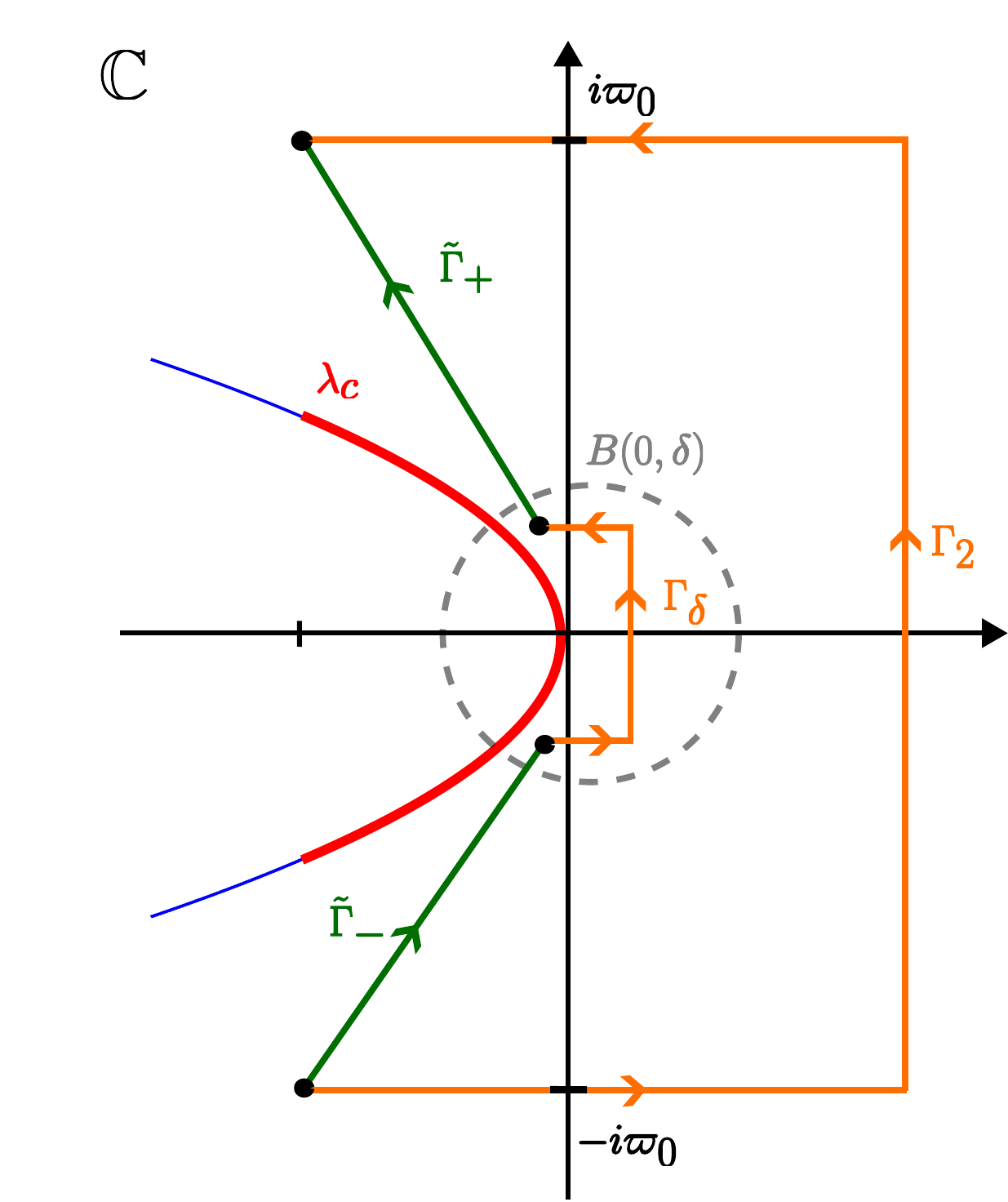}
\end{subfigure}
\caption{The spectrum of the linearization $\El_0$ of system~\eqref{FHN_co} about the wave train $\phi_0$ (depicted in blue and red) touches the origin in a quadratic tangency. It asymptotes to the line $\Re(\lambda) = -\varepsilon\gamma$. The red part of the spectrum is the critical curve $\{\lambda_c(\xi) : \xi \in \R \cap V_1\}$ established in Proposition~\ref{prop:speccons}. Left panel: the original contour $\Gamma_0^R$ used in the inverse Laplace representation~\eqref{laplace2} of the $C_0$-semigroup $\re^{\El_0 t}$, together with the deformed contour $\Gamma_4^R \cup \Gamma_3^R \cup \Gamma_2 \cup \Gamma_1^R \cup \Gamma_5^R$. The contributions of the inverse Laplace integral over $\Gamma_4^R$ and $\Gamma_5^R$ vanish as $R \to \infty$, cf.~Proposition~\ref{deform}. Right panel: a zoom-in on the contour $\Gamma_2$, as well as its deformation $\smash{\widetilde{\Gamma}_- \cup \Gamma_\delta \cup \widetilde{\Gamma}_+}$ used in the proof of Proposition~\ref{p:isolate1}. The rectangular contour $\Gamma_\delta$ lies in the ball $B(0,\delta)$, is reflection symmetric in the real axis and connects points $-\ri\eta_2 - \eta_1$ to $\ri \eta_2 - \eta_1$ with $\eta_{1,2} > 0$.}
\label{fig:deform}
\end{figure}

\subsection{Estimates on the high-frequency part}

We utilize the resolvent expansion obtained in Proposition~\ref{p:highfreqresolvdecomp} to establish exponential decay of the high-frequency part of the semigroup $\re^{\El_0 t}$, which corresponds to the complex line integrals over the contours $\Gamma_1^R$ and $\Gamma_3^R$ in the inverse Laplace representation~\eqref{e:Cauchy1} of the semigroup. 

\begin{proposition} \label{p:highfreqest}
Assume~\ref{assH1} and~\ref{assD1}-\ref{assD2}. Consider $\El_0$ as an operator on $C_{\mathrm{ub}}(\R)$. For each $\varpi_0 > 0$ sufficiently large there exist constants $C, \alpha > 0$ such that the operator $S_e^1(t) \colon C_{\mathrm{ub}}(\R) \to C_{\mathrm{ub}}(\R)$ given by
\begin{align*}
S_e^1(t) \gb = \chi(t) \lim_{R \to \infty} \frac{1}{2 \pi \ri} \int_{\Gamma_1^R \cup \Gamma_3^R} \re^{\lambda t} (\lambda - \El_0)^{-1} \gb \,\de\lambda
\end{align*}
for $\gb \in D(\El_0)$ and $t \geq 0$ obeys the estimate
\begin{align} \label{e:se1estimate}
\|S_e^1(t)\gb\|_{L^\infty} \leq C\re^{-\alpha t}\|\gb\|_{L^\infty}
\end{align}
for $\gb \in C_{\mathrm{ub}}(\R)$ and $t \geq 0$. 
\end{proposition}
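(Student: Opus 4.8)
The plan is to insert the decomposition of $(\lambda-\El_0)^{-1}$ from Proposition~\ref{p:highfreqresolvdecomp} into the contour integral defining $S_e^1(t)$ and to estimate the resulting four pieces separately. On the contours $\Gamma_1^R,\Gamma_3^R$ we have $\lambda = b + \ri\varpi$ with $b = -\tfrac34\varepsilon\gamma$ fixed and $|\varpi|\geq\varpi_0$, so the factor $\re^{\lambda t}$ contributes a uniform exponential decay $\re^{-\frac34\varepsilon\gamma t}$ which we will pull out; what remains is to show that the integral of the resolvent terms against $\re^{\ri\varpi t}$ stays bounded in $R$. For the residual piece $I^4_{b,\varpi}$ this is immediate: by Proposition~\ref{p:highfreqresolvdecomp} its operator norm is $\mathcal{O}(|\varpi|^{-3/2})$, which is integrable over $|\varpi|\geq\varpi_0$, so that piece converges absolutely and yields a bound $C\re^{-\frac34\varepsilon\gamma t}\|\gb\|_{L^\infty}$, uniformly in $t\geq 0$ and in $R$.

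The main obstacle is the first three pieces $I^1, I^2, I^3$, whose resolvent factors are \emph{not} absolutely integrable in $\varpi$ (e.g.\ $\|(\ri\varpi-\El_1)^{-1}\|\sim|\varpi|^{-1}$, and $\|(b+\ri\varpi-\El_2)^{-1}\|$ does not even decay). Here I would follow the strategy outlined in \S\ref{sec:strategy}: recognize each of these terms, after inserting it into the inverse Laplace integral $\frac{1}{2\pi\ri}\int\re^{\lambda t}(\,\cdot\,)\de\lambda$, as the contribution to the $C_0$-semigroup generated by $\El_1$ or $\El_2$, or — in the case of $I^2$ and $I^3$ — to a \emph{convolution} of such semigroups. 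Concretely, a product like $(\ri\varpi-\El_1)^{-1}(b+\ri\varpi-\El_2)^{-1}$ is the Laplace transform at $\lambda = b+\ri\varpi$ of the convolution $t\mapsto\int_0^t\re^{\El_1(t-s)}\re^{(\El_2+b)s}\,\de s$ (using resolvent identities / the fact that the Laplace transform turns convolution into product), and the complex inversion formula for such convolutions — which the paper proves in Appendix~\ref{sec:laplace} — lets us identify the contour integral over the \emph{full} vertical line $b+\ri\R$ with that convolution. The semigroup bounds from Lemma~\ref{lem:resolventsimple} ($\|\re^{\El_1 t}\|\leq M$, $\|\re^{\El_2 t}\|\leq\re^{-\varepsilon\gamma t}$) then give $\|\re^{\El_1(t-s)}\re^{(\El_2+b)s}\|\leq M\re^{(b-\varepsilon\gamma)s} = M\re^{-\frac74\varepsilon\gamma s}$, hence the convolution is bounded by $M\int_0^\infty\re^{-\frac74\varepsilon\gamma s}\de s\cdot\re^{\,?}$ — more carefully, after factoring $\re^{bt}$ out of $\re^{\lambda t}$, the convolution integral is uniformly bounded, producing a bound $C\re^{-\frac34\varepsilon\gamma t}\|\gb\|_{L^\infty}$ for the $I^2,I^3$ pieces. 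The $I^1$ piece is just (the relevant components of) $\re^{(b+\El_1)t}$ and $\re^{(b+\El_2)t}$ and is bounded the same way.

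The remaining bookkeeping is to account for the difference between the finite contour $\Gamma_1^R\cup\Gamma_3^R$ (which omits the low-frequency segment $|\varpi|<\varpi_0$ on the line $b+\ri\R$) and the full line $b+\ri\R$: the missing segment is a compact contour on which all resolvent factors are bounded (by Lemma~\ref{lem:resolventsimple} and Proposition~\ref{p:highfreqresolvdecomp}, with $b=-\tfrac34\varepsilon\gamma$ to the left of $\sigma(\El_0)$ there since $\varpi_0$ is large), so it contributes at most $C\re^{bt}\|\gb\|_{L^\infty} = C\re^{-\frac34\varepsilon\gamma t}\|\gb\|_{L^\infty}$, and subtracting it preserves the exponential bound. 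Finally, the cut-off $\chi(t)$ is harmless — it only kills $t\in[0,1]$ — and by density of $D(\El_0)$ in $C_{\mathrm{ub}}(\R)$ together with the uniform-in-$\gb$ bound just obtained, the operator $S_e^1(t)$ extends to all of $C_{\mathrm{ub}}(\R)$ with the same estimate~\eqref{e:se1estimate}. Collecting the four contributions and choosing $\alpha = \tfrac34\varepsilon\gamma$ (or any smaller positive constant, to absorb polynomial prefactors if any arise) completes the proof. I expect the convolution-semigroup identification, and justifying that the complex inversion formula applies to it on the unbounded line, to be the genuinely delicate point; everything else is routine once Appendix~\ref{sec:laplace} is in hand.
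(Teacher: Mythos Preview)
Your overall strategy matches the paper's: decompose via Proposition~\ref{p:highfreqresolvdecomp}, handle $I^4$ by absolute integrability, and identify the contributions from $I^1, I^2, I^3$ as (convolutions of) the semigroups generated by $\El_1$ and $\El_2$ via the inversion formula from Appendix~\ref{sec:laplace}. But there is a genuine technical gap in how you apply that inversion formula. The resolvent factors involving $\El_1$ in $I^1, I^2, I^3$ are $(\ri\varpi - \El_1)^{-1}$, i.e., they sit on the \emph{imaginary axis} in the spectral variable for $\El_1$. Since $\omega_0(\re^{\El_1 t}) = 0$ and $0 \in \sigma(\El_1)$, two problems arise. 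First, Corollary~\ref{convolution_semigroup} requires $\omega > \max\{\omega_0(S), \omega_0(T)\}$ strictly, and your line $\Re\lambda = 0$ does not satisfy this. Second, your ``missing segment'' $\{\ri\varpi : |\varpi| < \varpi_0\}$ passes through the origin, where $(\ri\varpi - \El_1)^{-1}$ is singular, so that compact-segment correction is not bounded as you claim.

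The paper resolves both issues at once. After factoring out $\re^{b_1 t}$ (with $b_1 = -\tfrac34\varepsilon\gamma$) and passing to the variable $\lambda = \ri\varpi$, it uses Cauchy's theorem to shift the integration line from $\ri\R$ to $\Re\lambda = \tfrac14\varepsilon\gamma > 0$, connecting through a contour $\check{\Gamma}_2$ that stays in the sector $\{\lambda \neq 0 : |\arg\lambda| < \tfrac{3\pi}{4}\}$ and hence avoids the origin. On the shifted line Corollary~\ref{convolution_semigroup} applies cleanly to $T_1(t) = \re^{\El_1 t}$ and $T_2(t) = \re^{(\El_2 - b_1)t}$ (both with growth bound $\leq 0$), while the correction contours are compact subsets of $\rho(\El_1) \cap \rho(\El_2 - b_1)$ and contribute at most $C\re^{\frac14\varepsilon\gamma t}$. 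Combined with the prefactor $\re^{b_1 t}$ this yields $\alpha = \tfrac12\varepsilon\gamma$, not $\tfrac34\varepsilon\gamma$. You correctly flagged this as ``the genuinely delicate point''; the missing ingredient is precisely this contour shift off the critical line.
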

\begin{proof}
Let $\gb = (g_1,g_2)^\top \in D(\El_0)$ and $t \geq 0$. We abbreviate $b_1 = -\frac{3}{4} \varepsilon\gamma$. Employing the high-frequency resolvent expansion from Proposition~\ref{p:highfreqresolvdecomp}, we arrive, provided $\varpi_0 > 0$ is sufficiently large, at the decomposition
\begin{align} \label{e:decompse1}
\begin{split}
S_e^1(t) \gb &= \chi(t) \lim_{R \to \infty} \frac{1}{2 \pi} \left(\int_{-R}^{-\varpi_0} + \int_{\varpi_0}^R\right) \re^{\ri \varpi t + b_1 t} \left(b_1 + \ri \varpi - \El_0\right)^{-1} \gb \,\de\varpi\\ 
&= \re^{b_1 t} \left(S_1(t) \gb + S_2(t) \gb + S_3(t) \gb + S_4(t) \gb\right),
\end{split}
\end{align}
where we denote
\begin{align*}
S_j(t)\gb = \chi(t) \lim_{R \to \infty} \frac{1}{2 \pi} \left(\int_{-R}^{-\varpi_0} + \int_{\varpi_0}^R\right) \re^{\ri \varpi t} I^j_{b_1,\varpi}\gb \,\de \varpi, \qquad j = 1,\ldots,4.
\end{align*}
The estimate on $I^4_{b_1,\varpi} \gb$ in Proposition~\ref{p:highfreqresolvdecomp} readily provides $\gb$- and $t$-independent constants $C_{1,2}>0$ such that
\begin{align} \label{e:Cauchy4}
\|S_4(t)\gb\|_{L^\infty} \leq C_1 \int_{\varpi_0}^\infty \varpi^{-\frac32} \|\gb\|_{L^\infty} \,\de \varpi \leq C_2\|\gb\|_{L^\infty}.
\end{align}

We relate the leading-order contributions $S_1(t),S_2(t),S_3(t)$ to (convolutions of) the $C_0$-semi- groups $T_1(t) := \re^{\El_1 t}$ and $T_2(t) := \re^{(\El_2 - b_1) t}$ using~\cite[Proposition~3.12.1]{arendt} and Corollary~\ref{convolution_semigroup}. To this end, we define an $R$-independent contour $\check{\Gamma}_2$, which connects $-\ri\varpi_0$ to $\ri \varpi_0$ and lies in $\Sigma := \{\lambda \in \C \setminus \{0\} : -\frac{1}{4}\varepsilon\gamma < \Re(\lambda) < \frac{1}{8}\varepsilon\gamma, |\arg(\lambda)| < \frac{3\pi}{4}\}$. Moreover, let $\check{\Gamma}_4^R$ and $\check{\Gamma}_5^R$ be the lines connecting $-\ri R + \frac{1}{4}\varepsilon\gamma$ with $-\ri R$ and connecting $\ri R$ with $\ri R + \frac{1}{4}\varepsilon\gamma$, respectively. Using that the maps $\Sigma \to C_{\mathrm{ub}}(\R)$ given by $\lambda \mapsto (\lambda - \El_1)^{-1}\gb$ and $\lambda \mapsto (\lambda + b_1 - \El_2)^{-1}$ are holomorphic by Lemma~\ref{lem:resolventsimple}, Cauchy's integral theorem yields
\begin{align} \label{e:Cauchy3}
\begin{split}
\frac{\chi(t)}{2\pi} \left(\int_{-R}^{-\varpi_0} + \int_{\varpi_0}^R\right) \re^{\ri \varpi t} I^j_{b_1,\varpi}\gb \,\de \varpi &= \frac{\chi(t)}{2 \pi\ri} \int_{\frac{1}{4}\varepsilon \gamma - \ri R}^{\frac{1}{4}\varepsilon \gamma + \ri R} \re^{\lambda t} I^j_{b_1,\lambda} \gb \,\de \lambda\\ 
&\qquad - \, \frac{\chi(t)}{2 \pi\ri} \int_{\check{\Gamma}_2 \cup \check{\Gamma}_4^R \cup \check{\Gamma}_5^R} \re^{\lambda t} I^j_{b_1,\lambda} \gb \,\de \lambda.
\end{split}
\end{align}

We note that the length of the contours $\check{\Gamma}_2, \check{\Gamma}_4^R,\check{\Gamma}_5^R \subset \Sigma \subset \rho(\El_1) \cap \rho(\El_2 - b_1)$ can be bounded by an $R$-independent constant. So, using the resolvent estimates from Lemma~\ref{lem:resolventsimple}, we establish a $t$-, $R$- and $\gb$-independent constant $C_3 > 0$ such that
\begin{align} \label{e:Cauchy2}
\left\|\chi(t)\int_{\check{\Gamma}_2 \cup \check{\Gamma}_4^R \cup \check{\Gamma}_5^R} \re^{\lambda t} I^j_{b_1,\lambda} \gb \,\de \lambda\right\|_{L^\infty} \leq C_3 \re^{\frac14 \varepsilon\gamma t} \|\gb\|_{L^\infty} 
\end{align}
for $j = 1,2,3$. 

Lemma~\ref{lem:resolventsimple} implies that $g_1 \in D(\El_1)$, $g_2 \in D(\El_2 - b_1)$ and the semigroups $T_1(t)$ and $T_2(t)$ are strongly continuous and exponentially bounded with growth bounds $\varpi_0(T_1) \leq 0$ and $\varpi_0(T_2) \leq -\frac{1}{4} \varepsilon\gamma$. Hence, an application of~\cite[Proposition~3.12.1]{arendt} and Corollary~\ref{convolution_semigroup} yields
\begin{align*}
\frac{\chi(t)}{2 \pi\ri} \lim_{R \to \infty} \int_{\frac{1}{4}\varepsilon \gamma - \ri R}^{\frac{1}{4}\varepsilon \gamma + \ri R} \re^{\lambda t} I^1_{b_1,\lambda} \gb \,\de \lambda &= \chi(t) \begin{pmatrix} T_1(t) g_1 \\ T_2(t) g_2 \end{pmatrix}, \\
\frac{\chi(t)}{2 \pi\ri} \lim_{R \to \infty} \int_{\frac{1}{4}\varepsilon \gamma - \ri R}^{\frac{1}{4}\varepsilon \gamma + \ri R} \re^{\lambda t} I^2_{b_1,\lambda} \gb \,\de \lambda &= \chi(t) \begin{pmatrix} \left(T_1 \ast T_2 \right)(t) g_2 \\ -\varepsilon\left(T_2 \ast T_1\right)(t) g_1 \end{pmatrix},
\end{align*}
and
\begin{align*}
\frac{\chi(t)}{2 \pi\ri} \lim_{R \to \infty} \int_{\frac{1}{4}\varepsilon \gamma - \ri R}^{\frac{1}{4}\varepsilon \gamma + \ri R} \re^{\lambda t} I^3_{b_1,\lambda} \gb \,\de \lambda &= \chi(t) \begin{pmatrix} 0 \\ -\varepsilon\left(T_2 \ast T_1 \ast T_2 \right)(t) g_2 \end{pmatrix}.
\end{align*}
By~\cite[Theorem~C.17]{nagel}, the convolutions $T_1 \ast T_2$, $T_2 \ast T_1$ and $T_2 \ast T_1 \ast T_2$ are strongly continuous and exponentially bounded with growth bounds being at most $\max\{\varpi_0(T_1),\varpi_0(T_2)\} \leq 0$. Therefore, we find a $t$- and $\gb$-independent constant $C_4 > 0$ such that
\begin{align*}
\left\|\frac{\chi(t)}{2 \pi\ri} \lim_{R \to \infty} \int_{\frac{1}{4}\varepsilon \gamma - \ri R}^{\frac{1}{4}\varepsilon \gamma + \ri R} \re^{\lambda t} I^j_{b_1,\lambda} \gb \,\de \lambda\right\| \leq C_4 \|\gb\|_{L^\infty}
\end{align*}
for $j = 1,2,3$. Combining the latter with the decompositions~\eqref{e:decompse1} and~\eqref{e:Cauchy3} and the estimates~\eqref{e:Cauchy4} and~\eqref{e:Cauchy2}, we arrive at~\eqref{e:se1estimate} with $\alpha = \frac{1}{2} \varepsilon\gamma > 0$ by density of $D(\El_0)$ in $C_{\mathrm{ub}}(\R)$. 
\end{proof}

\begin{remark}
Comparing the proof of Proposition~\ref{p:highfreqest} with the high-frequency analysis of the semigroup in~\cite[Appendix~B.2]{BjoernAvery}, we find that the identification of the critical high-frequency part of the semigroup as convolutions of the the heat and translation semigroups simplifies the analysis significantly. In particular, it is no longer necessary to compute the inverse Laplace transform of the leading-order terms of the Neumann-series expansion of the resolvent explicitly for a test function $\gb$.
\end{remark}

\subsection{Isolating the critical low-frequency part}

We wish to employ the decomposition of the resolvent $(\lambda- \El_0)^{-1}$ for $|\lambda|$ sufficiently small established in Proposition~\ref{p:lowfreqresolvdecomp} to isolate the critical low-frequency part of the semigroup. To this end, we deform the contour $\Gamma_2$ in the inverse Laplace representation~\eqref{e:Cauchy1} of the semigroup $\re^{\El_0 t}$, so that its part in the right-half plane is contained in the ball $B(0,\delta)$, where Proposition~\ref{p:lowfreqresolvdecomp} applies, cf.~Figure~\ref{fig:deform}. The remainder of the deformed contour lies in the open left-half plane, away from the spectrum of $\El_0$ and, thus, the associated complex line integrals are exponentially decaying. 

\begin{proposition} \label{p:isolate1}
Assume~\ref{assH1} and~\ref{assD1}-\ref{assD3}. Consider $\El_0$ as an operator on $C_{\mathrm{ub}}(\R)$. Let $\varpi_0 > 0$. For each $\delta > 0$ sufficiently small there exist constants $C,\alpha > 0$, a linear operator $S_e^2(t) \colon C_{\mathrm{ub}}(\R) \to C_{\mathrm{ub}}(\R)$ and a rectangular contour $\Gamma_\delta$, which is reflection symmetric in the real axis, lies in $B(0,\delta)$ strictly to the right of $\sigma(\El_0)$ and connects points $-\ri \eta_2 - \eta_1$ and $\ri \eta_2 - \eta_1$ with $\eta_{1,2} > 0$, such that we have the decomposition 
\begin{align} \label{e:Cauchy5}
\frac{\chi(t)}{2 \pi \ri} \int_{\Gamma_2} \re^{\lambda t} (\lambda - \El_0)^{-1} \gb \,\de\lambda = \frac{\chi(t)}{2 \pi \ri} \int_{\Gamma_\delta} \re^{\lambda t} (\lambda - \El_0)^{-1} \gb\,\de\lambda + S_e^2(t)\gb,
\end{align}
for each $\gb \in D(\El_0)$ and $t \geq 0$ and the estimate
\begin{align} \label{e:se2estimate}
\|S_e^2(t)\gb\|_{L^\infty} \leq C\re^{-\alpha t}\|\gb\|_{L^\infty}
\end{align}
holds for $\gb \in C_{\mathrm{ub}}(\R)$ and $t \geq 0$.
\end{proposition}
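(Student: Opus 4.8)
The plan is to deform the contour $\Gamma_2$ — which by Proposition~\ref{deform} lies in $\rho(\El_0)$ and consists of the left vertical segment on $\Re(\lambda) = -\tfrac34\varepsilon\gamma$ together with the two horizontal segments at $\Im(\lambda) = \pm\varpi_0$ and the right vertical segment on $\Re(\lambda) = \tfrac\eta2$ — into the concatenation $\widetilde{\Gamma}_- \cup \Gamma_\delta \cup \widetilde{\Gamma}_+$ depicted in the right panel of Figure~\ref{fig:deform}. Here $\Gamma_\delta$ is a small rectangular contour inside $B(0,\delta)$, reflection-symmetric in the real axis, lying strictly to the right of $\sigma(\El_0)$ and connecting $-\ri\eta_2 - \eta_1$ to $\ri\eta_2 - \eta_1$; this is possible because $\sigma(\El_0)$ touches $\ri\R$ only at the origin in a quadratic tangency by~\ref{assD1}--\ref{assD2}, so for $\delta$ small one can route a vertical segment at $\Re(\lambda) = -\eta_1$ (with $\eta_1$ much smaller than $\theta\eta_2^2$, using~\ref{assD2}) that stays in $\rho(\El_0)$. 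The remaining pieces $\widetilde{\Gamma}_\pm$ connect the endpoints $\pm\ri\varpi_0 - \tfrac34\varepsilon\gamma$ of $\Gamma_2$ to the endpoints $\pm\ri\eta_2 - \eta_1$ of $\Gamma_\delta$, and can be chosen to lie entirely in the open left-half plane at a fixed positive distance from $\ri\R$ and from $\sigma(\El_0)$; shrinking $\delta$ only shortens these arcs.

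First I would verify that the closed contour $\Gamma_2 - (\widetilde{\Gamma}_- \cup \Gamma_\delta \cup \widetilde{\Gamma}_+)$, together with its interior, lies in $\rho(\El_0)$. On the right half ($\Re(\lambda) \geq 0$) this is clear since $0$ is the only spectral point there and the small rectangle $\Gamma_\delta$ is placed to the right of $\sigma(\El_0)$; on the left half the enclosed region sits in the strip $-\tfrac34\varepsilon\gamma \leq \Re(\lambda) \leq 0$, $|\Im(\lambda)| \leq \varpi_0$, which meets $\sigma(\El_0)$ only near the origin, and $\Gamma_\delta$ has been chosen to exclude that portion. Since $\gb \in D(\El_0)$, the map $\lambda \mapsto \re^{\lambda t}(\lambda - \El_0)^{-1}\gb$ is analytic and $C_{\mathrm{ub}}$-valued on $\rho(\El_0)$, so Cauchy's integral theorem (as in the proof of Proposition~\ref{deform}) gives
\begin{align*}
\frac{\chi(t)}{2\pi\ri}\int_{\Gamma_2}\re^{\lambda t}(\lambda - \El_0)^{-1}\gb\,\de\lambda = \frac{\chi(t)}{2\pi\ri}\int_{\Gamma_\delta}\re^{\lambda t}(\lambda - \El_0)^{-1}\gb\,\de\lambda + S_e^2(t)\gb,
\end{align*}
where $S_e^2(t)\gb := \tfrac{\chi(t)}{2\pi\ri}\int_{\widetilde{\Gamma}_- \cup \widetilde{\Gamma}_+}\re^{\lambda t}(\lambda - \El_0)^{-1}\gb\,\de\lambda$. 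This defines $S_e^2(t)$ on the dense subspace $D(\El_0)$; since all contours are compact and bounded away from $\sigma(\El_0)$, Lemma~\ref{lem:resolventsimple} and Proposition~\ref{p:highfreqresolvdecomp} (or simply continuity of the resolvent on the compact set $\widetilde{\Gamma}_\pm$) give a uniform bound $\|(\lambda - \El_0)^{-1}\|_{\mathcal{B}(C_{\mathrm{ub}})} \leq C_0$ there, so $S_e^2(t)$ extends to a bounded operator on $C_{\mathrm{ub}}(\R)$.

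For the exponential decay~\eqref{e:se2estimate} I would bound $\|S_e^2(t)\gb\|_{L^\infty}$ directly: letting $\ell$ denote the ($\delta$-dependent but finite) total length of $\widetilde{\Gamma}_- \cup \widetilde{\Gamma}_+$ and $-\alpha := \max_{\lambda \in \widetilde{\Gamma}_\pm}\Re(\lambda) < 0$ (strictly negative by construction, since these arcs avoid $\ri\R$), we get
\begin{align*}
\|S_e^2(t)\gb\|_{L^\infty} \leq \frac{\chi(t)}{2\pi}\,\ell\, C_0\, \re^{-\alpha t}\|\gb\|_{L^\infty} \leq C\re^{-\alpha t}\|\gb\|_{L^\infty}
\end{align*}
for $\gb \in D(\El_0)$, and then for all $\gb \in C_{\mathrm{ub}}(\R)$ by density. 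The main subtlety — really the only place care is needed — is the geometric placement of $\Gamma_\delta$ and $\widetilde{\Gamma}_\pm$: one must confirm that a rectangle inside $B(0,\delta)$ can be drawn strictly to the right of the critical parabola-like spectral curve while still having its left side at $\Re(\lambda) = -\eta_1 < 0$, which forces the relation $\eta_1 \lesssim \theta\eta_2^2$ coming from~\ref{assD2}; once $\eta_{1,2}$ are fixed with $\eta_2$ small enough that the only spectrum in $B(0,\delta)$ with $|\Im(\lambda)| \leq \eta_2$ is the curve $\{\lambda_c(\xi)\}$ passing through the origin, the rest is the routine Cauchy-theorem-plus-length-estimate argument already used for Proposition~\ref{deform}.
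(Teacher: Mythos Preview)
Your proposal is correct and follows essentially the same approach as the paper: deform $\Gamma_2$ into $\widetilde{\Gamma}_- \cup \Gamma_\delta \cup \widetilde{\Gamma}_+$ via Cauchy's theorem, define $S_e^2(t)$ as the integral over the compact arcs $\widetilde{\Gamma}_\pm$ in the open left-half plane, and read off exponential decay from the uniform resolvent bound on those arcs together with $\max_{\lambda \in \widetilde{\Gamma}_\pm}\Re(\lambda) < 0$, extending to all of $C_{\mathrm{ub}}(\R)$ by density. One small slip: $\Gamma_2$ as defined in Proposition~\ref{deform} consists of only three sides of the rectangle (the two horizontal segments and the right vertical on $\Re(\lambda)=\tfrac{\eta}{2}$), not four---there is no left vertical segment---but you nonetheless identify its endpoints $\pm\ri\varpi_0 - \tfrac34\varepsilon\gamma$ correctly, so the Cauchy argument is unaffected.
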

\begin{proof}
Let $\gb \in D(\El_0)$ and $t \geq 0$. By Proposition~\ref{prop:speccons}, there exist constants $a \in \R$ and $b, \delta_0 > 0$ such that the spectrum of $\El_0$ in the ball $B(0,\delta_0)$ lies on or to the left of the parabola $\{\ri a \xi - b\xi^2 : \xi \in \R\}$. Take $\delta \in (0,\delta_0)$. By Assumption~\ref{assD1}, there exists a constant $\varrho > 0$ such that the spectrum of $\El_0$ in the compact set $K_0 = \{\lambda \in \C : |\Im(\lambda)| \leq 2\varpi_0, |\Re(\lambda)| \leq \gamma\} \setminus B(0,\delta)$ lies to the left of the line $\Re(\lambda) = -\varrho$. Furthermore, the contour $\Gamma_2$ lies in the resolvent set of $\El_0$ by Proposition~\ref{deform}. We conclude that there exist points $-\eta_1 \pm \ri \eta_2$ with $\eta_{1,2} > 0$ lying in $B(0,\delta)$ strictly to the right of $\sigma(\El_0)$, as well as contours $\smash{\widetilde{\Gamma}_-}$, connecting the lower end point $-\ri \varpi_0 - \frac34 \varepsilon \gamma$ of $\Gamma_2$ to the point $- \ri \eta_2 - \eta_1$, and $\smash{\widetilde{\Gamma}_+}$, connecting $\ri \eta_2 - \eta_1$ to the upper end point $\ri \varpi_0 - \frac34 \varepsilon\gamma$ of $\Gamma_2$, such that $\smash{\widetilde{\Gamma}_-}$ and $\smash{\widetilde{\Gamma}_+}$ are both contained in the resolvent set $\rho(\El_0)$ and in the open left-half plane. Hence, there exists a rectangular contour $\Gamma_\delta$, which connects $-\ri\eta_2 - \eta_1$ to $\ri\eta_2 -\eta_1$, is reflection symmetric in the real axis and lies in $B(0,\delta)$, strictly to the right of $\sigma(\El_0)$. Since the map $\rho(\El_0) \to C_{\mathrm{ub}}(\R)$ given by $\lambda \mapsto \re^{\lambda t} (\lambda - \El_0)^{-1} \gb$ is analytic, Cauchy's integral theorem yields~\eqref{e:Cauchy5} with
\begin{align*}
 S_e^2(t)\gb = \frac{\chi(t)}{2 \pi \ri} \left( \int_{\widetilde{\Gamma}_-} + \int_{\widetilde{\Gamma}_+}\right) \re^{\lambda t} (\lambda - \El_0)^{-1} \gb \,\de\lambda.
\end{align*}
The analytic map $\rho(\El_0) \to C_{\mathrm{ub}}(\R), \lambda \mapsto (\lambda - \El_0)^{-1}$ is bounded on the compact sets $\widetilde{\Gamma}_\pm \subset \rho(\El_0)$, which lie in the open left-half plane. Thus, the estimate~\eqref{e:se2estimate} follows by density of $D(\El_0)$ in $C_{\mathrm{ub}}(\R)$.
\end{proof}

We can now identify the critical part of the remaining complex line integral in~\eqref{e:Cauchy5} by employing the low-frequency decomposition of the resolvent obtained in Proposition~\ref{p:lowfreqresolvdecomp} and using the identity~\eqref{e:spectralprojid} derived in Lemma~\ref{expansion_in_laplace}.

\begin{proposition} \label{p:isolate2}
Assume~\ref{assH1},~\ref{assH2} and~\ref{assD1}-\ref{assD3}. Consider $\El_0$ as an operator on $C_{\mathrm{ub}}(\R)$. For each $\delta > 0$ sufficiently small there exist constants $C,\alpha > 0$ and a linear operator $S_e^3(t) \colon C_{\mathrm{ub}}(\R) \to C_{\mathrm{ub}}(\R)$ such that for each $\gb \in D(\El_0)$, $\zeta \in \R$ and $t \geq 0$ we have the decomposition 
\begin{align} \label{e:Cauchy6}
\begin{split}
\frac{\chi(t)}{2 \pi \ri} \int_{\Gamma_\delta} \re^{\lambda t} (\lambda - \El_0)^{-1} \gb \,\de\lambda\left[\zeta\right] &= \frac{\chi(t)}{2 \pi \ri} \int_\zeta^\infty \int_{\Gamma_\delta} \re^{\lambda t + \nu_c(\lambda)(\zeta - \zb)} \Psi(\zeta,\lambda) \tilde{\Psi}(\zb,\lambda)^* \,\de \lambda \, \gb(\zb) \,\de \zb\\ 
&\qquad + \, \left(S_e^3(t)\gb\right)[\zeta].
\end{split}
\end{align}
Moreover, the estimate
\begin{align} \label{e:se3estimate}
\|S_e^3(t)\gb\|_{L^\infty} \leq C\re^{-\alpha t}\|\gb\|_{L^\infty}
\end{align}
holds for $\gb \in C_{\mathrm{ub}}(\R)$ and $t \geq 0$.
\end{proposition}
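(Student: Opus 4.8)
The plan is to substitute the low-frequency resolvent decomposition of Proposition~\ref{p:lowfreqresolvdecomp}, recast through the factorization~\eqref{e:spectralprojid} of Lemma~\ref{expansion_in_laplace}, into the contour integral over $\Gamma_\delta$, to read off the critical term appearing in~\eqref{e:Cauchy6}, and then to bound the remaining ``regular'' contribution by deforming its contour into the open left half-plane. Concretely, I would fix $\gb \in D(\El_0)$ and $t \geq 0$ and choose $\delta > 0$ small enough for Propositions~\ref{p:lowfreqresolvdecomp} and~\ref{p:isolate1} and Lemma~\ref{expansion_in_laplace} to be in force; let $\Gamma_\delta$ and $\eta_1,\eta_2 > 0$ be as provided by Proposition~\ref{p:isolate1}. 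Since $\Gamma_\delta$ is a compact subset of $B(0,\delta)$ lying strictly to the right of $\sigma(\El_0)$, Proposition~\ref{p:Floquet}(v) yields a constant $c > 0$ with $\Re(\nu_c(\lambda)) \geq c$ for all $\lambda \in \Gamma_\delta$. Combining Proposition~\ref{p:lowfreqresolvdecomp} with the identity~\eqref{e:spectralprojid} and the relation $(\lambda - \El_0)^{-1} = -(\El_0 - \lambda)^{-1}$, we obtain for $\lambda \in \Gamma_\delta$ and $\zeta \in \R$ that
\[
\big((\lambda - \El_0)^{-1}\gb\big)(\zeta) = \int_\zeta^\infty \re^{\nu_c(\lambda)(\zeta - \zb)}\,\Psi(\zeta;\lambda)\,\tilde{\Psi}(\zb;\lambda)^*\,\gb(\zb)\,\de\zb \;-\; \big(S_e^0(\lambda)\gb\big)(\zeta).
\]
Multiplying by $\tfrac{\chi(t)}{2\pi\ri}\re^{\lambda t}$, integrating over $\Gamma_\delta$, and interchanging the order of the $\lambda$- and $\zb$-integrations in the first term, we arrive at~\eqref{e:Cauchy6} with $S_e^3(t)\gb := -\tfrac{\chi(t)}{2\pi\ri}\int_{\Gamma_\delta}\re^{\lambda t} S_e^0(\lambda)\gb\,\de\lambda$. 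The use of Fubini's theorem here is licensed by absolute convergence of the double integral: on $\Gamma_\delta$ the kernel obeys $|\re^{\nu_c(\lambda)(\zeta - \zb)}| \leq \re^{-c(\zb - \zeta)}$ for $\zb \geq \zeta$, the $T$-periodic functions $\Psi,\tilde{\Psi}$ and the datum $\gb$ are bounded, and $\Gamma_\delta$ is compact. By the uniform bound $\|S_e^0(\lambda)\|_{\mathcal{B}(C_{\mathrm{ub}}(\R))} \leq C$ from Proposition~\ref{p:lowfreqresolvdecomp}, the operator $S_e^3(t)$ is bounded on $C_{\mathrm{ub}}(\R)$ for each $t$, and the decomposition~\eqref{e:Cauchy6}, obtained first for $\gb \in D(\El_0)$, extends to all of $C_{\mathrm{ub}}(\R)$ by density.

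To establish the decay estimate~\eqref{e:se3estimate}, the key observation is that $S_e^0 \colon B(0,\delta) \to \mathcal{B}(C_{\mathrm{ub}}(\R))$ is holomorphic on \emph{all} of $B(0,\delta)$ by Proposition~\ref{p:lowfreqresolvdecomp}---the spectral singularity of $(\lambda - \El_0)^{-1}$ near the critical curve $\{\lambda_c(\xi)\}$ having been absorbed entirely into the first term---so $\lambda \mapsto \re^{\lambda t}S_e^0(\lambda)\gb$ is holomorphic on $B(0,\delta)$. Since the endpoints $-\eta_1 \pm \ri\eta_2$ of $\Gamma_\delta$ already have real part $-\eta_1 < 0$, I would deform $\Gamma_\delta$, keeping its endpoints fixed, to the vertical segment $\Gamma_\delta^- = \{-\eta_1 + \ri s : s \in [-\eta_2,\eta_2]\}$ (or, if needed, a short arc with the same endpoints lying in $\{\Re(\lambda) \leq -\eta_1\}$). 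As $\Gamma_\delta \subset B(0,\delta)$ and $B(0,\delta)$ is convex, the region swept between $\Gamma_\delta$ and $\Gamma_\delta^-$ stays inside $B(0,\delta)$, so Cauchy's integral theorem gives $\int_{\Gamma_\delta}\re^{\lambda t}S_e^0(\lambda)\gb\,\de\lambda = \int_{\Gamma_\delta^-}\re^{\lambda t}S_e^0(\lambda)\gb\,\de\lambda$. On $\Gamma_\delta^-$ one has $\Re(\lambda) \leq -\eta_1$, hence $|\re^{\lambda t}| \leq \re^{-\eta_1 t}$; combining this with $0 \leq \chi \leq 1$, the uniform bound on $S_e^0$, and the bounded length of $\Gamma_\delta^-$ yields $\|S_e^3(t)\gb\|_{L^\infty} \leq C\re^{-\eta_1 t}\|\gb\|_{L^\infty}$, which is~\eqref{e:se3estimate} with $\alpha = \eta_1$.

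Most of the argument is bookkeeping, and the one step I expect to be the crux is the deformation of the residual integral into the open left half-plane: it relies on the fact that $S_e^0$ extends holomorphically across $\sigma(\El_0)$ to the whole ball $B(0,\delta)$ (Proposition~\ref{p:lowfreqresolvdecomp}), together with the mild but essential geometric fact that the endpoints of $\Gamma_\delta$ already lie strictly to the left of the imaginary axis, so that a competing contour on which the symbol $\re^{\lambda t}$ is exponentially small is available without changing endpoints. The only other point needing care is the absolute convergence required for Fubini, which is immediate from the uniform positivity of $\Re\,\nu_c$ on the compact contour $\Gamma_\delta$ guaranteed by Proposition~\ref{p:Floquet}(v).
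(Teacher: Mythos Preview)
Your proposal is correct and follows essentially the same route as the paper: substitute the low-frequency resolvent decomposition of Proposition~\ref{p:lowfreqresolvdecomp} (via Lemma~\ref{expansion_in_laplace}) into the contour integral, read off $S_e^3(t)$ as the $S_e^0$-contribution, and then deform $\Gamma_\delta$ to the vertical segment joining its endpoints $-\eta_1\pm\ri\eta_2$ using analyticity of $S_e^0$ on all of $B(0,\delta)$. Your justification of Fubini via $\Re\,\nu_c(\lambda)\geq c>0$ on $\Gamma_\delta$ is more explicit than the paper's, but the argument is otherwise identical.
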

\begin{proof}
Provided $\delta > 0$ is sufficiently small, identity~\eqref{e:Cauchy6} follows readily from Fubini's theorem, Proposition~\ref{p:lowfreqresolvdecomp} and Lemma~\ref{expansion_in_laplace} by setting
\begin{align*}
 S_e^3(t) \gb = \frac{\chi(t)}{2 \pi \ri} \int_{\Gamma_\delta} \re^{\lambda t} S_e^0(\lambda) \gb \,\de \lambda
\end{align*}
for $t \geq 0$ and $\gb \in D(\El_0)$, where $S_e^0 \colon B(0,\delta) \to \mathcal{B}(C_{\mathrm{ub}}(\R))$ is the analytic map from Proposition~\ref{p:lowfreqresolvdecomp}, obeying the estimate 
\begin{align} \label{e:expresolvest}
 \|S_e^0(\lambda) \gb\|_{L^\infty} \leq C_0 \|\gb\|_{L^\infty} 
\end{align}
for some $\gb$- and $\lambda$-independent constant $C_0 > 0$. Now let $\widetilde{\Gamma}_\delta$ be the straight line connecting the end points $\pm \ri \eta_2 - \eta_1$ of $\Gamma_\delta$. Then, $\widetilde{\Gamma}_\delta$ lies both in $B(0,\delta)$ and in the open left-half plane. By Cauchy's integral theorem and analyticity of $S_e^0$, we infer
\begin{align*}
  S_e^3(t) \gb = \frac{\chi(t)}{2 \pi \ri} \int_{\widetilde{\Gamma}_\delta} \re^{\lambda t} S_e^0(\lambda) \gb \,\de \lambda
\end{align*}
for $\gb \in D(\El_0)$ and $t \geq 0$. Taking norms in the latter, using that the compact contour $\widetilde{\Gamma}_\delta$ lies in the open left-half plane and applying the bound~\eqref{e:expresolvest} readily yields the estimate~\eqref{e:se3estimate} by density of $D(\El_0)$ in $C_{\mathrm{ub}}(\R)$.
\end{proof}

\subsection{Floquet-Bloch representation of the critical low-frequency part} \label{sec:FloquetBloch}

Except for the integral appearing on the right-hand side of~\eqref{e:Cauchy6} representing its critical low-frequency part, the semigroup $\re^{\El_0 t}$ is exponentially decaying by Propositions~\ref{deform},~\ref{p:highfreqest},~\ref{p:isolate1} and~\ref{p:isolate2} and Lemma~\ref{l:shorttime}. The following result recovers, up to some exponentially decaying terms, the same Floquet-Bloch representation for the critical low-frequency part of the semigroup as in~\cite{BjoernMod}. 

The main idea is to exploit that the integral
\begin{align*}
\int_{\Gamma_\delta} \re^{\lambda t + \nu_c(\lambda)(\zeta - \zb)} \Psi(\zeta,\lambda) \tilde{\Psi}(\zb,\lambda)^* \,\de \lambda
\end{align*}
possesses an integrand, which is analytic in $\lambda$ on $B(0,\delta)$ for each $\zeta,\zb \in \R$ and $t \geq 0$, cf.~Proposition~\ref{prop:speccons} and Lemma~\ref{expansion_in_laplace}. This \emph{pointwise} analyticity\footnote{See~\cite[Section~5.1]{BjoernAvery} for further discussion on pointwise and $L^p$-analyticity.} allows us to shift (part of) the integration contour $\Gamma_\delta$  onto the critical spectral curve $\lambda_c(\xi)$, see Figure~\ref{ident_figure}. Via the identities $\nu_c(\lambda_c(\xi)) = \ri \xi$ and~\eqref{e:spectralprojid2}, obtained in Proposition~\ref{p:Floquet} and Lemma~\ref{expansion_in_laplace}, respectively, we then arrive at the desired Floquet-Bloch representation from~\cite{BjoernMod}. We show that the remainder terms are exponentially decaying by using pointwise estimates obtained through integration by parts, essentially following the same strategy as in~\cite[Lemma~A.1]{HDRS22}. 

\begin{proposition} \label{p:FloquetBlochRep}
Assume~\ref{assH1},~\ref{assH2} and~\ref{assD1}-\ref{assD3}. For each $\delta > 0$ sufficiently small there exist constants $\xi_0, C,\alpha > 0$, a linear operator 
$S_e^4(t) \colon C_{\mathrm{ub}}(\R) \to C_{\mathrm{ub}}(\R)$ and a smooth cut-off function $\rho \colon \R \to \R$ such that for each $\gb \in C_{\mathrm{ub}}(\R)$, $\zeta \in \R$ and $t \geq 0$ we have
\begin{align*} 
\begin{split}
\frac{\chi(t)}{2 \pi \ri} \int_\zeta^\infty \int_{\Gamma_\delta} &\re^{\lambda t + \nu_c(\lambda)(\zeta - \zb)} \Psi(\zeta,\lambda) \tilde{\Psi}(\zb,\lambda)^* \,\de \lambda \, \gb(\zb) \,\de \zb\\ &= \frac{\chi(t)}{2 \pi} \int_\R \int_\R \rho(\xi)\re^{\lambda_c(\xi) t + \ri \xi(\xx - \xt)} \Phi_\xi(\xx)\widetilde{\Phi}_\xi(\zb)^* \,\de\xi  \,\gb(\xt) \,\de\xt + \left(S_e^4(t)\gb\right)[\zeta].
\end{split}
\end{align*}
Moreover, $\rho$ is supported on the interval $(-\xi_0,\xi_0) \subset V_1 \cap \R$ and satisfies $\rho(\xi) = 1$ for $\xi \in \smash{[-\frac12 \xi_0, \frac12 \xi_0]}$. Finally, for each $\gb \in C_{\mathrm{ub}}(\R)$ and $t \geq 0$ it holds
\begin{align*} 
\|S_e^4(t)\gb\|_{L^\infty} \leq C\re^{-\alpha t}\|\gb\|_{L^\infty}.
\end{align*}
\end{proposition}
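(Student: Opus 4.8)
The plan is to deform the contour $\Gamma_\delta$ in \eqref{e:Cauchy6} onto a contour that follows the critical spectral curve $\{\lambda_c(\xi)\}$ near the origin and then closes off in the open left half-plane away from $\ri\R$; on the spectral-curve piece one changes variables $\lambda=\lambda_c(\xi)$ and invokes the identities $\nu_c(\lambda_c(\xi))=\ri\xi$ and \eqref{e:spectralprojid2} to recognise the Floquet-Bloch integrand of \cite{BjoernMod}, while every remaining piece is shown to generate an operator decaying like $\re^{-\alpha t}$ on $C_{\mathrm{ub}}(\R)$. The structural fact that makes this work is \emph{pointwise analyticity}: by Proposition~\ref{prop:speccons} and Lemma~\ref{expansion_in_laplace}, for each fixed $\zeta,\zb\in\R$ and $t\geq0$ the integrand $\lambda\mapsto\re^{\lambda t+\nu_c(\lambda)(\zeta-\zb)}\Psi(\zeta;\lambda)\tilde{\Psi}(\zb;\lambda)^*$ extends analytically to all of $B(0,\delta)$ — in particular across the spectrum — so Cauchy's theorem permits free deformations of $\Gamma_\delta$ inside $B(0,\delta)$.

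Concretely, I would first fix $\xi_0>0$ so small that $\{\lambda_c(\xi):|\xi|\leq\xi_0\}\subset B(0,\delta)\cap V_1$ and pick a cut-off $\rho\in C^\infty(\R)$ with $\mathrm{supp}\,\rho\subset(-\xi_0,\xi_0)$ and $\rho\equiv1$ on $[-\tfrac12\xi_0,\tfrac12\xi_0]$. Using the expansions $\nu_c(\lambda)=-\lambda/c_g+\mathcal O(\lambda^2)$ and $\Re\lambda_c(\xi)\leq-\theta\xi^2$ from Proposition~\ref{p:Floquet}(iv) and Assumption~\ref{assD2}, together with $\Re\nu_c>0$ strictly to the right of $\sigma(\El_0)$ from Proposition~\ref{p:Floquet}(v), one produces closing contours $\widetilde{\Gamma}_\pm\subset B(0,\delta)$ joining the endpoints $-\eta_1\pm\ri\eta_2$ of $\Gamma_\delta$ to the points $\lambda_c(\pm\xi_0)$ on the critical curve, lying in $\{\Re(\lambda)\leq-c\}\cap\{\Re\nu_c(\lambda)\geq0\}$ for some $c>0$; this is feasible because all four of these points lie in the closure of that set, with imaginary parts bounded away from $0$, and the set is path-connected on each side of $\ri\R$. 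Cauchy's theorem (for fixed $\zeta$ and $\zb\geq\zeta$) then lets me replace $\Gamma_\delta$ by $\{\lambda_c(\xi):|\xi|\leq\xi_0\}\cup\widetilde{\Gamma}_-\cup\widetilde{\Gamma}_+$ in the $\lambda$-integral.

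On the critical arc I substitute $\lambda=\lambda_c(\xi)$, $\de\lambda=\lambda_c'(\xi)\de\xi$, and use $\nu_c(\lambda_c(\xi))=\ri\xi$, $\Psi(\cdot;\lambda_c(\xi))=\Phi_\xi$ and $\lambda_c'(\xi)\tilde{\Psi}(\cdot;\lambda_c(\xi))=\ri\widetilde{\Phi}_\xi$; the Jacobian $\lambda_c'(\xi)$ cancels against the normalisation in \eqref{e:spectralprojid2}, and writing $1=\rho(\xi)+(1-\rho(\xi))$ produces precisely the asserted Floquet-Bloch integral (the $\rho$-part) plus a remainder supported in $\{\tfrac12\xi_0\leq|\xi|\leq\xi_0\}$, where $\Re\lambda_c(\xi)\leq-\tfrac14\theta\xi_0^2$. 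Accordingly $S_e^4(t)$ is assembled from this $(1-\rho)$-remainder, the two $\widetilde{\Gamma}_\pm$-integrals, and the term accounting for the fact that the left-hand side is constrained to $\zb\geq\zeta$ while the Floquet-Bloch integral runs over all $\zb$. For the first two contributions I would merge them into one contour integral over $\widehat{\Gamma}_\pm:=\{\lambda_c(\xi):\tfrac12\xi_0\leq|\xi|\leq\xi_0\}\cup\widetilde{\Gamma}_\pm$ carrying a weight equal to $1-\rho$ on the first arc and to $1$ on $\widetilde{\Gamma}_\pm$, glued smoothly at $\lambda_c(\pm\xi_0)$ since $\rho$ is flat there; on $\widehat{\Gamma}_\pm$ one has $\Re(\lambda)\leq-c'<0$ and $\Re\nu_c\geq0$. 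Since $|\re^{\nu_c(\lambda)(\zeta-\zb)}|\leq1$ for $\zb\geq\zeta$, the trivial bound handles $|\zeta-\zb|\leq1$, whereas for $|\zeta-\zb|>1$ two integrations by parts in the spectral variable $\nu=\nu_c(\lambda)$, using $\partial_\nu\re^{\nu(\zeta-\zb)}=(\zeta-\zb)\re^{\nu(\zeta-\zb)}$ — with the boundary term at $\lambda_c(\pm\tfrac12\xi_0)$ vanishing by flatness of the weight and the one at the endpoints of $\Gamma_\delta$ harmless because $\Re\nu_c$ is bounded below by a positive constant there — give a kernel bound $\lesssim(1+t)^2\re^{-c't}(1+|\zeta-\zb|)^{-2}$, following the strategy of \cite[Lemma~A.1]{HDRS22}; integrating in $\zb\geq\zeta$ yields an $\re^{-\alpha t}\|\gb\|_{L^\infty}$-bound. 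The $\zb<\zeta$ correction to the Floquet-Bloch integral is likewise exponentially small, since with $c_g<0$ the Floquet-Bloch kernel is, to leading order, a Gaussian centred at $\zb=\zeta-c_g t$, so its mass over $\{\zb<\zeta\}$ is of order $\re^{-\alpha t}$ — here the outgoing transport \ref{assH2} is essential. Collecting these bounds and passing from $D(\El_0)$ to $C_{\mathrm{ub}}(\R)$ by density completes the proof.

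The main obstacle is the absence of localisation. Once the contour has been moved onto the spectral curve, the factor $\re^{\nu_c(\lambda)(\zeta-\zb)}$ is merely oscillatory and $\int_\zeta^\infty|\gb(\zb)|\de\zb$ diverges for a general $\gb\in C_{\mathrm{ub}}(\R)$, so the decay in $\zb-\zeta$ needed to close the estimate has to be generated through integration by parts in the spectral parameter, which forces one to control the attendant boundary terms — this is exactly the role of the smooth cut-off $\rho$ (to annihilate the boundary terms on the side of the origin) and of the nonvanishing group velocity (to confine the full-range extension of the Floquet-Bloch kernel to a region of exponentially small mass).
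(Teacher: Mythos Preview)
Your overall strategy---pointwise analyticity, contour deformation onto the critical curve, the change of variables via $\nu_c(\lambda_c(\xi))=\ri\xi$ and \eqref{e:spectralprojid2}, the cut-off $\rho$, and integration by parts in the spectral variable to manufacture spatial integrability---is correct and tracks the paper's proof closely. Your treatment of the $\widehat{\Gamma}_\pm$-pieces for $\zb\geq\zeta$ is essentially the paper's handling of its integrals $I_\pm$ and $I_c$, just with different bookkeeping: you merge contours and rely on flatness of the weight to kill the interior boundary term, whereas the paper computes the boundary contributions at $\lambda_c(\pm\xi_0)$ explicitly and observes that they cancel in the combination $I_++I_-+I_c$. (A small caveat: for your version you need $1-\rho$ to vanish to second order at the inner endpoint, which the bare conditions on $\rho$ do not guarantee; this is easily arranged by letting $\rho\equiv1$ on a slightly larger interval and starting $\widehat{\Gamma}_\pm$ there.)

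The genuine gap is the $\zb<\zeta$ correction. The ``Gaussian centred at $\zb=\zeta-c_g t$'' heuristic is the right picture but not a proof: integration by parts in $\xi$ on the Floquet--Bloch integral generates factors of $t$ from $\partial_\xi\re^{\lambda_c(\xi)t}$ that are compensated only by $\Re\lambda_c(\xi)\leq0$ on $\mathrm{supp}\,\rho$, so you get at best polynomial-in-$t$ control; and you cannot shift the $\xi$-contour into $\C$ directly because $\rho$ is merely smooth. The paper's fix is precisely the rigorous version of your heuristic. One writes the Floquet--Bloch integral over $\zb\leq\zeta$ as $I_0-I_c$, where $I_0$ is the integral over the full arc $\Gamma_c=\{\lambda_c(\xi):|\xi|\leq\xi_0\}$ (no cut-off) and $I_c$ carries the factor $1-\rho$. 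On $I_0$ the integrand is analytic in $\lambda$, so Cauchy's theorem permits deforming $\Gamma_c$ to the straight chord $\widetilde{\Gamma}_c$ joining $\lambda_c(\pm\xi_0)$; this chord lies to the \emph{left} of $\sigma(\El_0)$, where $\Re\lambda\leq-\eta_0$ and $\Re\nu_c\leq0$, so $|\re^{\nu_c(\lambda)(\zeta-\zb)}|\leq1$ for $\zb\leq\zeta$ and the temporal factor is $\re^{-\eta_0 t}$. Two integrations by parts then give the $(1+(\zeta-\zb)^2)^{-1}$ spatial integrability exactly as in your $\widehat{\Gamma}_\pm$ argument. The piece $I_c$ is supported on $\tfrac12\xi_0\leq|\xi|\leq\xi_0$ where $\Re\lambda_c\leq-\eta_c<0$, so is handled identically. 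The boundary terms at $\lambda_c(\pm\xi_0)$ again cancel in the combination $I_0-I_c$. Note finally that Hypothesis~\ref{assH2} enters only through the orientation of the setup (Proposition~\ref{p:Floquet}(v) and the half-line $\zb\geq\zeta$ in \eqref{e:resolventdecomplowfreq}); the proof itself needs only $c_g\neq0$, not a Gaussian-tail estimate.
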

\begin{proof}
First, we note that Propositions~\ref{prop:speccons} and~\ref{p:Floquet} imply $\nu_c'(0) = -c_g^{-1} \neq 0$ and $\lambda_c'(0) = -\ri c_g \neq 0$. So, using Proposition~\ref{prop:speccons}, we can take $\delta > 0$ so small that Proposition~\ref{p:Floquet} and Lemma~\ref{expansion_in_laplace} apply, it holds $\nu_c'(\lambda) \neq 0$ for all $\lambda \in \overline{B(0,\delta)}$, and each point in $\sigma(\El_0) \cap B(0,\delta)$ lies on the curve $\{\lambda_c(\xi) : \xi \in V_1 \cap \R\}$. In addition, there exists, again by Proposition~\ref{prop:speccons}, $\xi_0 > 0$ such that we have $[-\xi_0,\xi_0] \subset V_1 \cap \R$, it holds $\mathrm{sgn}(\Im(\lambda_c(\pm \xi_0))) = \pm 1$, each point on the curve $\lambda_c([-\xi_0,\xi_0])$ lies in the ball $B(0,\delta)$ and on the rightmost boundary $\{z \in \sigma(\El_0) : z + w \in \rho(\El_0) \text{ for all } w > 0\}$ of the spectrum of $\El_0$, and $\lambda_c'(\xi)$ is nonzero for each $\xi \in [-\xi_0,\xi_0]$. We let $\rho \colon \R \to \R$ be a smooth cut-off function, which is supported on $(-\xi_0,\xi_0)$ and satisfies $\rho(\xi) = 1$ for $\xi \in \smash{[-\frac12 \xi_0, \frac12 \xi_0]}$.

Our approach is to deform the contour $\Gamma_\delta$ into a new contour consisting of a smooth curve $\Gamma_- \subset B(0,\delta) \cap \{z \in \C : \Re(\lambda) < 0\}$ which connects the lower endpoint $-\eta_1 - \ri \eta_2$ of $\Gamma_\delta$ to $\lambda_c(-\xi_0)$ and satisfies $\Gamma_- \setminus \{\lambda_c(-\xi_0)\} \subset \rho(\El_0)$, the smooth curve $\Gamma_c \subset B(0,\delta)$ which connects $\lambda_c(-\xi_0)$ to $\lambda_c(\xi_0)$ and is parameterized by $\lambda_c$, and a smooth curve $\Gamma_+ \subset B(0,\delta) \cap \{z \in \C : \Re(\lambda) < 0\}$ which connects the point $\lambda_c(\xi_0)$ to the upper endpoint $-\eta_1 + \ri \eta_2$ of $\Gamma_\delta$ and satisfies $\Gamma_+ \setminus \{\lambda_c(\xi_0)\} \subset \rho(\El_0)$, see Figure~\ref{ident_figure}. We note that the contours $\Gamma_\pm$ exist, because the points $-\eta_1 \pm \ri \eta_2$ lie in the open left-half plane strictly to the right of $\sigma_0(\El_0) \cap B(0,\delta)$, it holds $\mathrm{sgn}(\Im(\lambda_c(\pm \xi_0))) = \pm 1$, and each point on the curve $\lambda_c([-\xi_0,\xi_0])$ lies in the ball $B(0,\delta)$ and on the rightmost boundary $\{z \in \sigma(\El_0) : z + w \in \rho(\El_0) \text{ for all } w > 0\}$ of the spectrum of $\El_0$, which lies in $\{z \in \C : \Re(z) < 0\} \cup \{0\}$ by assumption~\ref{assD1}. 

We choose parameterizations $\lambda_\pm \colon [0,1] \to \C$ of the curves $\Gamma_\pm$ satisfying $\lambda_\pm'(\xi) \neq 0$ for $\xi \in [0,1]$. Since $\nu_c$ and $\smash{\Psi(\zeta,\cdot) \tilde{\Psi}(\zb,\cdot)^*}$ are analytic and it holds $\ri \smash{\Phi_\xi(\zeta)\widetilde{\Phi}_\xi(\zb)^* = \Psi(\zeta,\lambda_c(\xi))\tilde{\Psi}(\zb,\lambda_c(\xi))^* \lambda_c'(\xi)}$ for each $\zeta,\zb \in \R$ and $\xi \in (-\xi_0,\xi_0)$ by Proposition~\ref{p:Floquet} and Lemma~\ref{expansion_in_laplace}, Cauchy's integral theorem implies
\begin{align} \label{e:Cauchy8}
\begin{split}
\int_{\Gamma_\delta} \re^{\lambda t + \nu_c(\lambda)(\zeta - \zb)} \Psi(\zeta,\lambda) \tilde{\Psi}(\zb,\lambda)^* \,\de \lambda &= \left(\int_{\Gamma_-} + \int_{\Gamma_c} + \int_{\Gamma_+}\right) \re^{\lambda t + \nu_c(\lambda)(\zeta - \zb)} \Psi(\zeta,\lambda) \tilde{\Psi}(\zb,\lambda)^* \,\de \lambda\\
&= \ri \int_\R \rho(\xi)\re^{\lambda_c(\xi) t + \ri \xi(\xx - \xt)} \Phi_\xi(\zeta)\widetilde{\Phi}_\xi(\zb)^* \,\de\xi + I_+ + I_- + I_c 
\end{split}
\end{align}
where we denote
\begin{align*}
I_\pm &= \int_{\Gamma_\pm} \re^{\lambda t + \nu_c(\lambda)(\zeta - \zb)} \Psi(\zeta,\lambda) \tilde{\Psi}(\zb,\lambda)^* \,\de \lambda, \qquad I_c = \ri \int_{-\xi_0}^{\xi_0} (1-\rho(\xi))\re^{\lambda_c(\xi) t + \ri \xi(\xx - \xt)} \Phi_\xi(\zeta)\widetilde{\Phi}_\xi(\zb)^* \,\de\xi
\end{align*}
for $\xx,\xt \in \R$ and $t \geq 0$. On the other hand, using again $\smash{\ri \Phi_\xi(\zeta)\widetilde{\Phi}_\xi(\zb)^* = \Psi(\zeta,\lambda_c(\xi))\tilde{\Psi}(\zb,\lambda_c(\xi))^* \lambda_c'(\xi)}$, we infer
\begin{align} \label{e:Cauchy9}
\ri \int_\R \rho(\xi)\re^{\lambda_c(\xi) t + \ri \xi(\xx - \xt)} \Phi_\xi(\zeta)\widetilde{\Phi}_\xi(\zb)^* \,\de\xi = I_0 - I_c,
\end{align}
where we denote
\begin{align*}
I_0 = \int_{\Gamma_c} \re^{\lambda t + \nu(\lambda)(\xx-\xt)}\Psi(\zeta,\lambda)\tilde{\Psi}(\zb,\lambda)^* \,\de\lambda
\end{align*}
for $\xx,\xt \in \R$ and $t \geq 0$. All in all,~\eqref{e:Cauchy8} and~\eqref{e:Cauchy9} yield the decomposition
\begin{align}
\label{e:Cauchy_intermediate}
\begin{split}
    \ri \int_\R \rho(\xi)\re^{\lambda_c(\xi) t + \ri \xi(\xx - \xt)} \Phi_\xi(\zeta)\widetilde{\Phi}_\xi(\xt)^* \,\de\xi 
    &= \mathbf{1}_{[\zeta,\infty)}(\zb)\int_{\Gamma_\delta} \re^{\lambda t + \nu(\lambda)(\xx-\xt)}\Psi(\zeta,\lambda)\tilde{\Psi}(\xt,\lambda)^* \,\de\lambda\\ 
    &\qquad - \, \mathbf{1}_{[\zeta,\infty)}(\zb)\left(I_+ + I_- + I_c \right) + \mathbf{1}_{(-\infty,\zeta]}(\zb) \left(I_0-I_c\right)
    \end{split}
\end{align}
for $\xx,\xt \in \R$ and $t\geq 0$. We will  use integration by parts to establish pointwise approximations of $I_\pm$, $I_0$ and $I_c$, which yield integrability in space and exponential decay in time of $\mathbf{1}_{[\zeta,\infty)}(\zb)\left(I_+ + I_- + I_c \right)$ and of $\mathbf{1}_{(-\infty,\zeta]}(\zb) \left(I_0-I_c\right)$. This then readily leads to the desired result.

\paragraph*{Pointwise approximations of $I_\pm$ for $\xx \leq \xt$.} We wish to factor out the space-integrable quotient $(1+(\xx-\xt)^2)^{-1}$ by establishing pointwise approximations of $I_+$ and $(\xx-\xt)^2 I_+$. Recalling $\nu_c'(\lambda) \neq 0$ for all $\lambda \in B(0,\delta)$, abbreviating $\Psi_1(\xx,\xt,\lambda) = \Psi(\zeta,\lambda)\tilde{\Psi}(\zb,\lambda)^*/\nu_c'(\lambda)$ and using integration by parts and Proposition~\ref{p:Floquet}, we rewrite
\begin{align*}
(\xx-\xt)^2 I_+ &= \int_0^1  (\xx-\xt)\re^{\lambda_+(\xi) t}  \Psi_1(\xx,\xt,\lambda_+(\xi)) \partial_\xi \left(\re^{\nu(\lambda_+(\xi))(\xx-\xt)}\right) \,\de\xi\\
&= \left[(\xx-\xt)\re^{\lambda_+(\xi) t + \nu(\lambda_+(\xi))(\xx-\xt)}  \Psi_1(\xx,\xt,\lambda_+(\xi))\right]_{\xi = 0}^1\\ 
&\qquad \qquad - \, \int_0^1 \partial_\xi\left((\xx-\xt)\re^{\lambda_+(\xi) t}  \Psi_1(\xx,\xt,\lambda_+(\xi)) \right)\re^{\nu(\lambda_+(\xi))(\xx-\xt)} \,\de\xi\\
&= (\xx - \xt)\re^{(-\eta_1 + \ri\eta_2) t + \nu(-\eta_1 + \ri\eta_2)(\xx-\xt)}  \Psi_1(\xx,\xt,-\eta_1 + \ri\eta_2)\\ 
&\qquad - \, \int_0^1 \partial_\xi\left((\xx-\xt)\re^{\lambda_+(\xi) t}  \Psi_1(\xx,\xt,\lambda_+(\xi)) \right)\re^{\nu(\lambda_+(\xi))(\xx-\xt)} \,\de\xi\\
&\qquad - \, (\xx - \xt)\re^{\lambda_c(\xi_0) t + \ri\xi_0(\xx-\xt)}  \Psi_1(\xx,\xt,\lambda_c(\xi_0))\\
&=: II_+ + III_+ - (\xx - \xt)\re^{\lambda_c(\xi_0) t + \ri\xi_0(\xx-\xt)}  \Psi_1(\xx,\xt,\lambda_c(\xi_0)).
\end{align*}
Abbreviating $\Psi_2(\xx,\xt,\lambda) = \Psi_1(\xx,\xt,\lambda)/\nu_c'(\lambda)$ and $\Psi_3(\xx,\xt,\lambda) = \partial_\lambda \Psi_1(\xx,\xt,\lambda)/\nu_c'(\lambda)$ and integrating by parts once again, we arrive at
\begin{align*}
 III_+ &= -\int_0^1 \re^{\lambda_+(\xi) t} \left(t\Psi_2(\xx,\xt,\lambda_+(\xi)) + \Psi_3(\xx,\xt,\lambda_+(\xi))\right) \partial_\xi\left(\re^{\nu(\lambda_+(\xi))(\xx-\xt)}\right) \,\de\xi\\
&= \int_0^1  \partial_\xi \left(\re^{\lambda_+(\xi) t} \left(t\Psi_2(\xx,\xt,\lambda_+(\xi)) + \Psi_3(\xx,\xt,\lambda_+(\xi))\right)\right) \re^{\nu(\lambda_+(\xi))(\xx-\xt)} \,\de\xi\\
 &\qquad - \, \left[\re^{\lambda_+(\xi) t + \nu(\lambda_+(\xi))(\xx-\xt)} \left(t\Psi_2(\xx,\xt,\lambda_+(\xi)) + \Psi_3(\xx,\xt,\lambda_+(\xi))\right)\right]_{\xi = 0}^1\\
& = \int_{\Gamma_+} \re^{\lambda t + \nu(\lambda)(\xx-\xt)}\left(t^2 \Psi_2(\xx,\xt,\lambda) + t\left(\partial_\lambda \Psi_2(\xx,\xt,\lambda) + \Psi_3(\xx,\xt,\lambda)\right) + \partial_\lambda \Psi_3(\xx,\xt,\lambda)\right) \,\de \lambda\\
 &\qquad - \, \left[\re^{\lambda_+(\xi) t + \nu(\lambda_+(\xi))(\xx-\xt)} \left(t\Psi_2(\xx,\xt,\lambda_+(\xi)) + \Psi_3(\xx,\xt,\lambda_+(\xi))\right)\right]_{\xi = 0}^1.
\end{align*}
We establish pointwise estimates on the contributions $I_+, II_+$ and $III_+$. Here, we use the following facts which follow with the aid of Proposition~\ref{p:Floquet} and Lemma~\ref{expansion_in_laplace}. First, since the curves $\lambda_\pm$ lie  in the open left-half plane and the points $-\eta_1 \pm \ri \eta_2$ lie strictly to the right of $\sigma(\El_0)$, there exists a constant $\eta_0 > 0$ such that $\Re(\nu(-\eta_1 \pm \ri \eta_2)) \geq \eta_0$ and $\Re(\lambda_\pm(\xi)) \leq -\eta_0$ for all $\xi \in [0,1]$. Second, since the curves $\lambda_\pm$ lie to the right of $\sigma(\El_0)$, it holds $\Re(\nu(\lambda_\pm(\xi))) \geq 0$ for all $\xi \in [0,1]$. Third, the functions $\Psi_i(\xx,\xt,\lambda)$ as well as their derivatives with respect to $\lambda$ are bounded on $\R \times \R \times B(0,\delta)$ for $i = 1,2,3$. Thus, we establish the following pointwise bounds
\begin{align*}
 |II_+| &\lesssim |\xx-\xt| \re^{-\eta_1 t + \eta_0 (\xx - \xt)}, \qquad |I_+|, |III_+| \lesssim (1+t+t^2)\re^{-\eta_0 t}
\end{align*}
for $t \geq 0$ and $\xx, \xt \in \R$ with $\xx - \xt \leq 0$. All in all, we conclude
\begin{align}
\label{e:I+est}
\begin{split}
&\left|I_+ + \frac{(\xx - \xt)\re^{\lambda_c(\xi_0) t + \ri\xi_0(\xx-\xt)}}{1+(\xx-\xt)^2} \Psi_1(\xx,\xt,\lambda_c(\xi_0))\right| \lesssim \frac{(1+t+t^2)\re^{-\eta_0 t} + |\xx-\xt| \re^{-\eta_1 t + \eta_0 (\xx - \xt)}}{1 + \left(\xx - \xt\right)^2}
\end{split}
\end{align}
for $t \geq 0$ and $\xx, \xt \in \R$ with $\xx - \xt \leq 0$

Analogously, one finds
\begin{align}
\label{e:I-est}
\begin{split}
&\left|I_- - \frac{(\xx - \xt)\re^{\lambda_c(-\xi_0) t - \ri\xi_0(\xx-\xt)}}{1+(\xx-\xt)^2} \Psi_1(\xx,\xt,\lambda_c(-\xi_0))\right| \lesssim \frac{(1+t+t^2)\re^{-\eta_0 t} + |\xx-\xt| \re^{-\eta_1 t + \eta_0 (\xx - \xt)}}{1 + \left(\xx - \xt\right)^2},
\end{split}
\end{align}
for $t \geq 0$ and $\xx, \xt \in \R$ with $\xx - \xt \leq 0$. 

\paragraph*{Pointwise approximation of $I_0$ for $\xx \geq \xt$.}  Recalling that the integrand of $I_0$ is analytic in $\lambda$ on $B(0,\delta)$, we can apply Cauchy's integral theorem to deform the contour $\Gamma_c$ to a line $\smash{\widetilde{\Gamma}_c}$ connecting the point $\lambda_c(-\xi_0)$ to $\lambda_c(\xi_0)$. We parameterize the line by a curve $\lambda_0 \colon [0,1] \to \C$ satisfying $\lambda_0'(\xi) \neq 0$ for all $\xi \in [0,1]$, see Figure~\ref{ident_figure}. We proceed similarly as before and factor out the quotient $(1+(\xx-\xt)^2)^{-1}$, which is integrable in space. Thus, using integration by parts and Proposition~\ref{p:Floquet}, we rewrite
\begin{align*}
(\xx-\xt)^2 I_0 &= \int_0^1  (\xx-\xt)\re^{\lambda_0(\xi) t}  \Psi_1(\xx,\xt,\lambda_0(\xi)) \partial_\xi \left(\re^{\nu(\lambda_0(\xi))(\xx-\xt)}\right) \,\de\xi\\
&= (\xx - \xt)\re^{\lambda_c(\xi_0) t + \ri\xi_0(\xx-\xt)}  \Psi_1(\xx,\xt,\lambda_c(\xi_0)) - (\xx - \xt)\re^{\lambda_c(-\xi_0) t - \ri\xi_0(\xx-\xt)}  \Psi_1(\xx,\xt,\lambda_c(-\xi_0))\\
& \qquad - \, \int_0^1 \partial_\xi\left((\xx-\xt)\re^{\lambda_0(\xi) t}  \Psi_1(\xx,\xt,\lambda_0(\xi)) \right)\re^{\nu(\lambda_0(\xi))(\xx-\xt)} \,\de\xi.
\end{align*}
Using integration by parts once again, we establish
\begin{align*}
II_0 &:= -\int_0^1 \partial_\xi\left((\xx-\xt)\re^{\lambda_0(\xi) t}  \Psi_1(\xx,\xt,\lambda_0(\xi)) \right)\re^{\nu(\lambda_0(\xi))(\xx-\xt)} \,\de\xi \\
& = -\int_0^1 \re^{\lambda_0(\xi) t} \left(t\Psi_2(\xx,\xt,\lambda_0(\xi)) + \Psi_3(\xx,\xt,\lambda_0(\xi))\right) \partial_\xi\left(\re^{\nu(\lambda_0(\xi))(\xx-\xt)}\right) \,\de\xi\\
& = \int_{\widetilde\Gamma_c} \re^{\lambda t + \nu(\lambda)(\xx-\xt)}\left(t^2 \Psi_2(\xx,\xt,\lambda) + t\left(\partial_\lambda \Psi_2(\xx,\xt,\lambda) + \Psi_3(\xx,\xt,\lambda)\right) + \partial_\lambda \Psi_3(\xx,\xt,\lambda)\right) \,\de \lambda\\
 &\qquad - \, \left[\re^{\lambda_0(\xi) t + \nu(\lambda_0(\xi))(\xx-\xt)} \left(t\Psi_2(\xx,\xt,\lambda_0(\xi)) + \Psi_3(\xx,\xt,\lambda_0(\xi))\right)\right]_{\xi = 0}^1.
\end{align*}
Since $\widetilde{\Gamma}_c$ is a straight line in $B(0,\delta)$ lying to the left of $\sigma(\El_0)$, it holds $\Re(\lambda_0(\xi)) \leq \Re(\lambda_c(\pm \xi_0)) \leq -\eta_0$ and $\Re(\nu(\lambda_0(\xi))) \leq 0$ for all $\xi \in [0,1]$ by Proposition~\ref{p:Floquet}. Hence, we obtain the following pointwise bounds
\begin{align*}
 |I_0|, |II_0| &\lesssim (1+t+t^2)\re^{-\eta_0 t},
\end{align*}
for $t \geq 0$ and $\xx, \xt \in \R$ with $\xx - \xt \geq 0$. We conclude that
\begin{align}
\label{e:I0est}
\begin{split}
&\left|I_0 - \frac{\xx - \xt}{1+(\xx-\xt)^2}\left(\re^{\lambda_c(\xi_0) t + \ri\xi_0(\xx-\xt)}  \Psi_1(\xx,\xt,\lambda_c(\xi_0)) - \re^{\lambda_c(-\xi_0) t - \ri\xi_0(\xx-\xt)}  \Psi_1(\xx,\xt,\lambda_c(-\xi_0))\right)\right|\\ 
&\qquad \lesssim \frac{(1+t+t^2)\re^{-\eta_0 t}}{1 + \left(\xx - \xt\right)^2},
\end{split}
\end{align}
for $t \geq 0$ and $\xx, \xt \in \R$ with $\xx - \xt \geq 0$.

\paragraph*{Pointwise approximation of $I_c$.} Again our approach is to factor out the quotient $(1+(\xx-\xt)^2)^{-1}$. Recalling $\ri \Phi_\xi(\zeta)\widetilde{\Phi}_\xi(\zb)^* = \Psi(\zeta,\lambda_c(\xi))\tilde{\Psi}(\zb,\lambda_c(\xi))^*\lambda_c'(\xi)$ for $\xi \in [-\xi_0,\xi_0]$, using integration by parts and applying Proposition~\ref{p:Floquet}, we rewrite
\begin{align*}
(\xx-\xt)^2 I_c &= \int_{-\xi_0}^{\xi_0}  \left(1-\rho(\xi)\right) (\xx-\xt)\re^{\lambda_c(\xi) t}  \Psi_1(\xx,\xt,\lambda_c(\xi)) \partial_\xi \left(\re^{\nu(\lambda_c(\xi))(\xx-\xt)}\right) \,\de\xi\\
&= (\xx - \xt)\re^{\lambda_c(\xi_0) t + \ri\xi_0(\xx-\xt)}  \Psi_1(\xx,\xt,\lambda_c(\xi_0)) - (\xx - \xt)\re^{\lambda_c(-\xi_0) t - \ri\xi_0(\xx-\xt)}  \Psi_1(\xx,\xt,\lambda_c(-\xi_0))\\
&\qquad - \, \int_{-\xi_0}^{\xi_0} \partial_\xi\left( \left(1-\rho(\xi)\right) (\xx-\xt)\re^{\lambda_c(\xi) t}  \Psi_1(\xx,\xt,\lambda_c(\xi)) \right)\re^{\ri\xi(\xx-\xt)} \,\de\xi.
\end{align*}
Abbreviating $\tilde{\Psi}_2(\xx,\xt,\xi) = \left(1-\rho(\xi)\right) \Psi_1(\xx,\xt,\lambda_c(\xi))$ and integrating by parts once again, we establish
\begin{align*}
II_c &:= -\int_{-\xi_0}^{\xi_0} \partial_\xi\left((\xx-\xt)\re^{\lambda_c(\xi) t}  \tilde{\Psi}_2(\xx,\xt,\xi) \right)\re^{\ri\xi(\xx-\xt)} \,\de\xi \\
& = \ri\int_{-\xi_0}^{\xi_0} \re^{\lambda_c(\xi) t} \left(\lambda_c'(\xi) t \tilde{\Psi}_2(\xx,\xt,\xi) + \partial_\xi \tilde{\Psi}_2(\xx,\xt,\xi)\right)\partial_\xi\left(\re^{\ri \xi(\xx-\xt)}\right) \,\de\xi\\
& = \int_{-\xi_0}^{\xi_0} \frac{\re^{\lambda_c(\xi) t + \ri \xi(\xx-\xt)}}{\ri} \left(\left(\left(\lambda_c'(\xi)t\right)^2\! + \lambda_c''(\xi)t\right) \tilde{\Psi}_2(\xx,\xt,\xi) + 2\lambda_c'(\xi) t \partial_\xi \tilde{\Psi}_2(\xx,\xt,\xi) +  \partial_\xi^2 \tilde{\Psi}_2(\xx,\xt,\xi)\right)\!\de \xi\\
&\qquad + \, \ri\left[\re^{\lambda_c(\xi) t + \ri\xi(\xx-\xt)} \left(\lambda_c'(\xi) t \tilde{\Psi}_2(\xx,\xt,\xi) + \partial_\xi \tilde{\Psi}_2(\xx,\xt,\xi)\right)\right]_{\xi = -\xi_0}^{\xi_0}.
\end{align*}
In order to obtain pointwise estimates on $I_c$ and $II_c$, we note that there exists $\eta_c > 0$ such that $\Re(\lambda_c(\pm \xi)) \leq -\eta_c$ for all $\xi \in [\tfrac12 \xi_0,\xi_0]$ by Proposition~\ref{prop:speccons}. Therefore, recalling that $1-\rho(\xi)$ vanishes on $[-\tfrac12 \xi_0,\tfrac12 \xi_0]$, we obtain
\begin{align*}
 |I_c|, |II_c| &\lesssim (1+t+t^2)\re^{-\eta_c t},
\end{align*}
for $t \geq 0$ and $\xx, \xt \in \R$. We conclude
\begin{align}
 \label{e:Icest}
\begin{split}
&\left|I_c - \frac{\xx - \xt}{1+(\xx-\xt)^2}\left(\re^{\lambda_0(\xi_0) t + \ri\xi_0(\xx-\xt)}  \Psi_1(\xx,\xt,\lambda_0(\xi_0)) - \re^{\lambda_0(-\xi_0) t - \ri\xi_0(\xx-\xt)}  \Psi_1(\xx,\xt,\lambda_0(-\xi_0))\right)\right|\\ 
&\qquad \lesssim \frac{(1+t+t^2)\re^{-\eta_c t}}{1 + \left(\xx - \xt\right)^2},
\end{split}
\end{align}
for $t \geq 0$ and $\xx, \xt \in \R$. 

\paragraph*{Conclusion.} Denote $\widetilde{\eta} := \min\{\eta_0/2,\eta_c/2,\eta_1\} > 0$. Recalling the decomposition~\eqref{e:Cauchy_intermediate} and applying the estimates~\eqref{e:I+est},~\eqref{e:I-est},~\eqref{e:I0est} and~\eqref{e:Icest}, we find the desired bound
\begin{align*}
&\left|\ri \int_\R \int_\R \rho(\xi)\re^{\lambda_c(\xi) t + \ri \xi(\xx - \xt)} \Phi_\xi(\zeta)\widetilde{\Phi}_\xi(\xt)^* \,\de\xi \,\gb(\xt) \,\de\xt - \int_\xx^\infty \int_{\Gamma_\delta} \re^{\lambda t + \nu(\lambda)(\xx-\xt)}\Psi(\zeta,\lambda)\tilde{\Psi}(\xt,\lambda)^* \,\de\lambda \,\gb(\xt)\,\de \xt\right|\\ 
&\qquad \lesssim \|\gb\|_{L^\infty} \left(\int_\R \frac{(1+t+t^2)\re^{-2\widetilde{\eta} t}}{1 + \xt^2} \,\de \xt + \int_{-\infty}^0 \frac{|\xt|\re^{\eta_0 \xt-\eta_1 t}}{1 + \xt^2} \,\de \xt \right)\lesssim \|\gb\|_{L^\infty} \re^{-\widetilde{\eta} t},
\end{align*}
for $\gb \in C_{\mathrm{ub}}(\R)$, $\xx \in \R$ and $t \geq 0$.
\end{proof}

\begin{figure}[ht]
\centering
\begin{subfigure}[b]{.34\textwidth}
\centering
\includegraphics[width=\linewidth]{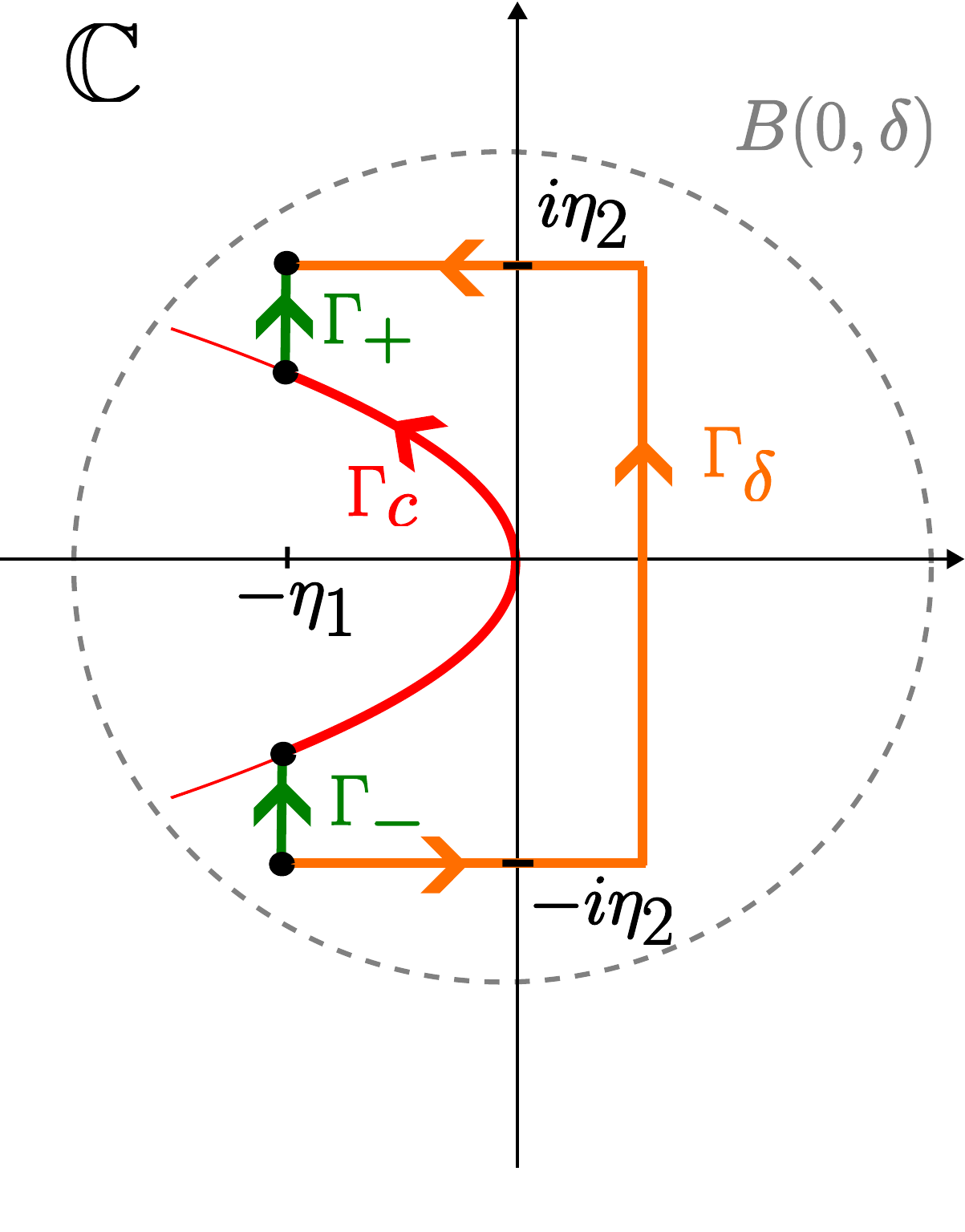}
\end{subfigure} \hspace{0.2\textwidth}
\begin{subfigure}[b]{.34\textwidth}
\centering
\includegraphics[width=\linewidth]{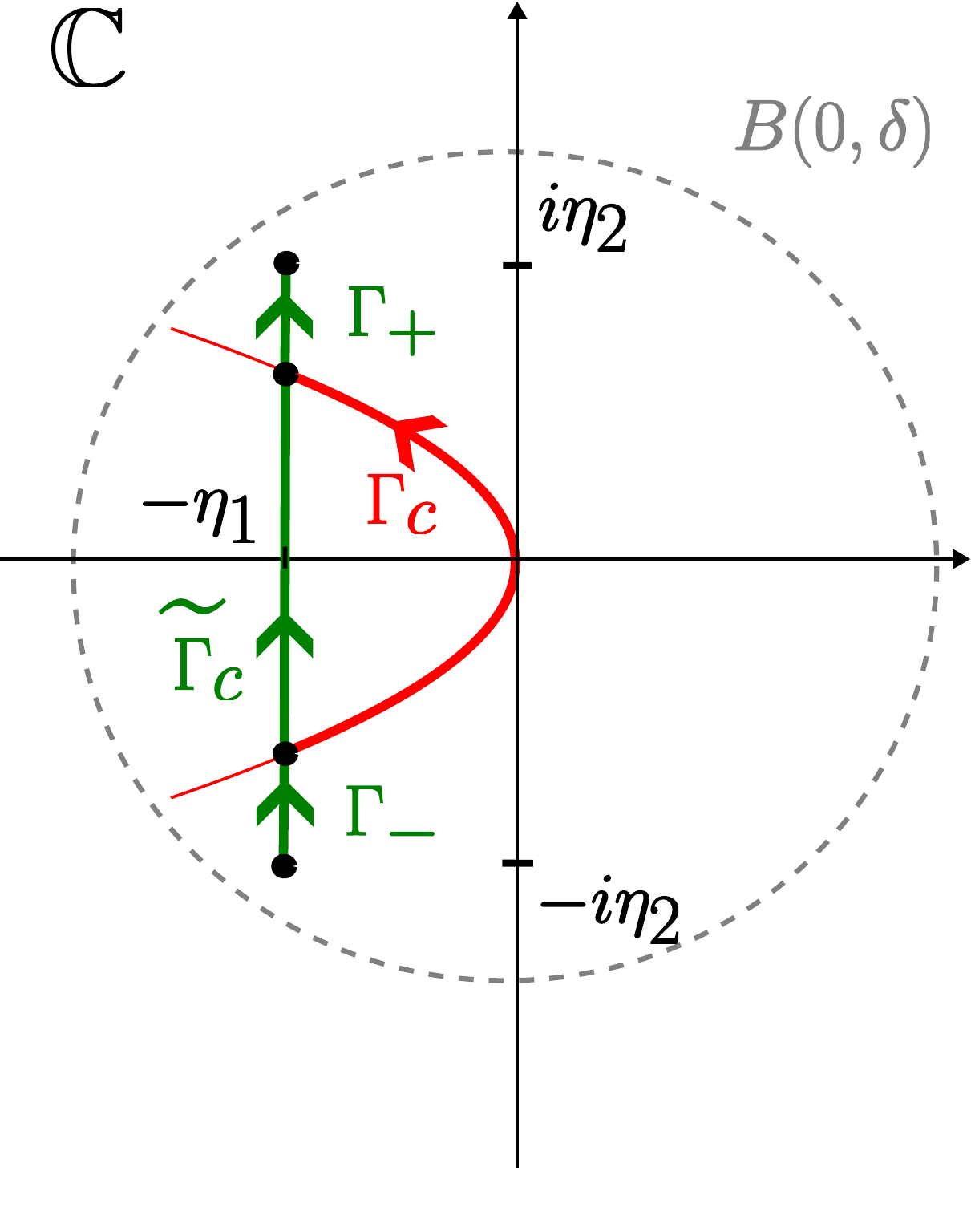}
\end{subfigure}
\caption{In the proof of Proposition~\ref{p:FloquetBlochRep} we relate the Floquet-Bloch representation of the critical part of the semigroup, corresponding to an inverse Laplace integral over $\Gamma_c$, with the aid of Cauchy's integral theorem to complex line integrals over $\Gamma_\delta,\Gamma_-$ and $\Gamma_+$ for $\xx \leq \xt$ (left panel) and over $\smash{\widetilde{\Gamma}_c}$ for $\xx \geq \xt$ (right panel). Here, $\Gamma_c$ lies on the critical spectral curve $\{\lambda_c(\xi) : \xi \in \R \cap V_1\}$ established in Proposition~\ref{prop:speccons}.}
\label{ident_figure}
\end{figure}

\subsection{Linear estimates}

By Propositions~\ref{deform},~\ref{p:highfreqest},~\ref{p:isolate1},~\ref{p:isolate2} and~\ref{p:FloquetBlochRep}, the semigroup $\re^{\El_0 t}$ decomposes for $t \geq 0$ as
\begin{align*}
 \re^{\El_0 t} = S_c(t) + S_e(t),
\end{align*}
where the operator $S_c(t) \colon C_{\mathrm{ub}}(\R) \to C_{\mathrm{ub}}(\R)$ given by
\begin{align} \label{e:Scdef}
\left(S_c(t)\gb\right)[\zeta] = \frac{\chi(t)}{2 \pi} \int_\R \int_\R \rho(\xi)\re^{\lambda_c(\xi) t + \ri \xi(\xx - \xt)} \Phi_\xi(\xx)\widetilde{\Phi}_\xi(\zb)^* \,\de\xi \, \gb(\xt) \,\de\xt
\end{align}
corresponds to the critical low-frequency part of the semigroup and
\begin{align} \label{e:Sedef}
 S_e(t) = (1-\chi(t)) \re^{\El_0 t} + S_e^1(t) + S_e^2(t) + S_e^3(t) + S_e^4(t)
\end{align}
is the exponentially decaying residual. The Floquet-Bloch representation~\eqref{e:Scdef} of the critical part of the semigroup is identical to the one obtained in the stability analysis~\cite{BjoernMod} of wave trains in reaction-diffusion systems against $C_{\mathrm{ub}}$-perturbations. Thus, the further decomposition of $S_c(t)$, as well as the proofs of the associated $L^\infty$-estimates, can be taken verbatim from~\cite{BjoernMod}. On the other hand, estimates on the terms comprising $S_e(t)$ were obtained in Lemma~\ref{l:shorttime} and Propositions~\ref{p:highfreqest},~\ref{p:isolate1},~\ref{p:isolate2} and~\ref{p:FloquetBlochRep}. In the final result of this section, we collect these results and state the decomposition of the semigroup and associated estimates needed for our nonlinear stability analysis.

\begin{theorem} \label{t:linear}
Assume~\ref{assH1},~\ref{assH2} and~\ref{assD1}-\ref{assD3}. Let $j,l \in \mathbb{N}_0$. There exist constants $C,\alpha > 0$ such that the semigroup $\re^{\El_0 t}$ decomposes as
\begin{align} \label{e:decomp1}
 \re^{\El_0 t} = \left(\phi_0' + \partial_k \phi(\cdot,1)\partial_\xx\right) S_p(t) + S_r(t) + S_e(t),
\end{align}
where the operators $S_e(t),S_r(t) \colon C_{\mathrm{ub}}(\R) \to C_{\mathrm{ub}}(\R)$ obey the estimates
\begin{align} \label{e:linest1}
\|S_e(t)\gb\|_{L^\infty} &\leq C \re^{-\alpha t}\|\gb\|_{L^\infty},\qquad
\|S_r(t)\gb\|_{L^\infty} \leq C \frac{\|\gb\|_{L^\infty}}{1+t}
\end{align}
for $t \geq 0$ and $\gb \in C_{\mathrm{ub}}(\R)$. In addition, $S_p(t) \colon C_{\mathrm{ub}}(\R) \to C_{\mathrm{ub}}(\R)$ satisfies $S_p(t) = 0$ for $t \in [0,1]$ and the map $t \mapsto S_p(t)\gb$ lies in $C^i\big([0,\infty),C^k_{\mathrm{ub}}(\R)\big)$ for any $i,k \in \mathbb{N}_0$ with 
\begin{align} \label{e:linest2}
\big\|(\partial_t + c_g \partial_\xx)^j \partial_\xx^l S_p(t)\gb\big\|_{L^\infty} &\leq C \frac{\|\gb\|_{L^\infty}}{(1+t)^{j + \frac{l}{2}}}
\end{align}
for $t \geq 0$ and $\gb \in C_{\mathrm{ub}}(\R)$. We have the further decomposition
\begin{align} \label{e:decomp2}
 \partial_\xx^m S_p(t)\gb = \partial_\xx^m \re^{\left(d\partial_\xx^2 - c_g\partial_\xx\right) t} \left(\widetilde{\Phi}_0^*\gb\right) + \partial_\xx^m \widetilde{S}_r(t)\gb,
\end{align}
where the operator $\partial_\xx^m \widetilde{S}_r(t) \colon  C_{\mathrm{ub}}(\R) \to C_{\mathrm{ub}}(\R)$ obeys the estimate
\begin{align}\label{e:linest3}
\big\|\partial_\xx^m \widetilde{S}_r(t)\gb\big\|_{L^\infty} \leq C (1+t)^{-\frac12} t^{-\frac{m}{2}}\|\gb\|_{L^\infty}
\end{align}
for $m = 0,1$, $t > 0$ and $\gb \in C_{\mathrm{ub}}(\R)$. Finally, there exist a bounded operator $A_h \colon L^2_{\mathrm{per}}\big((0,T),\R^2\big) \to C(\R,\R)$ such that it holds
\begin{align} \label{e:shid}
\re^{\left(d\partial_\xx^2 - c_g\partial_\xx\right) t} \left(v \widetilde{\Phi}_0^* \gb \right) = \re^{\left(d\partial_\xx^2 - c_g\partial_\xx\right) t}\left(\langle \widetilde{\Phi}_0, \gb\rangle_{L^2(0,T)} v - A_h(\gb) \partial_\xx v\right) +  \partial_\xx \re^{\left(d\partial_\xx^2 - c_g\partial_\xx\right) t}\left(A_h(\gb) v\right),
\end{align}
for $\gb \in L^2_{\mathrm{per}}((0,T),\R^2)$, $v \in C_{\mathrm{ub}}^1(\R,\R)$ and $t > 0$.
\end{theorem}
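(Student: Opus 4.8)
The plan is to prove Theorem~\ref{t:linear} by first assembling the decomposition~\eqref{e:decomp1} from the results established so far in this section, and then importing the fine structure of the critical low-frequency part --- the further splitting into the operators $S_p(t)$, $S_r(t)$ and $\widetilde{S}_r(t)$, the decay estimates~\eqref{e:linest1}--\eqref{e:linest3}, and the identity~\eqref{e:shid} --- essentially verbatim from~\cite{BjoernMod}. This transfer is legitimate precisely because the Floquet--Bloch representation~\eqref{e:Scdef} of the critical part that we have isolated is, term by term, the same operator analyzed there.

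First I would chain together Lemma~\ref{l:shorttime} and Propositions~\ref{deform},~\ref{p:highfreqest},~\ref{p:isolate1},~\ref{p:isolate2} and~\ref{p:FloquetBlochRep} to obtain, for $\gb \in D(\El_0)$ and $t \geq 0$, the identity $\re^{\El_0 t}\gb = S_c(t)\gb + S_e(t)\gb$, where $S_c(t)$ is the critical operator given by~\eqref{e:Scdef} and $S_e(t) = (1-\chi(t))\re^{\El_0 t} + S_e^1(t) + S_e^2(t) + S_e^3(t) + S_e^4(t)$ is as in~\eqref{e:Sedef}. Each of the five summands of $S_e(t)$ is exponentially bounded on $C_{\mathrm{ub}}(\R)$ by Lemma~\ref{l:shorttime} and Propositions~\ref{p:highfreqest},~\ref{p:isolate1},~\ref{p:isolate2} and~\ref{p:FloquetBlochRep}; summing these bounds and using density of $D(\El_0)$ in $C_{\mathrm{ub}}(\R)$ shows that $S_e(t)$ extends to a bounded operator on $C_{\mathrm{ub}}(\R)$ obeying the first estimate in~\eqref{e:linest1}. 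Since this is the only point where the genuinely new semigroup analysis enters, I do not expect any serious difficulty here: the hard work has already been carried out in Propositions~\ref{p:isolate2} and~\ref{p:FloquetBlochRep}.

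It then remains to unpack $S_c(t)$, and here I would observe that~\eqref{e:Scdef} is exactly the principal low-frequency part of the semigroup appearing in the $C_{\mathrm{ub}}$-stability analysis of wave trains in reaction--diffusion systems in~\cite{BjoernMod}, with all spectral data entering it --- the analytic linear dispersion relation $\lambda_c$, the eigenfunctions $\Phi_\xi,\widetilde{\Phi}_\xi$, their expansions~\eqref{e:eig_exp}, and the cut-off $\rho$ --- satisfying exactly the properties used in that reference (Propositions~\ref{prop:speccons} and~\ref{p:FloquetBlochRep}). Consequently the further decomposition $S_c(t) = \big(\phi_0' + \partial_k\phi(\cdot;1)\partial_\xx\big)S_p(t) + S_r(t)$, the regularity statements about $S_p(t)$, the estimates~\eqref{e:linest2} and the $S_r$-bound in~\eqref{e:linest1}, together with the decomposition~\eqref{e:decomp2} and the bound~\eqref{e:linest3}, transfer verbatim, and I would simply cite~\cite{BjoernMod} for their proofs. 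For orientation I would still indicate the mechanism: substituting~\eqref{e:eig_exp} into~\eqref{e:Scdef} and peeling off the leading eigenfunction terms $\Phi_0 = \phi_0'$ and $\ri\xi\,\partial_k\phi(\cdot;1)$, one uses $\ri\xi\,\re^{\ri\xi(\xx-\xt)} = \partial_\xx\re^{\ri\xi(\xx-\xt)}$ to factor out the operator $\phi_0' + \partial_k\phi(\cdot;1)\partial_\xx$, leaving the scalar operator $S_p(t)$ built from the band-limited kernel $\rho(\xi)\re^{\lambda_c(\xi)t}$ (so $S_p(t)=0$ on $[0,1]$ from the prefactor $\chi(t)$, and $t\mapsto S_p(t)\gb$ is smooth into $C^k_{\mathrm{ub}}(\R)$ for $t\geq 1$ by compact support of $\rho$), while the $\mathcal{O}(\xi^2)$-remainder of $\Phi_\xi$ is collected into $S_r(t)$; estimate~\eqref{e:linest2} then follows from $(\partial_t + c_g\partial_\xx)\re^{\lambda_c(\xi)t + \ri\xi(\xx-\xt)} = (\lambda_c(\xi)+\ri c_g\xi)\re^{\lambda_c(\xi)t + \ri\xi(\xx-\xt)}$, the expansion $\lambda_c(\xi)+\ri c_g\xi = -d\xi^2 + \mathcal{O}(\xi^3)$, and standard $L^1$-bounds on the Gaussian oscillatory integrals $\int_\R \xi^n\re^{-d\xi^2 t + \ri\xi z}\,\de\xi$; and replacing, in $S_p(t)$, the kernel $\rho(\xi)\re^{\lambda_c(\xi)t}$ by $\re^{(-\ri c_g\xi - d\xi^2)t}$ up to exponentially small errors --- using $\rho\equiv 1$ near $\xi=0$, that $1-\rho$ is supported where $\Re\,\lambda_c(\xi)$ is bounded away from $0$ by~\ref{assD2}, and $|\lambda_c(\xi)+\ri c_g\xi+d\xi^2|\lesssim|\xi|^3$ by~\eqref{e:eig_exp} --- and collapsing the $\xt$-integral via Fourier inversion, $\tfrac{1}{2\pi}\int_\R\re^{\ri\xi(\xx-\xt)}\,\de\xi = \delta(\xx-\xt)$, yields~\eqref{e:decomp2} with $(\widetilde{\Phi}_0^*\gb)(\xx) = \widetilde{\Phi}_0(\xx)^*\gb(\xx)$ and a remainder $\widetilde{S}_r(t)$ satisfying~\eqref{e:linest3}.

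Finally, for the algebraic identity~\eqref{e:shid}, given $\gb\in L^2_{\mathrm{per}}((0,T),\R^2)$ I would define $A_h(\gb)$ as a bounded $T$-periodic primitive of $\widetilde{\Phi}_0^*\gb - \langle\widetilde{\Phi}_0,\gb\rangle_{L^2(0,T)}$, which is $T$-periodic with vanishing period-average and, since $\widetilde{\Phi}_0\in H^2_{\mathrm{per}}(0,T)\hookrightarrow L^\infty$, bounded by $\lesssim\|\gb\|_{L^2(0,T)}$; thus $\partial_\xx A_h(\gb) = \widetilde{\Phi}_0^*\gb - \langle\widetilde{\Phi}_0,\gb\rangle_{L^2(0,T)}$ and $A_h\colon L^2_{\mathrm{per}}((0,T),\R^2)\to C(\R,\R)$ is bounded. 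Then~\eqref{e:shid} follows for $v\in C^1_{\mathrm{ub}}(\R,\R)$ from the Leibniz rule $\partial_\xx\big(A_h(\gb)v\big) = \big(\widetilde{\Phi}_0^*\gb - \langle\widetilde{\Phi}_0,\gb\rangle_{L^2(0,T)}\big)v + A_h(\gb)\partial_\xx v$ together with the commutation of $\partial_\xx$ with the convective heat semigroup $\re^{(d\partial_\xx^2 - c_g\partial_\xx)t}$, after rearranging. Overall I anticipate no genuine obstacle in this theorem; the one point requiring care is to verify that the kernel in~\eqref{e:Scdef} coincides term-by-term with its counterpart in~\cite{BjoernMod} --- in particular that the eigenfunctions $\Phi_\xi$, $\widetilde{\Phi}_\xi$ and the cut-off $\rho$ are normalized and chosen in the same way --- so that the verbatim transfer of the decomposition and estimates of $S_c(t)$ is valid.
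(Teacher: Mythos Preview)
Your proposal is correct and follows essentially the same approach as the paper: assemble $\re^{\El_0 t}=S_c(t)+S_e(t)$ from Lemma~\ref{l:shorttime} and Propositions~\ref{deform},~\ref{p:highfreqest},~\ref{p:isolate1},~\ref{p:isolate2},~\ref{p:FloquetBlochRep}, obtain the exponential bound on $S_e(t)$ by summing the individual bounds, and then import the decomposition and estimates for $S_c(t)$, $S_p(t)$, $S_r(t)$, $\widetilde{S}_r(t)$ and the identity~\eqref{e:shid} directly from~\cite{BjoernMod}. The paper's proof is terser, simply citing the relevant sections of~\cite{BjoernMod} without sketching the mechanism, whereas you additionally outline how the eigenfunction expansion~\eqref{e:eig_exp} and the Gaussian kernel estimates produce~\eqref{e:linest2}--\eqref{e:linest3} and give an explicit construction of $A_h$; this extra detail is consistent with the cited reference and not a deviation in strategy.
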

\begin{proof}
The decomposition $\re^{\El_0 t} = S_c(t) + S_e(t)$, where $S_e(t)$ is given by~\eqref{e:Sedef} and $S_c(t)$ is given by~\eqref{e:Scdef}, follows from Propositions~\ref{deform},~\ref{p:highfreqest},~\ref{p:isolate1},~\ref{p:isolate2} and~\ref{p:FloquetBlochRep}. The desired bound~\eqref{e:linest1} on $S_e(t)$ can be derived by combining Lemma~\ref{l:shorttime} and Propositions~\ref{p:highfreqest},~\ref{p:isolate1},~\ref{p:isolate2} and~\ref{p:FloquetBlochRep}. Moreover, it has been shown in~\cite[Section~3.3]{BjoernMod} that $S_c(t)$ decomposes as $S_c(t) = \left(\phi_0' + \partial_k \phi(\cdot,1)\partial_\xx\right) S_p(t) + S_r(t)$, where $S_p(t), S_r(t) \colon C_{\mathrm{ub}}(\R) \to C_{\mathrm{ub}}(\R)$ are operators obeying the estimates~\eqref{e:linest1} and~\eqref{e:linest2}. Moreover, $S_p(t)$ satisfies $S_p(t) = 0$ for $t \in [0,1]$ and the map $t \mapsto S_p(t)\gb$ lies in $C^i\big([0,\infty),C^k_{\mathrm{ub}}(\R)\big)$ for any $i,k \in \mathbb{N}_0$. Finally, the decomposition~\eqref{e:decomp2}, the estimates~\eqref{e:linest3} and the identity~\eqref{e:shid} can be found in~\cite[Section~3.5]{BjoernMod}. 
\end{proof}

\section{Nonlinear iteration scheme and nonlinear estimates} \label{sec:iteration}

In this section, we set up the nonlinear iteration scheme and state associated nonlinear estimates, which will be employed in the upcoming section to prove our nonlinear stability result, Theorem~\ref{t:mainresult}. To this end, we consider a diffusively spectrally stable wave-train solution $\ub_0(x,t) = \phi_0(x-c_0t)$ to~\eqref{FHNsys}, i.e., we assume that Hypotheses~\ref{assH1},~\ref{assH2} and~\ref{assD1}-\ref{assD3} are satisfied, and an initial perturbation $\vb_0 \in C_{\mathrm{ub}}^3(\R) \times C_{\mathrm{ub}}^2(\R)$. We wish to control the perturbation $\vt(t) = \ub(t) - \phi_0$ over time, where $\ub(t)$ is the solution to~\eqref{FHN_co} with initial condition $\ub(0) = \phi_0 + \vb_0$. The perturbation $\vt(t)$ satisfies equation~\eqref{e:umodpert}. Theorem~\ref{t:linear} shows that the bounds on full semigroup $\re^{\El_0 t}$ are too weak to close a nonlinear iteration argument using the Duhamel formulation of~\eqref{e:umodpert}. 

As explained in~\S\ref{sec:strategy}, this leads us to consider the inverse-modulated perturbation $\vb(t)$ given by~\eqref{e:defv}. We derive a quasilinear equation for $\vb(t)$, establish $L^\infty$-bounds on the nonlinearity and define a suitable phase modulation $\psi(t)$ compensating for the most critical terms in the Duhamel formulation of $\vb(t)$. We then infer, as in~\cite{BjoernMod}, that $\psi(t)$ satisfies a perturbed viscous Hamilton-Jacobi equation, whose most critical nonlinear term cannot be controled through $L^\infty$-estimates, but can be eliminated with the aid of the Cole-Hopf transform. We formulate an equation for the Cole-Hopf variable and state $L^\infty$-bounds on the nonlinearity. 

Lastly, we control regularity in the quasilinear iteration scheme by relying on forward-modulated damping estimates. We obtain an equation for the modified forward-modulated perturbation $\mathring{\zt}(t)$ given by~\eqref{e:defvforw}, establish norm equivalences between $\mathring{\zt}(t)$ and the residual
\begin{align} \label{e:defz}
 \zt(t) = \vb(t) - \partial_k \phi(\cdot; 1) \psi_\xx(t),
\end{align}
and we derive a nonlinear damping estimate for $\mathring{\zt}(t)$ using uniformly local Sobolev norms.

\subsection{The unmodulated perturbation}

The unmodulated perturbation $\vt(t)$ satisfies the semilinear equation~\eqref{e:umodpert}, whose nonlinearity $\widetilde{\mathcal{N}} \colon C_{\mathrm{ub}}^1(\R) \to C_{\mathrm{ub}}^1(\R)$ is readily seen to be continuously Fr\'echet differentiable. On the other hand, regarding $\El_0$ as an operator on $C_{\mathrm{ub}}^1(\R)$ with dense domain $C_{\mathrm{ub}}^3(\R) \times C_{\mathrm{ub}}^2(\R)$, Proposition~\ref{p:semigroupgen} yields that $\El_0$ generates a $C_0$-semigroup on $C_{\mathrm{ub}}^1(\R)$. Hence, local existence and uniqueness of a classical solution to~\eqref{e:umodpert} follows by standard results, e.g.~\cite[Theorem~6.1.5]{pazy}, from semigroup theory. 

\begin{proposition} \label{p:local_unmod} Assume~\ref{assH1}. Let $\vb_0 \in C_{\mathrm{ub}}^3(\R) \times C_{\mathrm{ub}}^2(\R)$. Then, there exists a maximal time $T_{\max} \in (0,\infty]$ such that~\eqref{e:umodpert} admits a unique classical solution
\begin{align*}\vt \in C\big([0,T_{\max}),C_{\mathrm{ub}}^3(\R) \times C_{\mathrm{ub}}^2(\R)\big) \cap C^1\big([0,T_{\max}),C_{\mathrm{ub}}^1(\R)\big), \end{align*}
with initial condition $\vt(0) = \vb_0$. Moreover, if $T_{\max} < \infty$, then we have
\begin{align*} \limsup_{t \uparrow T_{\max}} \left\|\vt(t)\right\|_{C_{\mathrm{ub}}^1} = \infty. \end{align*}
\end{proposition}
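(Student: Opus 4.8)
The plan is to read~\eqref{e:umodpert} as the semilinear abstract Cauchy problem $\partial_t\vt=\El_0\vt+\NT(\vt)$, $\vt(0)=\vb_0$, on the Banach space $X:=C_{\mathrm{ub}}^1(\R,\C^2)$, and to invoke standard local well-posedness theory for such problems, cf.~\cite[Section~6.1]{pazy}. Two preliminary facts are needed. First, Proposition~\ref{p:semigroupgen} with $k=1$ yields that $\El_0$, with domain $D(\El_0)=C_{\mathrm{ub}}^3(\R)\times C_{\mathrm{ub}}^2(\R)$, is closed, densely defined and generates a $C_0$-semigroup on $X$; being closed, $D(\El_0)$ carries a complete graph norm, and since $\El_0$ maps $C_{\mathrm{ub}}^3(\R)\times C_{\mathrm{ub}}^2(\R)$ boundedly into $X$, the open mapping theorem shows that this graph norm is equivalent to the $(C_{\mathrm{ub}}^3\times C_{\mathrm{ub}}^2)$-norm. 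Second, I would verify that $\NT\colon X\to X$ is well-defined and locally Lipschitz: since the second component of $F$ in~\eqref{FHNsys} is affine, $\NT(\vt)=F(\phi_0+\vt)-F(\phi_0)-F'(\phi_0)\vt$ has vanishing second component, while its first component is a polynomial --- of degree two and three, because $f(u)=u(1-u)(u-\mu)$ is cubic --- in the first component of $\vt$, with coefficients in $C_{\mathrm{ub}}^\infty(\R)$ as $\phi_0$ is smooth and periodic by Hypothesis~\ref{assH1}; since $C_{\mathrm{ub}}^1(\R)$ is a Banach algebra, $\NT$ is thus a polynomial --- in particular smooth, hence locally Lipschitz --- self-map of $X$.

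With these facts in hand, I would first apply the existence and uniqueness theorem for maximal mild solutions, \cite[Theorem~6.1.2]{pazy}, to obtain a maximal time $T_{\max}\in(0,\infty]$ and a unique $\vt\in C([0,T_{\max}),X)$ solving the Duhamel formulation of~\eqref{e:umodpert} with $\vt(0)=\vb_0$, together with the blow-up alternative \cite[Theorem~6.1.4]{pazy}, namely that $\|\vt(t)\|_X\to\infty$ as $t\uparrow T_{\max}$ whenever $T_{\max}<\infty$; as $\|\cdot\|_X$ is the $C_{\mathrm{ub}}^1$-norm on $\C^2$-valued maps, this gives the stated blow-up criterion (with $\limsup$ in place of the stronger $\lim$). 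Then, because $\vb_0\in D(\El_0)$ and $\NT$ is continuously Fr\'echet differentiable, I would upgrade the mild solution to a classical one via \cite[Theorem~6.1.5]{pazy}, obtaining $\vt\in C([0,T_{\max}),D(\El_0))\cap C^1([0,T_{\max}),X)$. Invoking the norm equivalence from the first paragraph and $X=C_{\mathrm{ub}}^1(\R,\C^2)$ then yields the asserted regularity $\vt\in C([0,T_{\max}),C_{\mathrm{ub}}^3(\R)\times C_{\mathrm{ub}}^2(\R))\cap C^1([0,T_{\max}),C_{\mathrm{ub}}^1(\R))$.

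I do not expect any genuine obstacle: this is the routine semigroup-theoretic argument already signposted in the paragraph preceding the statement. The only points that need a little care are the two preliminary checks --- that $\NT$ is a well-defined smooth self-map of $X$, which follows from the polynomial structure of $F$ together with the Banach-algebra property of $C_{\mathrm{ub}}^1(\R)$; and the identification of the graph-norm domain of $\El_0$ with $C_{\mathrm{ub}}^3(\R)\times C_{\mathrm{ub}}^2(\R)$, which ultimately rests on the closedness of $\El_0$ asserted in Proposition~\ref{p:semigroupgen} (equivalently, that the diffusion-advection operator $L_0=D\partial_{\zeta\zeta}+c_0\partial_\zeta$ has domain $C_{\mathrm{ub}}^3\times C_{\mathrm{ub}}^2$ on $X$, which relies on $c_0\neq0$).
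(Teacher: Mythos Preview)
Your proposal is correct and follows essentially the same approach as the paper: the paper's argument is the brief paragraph immediately preceding the proposition, which observes that $\NT$ is continuously Fr\'echet differentiable on $C_{\mathrm{ub}}^1(\R)$, invokes Proposition~\ref{p:semigroupgen} for the $C_0$-semigroup on $C_{\mathrm{ub}}^1(\R)$, and appeals to \cite[Theorem~6.1.5]{pazy}. You supply more detail (the graph-norm identification, the polynomial structure of $\NT$, the separate use of \cite[Theorems~6.1.2 and~6.1.4]{pazy} for the blow-up alternative), but the route is the same.
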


\subsection{The inverse-modulated perturbation}

Using that $\ub(t)$ and $\phi_0$ solve~\eqref{FHN_co}, one finds that the inverse-modulated perturbation $\vb(t)$, given by~\eqref{e:defv}, obeys the quasilinear equation
\begin{align}
\left(\partial_t - \El_0\right)\left[\vb + \phi_0' \psi\right] = \Non(\vb,\psi,\partial_t \psi) + \left(\partial_t - \El_0\right)\left[\psi_\xx \vb\right] \label{e:modpertbeq}
\end{align}
with nonlinearity
\begin{align*}
\Non(\vb,\psi,\psi_t) &= \mathcal Q(\vb,\psi) + \partial_\xx \mathcal R(\vb,\psi,\psi_t),
\end{align*}
where
\begin{align*}
\mathcal Q(\vb,\psi) &= \left(F(\phi_0+\vb) - F(\phi_0) - F'(\phi_0) \vb\right)\left(1-\psi_\xx\right)
\end{align*}
is quadratic in $\vb$ and
\begin{align*}
\mathcal R(\vb,\psi,\psi_t) &= (c_0 \psi_\xx - \psi_t)\vb + D\left(\frac{(\vb_\xx + \phi_0' \psi_\xx)\psi_\xx}{1-\psi_\xx} + (\vb \psi_\xx)_\xx\right).
\end{align*}
contains all linear terms in $\vb$. We refer to~\cite[Appendix~E]{BjoernAvery} for a detailed derivation of~\eqref{e:modpertbeq}. 

It is relatively straightforward to verify the relevant nonlinear bound.

\begin{lemma} \label{lem:nlboundsmod}
Assume~\ref{assH1}. Then, we have 
\begin{align*}
\|\mathcal N(\vb,\psi,\psi_t)\|_{L^\infty} &\lesssim \|\vb\|_{L^\infty}^2 + \|(\psi_\xx,\psi_t)\|_{C_{\mathrm{ub}}^2 \times C_{\mathrm{ub}}^1} \left(\|\vb\|_{C_{\mathrm{ub}}^2 \times C_{\mathrm{ub}}^1} + \|\psi_\xx\|_{L^\infty}\right)
\end{align*}
for $\vb = (u,v) \in C_{\mathrm{ub}}^2(\R) \times C_{\mathrm{ub}}^1(\R)$ and $(\psi,\psi_t) \in C_{\mathrm{ub}}^3(\R) \times C_{\mathrm{ub}}^1(\R)$ satisfying $\|u\|_{L^\infty}, \|\psi_\xx\|_{L^\infty} \leq \frac{1}{2}$. 
\end{lemma}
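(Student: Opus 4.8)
The plan is to establish the bound by inspecting the three constituent pieces of $\mathcal N(\vb,\psi,\psi_t) = \mathcal Q(\vb,\psi) + \partial_\xx \mathcal R(\vb,\psi,\psi_t)$ separately, using only that $F$ is smooth (being polynomial in the components of $\ub$), together with the algebra property of $C_{\mathrm{ub}}^m$-norms and the smallness assumptions $\|u\|_{L^\infty}, \|\psi_\xx\|_{L^\infty} \leq \frac12$, the latter guaranteeing that $(1-\psi_\xx)^{-1}$ is well-defined with $\|(1-\psi_\xx)^{-1}\|_{C_{\mathrm{ub}}^2} \lesssim 1$.

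First I would treat $\mathcal Q(\vb,\psi) = \big(F(\phi_0+\vb) - F(\phi_0) - F'(\phi_0)\vb\big)(1-\psi_\xx)$. By Taylor's theorem with integral remainder, the bracketed term equals $\int_0^1 (1-s) F''(\phi_0 + s\vb)(\vb,\vb)\,\de s$, and since $F$ is smooth and $\phi_0$ is bounded, while $\|\vb\|_{L^\infty}$ is controlled (using $\|u\|_{L^\infty} \leq \frac12$ and boundedness of $v$), we get $\|F(\phi_0+\vb) - F(\phi_0) - F'(\phi_0)\vb\|_{L^\infty} \lesssim \|\vb\|_{L^\infty}^2$. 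Multiplying by $(1-\psi_\xx)$, whose $L^\infty$-norm is bounded by $\frac32$, yields $\|\mathcal Q(\vb,\psi)\|_{L^\infty} \lesssim \|\vb\|_{L^\infty}^2$, which is absorbed into the first term on the right-hand side.

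Next I would handle $\partial_\xx \mathcal R(\vb,\psi,\psi_t)$, where $\mathcal R = (c_0\psi_\xx - \psi_t)\vb + D\big(\tfrac{(\vb_\xx + \phi_0'\psi_\xx)\psi_\xx}{1-\psi_\xx} + (\vb\psi_\xx)_\xx\big)$. Applying $\partial_\xx$ and expanding by the product rule, every resulting term is a product of: a factor drawn from $\{\psi_\xx, \psi_{\xx\xx}, \psi_{\xx\xx\xx}, \psi_t, \partial_t\psi_\xx\}$ or a smooth function of $\psi_\xx$ (such as $(1-\psi_\xx)^{-1}$ and its $\xx$-derivatives, all bounded by $\lesssim 1$ uniformly), times a factor drawn from $\{\vb, \vb_\xx, \vb_{\xx\xx}\}$ or from $\phi_0',\phi_0''$ (bounded). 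Here I note that $\partial_t \psi_\xx$ appears, which is controlled by $\|\partial_t\psi\|_{C_{\mathrm{ub}}^1}$; grouping, each term is bounded in $L^\infty$ by $\|(\psi_\xx,\psi_t)\|_{C_{\mathrm{ub}}^2 \times C_{\mathrm{ub}}^1}$ times either $\|\vb\|_{C_{\mathrm{ub}}^2 \times C_{\mathrm{ub}}^1}$ or, for the purely phase-dependent term $D\,\partial_\xx\big(\tfrac{\phi_0'\psi_\xx^2}{1-\psi_\xx}\big)$, by $\|\psi_\xx\|_{C_{\mathrm{ub}}^2}\|\psi_\xx\|_{L^\infty}$. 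Summing these contributions gives exactly $\|(\psi_\xx,\psi_t)\|_{C_{\mathrm{ub}}^2\times C_{\mathrm{ub}}^1}\big(\|\vb\|_{C_{\mathrm{ub}}^2\times C_{\mathrm{ub}}^1} + \|\psi_\xx\|_{L^\infty}\big)$, as claimed.

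The main obstacle, such as it is, is purely bookkeeping: keeping careful track of which derivatives of $\psi$ and $\vb$ land where after applying $\partial_\xx$ to $\mathcal R$, and verifying that no term requires more regularity than $C_{\mathrm{ub}}^2\times C_{\mathrm{ub}}^1$ on $\vb$ or more than two $\xx$-derivatives on $\psi_\xx$ (equivalently $\psi \in C_{\mathrm{ub}}^3$ suffices for the non-$\partial_t$ terms, with $\partial_t\psi \in C_{\mathrm{ub}}^1$ for the advective term). One should also confirm that the quotient terms remain controlled: since $\|\psi_\xx\|_{L^\infty}\leq\frac12$, the function $z\mapsto (1-z)^{-1}$ composed with $\psi_\xx$ and differentiated up to twice in $\xx$ produces only bounded factors times powers of $\psi_{\xx\xx},\psi_{\xx\xx\xx}$ and $(1-\psi_\xx)^{-1}$, all of which are subsumed in $\|\psi_\xx\|_{C_{\mathrm{ub}}^2}$. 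No genuinely hard estimate is involved; the statement follows from these routine product and chain-rule manipulations together with the smallness of $u$ and $\psi_\xx$.
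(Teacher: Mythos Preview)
Your proposal is correct and supplies precisely the routine verification the paper alludes to; the paper itself does not give a proof, stating only that ``it is relatively straightforward to verify the relevant nonlinear bound.'' One small clarification: in the lemma $\psi_t$ is treated as an independent function argument (so the term you call $\partial_t\psi_\xx$ is really $\partial_\xx\psi_t$), and the reason the Taylor remainder is uniformly bounded is that for the FHN nonlinearity $F''$ depends only on the first component $u$ (being linear in $v$), so the assumption $\|u\|_{L^\infty}\le\tfrac12$ alone suffices---no separate boundedness of $v$ is needed for the implicit constant.
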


Inspired by earlier works~\cite{JONZ,JONZNL}, we implicitly define the phase modulation by the integral equation
\begin{align}
\psi(t) = S_p(t)\vb_0 + \int_0^t S_p(t-s) \mathcal{N}(\vb(s),\psi(s),\partial_t \psi(s)) \,\de s. \label{e:intpsi}
\end{align}
Recalling from Theorem~\ref{t:linear} that $S_p(0) = 0$, we find that $\psi(t)$ vanishes at $t = 0$. Thus, integrating~\eqref{e:modpertbeq} yields the Duhamel formulation
\begin{align}
\vb(t)+\phi_0'\psi(t) = \re^{\El_0 t} \vb_0 + \int_0^t \re^{\El_0(t-s)}\mathcal{N}(\vb(s),\psi(s),\partial_t \psi(s)) \,\de s + \psi_\xx(t)\vb(t). \label{e:intv}
\end{align}
Writing the left-hand side of~\eqref{e:intv} as $\vb(t) + \phi_0'\psi(t) = \zt(t) + \left(\phi_0' + \partial_k \phi(\cdot;1) \partial_\xx \right) \psi(t)$, where $\zt(t)$ is given by~\eqref{e:defz}, and recalling the semigroup decomposition~\eqref{e:decomp1}, we observe that by defining the phase modulation by~\eqref{e:intpsi}, the term $\left(\phi_0' + \partial_k \phi(\cdot;1) \partial_\xx \right) \psi(t)$ compensates for the critical, slowest decaying, contributions on the right-hand side of~\eqref{e:intv}. Indeed, we arrive at the Duhamel formulation
\begin{align}
\begin{split}
\zt(t) &= \left(S_r(t) + S_e(t)\right)\vb_0+\int_0^t\left(S_r(t-s) + S_e(t-s)\right)\mathcal{N}(\vb(s),\psi(s),\partial_t \psi(s)) \,\de s + \psi_\xx(t)\vb(t),
\end{split}\label{e:intz}
\end{align}
for the residual $\zt(t)$, where $S_r(t) + S_e(t)$ exhibits stronger decay than $\re^{\El_0 t}$, cf.~Theorem~\ref{t:linear}.

Local existence of the phase modulation $\psi(t)$ can be obtained by applying a standard contraction mapping argument to the integral equation~\eqref{e:intpsi}, where one employs Proposition~\ref{p:local_unmod} and expresses the inverse-modulated perturbation as
\begin{align} \label{e:defv2}
\vb(\xx,t) = \vt(\xx-\psi(\xx,t),t) + \phi_0(\xx-\psi(\xx,t)) - \phi_0(\xx),
\end{align}
to obtain a fixed point problem in $\psi(t)$ and its derivatives. This leads to the following result, whose proof is identical to~\cite[Proposition~4.4]{BjoernMod}. 

\begin{proposition} \label{p:psi}
Assume~\ref{assH1}. Let $\vb_0 \in C_{\mathrm{ub}}^3(\R) \times C_{\mathrm{ub}}^2(\R)$. Fix $j,l, m \in \mathbb{N}_0$. For $\vt$ and $T_{\max}$ as in Proposition~\ref{p:local_unmod}, there exists a maximal time $\tau_{\max} \in (0,T_{\max}]$ such that equation~\eqref{e:intpsi}, with $\vb$ given by~\eqref{e:defv2}, possesses a solution
\begin{align*}\psi \in C\big([0,\tau_{\max}),C_{\mathrm{ub}}^{2 + m}(\R)\big) \cap C^{1+j}\big([0,\tau_{\max}),C_{\mathrm{ub}}^{l}(\R)\big), \end{align*}
satisfying $\psi(t)=0$ for all $t \in [0,\tau_{\max})$ with $t \leq 1$. Moreover, we have $\|(\psi(t),\partial_t \psi(t))\|_{C_{\mathrm{ub}}^2 \times C_{\mathrm{ub}}} < \frac12$ for all $t \in [0,\tau_{\max})$. Finally, if $\tau_{\max} < T_{\max}$, then
\begin{align*} \limsup_{t \uparrow \tau_{\max}} \left\|\left(\psi(t),\partial_t \psi(t)\right)\right\|_{C_{\mathrm{ub}}^2 \times C_{\mathrm{ub}}} = \frac12.\end{align*}
\end{proposition}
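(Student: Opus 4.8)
The proof follows the contraction mapping argument of~\cite[Proposition~4.4]{BjoernMod}, which carries over essentially verbatim; I indicate the structure. The plan is to treat $\vt$ and $T_{\max}$ from Proposition~\ref{p:local_unmod} as given, to regard $\vb$ throughout as a functional of $\psi$ via the composition formula~\eqref{e:defv2}, and to solve~\eqref{e:intpsi} together with its formal time derivative as a fixed-point problem for the pair $(\psi,\varphi)$, where $\varphi$ plays the role of $\partial_t\psi$. Since $S_p(0)=0$ by Theorem~\ref{t:linear}, differentiating~\eqref{e:intpsi} in $t$ produces no boundary term, so the relevant system reads
\begin{align*}
\psi(t) &= S_p(t)\vb_0 + \int_0^t S_p(t-s)\,\mathcal N\big(\vb[\psi](s),\psi(s),\varphi(s)\big)\,\de s,\\
\varphi(t) &= (\partial_t S_p)(t)\vb_0 + \int_0^t (\partial_t S_p)(t-s)\,\mathcal N\big(\vb[\psi](s),\psi(s),\varphi(s)\big)\,\de s,
\end{align*}
where $\vb[\psi](\xx,t)=\vt(\xx-\psi(\xx,t),t)+\phi_0(\xx-\psi(\xx,t))-\phi_0(\xx)$. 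Because $S_p(t)=0$ and $(\partial_t S_p)(t)=0$ for $t\in[0,1]$, and for $t\le 1$ one has $t-s\le 1$ over the whole range of integration, both right-hand sides vanish identically for $t\le 1$; hence any solution satisfies $\psi(t)=0$ for $t\le 1$, and it suffices to construct $(\psi,\varphi)$ on an interval $[1,1+\tau]$.

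For the contraction, fix $\tau_0>0$ with $1+\tau_0<T_{\max}$ and work in a closed ball of the Banach space $C\big([0,1+\tau],C^3_{\mathrm{ub}}(\R)\big)\times C\big([0,1+\tau],C^1_{\mathrm{ub}}(\R)\big)$ centered at the linear part $\big(S_p(\cdot)\vb_0,(\partial_t S_p)(\cdot)\vb_0\big)$, for $\tau\in(0,\tau_0]$ to be chosen. By Theorem~\ref{t:linear}, taking $(j,l)\in\{(0,0),\dots,(0,3)\}$ in~\eqref{e:linest2} and, after writing $\partial_t S_p=(\partial_t+c_g\partial_\xx)S_p-c_g\partial_\xx S_p$, taking $(j,l)\in\{(1,0),(1,1),(0,1),(0,2)\}$, the operators $S_p(t)$ and $(\partial_t S_p)(t)$ map $C_{\mathrm{ub}}(\R)$ boundedly into $C^3_{\mathrm{ub}}(\R)$ and $C^1_{\mathrm{ub}}(\R)$, respectively, with bounds uniform for $t\in[0,1+\tau_0]$. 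On the other hand, the chain rule applied to~\eqref{e:defv2}, smoothness of $\phi_0$, and the fact that $\vt\in C\big([0,1+\tau_0],C^3_{\mathrm{ub}}(\R)\times C^2_{\mathrm{ub}}(\R)\big)$ give a bound on $\|\vb[\psi](s)\|_{C^2_{\mathrm{ub}}\times C^1_{\mathrm{ub}}}$ in terms of $\|\psi(s)\|_{C^2_{\mathrm{ub}}}$ and $\sup_{[0,1+\tau_0]}\|\vt\|_{C^2_{\mathrm{ub}}\times C^1_{\mathrm{ub}}}$ only; combined with Lemma~\ref{lem:nlboundsmod}, this controls $\|\mathcal N(\vb[\psi](s),\psi(s),\varphi(s))\|_{L^\infty}$ on the ball. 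The hypotheses of Lemma~\ref{lem:nlboundsmod} — smallness of $\psi_\xx$ (which keeps $1-\psi_\xx$ bounded away from zero) and boundedness of the first component of $\vb[\psi]$ — hold on $[1,1+\tau]$ for $\tau$ small, since $\psi\equiv 0$ on $[0,1]$ and all data are continuous. As $S_p(t-s)$ and $(\partial_t S_p)(t-s)$ vanish unless $t-s>1$, the Duhamel integrals over $[1,1+\tau]$ are $O(\tau)$ in the relevant norms, so for $\tau$ small the map is a self-map of the ball and a contraction. Its fixed point is the sought pair $(\psi,\partial_t\psi)$ on $[0,1+\tau]$ with the stated minimal regularity, and it satisfies $\|(\psi(t),\partial_t\psi(t))\|_{C^2_{\mathrm{ub}}\times C_{\mathrm{ub}}}<\tfrac12$ there, after shrinking $\tau$ once more if necessary.

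Higher regularity follows by bootstrapping. Once $\psi\in C\big([0,1+\tau],C^3_{\mathrm{ub}}(\R)\big)$ and $\partial_t\psi\in C\big([0,1+\tau],C^1_{\mathrm{ub}}(\R)\big)$, the map $s\mapsto \mathcal N(\vb[\psi](s),\psi(s),\partial_t\psi(s))$ is continuous into $C_{\mathrm{ub}}(\R)$, so Theorem~\ref{t:linear} — which asserts $t\mapsto S_p(t)\gb\in C^i\big([0,\infty),C^k_{\mathrm{ub}}(\R)\big)$ for all $i,k$, with $S_p$ and all its $t$-derivatives vanishing on $[0,1]$ (hence no boundary terms when differentiating the Duhamel integrals in $t$) — yields $\psi\in C\big([0,1+\tau],C^{2+m}_{\mathrm{ub}}(\R)\big)\cap C^{1+j}\big([0,1+\tau],C^{l}_{\mathrm{ub}}(\R)\big)$ for all $j,l,m\in\mathbb N_0$; the time-differentiability of $s\mapsto\mathcal N(\vb[\psi](s),\psi(s),\partial_t\psi(s))$ needed at each step is obtained by an induction on the number of derivatives. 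Finally, one defines $\tau_{\max}\in(0,T_{\max}]$ as the supremum of times up to which the solution extends while $\|(\psi(t),\partial_t\psi(t))\|_{C^2_{\mathrm{ub}}\times C_{\mathrm{ub}}}<\tfrac12$. If $\tau_{\max}<T_{\max}$ while the $\limsup$ of this norm as $t\uparrow\tau_{\max}$ were strictly below $\tfrac12$, one could re-run the contraction argument started from a time slightly below $\tau_{\max}$ — using that $\vt$ extends past $\tau_{\max}$ because $\tau_{\max}<T_{\max}$ — to prolong $\psi$, contradicting maximality; hence the $\limsup$ equals $\tfrac12$.

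The only genuinely technical point is the self-mapping/contraction step, and there the main thing to watch is the potential loss of regularity in the composition~\eqref{e:defv2}: since $\vt$ carries only $C^3_{\mathrm{ub}}\times C^2_{\mathrm{ub}}$ regularity, one must differentiate $\vb[\psi]$ carefully so that the bound on $\|\vb[\psi]\|_{C^2_{\mathrm{ub}}\times C^1_{\mathrm{ub}}}$ (which is all that Lemma~\ref{lem:nlboundsmod} requires) comes out in terms of $\|\psi\|_{C^2_{\mathrm{ub}}}$ and $\|\vt\|_{C^2_{\mathrm{ub}}\times C^1_{\mathrm{ub}}}$ alone. Apart from this, the argument is routine and identical to that of~\cite[Proposition~4.4]{BjoernMod}.
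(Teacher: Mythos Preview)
Your proposal is correct and matches the paper's approach exactly: the paper does not give an independent proof but simply states that the argument is identical to~\cite[Proposition~4.4]{BjoernMod}, and your outline faithfully reconstructs that contraction-mapping scheme (the pair $(\psi,\partial_t\psi)$, the vanishing of $S_p$ on $[0,1]$ giving $\psi\equiv0$ there and an $O(\tau)$ integration window, and the bootstrapping via the smoothing properties of $S_p$ from Theorem~\ref{t:linear}).
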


The existence and regularity of the inverse-modulated perturbation $\vb(t)$ and the residual $\zt(t)$ now follow immediately from~\eqref{e:defv2} and~\eqref{e:defz}, respectively, upon applying Propositions~\ref{prop:family},~\ref{p:local_unmod} and~\ref{p:psi} and using the uniform continuity of functions in $C_{\mathrm{ub}}(\R)$.

\begin{corollary} \label{C:local_v}
Assume~\ref{assH1} and~\ref{assD3}. Let $\vb_0 \in C_{\mathrm{ub}}^3(\R) \times C_{\mathrm{ub}}^2(\R)$. For $\vt$ as in Proposition~\ref{p:local_unmod} and $\psi$ and $\tau_{\max}$ as in Proposition~\ref{p:psi}, the inverse-modulated perturbation $\vb$, defined by~\eqref{e:defv}, and the residual $\zt$, defined by~\eqref{e:defz}, obey
$$\vb,\zt \in C\big([0,\tau_{\max}),C_{\mathrm{ub}}^3(\R) \times C_{\mathrm{ub}}^2(\R) \big).$$
Moreover, their Duhamel formulations~\eqref{e:intv} and~\eqref{e:intz} hold for $t \in [0,\tau_{\max})$.
\end{corollary}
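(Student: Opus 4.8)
The plan is to read off the claimed regularity of $\vb$ and $\zt$ from the explicit representations~\eqref{e:defv2} and~\eqref{e:defz}, and then to obtain the two Duhamel formulations by combining the quasilinear equation~\eqref{e:modpertbeq}, the defining integral equation~\eqref{e:intpsi} for $\psi$, and the semigroup decomposition from Theorem~\ref{t:linear}. First I would fix the free indices in Proposition~\ref{p:psi}, choosing $m=2$, $j=0$ and $l=1$, so that $\psi\in C\big([0,\tau_{\max}),C_{\mathrm{ub}}^4(\R)\big)\cap C^1\big([0,\tau_{\max}),C_{\mathrm{ub}}^1(\R)\big)$ with $\psi(t)=0$ for $t\leq 1$ and $\|(\psi(t),\partial_t\psi(t))\|_{C_{\mathrm{ub}}^2\times C_{\mathrm{ub}}}<\tfrac12$. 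By Proposition~\ref{prop:family} the profile $\phi_0=\phi(\cdot;1)$ and the wavenumber derivative $\partial_k\phi(\cdot;1)$ are smooth and $T$-periodic, hence bounded together with all their derivatives.

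Since $\|\psi_\xx(t)\|_{L^\infty}<\tfrac12<1$, for each fixed $t$ the map $\zeta\mapsto\zeta-\psi(\zeta,t)$ is a bi-Lipschitz homeomorphism of $\R$ of class $C^4$. Combining this with Proposition~\ref{p:local_unmod}, the elementary facts that $C_{\mathrm{ub}}^k(\R)$ is closed under products and under composition with $C_{\mathrm{ub}}^k$-diffeomorphisms, and the chain rule applied to~\eqref{e:defv2}, one obtains $\vb\in C\big([0,\tau_{\max}),C_{\mathrm{ub}}^3(\R)\times C_{\mathrm{ub}}^2(\R)\big)$; the only genuine point is continuity in $t$, which follows from the continuity of $t\mapsto\vt(t)$ in $C_{\mathrm{ub}}^3\times C_{\mathrm{ub}}^2$ and of $t\mapsto\psi(t)$ in $C_{\mathrm{ub}}^4$ together with the uniform continuity of $\vt$ and its derivatives, allowing one to estimate differences $\vt(\cdot-\psi(\cdot,s),s)-\vt(\cdot-\psi(\cdot,t),t)$ uniformly as $s\to t$. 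The regularity of $\zt$ is then immediate from~\eqref{e:defz}: $\partial_k\phi(\cdot;1)\,\psi_\xx(t)\in C_{\mathrm{ub}}^3(\R)\times C_{\mathrm{ub}}^3(\R)$ because $\partial_k\phi(\cdot;1)$ is smooth and $\psi_\xx(t)\in C_{\mathrm{ub}}^3(\R)$, so $\zt(t)\in C_{\mathrm{ub}}^3(\R)\times C_{\mathrm{ub}}^2(\R)$, again continuously in $t$. In the same way one checks, using~\eqref{e:defv2} and $\partial_t\vt\in C([0,T_{\max}),C_{\mathrm{ub}}^1)$, that $\partial_t\vb\in C([0,\tau_{\max}),C_{\mathrm{ub}})$.

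For the Duhamel formulation~\eqref{e:intv} I would proceed as follows. By the computation in~\cite[Appendix~E]{BjoernAvery}, the inverse-modulated perturbation satisfies the quasilinear equation~\eqref{e:modpertbeq}; equivalently, $w(t):=\vb(t)+\phi_0'\psi(t)-\psi_\xx(t)\vb(t)$ obeys $(\partial_t-\El_0)w(t)=\Non(\vb(t),\psi(t),\partial_t\psi(t))$. The regularity established above, together with $\psi\in C^1([0,\tau_{\max}),C_{\mathrm{ub}}^1)$, gives $w\in C\big([0,\tau_{\max}),C_{\mathrm{ub}}^3(\R)\times C_{\mathrm{ub}}^2(\R)\big)\cap C^1\big([0,\tau_{\max}),C_{\mathrm{ub}}(\R)\big)$ with $w(t)\in C_{\mathrm{ub}}^2\times C_{\mathrm{ub}}^1=D(\El_0)$, while the explicit form of $\Non=\mathcal Q+\partial_\xx\mathcal R$ shows that $t\mapsto\Non(\vb(t),\psi(t),\partial_t\psi(t))$ is continuous into $C_{\mathrm{ub}}(\R)$. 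Hence $w$ is a classical solution of an inhomogeneous linear Cauchy problem for the generator $\El_0$, and the variation-of-constants formula (cf.~\cite[Section~4.2]{pazy}) yields $w(t)=\re^{\El_0 t}w(0)+\int_0^t\re^{\El_0(t-s)}\Non(\vb(s),\psi(s),\partial_t\psi(s))\,\de s$. Since $\psi(\cdot,0)=0$ and $\ub(0)=\phi_0+\vb_0$, one has $\vb(0)=\vb_0$ and $\psi_\xx(0)=0$, so $w(0)=\vb_0$; rearranging gives~\eqref{e:intv}. Finally,~\eqref{e:intz} follows from~\eqref{e:intv} by writing its left-hand side as $\vb(t)+\phi_0'\psi(t)=\zt(t)+\big(\phi_0'+\partial_k\phi(\cdot;1)\partial_\xx\big)\psi(t)$, inserting the decomposition~\eqref{e:decomp1} of $\re^{\El_0 t}$ and $\re^{\El_0(t-s)}$, and invoking the definition~\eqref{e:intpsi} of $\psi$ to cancel the two $\big(\phi_0'+\partial_k\phi(\cdot;1)\partial_\xx\big)S_p$-contributions.

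The main obstacle is really just the bookkeeping of derivatives: one must check that the regularity of $\psi$ supplied by Proposition~\ref{p:psi} is not consumed by the chain rule applied to the composition $\vt(\cdot-\psi(\cdot,t),t)$ in~\eqref{e:defv2}, and that both $\partial_t w$ and the nonlinearity $\Non$ land in $C_{\mathrm{ub}}(\R)$ — it is precisely this that forces the choices $m=2$ and $l=1$ above. All the analytic inputs (composition and product estimates in $C_{\mathrm{ub}}^k$, preservation of uniform continuity under composition, and the variation-of-constants formula for $C_0$-semigroups) are standard, so once the indices are fixed the argument is routine, which is why the statement can be phrased as following immediately from Propositions~\ref{prop:family},~\ref{p:local_unmod} and~\ref{p:psi}.
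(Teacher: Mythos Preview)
Your proposal is correct and follows the same route as the paper: the paper derives~\eqref{e:intv} by integrating the quasilinear equation~\eqref{e:modpertbeq} (using $\psi(0)=0$), obtains~\eqref{e:intz} from it via the semigroup decomposition~\eqref{e:decomp1} and the definition~\eqref{e:intpsi} of $\psi$, and reads off the regularity of $\vb$ and $\zt$ from~\eqref{e:defv2} and~\eqref{e:defz} together with Propositions~\ref{prop:family},~\ref{p:local_unmod} and~\ref{p:psi} and the uniform continuity of $C_{\mathrm{ub}}$-functions. Your write-up is simply a more explicit version of this sketch, with the bookkeeping (the choice $m=2$, $l=1$ in Proposition~\ref{p:psi} and the verification that $w$ is a classical solution so that variation of constants applies) spelled out.
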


\subsection{Derivation of the perturbed viscous Hamilton-Jacobi equation} \label{sec:derihamjac}

The estimates in Theorem~\ref{t:linear}, in combination with~\eqref{e:intpsi} and~\eqref{e:intz}, show that, at least on the linear level, the derivative $\smash{\partial_\xx^j \partial_t^l \psi(t)}$ of the phase modulation decays at rate $\smash{t^{-(j+l)/2}}$ for $j,l \in \mathbb{N}_0$, whereas the residual $\zt(t)$ and 
\begin{align*} 
\tilde{\psi}(t) = \partial_t \psi(t) + c_g \psi_\xx(t),
\end{align*}
decay at rate $t^{-1}$. Therefore, after substituting 
\begin{align} \label{e:reexpress}
\vb(t) = \zt(t) + \partial_k \phi(\cdot;1) \psi_\xx(t), \qquad \partial_t \psi(t) = \tilde{\psi}(t) - c_g \psi_\xx(t),
\end{align}
in the nonlinearity $\Non(\vb,\psi,\psi_t)$ one finds that the nonlinear terms exhibiting the slowest decay are of Burgers'-type, i.e.~of the form $\mathbf{f} \psi_\xx(t)^2$ with coefficient $\mathbf{f} \in L^2_{\mathrm{per}}(0,T)$. 

The decay rates of the principal part $S_p(t)$ of the semigroup $\re^{\El_0 t}$ are not strong enough to control these most critical nonlinear terms through iterative estimates on the equation~\eqref{e:intpsi} for the phase modulation. As outlined in~\S\ref{sec:strategy}, we address this issue by proceeding as in~\cite{BjoernMod}. That is, we show that $\psi(t)$ obeys a perturbed viscous Hamilton-Jacobi equation and subsequently apply the Cole-Hopf transform to this equation to eliminate the critical $\psi_\xx^2$-contributions. 

To derive a viscous Hamilton-Jacobi equation for $\psi(t)$, we first isolate the $\psi_\xx^2$-contributions in the nonlinearity $\Non(\vb,\psi,\psi_t)$ of~\eqref{e:intpsi}. We do so by reexpressing $\vb(t)$ and $\partial_t \psi(t)$ through~\eqref{e:reexpress} wherever necessary. Thus, recalling $c_0 + c_g = \omega'(1)$ from Proposition~\ref{prop:speccons}, we arrive at
\begin{align} \label{e:nonldecomp}
\Non(\vb(s),\psi(s),\partial_t \psi(s)) =  \mathbf{f}_p \psi_{\xx}(s)^2 + \mathcal{N}_p(\zt(s),\vb(s),\psi(s),\tilde{\psi}(s)),
\end{align}
with $T$-periodic coefficient
\begin{align*}
\mathbf{f}_p &= \frac{1}{2} F''(\phi_0)\left(\partial_k \phi(\cdot;1),\partial_k \phi(\cdot;1)\right) + \omega'(1) \partial_{\xx k} \phi(\cdot;1) + D\left(\phi_0'' + 2 \partial_{\xx\xx k} \phi(\cdot;1)\right)
\end{align*}
and residual nonlinearity
\begin{align*}
\mathcal N_p(\zt,\vb,\psi,\tilde{\psi}) &= \mathcal Q_p(\zt,\vb,\psi) + \partial_\xx \mathcal R_p(\zt,\vb,\psi,\tilde{\psi}),
\end{align*}
where we denote
\begin{align*}
\mathcal Q_p(\zt,\vb,\psi) &= \left(F(\phi_0+\vb) - F(\phi_0) - F'(\phi_0) \vb\right)\psi_\xx + F(\phi_0+\vb) - F(\phi_0) - F'(\phi_0) \vb \\
&\qquad - \, \frac{1}{2} F''(\phi_0)(\vb,\vb) + \frac{1}{2}F''(\phi_0)(\zt,\vb) + \frac12 \psi_{\xx} F''(\phi_0)(\zt,\partial_k \phi(\cdot;1)) \\
&\qquad + \, 2 \psi_\xx \psi_{\xx\xx} \left(\omega'(1)\partial_k \phi(\cdot;1)  +  D \left(\phi_0' + 2\partial_{\xx k} \phi(\cdot;1)\right)\right),\\
\mathcal R_p(\zt,\vb,\psi,\tilde{\psi}) &= -\tilde{\psi}\vb + \omega'(1) \psi_{\xx}\zt + D\left(\frac{\left(\vb_\xx + \phi_0'\psi_\xx\right)\psi_\xx^2}{1-\psi_\xx} + 2 \zt_\xx \psi_\xx + \vb \psi_{\xx\xx} + 2 \partial_k \phi(\cdot;1) \psi_\xx \psi_{\xx\xx}\right).
\end{align*}

We establish an $L^\infty$-estimate on the residual nonlinearity. 

\begin{lemma} \label{lem:nlboundsmod3}
Assume~\ref{assH1} and~\ref{assD3}. Then, we have
\begin{align*}
\begin{split}
\|\mathcal N_p(\zt,\vb,\psi,\tilde{\psi})\|_{L^\infty} &\lesssim \left(\|\vb\|_{L^\infty} + \|\psi_\xx\|_{C_{\mathrm{ub}}^1}\right)\left(\|\vb\|_{L^\infty}^2 + \|\zt\|_{C_{\mathrm{ub}}^2}\right) + \left(\|\tilde{\psi}\|_{C_{\mathrm{ub}}^1} + \|\psi_{\xx\xx}\|_{C_{\mathrm{ub}}^1}\right) \|\vb\|_{C_{\mathrm{ub}}^1} \\ 
&\qquad + \, \left(\|\psi_{\xx\xx}\|_{C_{\mathrm{ub}}^1} + \|\vb\|_{C_{\mathrm{ub}}^2} \|\psi_\xx\|_{L^\infty} + \|\psi_\xx\|_{L^\infty}^2 \right)\|\psi_\xx\|_{C_{\mathrm{ub}}^1}
\end{split}
\end{align*}
for $\zt,\vb \in C_{\mathrm{ub}}^2(\R)$ and $(\psi,\tilde{\psi}) \in C_{\mathrm{ub}}^3(\R) \times C_{\mathrm{ub}}^1(\R)$ satisfying $\|\vb\|_{L^\infty}, \|\psi_\xx\|_{L^\infty} \leq \frac{1}{2}$.
\end{lemma}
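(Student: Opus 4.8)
The plan is to bound the two pieces of $\mathcal N_p=\mathcal Q_p+\partial_\xx\mathcal R_p$ separately, expanding each into a finite sum of elementary products and estimating every product by submultiplicativity of the $L^\infty$-norm. Three uniform facts are used throughout. First, by Hypothesis~\ref{assH1} and Proposition~\ref{prop:family} the functions $\phi_0,\phi_0',\phi_0'',\partial_k\phi(\cdot;1),\partial_{\xx k}\phi(\cdot;1),\partial_{\xx\xx k}\phi(\cdot;1)$ are smooth and $T$-periodic, hence bounded on $\R$. Second, since $F$ is polynomial and $\phi_0$ is bounded, Taylor's theorem yields $\|F(\phi_0+\vb)-F(\phi_0)-F'(\phi_0)\vb\|_{L^\infty}\lesssim\|\vb\|_{L^\infty}^2$ and $\|F(\phi_0+\vb)-F(\phi_0)-F'(\phi_0)\vb-\tfrac12F''(\phi_0)(\vb,\vb)\|_{L^\infty}\lesssim\|\vb\|_{L^\infty}^3$ whenever $\|\vb\|_{L^\infty}\le\tfrac12$, with constants independent of $\vb$ because the intermediate evaluation points remain in a fixed bounded set. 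Third, since $\|\psi_\xx\|_{L^\infty}\le\tfrac12$, one has $\|(1-\psi_\xx)^{-1}\|_{L^\infty}\le2$ and $\|\partial_\xx(1-\psi_\xx)^{-1}\|_{L^\infty}\lesssim\|\psi_{\xx\xx}\|_{L^\infty}$. I will also freely absorb lower-order factors via $\|\psi_{\xx\xx}\|_{L^\infty}\le\min\{\|\psi_\xx\|_{C_{\mathrm{ub}}^1},\|\psi_{\xx\xx}\|_{C_{\mathrm{ub}}^1}\}$, $\|\psi_{\xx\xx\xx}\|_{L^\infty}\le\|\psi_{\xx\xx}\|_{C_{\mathrm{ub}}^1}$, and, using~\eqref{e:defz}, $\|\zt\|_{C_{\mathrm{ub}}^1}\lesssim\|\vb\|_{C_{\mathrm{ub}}^1}+\|\psi_\xx\|_{C_{\mathrm{ub}}^1}$.

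For $\mathcal Q_p$: the first term $(F(\phi_0+\vb)-F(\phi_0)-F'(\phi_0)\vb)\psi_\xx$ is bounded by $\|\vb\|_{L^\infty}^2\|\psi_\xx\|_{L^\infty}$; the cubic remainder $F(\phi_0+\vb)-F(\phi_0)-F'(\phi_0)\vb-\tfrac12F''(\phi_0)(\vb,\vb)$ by $\|\vb\|_{L^\infty}^3$; the bilinear terms $\tfrac12F''(\phi_0)(\zt,\vb)$ and $\tfrac12\psi_\xx F''(\phi_0)(\zt,\partial_k\phi(\cdot;1))$ by $\|\vb\|_{L^\infty}\|\zt\|_{L^\infty}$ and $\|\psi_\xx\|_{L^\infty}\|\zt\|_{L^\infty}$; and the last term by $\|\psi_\xx\|_{L^\infty}\|\psi_{\xx\xx}\|_{L^\infty}$. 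Each of these is dominated by a summand of the asserted right-hand side (e.g.\ $\|\vb\|_{L^\infty}^3\le\|\vb\|_{L^\infty}\|\vb\|_{L^\infty}^2$, $\|\vb\|_{L^\infty}\|\zt\|_{L^\infty}\le\|\vb\|_{L^\infty}\|\zt\|_{C_{\mathrm{ub}}^2}$, $\|\psi_\xx\|_{L^\infty}\|\psi_{\xx\xx}\|_{L^\infty}\le\|\psi_\xx\|_{C_{\mathrm{ub}}^1}\|\psi_{\xx\xx}\|_{C_{\mathrm{ub}}^1}$), which settles the $\mathcal Q_p$-part.

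For $\partial_\xx\mathcal R_p$ I would differentiate term by term using the product and quotient rules. The contributions $\partial_\xx(-\tilde{\psi}\vb)$ and $\partial_\xx(\omega'(1)\psi_\xx\zt)$ give, after trading $\zt$ for $\vb+\partial_k\phi(\cdot;1)\psi_\xx$ where needed, the bounds $\lesssim\|\tilde{\psi}\|_{C_{\mathrm{ub}}^1}\|\vb\|_{C_{\mathrm{ub}}^1}$ and $\lesssim(\|\vb\|_{L^\infty}+\|\psi_\xx\|_{C_{\mathrm{ub}}^1})\|\zt\|_{C_{\mathrm{ub}}^2}+\|\psi_{\xx\xx}\|_{C_{\mathrm{ub}}^1}\|\vb\|_{C_{\mathrm{ub}}^1}$. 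Differentiating $D(\vb_\xx+\phi_0'\psi_\xx)\psi_\xx^2(1-\psi_\xx)^{-1}$, every resulting summand is a bounded coefficient times a product of at least two factors from $\{\psi_\xx,\psi_{\xx\xx}\}$ and at most one factor from $\{\vb_\xx,\vb_{\xx\xx}\}$, so by the uniform facts above it is $\lesssim\|\vb\|_{C_{\mathrm{ub}}^2}\|\psi_\xx\|_{L^\infty}\|\psi_\xx\|_{C_{\mathrm{ub}}^1}+\|\psi_\xx\|_{L^\infty}^2\|\psi_\xx\|_{C_{\mathrm{ub}}^1}$. The remaining pieces $\partial_\xx(2D\zt_\xx\psi_\xx)$, $\partial_\xx(D\vb\psi_{\xx\xx})$ and $\partial_\xx(2D\partial_k\phi(\cdot;1)\psi_\xx\psi_{\xx\xx})$ are treated the same way and contribute $\lesssim(\|\vb\|_{L^\infty}+\|\psi_\xx\|_{C_{\mathrm{ub}}^1})\|\zt\|_{C_{\mathrm{ub}}^2}$, $\lesssim\|\vb\|_{C_{\mathrm{ub}}^1}\|\psi_{\xx\xx}\|_{C_{\mathrm{ub}}^1}$ and $\lesssim\|\psi_\xx\|_{C_{\mathrm{ub}}^1}\|\psi_{\xx\xx}\|_{C_{\mathrm{ub}}^1}$ (here using $\|\psi_{\xx\xx}\|_{L^\infty}^2\le\|\psi_{\xx\xx}\|_{C_{\mathrm{ub}}^1}\|\psi_\xx\|_{C_{\mathrm{ub}}^1}$), respectively. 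Summing all contributions and regrouping yields the stated inequality; the whole argument parallels the reaction-diffusion case in~\cite{BjoernMod}.

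No genuine difficulty arises; the only points requiring care are (a) matching the precise shape of the right-hand side, which forces one to pass between $\zt$ and $\vb+\partial_k\phi(\cdot;1)\psi_\xx$ in a handful of terms and to decide whether a given factor is absorbed into a $C_{\mathrm{ub}}^1$- or a $C_{\mathrm{ub}}^2$-norm, and (b) verifying that the cubic Taylor remainder of $F$ as well as the factor $(1-\psi_\xx)^{-1}$ and its derivative are controlled with constants independent of $\vb$ and $\psi$ — which is exactly where the smallness hypotheses $\|\vb\|_{L^\infty},\|\psi_\xx\|_{L^\infty}\le\tfrac12$ are needed.
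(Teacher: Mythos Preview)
Your proposal is correct and is precisely the routine term-by-term verification the paper has in mind; the paper itself gives no proof for this lemma, treating it as straightforward bookkeeping (just as for Lemma~\ref{lem:nlboundsmod}). One small remark: since the lemma is stated for \emph{independent} arguments $\zt,\vb$, your appeal to~\eqref{e:defz} to trade $\zt$ for $\vb-\partial_k\phi(\cdot;1)\psi_\xx$ is not strictly licensed, but it is also unnecessary---for instance $\|\psi_{\xx\xx}\zt\|_{L^\infty}\le\|\psi_\xx\|_{C_{\mathrm{ub}}^1}\|\zt\|_{C_{\mathrm{ub}}^2}$ already lands in the first group on the right-hand side, so the detour can simply be dropped.
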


Next, we substitute the decompositions~\eqref{e:decomp2} of the propagtor $S_p(t)$ and~\eqref{e:nonldecomp} of the nonlinearity $\mathcal{N}(\vb(s),\psi(s),\partial_t \psi(s))$ into~\eqref{e:intpsi} and use~\eqref{e:shid} to reexpress $\smash{\re^{\left(d\partial_\xx^2 - c_g\partial_\xx\right) t} \left(\widetilde{\Phi}_0^*\mathbf{f}_p \psi_\xx^2\right)}$. All in all, we arrive at
\begin{align}
\begin{split}
\psi(t) &= r(t) + \re^{\left(d\partial_\xx^2 - c_g\partial_\xx\right) t} \left(\widetilde{\Phi}_0^*\vb_0\right) + \int_0^t \re^{\left(d\partial_\xx^2 - c_g\partial_\xx\right) (t-s)} \left(\nu \psi_\xx(s)^2 -  A_h(\mathbf{f}_p) \partial_\xx \left(\psi_\xx(s)^2\right)\right) \,\de s\\
&\qquad + \, \int_0^t \re^{\left(d\partial_\xx^2 - c_g\partial_\xx\right) t} \left(\widetilde{\Phi}_0^* \mathcal N_p(\zt(s),\vb(s),\psi(s),\tilde{\psi}(s))\right)\de s,
\end{split}
\label{e:intpsi2}
\end{align}
where we denote
\begin{align*}
\nu = \langle \widetilde{\Phi}_0, \mathbf{f}_p\rangle_{L^2(0,T)}
\end{align*}
and
\begin{align}
\begin{split}
r(t) &= \widetilde{S}_r(t)\vb_0 +  \int_0^t \widetilde{S}_r(t-s)\left(\mathbf{f}_p \psi_\xx(s)^2\right) \,\de s + \partial_\xx \int_0^t \re^{\left(d\partial_\xx^2 - c_g\partial_\xx\right) (t-s)}\left(A_h(\mathbf{f}_p) \psi_\xx(s)^2\right) \,\de s\\
&\qquad + \, \int_0^t \widetilde{S}_r(t-s) \mathcal N_p(\zt(s),\vb(s),\psi(s),\tilde{\psi}(s)) \,\de s.
\end{split}\label{e:intr}
\end{align}
Since $\psi(0)$ vanishes identically by Proposition~\ref{p:psi}, setting $t = 0$ in~\eqref{e:intpsi2} yields
\begin{align} \label{e:initr}
r(0) = - \widetilde{\Phi}_0^* \vb_0.
\end{align}
Moreover, following the computations in~\cite[Section~4.2]{DSSS}, one finds that the coefficient $\nu$ in~\eqref{e:intpsi2} equals $\smash{-\frac12 \omega''(1)}$. Thus, with the aid of Proposition~\ref{prop:speccons}, we arrive at the expression~\eqref{e:nuexpress} for $\nu$. Since $\smash{\widetilde{S}_r(t)}$ and $\smash{\partial_\zeta \re^{\left(d\partial_\xx^2 - c_g\partial_\xx\right) t}}$ decay at rate $\smash{t^{-\frac12}}$ as operators on $C_{\mathrm{ub}}(\R)$, we find that $r(t)$ captures, at least on the linear level, the decaying contributions in~\eqref{e:intpsi2}, cf.~Theorem~\ref{t:linear}. 

Finally, applying the convective heat operator $\partial_t - d\partial_\xx^2 + c_g\partial_\xx$ to~\eqref{e:intpsi2}, we arrive at the perturbed viscous Hamilton-Jacobi equation
\begin{align} \label{e:hamjacper}
\left(\partial_t - d\partial_\xx^2 + c_g\partial_\xx\right)\left(\psi - r\right) = \nu \psi_\xx^2 + G(\zt,\vb,\psi,\tilde{\psi})
\end{align}
with nonlinear residual
$$G(\zt,\vb,\psi,\tilde{\psi}) = \widetilde{\Phi}_0^* \mathcal N_p(\zt,\vb,\psi,\tilde{\psi}) - A_h(\mathbf{f}_p) \partial_\xx \left(\psi_\xx^2\right).$$
Indeed, modulo the higher-order terms $r$ and $G(\vb,\zt,\psi,\tilde{\psi})$ equation~\eqref{e:hamjacper} coincides with the Hamilton-Jacobi equation~\eqref{e:HamJac}. Regarding~\eqref{e:hamjacper} as an inhomogeneous parabolic equation, regularity properties of $\psi(t) - r(t)$, and thus of $r(t)$, can be readily deduced from standard analytic semigroup theory.

\begin{corollary} \label{C:local_r}
Assume~\ref{assH1} and~\ref{assD3}. Let $\vb_0 \in C_{\mathrm{ub}}^3(\R) \times C_{\mathrm{ub}}^2(\R)$. For $\psi$ and $\tau_{\max}$ as in Proposition~\ref{p:psi} and for $\vb$ and $\zt$ as in Corollary~\ref{C:local_v}, the residual $r$, given by~\eqref{e:intr}, obeys
$$r \in C\big([0,\tau_{\max}),C_{\mathrm{ub}}^2(\R)\big) \cap C^1\big([0,\tau_{\max}),C_{\mathrm{ub}}(\R)\big).$$
\end{corollary}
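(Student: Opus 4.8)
The plan is to read off the regularity of $r(t)$ from equation~\eqref{e:hamjacper}, treating it as an inhomogeneous linear parabolic equation for the quantity $\psi(t) - r(t)$, and then subtracting the already-established regularity of $\psi(t)$. First I would recall that the convective heat operator $d\partial_\xx^2 - c_g\partial_\xx$, acting on $C_{\mathrm{ub}}(\R)$ with domain $C_{\mathrm{ub}}^2(\R)$, is sectorial and generates an analytic semigroup $\re^{(d\partial_\xx^2 - c_g\partial_\xx)t}$ (this is standard, e.g.\ via~\cite[Corollary~3.1.9]{LUN} as already used in the proof of Proposition~\ref{p:semigroupgen}, since $c_g\partial_\xx$ is a lower-order perturbation of the sectorial operator $d\partial_\xx^2$). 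Next I would verify that, on the time interval $[0,\tau_{\max})$, the inhomogeneity $h(t) := \nu\,\psi_\xx(t)^2 + G(\zt(t),\vb(t),\psi(t),\tilde{\psi}(t))$ on the right-hand side of~\eqref{e:hamjacper} is a well-defined, continuous $C_{\mathrm{ub}}(\R)$-valued function of $t$. This follows by combining the regularity already obtained: $\psi \in C([0,\tau_{\max}),C_{\mathrm{ub}}^{2+m}(\R)) \cap C^{1+j}([0,\tau_{\max}),C_{\mathrm{ub}}^{l}(\R))$ from Proposition~\ref{p:psi} (so $\psi_\xx, \psi_{\xx\xx}, \partial_t\psi$, hence $\tilde\psi = \partial_t\psi + c_g\psi_\xx$, are all continuous into $C_{\mathrm{ub}}$-spaces), $\vb, \zt \in C([0,\tau_{\max}),C_{\mathrm{ub}}^3(\R) \times C_{\mathrm{ub}}^2(\R))$ from Corollary~\ref{C:local_v}, smoothness of $F$, $\phi_0$, $\partial_k\phi(\cdot;1)$, $\mathbf{f}_p$, and boundedness of the operator $A_h$ from Theorem~\ref{t:linear}; the explicit formulas for $\mathcal{Q}_p$, $\mathcal{R}_p$, $G$ then show $h(t) \in C_{\mathrm{ub}}(\R)$ depends continuously on $t$ (note $\partial_\xx\mathcal{R}_p$ and $\partial_\xx(\psi_\xx^2)$ cost one derivative, which is affordable since $\zt,\vb \in C_{\mathrm{ub}}^2$ and $\psi \in C_{\mathrm{ub}}^3$).

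Having established that $h \in C([0,\tau_{\max}),C_{\mathrm{ub}}(\R))$ and that the initial datum $\psi(0) - r(0) = 0 - (-\widetilde{\Phi}_0^*\vb_0) = \widetilde{\Phi}_0^*\vb_0 \in C_{\mathrm{ub}}(\R)$ (using $\psi(0)=0$ from Proposition~\ref{p:psi} and~\eqref{e:initr}), I would invoke standard analytic-semigroup theory for the inhomogeneous Cauchy problem $\partial_t w = (d\partial_\xx^2 - c_g\partial_\xx)w + h(t)$, $w(0) = \widetilde{\Phi}_0^*\vb_0$: its mild solution $w(t) = \re^{(d\partial_\xx^2 - c_g\partial_\xx)t}(\widetilde{\Phi}_0^*\vb_0) + \int_0^t \re^{(d\partial_\xx^2 - c_g\partial_\xx)(t-s)}h(s)\,\de s$ is, by the parabolic smoothing of the analytic semigroup together with continuity of $h$, of class $C((0,\tau_{\max}),C_{\mathrm{ub}}^2(\R)) \cap C^1((0,\tau_{\max}),C_{\mathrm{ub}}(\R))$, and in fact continuous up to $t=0$ into $C_{\mathrm{ub}}(\R)$; cf.~\cite[Proposition~4.1.2]{LUN} or~\cite[Theorem~4.3.1]{LUN}. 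This identifies $\psi(t) - r(t) = w(t)$ (equation~\eqref{e:intpsi2} is precisely this mild formulation, so uniqueness of mild solutions gives the identification), whence $\psi - r \in C([0,\tau_{\max}),C_{\mathrm{ub}}(\R)) \cap C((0,\tau_{\max}),C_{\mathrm{ub}}^2(\R)) \cap C^1((0,\tau_{\max}),C_{\mathrm{ub}}(\R))$. Combining with $\psi \in C([0,\tau_{\max}),C_{\mathrm{ub}}^2(\R)) \cap C^1([0,\tau_{\max}),C_{\mathrm{ub}}(\R))$ from Proposition~\ref{p:psi} yields $r = \psi - (\psi - r) \in C([0,\tau_{\max}),C_{\mathrm{ub}}^2(\R)) \cap C^1([0,\tau_{\max}),C_{\mathrm{ub}}(\R))$ on the positive-time interval; the behavior near $t=0$ is covered because $\psi$ vanishes identically on $[0,1] \cap [0,\tau_{\max})$, so there $r(t) = -(\psi(t)-r(t))$ inherits the full regularity directly, and in particular $r$ extends continuously to $t=0$ with $r(0) = -\widetilde{\Phi}_0^*\vb_0$ in $C_{\mathrm{ub}}^2(\R)$.

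The main obstacle I anticipate is purely bookkeeping: carefully checking that every term appearing in $h(t)$ — in particular the terms in $\mathcal{R}_p$ and $G$ that carry an outer $\partial_\xx$, as well as the fraction $\tfrac{(\vb_\xx + \phi_0'\psi_\xx)\psi_\xx^2}{1-\psi_\xx}$ (which is well-defined since $\|\psi_\xx(t)\|_{L^\infty} < \tfrac12$ by Proposition~\ref{p:psi}, and smooth in its arguments) — maps continuously in time into $C_{\mathrm{ub}}(\R)$ using only the regularity $\zt,\vb \in C_{\mathrm{ub}}^2$, $\psi \in C_{\mathrm{ub}}^3$, $\tilde\psi \in C_{\mathrm{ub}}^1$ already available; once this is confirmed there are no genuine analytic difficulties, since the whole argument rides on the analyticity of the convective heat semigroup and the identification of~\eqref{e:intpsi2} as its mild formulation. (One could also simply invoke that the proof is identical to the corresponding step in~\cite{BjoernMod}, as~\eqref{e:intpsi2}–\eqref{e:hamjacper} have the same structure there.)
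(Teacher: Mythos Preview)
Your overall strategy coincides with the paper's: view~\eqref{e:intpsi2} as the mild formulation of the inhomogeneous parabolic problem~\eqref{e:hamjacper} for $w = \psi - r$, exploit sectoriality of $d\partial_\xx^2 - c_g\partial_\xx$ on $C_{\mathrm{ub}}(\R)$, and then subtract the regularity of $\psi$. However, there is a genuine gap in the execution.

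You only verify that the inhomogeneity $h(t) = \nu\psi_\xx(t)^2 + G(\zt(t),\vb(t),\psi(t),\tilde\psi(t))$ lies in $C\big([0,\tau_{\max}),C_{\mathrm{ub}}(\R)\big)$ and that the initial datum lies in $C_{\mathrm{ub}}(\R)$, and then assert that ``parabolic smoothing of the analytic semigroup together with continuity of $h$'' yields $w \in C\big((0,\tau_{\max}),C_{\mathrm{ub}}^2(\R)\big) \cap C^1\big((0,\tau_{\max}),C_{\mathrm{ub}}(\R)\big)$. This inference is false in general: for an analytic semigroup, mere continuity of the inhomogeneity into the base space does \emph{not} guarantee that the mild solution takes values in the domain or is time-differentiable; there are classical counterexamples (see e.g.\ \cite[Section~4.2]{LUN}). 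The results you cite, such as \cite[Proposition~4.1.2 or Theorem~4.3.1]{LUN}, require Hölder continuity of $h$ in time, which you have not checked. Consequently your patching argument near $t=0$ does not rescue the situation, because the $t>0$ regularity it relies on is not established.

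The paper closes this gap by using slightly more of the available regularity. Since $\vb,\zt \in C\big([0,\tau_{\max}),C_{\mathrm{ub}}^3(\R)\times C_{\mathrm{ub}}^2(\R)\big)$ and $\psi \in C\big([0,\tau_{\max}),C_{\mathrm{ub}}^m(\R)\big)$ for all $m$, one in fact has $h \in C\big([0,\tau_{\max}),C_{\mathrm{ub}}^1(\R)\big)$; moreover $\widetilde{\Phi}_0^*\vb_0 \in C_{\mathrm{ub}}^2(\R)$ lies in the domain, not merely in $C_{\mathrm{ub}}(\R)$. Because $C_{\mathrm{ub}}^1(\R)$ is an intermediate space between $C_{\mathrm{ub}}(\R)$ and $C_{\mathrm{ub}}^2(\R)$, one can invoke \cite[Theorem~4.3.8]{LUN} to obtain directly $w \in C\big([0,\tau_{\max}),C_{\mathrm{ub}}^2(\R)\big) \cap C^1\big([0,\tau_{\max}),C_{\mathrm{ub}}(\R)\big)$, with no separate treatment at $t=0$ required.
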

\begin{proof}
Moving $r(t)$ to the left-hand side, we can regard~\eqref{e:intpsi2} as the mild formulation of the inhomogeneous problem~\eqref{e:hamjacper} for $\psi(t) - r(t)$ with inhomogeneity $t \mapsto \nu\psi_\xx(t)^2 + G(\zt(t),\vb(t),\psi(t),\tilde{\psi}(t))$, which lies in $C\big([0,\tau_{\max}),C_{\mathrm{ub}}^1(\R)\big)$ by Proposition~\ref{p:psi} and Corollary~\ref{C:local_v}. It is well-known that $d\partial_\xx^2 - c_g\partial_\xx$ is a sectorial operator on $C_{\mathrm{ub}}(\R)$ with domain $C_{\mathrm{ub}}^2(\R)$, cf.~\cite[Corollary~3.1.9]{LUN}. Therefore, since the initial condition $\psi(0) - r(0) = \smash{\widetilde{\Phi}_0^* \vb_0}$ lies in the domain $C_{\mathrm{ub}}^2(\R)$ and $C_{\mathrm{ub}}^1(\R)$ is an intermediate space between $C_{\mathrm{ub}}(\R)$ and the domain $C_{\mathrm{ub}}^2(\R)$, it follows from~\cite[Propositions~2.1.1 and~2.1.4 and Theorem~4.3.8]{LUN} that $t \mapsto \psi(t) - r(t)$ lies in $C\big([0,\tau_{\max}),C_{\mathrm{ub}}^2(\R)\big) \cap C^1\big([0,\tau_{\max}),C_{\mathrm{ub}}(\R)\big)$. Invoking Proposition~\ref{p:psi} then yields the result.
\end{proof}

\subsection{Application of the Cole-Hopf transform}

We apply the Cole-Hopf transform to remove the critical nonlinear term $\nu \psi_\xx^2$ in~\eqref{e:hamjacper}. That is, we introduce the new variable
\begin{align} \label{e:defy} y(t) = \re^{\frac{\nu}{d} \left(\psi(t) - r(t)\right)} - 1,\end{align}
which satisfies
\begin{align}\label{e:regy} y \in C\big([0,\tau_{\max}),C_{\mathrm{ub}}^2(\R)\big) \cap C^1\big([0,\tau_{\max}),C_{\mathrm{ub}}(\R)\big)\end{align}
by Proposition~\ref{p:psi} and Corollary~\ref{C:local_r}. It is readily seen that $y(t)$ is a solution of the convective heat equation
\begin{align} \left(\partial_t - d\partial_\xx^2 + c_g\partial_\xx\right)y = 2\nu r_\xx y_\xx + \frac{\nu}{d}\left(\nu r_\xx^2 + G(\zt,\vb,\psi,\tilde{\psi})\right)\left(y+1\right) \label{e:colehopf} \end{align}
with initial condition
\begin{align} \label{e:yinit}
y(0) = \re^{\frac{\nu}{d}\widetilde{\Phi}_0^* (\psi(0) - r(0))} - 1 = \re^{\frac{\nu}{d}\widetilde{\Phi}_0^* \vb_0} - 1,
\end{align}
cf.~Proposition~\ref{p:psi} and~\eqref{e:initr}. 

Recalling that $\psi(t)$ vanishes identically for $t \in [0,1]$ by Proposition~\ref{p:psi}, the Cole-Hopf variable $y(t)$ can be expressed in terms of the residual $r(t)$ through
\begin{align} \label{e:ysmall}
y(t) = \re^{-\frac{\nu}{d} r(t)} - 1
\end{align}
for $t \in [0,\tau_{\max})$ with $t \leq 1$. On the other hand, the Duhamel formulation of~\eqref{e:colehopf} reads
\begin{align} 
\begin{split}
y(t) &= \re^{\left(d\partial_\xx^2 - c_g\partial_\xx\right) (t-1)}\left(\re^{\frac{\nu}{d}\widetilde{\Phi}_0^* \vb_0} - 1\right)\\
&\qquad + \, \int_1^t \re^{\left(d\partial_\xx^2 - c_g\partial_\xx\right) (t-s)} \Non_c(r(s),y(s),\zt(s),\vb(s),\psi(s),\tilde{\psi}(s)) \,\de s
\end{split}
\label{e:inty} \end{align}
for $t \in [0,\tau_{\max})$ with $t \geq 1$, where the nonlinearity is given by
\begin{align*}
\Non_c(r,y,\zt,\vb,\psi,\tilde{\psi}) = 2\nu r_\xx y_\xx + \frac{\nu}{d}\left(\nu r_\xx^2 + G(\zt,\vb,\psi,\tilde{\psi})\right)\left(y+1\right).
\end{align*}
We use~\eqref{e:ysmall} for short-time control on $y(t)$ (rather than its Duhamel formulation) in the upcoming nonlinear argument. The reason is that we use a temporal weight $\sqrt{s}\sqrt{1+s}$ on $r_\xx(s)$, so that the obtained bound on $r_\xx(s)^2$ is nonintegrable and blows up as $1/s$ as $s \downarrow 0$. We refer to the proof of Theorem~\ref{t:mainresult} and Remark~\ref{rem:motv2} for further details.

With the aid of Lemma~\ref{lem:nlboundsmod3}, we obtain the following nonlinear estimate.

\begin{lemma}\label{lem:nlboundsmod4}
Assume~\ref{assH1} and~\ref{assD3}. It holds
\begin{align*}
\begin{split}
\|\Non_c(r,y,\vb,\zt,\psi,\tilde{\psi})\|_{L^\infty} &\lesssim \left(\|r_\xx\|_{L^\infty} + \|y_\xx\|_{L^\infty}\right)\|r_\xx\|_{L^\infty} + \left(\|\vb\|_{L^\infty} + \|\psi_\xx\|_{C_{\mathrm{ub}}^1}\right)\left(\|\vb\|_{L^\infty}^2 + \|\zt\|_{C_{\mathrm{ub}}^2}\right)  \\ 
&\qquad + \,  \left(\|\tilde{\psi}\|_{C_{\mathrm{ub}}^1} + \|\psi_{\xx\xx}\|_{C_{\mathrm{ub}}^1}\right) \|\vb\|_{C_{\mathrm{ub}}^1}\\
&\qquad + \, \left(\|\psi_{\xx\xx}\|_{C_{\mathrm{ub}}^1} + \|\vb\|_{C_{\mathrm{ub}}^2} \|\psi_\xx\|_{L^\infty} + \|\psi_\xx\|_{L^\infty}^2 \right)\|\psi_\xx\|_{C_{\mathrm{ub}}^1}
\end{split}
\end{align*}
for each $r,y \in C_{\mathrm{ub}}^1(\R)$, $\zt,\vb \in C_{\mathrm{ub}}^2(\R)$ and $(\psi,\tilde{\psi}) \in C_{\mathrm{ub}}^3(\R) \times C_{\mathrm{ub}}^1(\R)$ with $\|y\|_{L^\infty}, \|\vb\|_{L^\infty}, \|\psi_\xx\|_{L^\infty} \leq \frac{1}{2}$.
\end{lemma}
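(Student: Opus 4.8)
The plan is to prove Lemma~\ref{lem:nlboundsmod4} by unpacking the definition of $\Non_c$ and bounding each of its two constituent pieces with the $L^\infty$-multiplicativity of the pointwise product, reducing the second piece to Lemma~\ref{lem:nlboundsmod3}. First I would recall
\[
\Non_c(r,y,\zt,\vb,\psi,\tilde\psi) = 2\nu\, r_\xx y_\xx + \frac{\nu}{d}\left(\nu r_\xx^2 + G(\zt,\vb,\psi,\tilde\psi)\right)(y+1),
\]
and treat the two summands separately. For the first summand, since $\|r_\xx y_\xx\|_{L^\infty} \le \|r_\xx\|_{L^\infty}\|y_\xx\|_{L^\infty}$, this term is bounded by a constant times $\|r_\xx\|_{L^\infty}\|y_\xx\|_{L^\infty}$, which is $\le (\|r_\xx\|_{L^\infty} + \|y_\xx\|_{L^\infty})\|r_\xx\|_{L^\infty}$ after discarding the (harmless) extra contribution. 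This is exactly the first term on the right-hand side of the claimed estimate, so nothing further is needed there.

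Next I would handle $\frac{\nu}{d}(\nu r_\xx^2 + G)(y+1)$. Using $\|y\|_{L^\infty} \le \tfrac12$ we have $\|y+1\|_{L^\infty} \le \tfrac32$, so this term is bounded by a constant times $\|r_\xx^2\|_{L^\infty} + \|G(\zt,\vb,\psi,\tilde\psi)\|_{L^\infty} \le \|r_\xx\|_{L^\infty}^2 + \|G(\zt,\vb,\psi,\tilde\psi)\|_{L^\infty}$. The term $\|r_\xx\|_{L^\infty}^2$ is absorbed into the first term of the claimed bound (it is $\le (\|r_\xx\|_{L^\infty} + \|y_\xx\|_{L^\infty})\|r_\xx\|_{L^\infty}$). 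It then remains to estimate $\|G(\zt,\vb,\psi,\tilde\psi)\|_{L^\infty}$, where
\[
G(\zt,\vb,\psi,\tilde\psi) = \widetilde{\Phi}_0^* \mathcal N_p(\zt,\vb,\psi,\tilde\psi) - A_h(\mathbf{f}_p)\,\partial_\xx\!\left(\psi_\xx^2\right).
\]
Since $\widetilde{\Phi}_0^*$ and $A_h(\mathbf{f}_p)$ are bounded operators (cf.~Theorem~\ref{t:linear} and the discussion around~\eqref{e:shid}), boundedness gives $\|G\|_{L^\infty} \lesssim \|\mathcal N_p(\zt,\vb,\psi,\tilde\psi)\|_{L^\infty} + \|\partial_\xx(\psi_\xx^2)\|_{L^\infty}$, and $\partial_\xx(\psi_\xx^2) = 2\psi_\xx\psi_{\xx\xx}$ so $\|\partial_\xx(\psi_\xx^2)\|_{L^\infty} \le 2\|\psi_\xx\|_{L^\infty}\|\psi_{\xx\xx}\|_{L^\infty} \le 2\|\psi_\xx\|_{C_{\mathrm{ub}}^1}\|\psi_{\xx\xx}\|_{C_{\mathrm{ub}}^1}$, which is subsumed by the last line of the claimed estimate. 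Applying Lemma~\ref{lem:nlboundsmod3} to bound $\|\mathcal N_p(\zt,\vb,\psi,\tilde\psi)\|_{L^\infty}$ then directly reproduces the remaining three groups of terms in the claimed inequality. Collecting all contributions yields the lemma.

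There is essentially no obstacle here: the estimate is a bookkeeping exercise, combining the submultiplicativity of the $L^\infty$-norm under pointwise products with the already-established bound of Lemma~\ref{lem:nlboundsmod3} and the boundedness of the linear operators $\widetilde{\Phi}_0^*$ and $A_h(\mathbf{f}_p)$. The only point requiring the smallness hypotheses $\|y\|_{L^\infty}, \|\vb\|_{L^\infty}, \|\psi_\xx\|_{L^\infty} \le \tfrac12$ is that they make $\|y+1\|_{L^\infty}$ bounded by an absolute constant and that they are exactly the conditions under which Lemma~\ref{lem:nlboundsmod3} applies; no further regularity or smallness is used.
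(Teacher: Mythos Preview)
Your proposal is correct and matches the paper's approach: the paper states the lemma with the preface ``With the aid of Lemma~\ref{lem:nlboundsmod3}, we obtain the following nonlinear estimate'' and gives no further proof, so your argument simply fills in the straightforward details the authors omit. The only minor imprecision is calling $\widetilde{\Phi}_0^*$ and $A_h(\mathbf{f}_p)$ ``bounded operators''---more accurately, they are multiplication by fixed bounded (periodic) functions, hence bounded as multiplication operators on $L^\infty$---but this does not affect the validity of the argument.
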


\subsection{Forward-modulated damping}

The modified forward-modulated perturbation $\mathring{\zt}(t)$ is given by~\eqref{e:defvforw}, where the $T$-periodic continuation $\phi(\cdot;k)$ of the wave train $\phi_0$ with respect to the wavenumber $k$ is defined for $k$ in the neighborhood $[1-r_0,1+r_0]$ by Proposition~\ref{prop:family}. Combining the latter with Propositions~\ref{p:local_unmod} and~\ref{p:psi}, we find that the forward-modulated perturbation $\mathring{\zt}(t)$ is well-defined as long as $t \in [0,\tau_{\max})$ is such that $\|\psi_\xx(t)\|_{L^\infty} < r_0$. Its regularity then follows readily from Propositions~\ref{prop:family},~\ref{p:local_unmod} and~\ref{p:psi}.

\begin{corollary} \label{c:local_forward}
Assume~\ref{assH1} and~\ref{assD3}. Let $\vb_0 \in C_{\mathrm{ub}}^3(\R) \times C_{\mathrm{ub}}^2(\R)$. For $r_0 > 0$ as in Proposition~\ref{prop:family}, $\vt$ as in Proposition~\ref{p:local_unmod}, and $\psi$ and $\tau_{\max}$ as in Proposition~\ref{p:psi}, we have
\begin{align*}
\widetilde{\tau}_{\max} = \sup\left\{t \in [0,\tau_{\max}) : \|\psi_\xx(s)\|_{L^\infty} < r_0 \text{ for all } s \in [0,t]\right\} > 0
\end{align*}
and the modified forward-modulated perturbation $\mathring{\zt}(t)$, given by~\eqref{e:defvforw}, is well-defined for $t \in [0,\widetilde{\tau}_{\max})$ and satisfies
$$\mathring{\zt} \in C\big([0,\widetilde{\tau}_{\max}),C_{\mathrm{ub}}^3(\R) \times C_{\mathrm{ub}}^2(\R)\big) \cap C^1\big([0,\widetilde{\tau}_{\max}),C_{\mathrm{ub}}^1(\R)\big).$$
\end{corollary}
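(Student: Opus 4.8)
The plan is to establish the three assertions of Corollary~\ref{c:local_forward} in sequence: strict positivity of $\widetilde{\tau}_{\max}$, well-definedness of $\mathring{\zt}(t)$ on $[0,\widetilde{\tau}_{\max})$, and the stated space-time regularity of $\mathring{\zt}$. All three are meant to fall out essentially by reading off Propositions~\ref{prop:family},~\ref{p:local_unmod} and~\ref{p:psi}, together with a chain-rule bookkeeping for the composition term in~\eqref{e:defvforw}.

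For $\widetilde{\tau}_{\max}>0$ I would invoke Proposition~\ref{p:psi}, which gives $\psi(t)=0$ for every $t\in[0,\tau_{\max})$ with $t\le1$; hence $\|\psi_\xx(s)\|_{L^\infty}=0<r_0$ for all $s\in[0,\min\{1,\tau_{\max}\})$, so the set defining $\widetilde{\tau}_{\max}$ is nonempty and $\widetilde{\tau}_{\max}\ge\min\{1,\tau_{\max}\}>0$. For well-definedness, fix $t\in[0,\widetilde{\tau}_{\max})$: since the set in the definition of $\widetilde{\tau}_{\max}$ is an interval based at $0$ with supremum $\widetilde{\tau}_{\max}$, there is $t'\in(t,\widetilde{\tau}_{\max}]$ with $\|\psi_\xx(s)\|_{L^\infty}<r_0$ on $[0,t']$, so in particular $\|\psi_\xx(t)\|_{L^\infty}<r_0$, i.e. $1+\psi_\xx(\zeta,t)\in(1-r_0,1+r_0)$ for every $\zeta\in\R$. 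Thus $\phi(\,\cdot\,;1+\psi_\xx(\zeta,t))$ lies in the range of definition supplied by Proposition~\ref{prop:family} and the composition $\phi\big(\zeta+\psi(\zeta,t)(1+\psi_\xx(\zeta,t));1+\psi_\xx(\zeta,t)\big)$ in~\eqref{e:defvforw} makes sense.

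For the regularity statement I would argue locally in time. Fix $t_0\in[0,\widetilde{\tau}_{\max})$; since $t\mapsto\psi_\xx(t)$ is continuous with values in $C_{\mathrm{ub}}^1(\R)$ by Proposition~\ref{p:psi}, the scalar map $t\mapsto\|\psi_\xx(t)\|_{L^\infty}$ is continuous, so there exist $\varepsilon,\delta>0$ with $\|\psi_\xx(t)\|_{L^\infty}\le r_0-\delta$ on $J:=[t_0-\varepsilon,t_0+\varepsilon]\cap[0,\widetilde{\tau}_{\max})$. On $J$ the wavenumber argument $1+\psi_\xx(\zeta,t)$ ranges over the fixed compact interval $[1-r_0+\delta,1+r_0-\delta]$ in the interior of $[1-r_0,1+r_0]$, and on $\R\times[1-r_0+\delta,1+r_0-\delta]$ the map $\phi$ — smooth and $T$-periodic in its first argument by Proposition~\ref{prop:family} — has all partial derivatives bounded and uniformly continuous. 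Invoking Proposition~\ref{p:psi} with $m$ and $l$ chosen sufficiently large, the functions $\psi$ and $\partial_t\psi$, together with all of their spatial derivatives up to the finitely many orders needed below, lie in $C(J,C_{\mathrm{ub}}(\R))$. Expanding $\partial_\xx^i\big(\phi(\,\cdot+\psi(\cdot,t)(1+\psi_\xx(\cdot,t));1+\psi_\xx(\cdot,t))\big)$ for $i=0,1,2,3$, and likewise its time derivative, by the (higher-order) chain rule writes these as finite sums of products of partial derivatives of $\phi$ evaluated at the shifted argument against polynomials in spatial and temporal derivatives of $\psi$; upgrading pointwise continuity to continuity in the $C_{\mathrm{ub}}$-norm via the uniform continuity of elements of $C_{\mathrm{ub}}(\R)$ then shows that $t\mapsto\phi\big(\,\cdot+\psi(\cdot,t)(1+\psi_\xx(\cdot,t));1+\psi_\xx(\cdot,t)\big)$ belongs to $C\big(J,C_{\mathrm{ub}}^3(\R)\times C_{\mathrm{ub}}^2(\R)\big)\cap C^1\big(J,C_{\mathrm{ub}}^1(\R)\big)$. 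Combining this with $\ub=\phi_0+\vt$ and the regularity $\vt\in C\big([0,T_{\max}),C_{\mathrm{ub}}^3(\R)\times C_{\mathrm{ub}}^2(\R)\big)\cap C^1\big([0,T_{\max}),C_{\mathrm{ub}}^1(\R)\big)$ from Proposition~\ref{p:local_unmod}, and recalling $t_0$ was arbitrary, yields the asserted regularity of $\mathring{\zt}$.

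I do not expect any genuinely hard step here. The only point requiring care is the last one, namely that the nonlinear superposition $(\psi,\psi_\xx,\dots)\mapsto\phi\big(\,\cdot+\psi(1+\psi_\xx);1+\psi_\xx\big)$ is continuous, and $C^1$ in time, into $C_{\mathrm{ub}}^3(\R)\times C_{\mathrm{ub}}^2(\R)$. This is purely bookkeeping — one lists the derivatives of $\psi$ and $\partial_t\psi$ that appear (up to fourth spatial order, and, after the extra differentiation in the $C^1$-in-time claim, up to second spatial order in $\partial_t\psi$), checks that they are all supplied by Proposition~\ref{p:psi}, and uses smoothness of $\phi$ and uniform continuity of $C_{\mathrm{ub}}$-functions to pass from pointwise to norm statements — but it carries no conceptual difficulty. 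Everything else is an immediate application of Propositions~\ref{prop:family},~\ref{p:local_unmod} and~\ref{p:psi}.
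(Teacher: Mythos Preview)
Your proposal is correct and follows essentially the same approach as the paper, which simply states (without a displayed proof) that well-definedness and regularity of $\mathring{\zt}$ follow readily from Propositions~\ref{prop:family},~\ref{p:local_unmod} and~\ref{p:psi}, together with the uniform continuity of functions in $C_{\mathrm{ub}}(\R)$. Your write-up merely unpacks this in more detail; the chain-rule bookkeeping and the invocation of arbitrarily high $m,l$ in Proposition~\ref{p:psi} are exactly what is implicit in the paper's one-line justification.
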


Using that the wave train $\ub_k(x,t) = \phi(kx - \omega(k) t; k)$ is a solution to~\eqref{FHN} and $\ub(t)$ solves~\eqref{FHN_co}, one obtains the equation 
\begin{align} \label{e:Pert1}
\partial_t \mathring{\zt} = D \mathring{\zt}_{\xx\xx} + c_0 \mathring{\zt}_\xx + F'(0) \mathring{\zt} + \mathring{\mathcal{Q}}(\mathring{\zt},\psi) + \mathring{\mathcal{R}}(\psi,\tilde{\psi},\partial_t \psi)
\end{align}
for the modified forward-modulated perturbation, where 
\begin{align*}
\mathring{\mathcal{Q}}(\mathring{\zt},\psi) &= F(\mathring{\zt} + \phi(\beta(\psi))) - F(\phi(\beta(\psi))) - F'(0)\mathring{\zt}\\
&= \begin{pmatrix}
  \left(\phi_1(\beta(\psi)) \left(2 + 2\mu - 3 \mathring{z}_1\right) - 3 \phi_1(\beta(\psi))^2 + (1 + \mu - \mathring{z}_1) \mathring{z}_1\right) \mathring{z}_1 \\ 0 
\end{pmatrix}
\end{align*}
is the nonlinearity in $\mathring{\zt} = (\mathring{z}_1,\mathring{z}_2)$, 
\begin{align*}
\mathring{\mathcal{R}}(\psi,\tilde{\psi},\psi_t) 
&= D\Big[\phi_{yy}(\beta(\psi))\left(\left(1+\psi_\xx(1+\psi_\xx) + \psi\psi_{\xx\xx}\right)^2 - (1+\psi_\xx)^2\right) + 
\phi_{kk}(\beta(\psi))\psi_{\xx\xx}^2 \\ 
&\qquad \quad + 2 \phi_{yk}(\beta(\psi)) \psi_{\xx\xx}\left(1+\psi_\xx(1+\psi_\xx) + \psi\psi_{\xx\xx}\right) + \phi_{y}(\beta(\psi))\left(\psi_{\xx\xx}(1+3\psi_\xx) + \psi\psi_{\xx\xx\xx}\right) \\
&\qquad \quad  + \phi_{k}(\beta(\psi))\psi_{\xx\xx\xx} 
\Big] + \phi_{k}(\beta(\psi))\left(c_0 \psi_{\xx\xx} - \psi_{\xx t}\right)\\
&\qquad \quad + \phi_{y}(\beta(\psi))\left(c_0 + \omega'(1) \psi_\xx - \omega(1+\psi_\xx) - \tilde{\psi} + c_0 \left(\psi_\xx^2 + \psi \psi_{\xx\xx}\right)- \psi_t \psi_\xx - \psi \psi_{\xx t}\right)
\end{align*}
is the $\mathring{\zt}$-independent residual and we used
\begin{align*}
\beta(\psi)(\xx,t) = \big(\xx + \psi(\xx,t)(1+\psi_\xx(\xx,t));1+\psi_\xx(\xx,t)\big)
\end{align*}
to abbreviate the argument of the profile function $\phi(y;k) = (\phi_1(y;k),\phi_2(y;k))$ and its derivatives. We refer to Appendix~\ref{app:derivationforward} for further details on the derivation of~\eqref{e:Pert1}. 

We proceed with deriving a nonlinear damping estimate for the modified forward-modulated perturbation $\mathring{\zt}(t)$, which will be employed in the nonlinear stability argument to control regularity. A nonlinear damping estimate in $H^3(\R) \times H^2(\R)$ for the ``classical'' forward-modulated perturbation $\mathring{\vb}(t)$, given by~\eqref{e:defforwregular}, was established in~\cite[Proposition~8.6]{BjoernAvery}. Here, we extend the method in~\cite{BjoernAvery} to nonlocalized perturbations by relying on the embedding of the uniformly local Sobolev space $H^{1}_{\mathrm{ul}}(\R)$ in $C_{\mathrm{ub}}(\R)$, see~\cite[Lemma~8.3.11]{SU17book}. 

The equation~\eqref{e:Pert1} for $\mathring{\zt}(t)$ has a similar structure as the one for $\mathring{\vb}(t)$ derived in~\cite{BjoernAvery}. That is, the second derivative $\partial_{\xx\xx} \mathring{z}_1$ yields damping in the first component of~\eqref{e:Pert1} and the term $-\eps \gamma \mathring{z}_2$ yields damping in the second component. Since~\eqref{e:Pert1} is semilinear, all other linear and nonlinear terms can be controlled by these damping terms. 

All in all, we arrive at the following result.

\begin{proposition} \label{p:damping}
Assume~\ref{assH1} and~\ref{assD3}. Fix $R > 0$. Let $\vb_0 \in C_{\mathrm{ub}}^3(\R) \times C_{\mathrm{ub}}^2(\R)$. Let $\psi(t)$ be as in Proposition~\ref{p:psi} and let $\mathring{\zt}(t)$ and $\widetilde{\tau}_{\max}$ be as in Corollary~\ref{c:local_forward}. There exist $\vb_0$- and $t$-independent constants $C, \alpha > 0$ such that the nonlinear damping estimate
\begin{align} \label{e:dampingineq}
\begin{split}
\big\|\mathring{\zt}(t)\big\|_{C_{\mathrm{ub}}^2 \times C_{\mathrm{ub}}^1} &\leq C\Bigg(\Big\|\mathring{\zt}(t)\big\|_{L^\infty} + \big\|\mathring{\zt}(t)\big\|_{L^\infty}^{\frac15} \bigg[\re^{-\alpha t}  \|\vb_0\|_{C_{\mathrm{ub}}^3 \times C_{\mathrm{ub}}^2}^2 + \int_0^t \re^{-\alpha(t-s)} \Big(\big\|\mathring{z}_1(s)\big\|_{L^\infty}^2\\ 
&\qquad\qquad + \, \|\psi_{\xx\xx}(s)\|_{C_{\mathrm{ub}}^4}^2 + \|\partial_s \psi_{\xx}(s)\|_{C_{\mathrm{ub}}^3}^2 + \big\|\tilde{\psi}(s)\big\|_{C_{\mathrm{ub}}^3}^2\\ 
&\qquad\qquad  + \, \|\psi_{\xx}(s)\|_{L^\infty}^2\left(\|\psi_{\xx}(s)\|_{L^\infty}^2 + \|\partial_s \psi(s)\|_{L^\infty}^2\right)\Big) \,\de s
\bigg]^{\frac{2}{5}}\Bigg)
\end{split}
\end{align}
holds for all $t \in [0,\widetilde{\tau}_{\max})$ with
\begin{align}\label{e:upbound}
\sup_{0 \leq s \leq t} \left(\big\|\mathring{z}_1(s)\big\|_{C_{\mathrm{ub}}^1} + \|\psi(s)\|_{C_{\mathrm{ub}}^3}\right) \leq R.
\end{align}     
\end{proposition}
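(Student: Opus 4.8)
The plan is to control the $(C_{\mathrm{ub}}^2\times C_{\mathrm{ub}}^1)$-norm of $\mathring{\zt}(t)$ by running an energy estimate in uniformly local Sobolev spaces on the semilinear equation~\eqref{e:Pert1}, exploiting the damping coming from $D\mathring{\zt}_{\xx\xx}$ in the first component and from $-\varepsilon\gamma\mathring{z}_2$ in the second. First I would fix a smooth, strictly positive weight $\varrho\colon\R\to(0,\infty)$ with $\varrho\in L^2(\R)$ and $|\varrho^{(j)}|\leq C_j\varrho$ for all $j\in\mathbb{N}_0$ (e.g.\ a mollified version of $\re^{-\delta_\varrho|\zeta|}$), set $\varrho_{\zeta_0}:=\varrho(\cdot-\zeta_0)$ and $\|f\|_{H^k_{\mathrm{ul}}}:=\sup_{\zeta_0\in\R}\|\varrho_{\zeta_0}f\|_{H^k(\R)}$ as in~\cite[Section~8.3.1]{SU17book}. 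I will use the embedding $H^1_{\mathrm{ul}}(\R)\hookrightarrow C_{\mathrm{ub}}(\R)$, hence $\|f\|_{C_{\mathrm{ub}}^k}\lesssim\|f\|_{H^{k+1}_{\mathrm{ul}}}$, the trivial bound $\|f\|_{L^2_{\mathrm{ul}}}\lesssim\|f\|_{L^\infty}$, and the fact — obtained by applying classical Gagliardo–Nirenberg interpolation on $\R$ to $\varrho_{\zeta_0}f$ and using $|\varrho'|\lesssim\varrho$ — that $\|\partial_\xx^j f\|_{L^\infty}\lesssim\|f\|_{H^m_{\mathrm{ul}}}^{\theta}\|f\|_{L^\infty}^{1-\theta}$ with $\theta=2j/(2m-1)$ for $0\leq j<m$; in particular $\|\partial_\xx^2 f\|_{L^\infty}\lesssim\|f\|_{H^3_{\mathrm{ul}}}^{4/5}\|f\|_{L^\infty}^{1/5}$, which is the source of the fractional power $\tfrac15$ in~\eqref{e:dampingineq}, cf.~Remark~\ref{rem:gagl}. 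Throughout, the a priori bound~\eqref{e:upbound}, together with Proposition~\ref{prop:family} and Corollary~\ref{c:local_forward}, ensures that all profile coefficients $\phi_1(\beta(\psi))$, $f'(\phi_1(\beta(\psi)))$, $\phi_y(\beta(\psi))$, $\phi_k(\beta(\psi))$, etc.\ appearing in~\eqref{e:Pert1} are bounded by $t$-independent constants (using smoothness and $T$-periodicity of $\phi$ on the closed interval $[1-r_0,1+r_0]$), and that $\mathring{\zt}(t)$ has the regularity of Corollary~\ref{c:local_forward}; by parabolic smoothing in the first component and the ODE structure in the second, $\mathring{\zt}(t)$ is moreover spatially smooth for $t>0$, which justifies the differentiations below, the bound at $t=0$ following by continuity of the energy.

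The core of the argument is a differential inequality for the localised energy
\begin{align*}
\mathcal{E}_{\zeta_0}(t):=\sum_{j=0}^{3}\int_\R\varrho_{\zeta_0}^2\,(\partial_\xx^j\mathring{z}_1)^2\,\de\zeta+\kappa\sum_{j=0}^{2}\int_\R\varrho_{\zeta_0}^2\,(\partial_\xx^j\mathring{z}_2)^2\,\de\zeta,
\end{align*}
with $\kappa>0$ fixed below, so that $\mathcal{E}_{\zeta_0}(t)\sim\|\varrho_{\zeta_0}\mathring{z}_1(t)\|_{H^3}^2+\|\varrho_{\zeta_0}\mathring{z}_2(t)\|_{H^2}^2$ and $\mathcal{E}(t):=\sup_{\zeta_0}\mathcal{E}_{\zeta_0}(t)\sim\|\mathring{z}_1(t)\|_{H^3_{\mathrm{ul}}}^2+\|\mathring{z}_2(t)\|_{H^2_{\mathrm{ul}}}^2$. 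Differentiating in $t$ with the aid of~\eqref{e:Pert1}: for the $\mathring{z}_1$-part I multiply $\partial_\xx^j$ of the first component of~\eqref{e:Pert1} by $\varrho_{\zeta_0}^2\partial_\xx^j\mathring{z}_1$ and integrate; the diffusion term yields $-\int\varrho_{\zeta_0}^2(\partial_\xx^{j+1}\mathring{z}_1)^2$ plus a commutator $-2\int\varrho_{\zeta_0}\varrho_{\zeta_0}'\partial_\xx^j\mathring{z}_1\partial_\xx^{j+1}\mathring{z}_1$ which, by $|\varrho'|\lesssim\varrho$, is bounded by $\tfrac14\int\varrho_{\zeta_0}^2(\partial_\xx^{j+1}\mathring{z}_1)^2+C\int\varrho_{\zeta_0}^2(\partial_\xx^j\mathring{z}_1)^2$; the transport term, the constant-coefficient reaction $F'(0)\mathring{\zt}$ and the coupling to $\mathring{z}_2$ are handled by integration by parts and Young's inequality. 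The nonlinearity $\mathring{\mathcal{Q}}_1$, which by the cubic structure of the reaction equals $(f'(\phi_1(\beta(\psi)))-f'(0))\mathring{z}_1+\tfrac12 f''(\phi_1(\beta(\psi)))\mathring{z}_1^2-\mathring{z}_1^3$, produces at $j=3$ the delicate term $\int\varrho_{\zeta_0}^2\partial_\xx^3\mathring{z}_1\cdot(f'(\phi_1(\beta(\psi)))-f'(0))\partial_\xx^3\mathring{z}_1$, whose coefficient is bounded but not small and hence not directly absorbable into $-\int\varrho_{\zeta_0}^2(\partial_\xx^4\mathring{z}_1)^2$; I trade it, via one further integration by parts, for $\eta\int\varrho_{\zeta_0}^2(\partial_\xx^4\mathring{z}_1)^2$ plus terms of order $\leq 2$, with $\eta>0$ free. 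The remaining contributions of $\partial_\xx^j\mathring{\mathcal{Q}}_1$ carry a factor $\|\mathring{z}_1\|_{L^\infty}$ or $\|\psi_\xx\|_{C_{\mathrm{ub}}^1}$ in front of a bounded multiple of $\mathcal{E}_{\zeta_0}$, or are purely $\psi$-dependent. The $\mathring{\zt}$-independent residuals $\mathring{\mathcal{R}}_1,\mathring{\mathcal{R}}_2$ are expanded using $\omega(1)=c_0$ (so that $c_0+\omega'(1)\psi_\xx-\omega(1+\psi_\xx)=O(\psi_\xx^2)$), cf.~Proposition~\ref{prop:family}, and $\partial_t\psi=\tilde\psi-c_g\psi_\xx$; then $\partial_\xx^j\mathring{\mathcal{R}}_{1,2}$ is bounded by products of the norms $\|\psi_{\xx\xx}\|_{C_{\mathrm{ub}}^4}$, $\|\partial_t\psi_\xx\|_{C_{\mathrm{ub}}^3}$, $\|\tilde\psi\|_{C_{\mathrm{ub}}^3}$ and $\|\psi_\xx\|_{L^\infty}$ (the bare $\psi$-factors being absorbed into $R$-dependent constants). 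For the $\mathring{z}_2$-part the ODE structure gives $\tfrac12\tfrac{\de}{\de t}\int\varrho_{\zeta_0}^2(\partial_\xx^j\mathring{z}_2)^2=-\varepsilon\gamma\int\varrho_{\zeta_0}^2(\partial_\xx^j\mathring{z}_2)^2+\varepsilon\int\varrho_{\zeta_0}^2\partial_\xx^j\mathring{z}_1\partial_\xx^j\mathring{z}_2+\int\varrho_{\zeta_0}^2\partial_\xx^j\mathring{z}_2\partial_\xx^j\mathring{\mathcal{R}}_2$, and Young's inequality keeps half the damping while trading the coupling for $C\int\varrho_{\zeta_0}^2(\partial_\xx^j\mathring{z}_1)^2$. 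Collecting everything and using weighted Gagliardo–Nirenberg interpolation, $\int\varrho_{\zeta_0}^2(\partial_\xx^j\mathring{z}_1)^2\leq\eta\sum_{i=0}^{3}\int\varrho_{\zeta_0}^2(\partial_\xx^{i+1}\mathring{z}_1)^2+C_\eta\int\varrho_{\zeta_0}^2\mathring{z}_1^2$ for $0\leq j\leq 3$, to absorb all intermediate-order contributions — including the $R$-dependent ones, which is possible precisely because $R$ is fixed — into the dissipation, I arrive, for $\eta$ small and $\kappa$ large, at
\begin{align*}
\frac{\de}{\de t}\mathcal{E}_{\zeta_0}(t)\leq-\alpha\,\mathcal{E}_{\zeta_0}(t)+C\Big(\|\mathring{z}_1(t)\|_{L^\infty}^2+\mathcal{F}(t)\Big)
\end{align*}
with $\alpha,C>0$ independent of $\zeta_0$ (but depending on $R$), where $\mathcal{F}(t):=\|\psi_{\xx\xx}(t)\|_{C_{\mathrm{ub}}^4}^2+\|\partial_t\psi_\xx(t)\|_{C_{\mathrm{ub}}^3}^2+\|\tilde\psi(t)\|_{C_{\mathrm{ub}}^3}^2+\|\psi_\xx(t)\|_{L^\infty}^2(\|\psi_\xx(t)\|_{L^\infty}^2+\|\partial_t\psi(t)\|_{L^\infty}^2)$.

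It then remains to integrate and interpolate. Integrating the differential inequality, using $\mathring{\zt}(0)=\vb_0$ (since $\psi(0)=0$) so $\mathcal{E}_{\zeta_0}(0)\lesssim\|\vb_0\|_{C_{\mathrm{ub}}^3\times C_{\mathrm{ub}}^2}^2$, and then taking the supremum over $\zeta_0$ on both sides — legitimate, since after the time integration the only $\zeta_0$-dependence on the right sits in $\mathcal{E}_{\zeta_0}(0)$ and in $\|\mathring{z}_1(s)\|_{L^2_{\mathrm{ul}}}\lesssim\|\mathring{\zt}(s)\|_{L^\infty}$ — one obtains $\mathcal{E}(t)\leq C\mathcal{B}(t)$, where $\mathcal{B}(t)$ is exactly the bracket on the right-hand side of~\eqref{e:dampingineq}. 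Finally, $\|\mathring{\zt}(t)\|_{C_{\mathrm{ub}}^2\times C_{\mathrm{ub}}^1}\lesssim\sum_{j\leq2}\|\partial_\xx^j\mathring{z}_1(t)\|_{L^\infty}+\sum_{j\leq1}\|\partial_\xx^j\mathring{z}_2(t)\|_{L^\infty}$ combined with the interpolation $\|\partial_\xx^2\mathring{z}_1(t)\|_{L^\infty}\lesssim\|\mathring{z}_1(t)\|_{H^3_{\mathrm{ul}}}^{4/5}\|\mathring{z}_1(t)\|_{L^\infty}^{1/5}\lesssim\mathcal{B}(t)^{2/5}\|\mathring{\zt}(t)\|_{L^\infty}^{1/5}$, together with the analogous but stronger estimates for the remaining lower-order derivatives (which by Young's inequality are dominated by $\|\mathring{\zt}(t)\|_{L^\infty}+\mathcal{B}(t)^{2/5}\|\mathring{\zt}(t)\|_{L^\infty}^{1/5}$), yields~\eqref{e:dampingineq}.

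The hard part will be the top-order energy estimate for the first component: the reaction nonlinearity generates, after three differentiations, the term $\int\varrho_{\zeta_0}^2(f'(\phi_1(\beta(\psi)))-f'(0))(\partial_\xx^3\mathring{z}_1)^2$, whose coefficient is determined by the wave profile and is therefore bounded but not small, so it cannot be absorbed directly into the top-order dissipation $-\int\varrho_{\zeta_0}^2(\partial_\xx^4\mathring{z}_1)^2$; the resolution — trading it by an extra integration by parts for an arbitrarily small multiple of that dissipation plus genuinely lower-order terms, and then absorbing the $R$-dependent constants by choosing the interpolation parameter small — is exactly where the constant $C$ in~\eqref{e:dampingineq} acquires its dependence on $R$. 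Two further points requiring care are the validity of weighted Gagliardo–Nirenberg interpolation and of the embedding $H^1_{\mathrm{ul}}(\R)\hookrightarrow C_{\mathrm{ub}}(\R)$ with the correct structural constants, and the fact that the supremum over $\zeta_0$ must be taken only after integrating the (uniform-in-$\zeta_0$) differential inequality, not before the time differentiation.
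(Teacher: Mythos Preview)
Your strategy is essentially the paper's: a uniformly-local energy estimate on~\eqref{e:Pert1}, using diffusion in the first component and the $-\varepsilon\gamma$ damping in the second, followed by Gagliardo--Nirenberg interpolation to extract the $\tfrac15$-power. The paper differs only in packaging: it works with a \emph{top-order-only} energy $E_y(t)=\int\varrho(\vartheta(\cdot+y))(\upsilon|\partial_\zeta^3\mathring{z}_1|^2+|\partial_\zeta^2\mathring{z}_2|^2)$ with the polynomial weight $\varrho(\zeta)=2/(2+\zeta^2)$ and a carefully chosen dilation $\vartheta$, and handles the intermediate-order terms you flag via an explicit weighted interpolation inequality rather than an extra integration by parts. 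Both routes lead to the same differential inequality and the same final bound.

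There is one genuine gap in your argument. Your justification for differentiating the energy --- ``by parabolic smoothing in the first component and the ODE structure in the second, $\mathring{\zt}(t)$ is spatially smooth for $t>0$'' --- is incorrect: the second component satisfies a transport equation $\partial_t\mathring{z}_2=c_0\partial_\zeta\mathring{z}_2+\varepsilon(\mathring{z}_1-\gamma\mathring{z}_2)+\mathring{\mathcal{R}}_2$, which preserves but does not improve spatial regularity of its initial datum. Since $\mathring{z}_2(0)\in C_{\mathrm{ub}}^2$ only, you cannot form $\partial_\zeta^3\mathring{z}_2$ (needed when you compute $\partial_t\int\varrho^2(\partial_\zeta^2\mathring{z}_2)^2$ via $\partial_t\partial_\zeta^2\mathring{z}_2=c_0\partial_\zeta^3\mathring{z}_2+\cdots$). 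The paper repairs this by first taking $\vb_0\in C_{\mathrm{ub}}^5\times C_{\mathrm{ub}}^4$, so that $\mathring{\zt}\in C([0,\widetilde\tau_{\max}),C_{\mathrm{ub}}^5\times C_{\mathrm{ub}}^4)\cap C^1([0,\widetilde\tau_{\max}),C_{\mathrm{ub}}^3)$ and the energy computation is rigorous, and then approximating a general $\vb_0\in C_{\mathrm{ub}}^3\times C_{\mathrm{ub}}^2$ by a sequence in $C_{\mathrm{ub}}^5\times C_{\mathrm{ub}}^4$, using continuous dependence and that~\eqref{e:dampingineq} involves only the $(C_{\mathrm{ub}}^3\times C_{\mathrm{ub}}^2)$-norm of the data. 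You should replace your smoothing claim by this density argument.
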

\begin{proof}
Fix a constant $R > 0$ and set
\begin{align*} \vartheta = \frac12 \min\left\{1,\frac{\varepsilon \gamma}{2|c_0|+1}\right\}.\end{align*} 
We start by relating the $(C_{\mathrm{ub}}^2 \times C_{\mathrm{ub}}^1)$-norm of $\mathring{\zt}(t)$ to a uniformly local Sobolev norm. First, we define the window function $\varrho \colon \R \to \R$ given by
$$\varrho(\xx) = \frac{2}{2+\xx^2},$$
which is positive, smooth and $L^1$-integrable, and satisfies 
\begin{align} \label{e:rhoineq}
|\varrho'(\xx)| \leq \varrho(\xx) \leq 1
\end{align}
for all $\xx \in \R$. Next, we apply the Gagliaro-Nirenberg interpolation inequality, while noting that $\varrho \in W^{k+1,1}(\R) \cap W^{k+1,\infty}(\R)$ and $\varrho(0) = 1$, to infer
\begin{align*}
\big\|\partial_\xx^k z\big\|_{L^\infty} &= \sup_{y \in \R} \big\|\varrho(\vartheta(\cdot + y)) \partial_\xx^k z\big\|_{L^\infty} \lesssim  \|z\|_{C_{\mathrm{ub}}^{k-1}} + \sup_{y \in \R} \big\|\partial_\xx^k\big(\varrho(\vartheta(\cdot + y)) z\big)\big\|_{L^\infty}\\
 &\lesssim  \|z\|_{C_{\mathrm{ub}}^{k-1}} + \sup_{y \in \R} \big\|\partial_\xx^{k+1}\big(\varrho(\vartheta(\cdot + y)) z\big)\big\|_{L^2}^{\frac{4}{5}} \big\|\varrho(\vartheta(\cdot + y)) z\big\|_{L^{\frac{1}{2-k}}}^{\frac{1}{5}}\\
  &\lesssim  \|z\|_{C_{\mathrm{ub}}^{k-1}} + \|z\|_{L^\infty}^{\frac{1}{5}} \sup_{y \in \R} \big\|\partial_\xx^{k+1}\big(\varrho(\vartheta(\cdot + y)) z\big)\big\|_{L^2}^{\frac{4}{5}} \\
 &\lesssim \|z\|_{C_{\mathrm{ub}}^{k-1}} + \|z\|_{C_{\mathrm{ub}}^k}^{\frac{4}{5}}\|z\|_{L^\infty}^{\frac{1}{5}} + \|z\|_{L^\infty}^{\frac{1}{5}} \sup_{y \in \R} \big\|\varrho(\vartheta(\cdot + y)) \partial_\xx^{k+1} z\big\|_{L^2}^{\frac{4}{5}}
\end{align*}
for $z \in C_{\mathrm{ub}}^{k+1}(\R)$ and $k = 1,2$. Hence, interpolating between $C_{\mathrm{ub}}^k(\R)$ and $C_{\mathrm{ub}}(\R)$, applying Young's inequality and rearranging terms, we arrive at
\begin{align*}
\|z\|_{C_{\mathrm{ub}}^k} \lesssim \|z\|_{L^\infty}  + \|z\|_{L^\infty}^{\frac{1}{5}} \sup_{y \in \R} \big\|\varrho(\vartheta(\cdot + y)) \partial_\xx^{k+1} z\big\|_{L^2}^{\frac{4}{5}}
\end{align*}
for $z \in C_{\mathrm{ub}}^{k+1}(\R)$ and $k = 1,2$. Combining the latter with~\eqref{e:rhoineq} and recalling Corollary~\ref{c:local_forward}, yields
\begin{align} \label{e:upbounddamping}
\big\|\mathring{\zt}(t)\big\|_{C_{\mathrm{ub}}^2 \times C_{\mathrm{ub}}^1} \lesssim \big\|\mathring{\zt}(t)\big\|_{L^\infty} + \big\|\mathring{\zt}(t)\big\|_{L^\infty}^{\frac15} \sup_{y \in \R} E_y(t)^{\frac25}
\end{align}
for $t \in [0,\widetilde{\tau}_{\max})$, where we denote
$$E_y(t) = \int_\R \varrho(\vartheta(\xx+y)) \left(\upsilon\left|\partial_\xx^3 \mathring{z}_1(\xx,t)\right|^2 + \left|\partial_\xx^2 \mathring{z}_2(\xx,t)\right|^2\right) \,\de \xx, \qquad \upsilon := \frac{\varepsilon\gamma}{4} > 0$$
for $y \in \R$. The estimate~\eqref{e:upbounddamping} provides the desired relationship between the $(C_{\mathrm{ub}}^2 \times C_{\mathrm{ub}}^1)$-norm of $\mathring{\zt}(t)$ and the family of energies $E_y(t)$, which are associated with the norm on the uniformly local Sobolev space $H^3_{\mathrm{ul}}(\R) \times H^2_{\mathrm{ul}}(\R)$ with dilation parameter $\vartheta$, cf.~\cite[Section~8.3.1]{SU17book}.

Our next step is to derive an inequality for the energies $E_y(t)$. In order to be able to differentiate $E_y(t)$ with respect to $t$, we restrict ourselves for the moment to initial conditions $\vb_0 \in C_{\mathrm{ub}}^5(\R) \times C_{\mathrm{ub}}^4(\R)$. With these two additional degrees of regularity one derives, analogously as in Proposition~\ref{p:local_unmod}, that $\vt \in C\big([0,T_{\max}),C^5_{\mathrm{ub}}(\R) \times C_{\mathrm{ub}}^4(\R)\big) \cap C^1\big([0,T_{\max}),C_{\mathrm{ub}}^3(\R)\big)$. Combining the latter with Propositions~\ref{prop:family} and~\ref{p:psi} yields $\mathring{\zt} \in C\big([0,\smash{\widetilde{\tau}}_{\max}),C^5_{\mathrm{ub}}(\R) \times C_{\mathrm{ub}}^4(\R)\big) \cap C^1\big([0,\smash{\widetilde{\tau}}_{\max}),C_{\mathrm{ub}}^3(\R)\big)$. 

Let $y \in \R$ and let $t \in [0,\widetilde{\tau}_{\max})$ be such that~\eqref{e:upbound} holds. Using~\eqref{e:Pert1} and
$$F'(0) = \begin{pmatrix}
    -\mu & -1 \\
    \varepsilon & -\varepsilon\gamma
\end{pmatrix},$$ 
while noting that the second component of $\mathring{\mathcal{Q}}(\mathring{\zt},\psi)$ vanishes, we compute
\begin{align} \label{e:damptemp}
\frac{1}{2} \partial_s E_y(s) = I + II + III + IV,
\end{align}
where
\begin{align*}
I &= \upsilon \int_\R \varrho(\vartheta(\xx+y)) \left\langle \partial_{\xx}^3 \mathring{z}_1(\xx,s), \partial_{\xx}^5 \mathring{z}_1(\xx,s) + c_0 \partial_{\xx}^4 \mathring{z}_1(\xx,s) - \partial_{\xx}^3 \mathring{z}_2(\xx,s) - \mu\partial_{\xx}^3 \mathring{z}_1(\xx,s) \right \rangle \,\de \xx,\\ 
II &= \frac{c_0}{2} \int_\R \varrho(\vartheta(\xx+y)) \partial_\xx \left| \partial_{\xx}^2 \mathring{z}_2(\xx,s)\right|^2 \,\de \xx\\ 
&\qquad + \, \varepsilon \int_\R \varrho(\vartheta(\xx+y))\left\langle \partial_\xx^2 \mathring{z}_2(\xx,s), \partial_{\xx}^2 \mathring{z}_1(\xx,s) - \gamma \partial_\xx^2\mathring{z}_2(\xx,s) \right \rangle \,\de \xx,
\end{align*}
are the contributions from the linear terms, and
\begin{align*}
III &= \upsilon \int_\R \varrho(\vartheta(\xx+y)) \left\langle \begin{pmatrix} \partial_{\xx}^3 \mathring{z}_1(\xx,s) \\ 0 \end{pmatrix}, \partial_\xx^3 \left(\mathring{\mathcal{Q}}(\mathring{\zt}(\xx,s),\psi(\xx,s)) + \mathring{\mathcal{R}}(\psi(\xx,s),\tilde{\psi}(\xx,s),\partial_s \psi(\xx,s)) \right) \right\rangle \,\de \xx,\\
IV &= \int_\R \varrho(\vartheta(\xx+y)) \left\langle \begin{pmatrix} 0 \\ \partial_{\xx}^2 \mathring{z}_2(\xx,s) \end{pmatrix}, \partial_{\xx}^2 \mathring{\mathcal{R}}(\psi(\xx,s),\tilde{\psi}(\xx,s),\partial_s \psi(\xx,s)) \right\rangle \,\de \xx
\end{align*}
are the nonlinear contributions for $s \in [0,t]$. Integrating by parts, we rewrite
\begin{align*}
I &= -\upsilon \int_\R \varrho(\vartheta(\xx+y)) \left(\left|\partial_{\xx}^4 \mathring{z}_1(\xx,s)\right|^2 + \mu \left|\partial_{\xx}^3 \mathring{z}_1(\xx,s)\right|^2 - \left\langle \partial_{\xx}^4 \mathring{z}_1(\xx,s), \partial_{\xx}^2 \mathring{z}_2(\xx,s) \right \rangle \right) \,\de \xx\\ 
&\qquad - \, \upsilon \vartheta \int_\R \varrho'(\vartheta(\xx+y)) \left\langle \partial_{\xx}^3 \mathring{z}_1(\xx,s), \partial_{\xx}^4 \mathring{z}_1(\xx,s) - \partial_{\xx}^2 \mathring{z}_2(\xx,s) \right \rangle \,\de \xx\\ 
&\qquad + \, c_0\upsilon \int_\R \varrho(\vartheta(\xx+y)) \left\langle \partial_{\xx}^3 \mathring{z}_1(\xx,s), \partial_{\xx}^4 \mathring{z}_1(\xx,s)\right\rangle \,\de \xx
\end{align*}
and
\begin{align*}
II &= -\varepsilon \gamma \int_\R \varrho(\vartheta(\xx+y)) \left|\partial_{\xx}^2 \mathring{z}_2(\xx,s)\right|^2 \,\de \xx - \frac{c_0 \vartheta}{2} \int_\R \varrho'(\vartheta(\xx+y)) \left| \partial_{\xx}^2 \mathring{z}_2(\xx,s)\right|^2 \,\de \xx\\ 
&\qquad +\, \varepsilon \int_\R \varrho(\vartheta(\xx+y))\left\langle \partial_{\xx}^2 \mathring{z}_2(\xx,s), \partial_{\xx}^2 \mathring{z}_1(\xx,s) \right \rangle \,\de \xx
\end{align*}
for $s \in [0,t]$. Applying Young's inequality to the latter, while using~\eqref{e:rhoineq} and $4|c_0|\vartheta \leq \varepsilon\gamma$, yields a $t$- and $\vb_0$-independent constant $C_1 > 0$ such that
\begin{align} \label{e:dampest1} \begin{split} 
I &\leq -\frac{\upsilon}{4} \int_\R \varrho(\vartheta(\xx+y)) \left|\partial_{\xx}^4 \mathring{z}_1(\xx,s)\right|^2 \,\de \xx + \upsilon \int_\R \varrho(\vartheta(\xx+y)) \left|\partial_{\xx}^2 \mathring{z}_2(\xx,s)\right|^2 \,\de \xx\\ 
&\qquad + \, C_1 \int_\R \varrho(\vartheta(\xx+y)) \left|\partial_{\xx}^3 \mathring{z}_1(\xx,s)\right|^2 \,\de \xx
\end{split}\end{align} 
and
\begin{align} \label{e:dampest2} \begin{split} 
II &\leq -\frac{3\varepsilon \gamma}{4} \int_\R \varrho(\vartheta(\xx+y)) \left|\partial_{\xx}^2 \mathring{z}_2(\xx,s)\right|^2 \,\de \xx + C_1 \int_\R \varrho(\vartheta(\xx+y)) \left|\partial_{\xx}^2 \mathring{z}_1(\xx,s)\right|^2 \,\de \xx
\end{split}\end{align} 
for $s \in [0,t]$. Similarly, employing Young's inequality, while using that~\eqref{e:upbound} holds and $\rho$ is $L^1$-integrable, we establish a $t$- and $\vb_0$-independent constant $C_2 > 0$ such that
\begin{align} \label{e:dampest3} \begin{split} 
III &\leq C_2 \left(\int_\R \varrho(\vartheta(\xx+y))\left( \left|\partial_{\xx}^3 \mathring{z}_1(\xx,s)\right|^2 + \left|\partial_{\xx}^2 \mathring{z}_1(\xx,s)\right|^2 + \left|\partial_{\xx} \mathring{z}_1(\xx,s)\right|^2\right) \,\de \xx \right. \\
&\left. \phantom{\int_\R} \qquad \qquad + \|\mathring{z}_1(s)\|_{L^\infty}^2  + \|\psi_{\xx\xx}(s)\|_{C_{\mathrm{ub}}^4}^2 + \|\partial_s \psi_{\xx}(s)\|_{C_{\mathrm{ub}}^3}^2 + \big\|\tilde{\psi}(s)\big\|_{C_{\mathrm{ub}}^3}^2\right.\\
&\left. \phantom{\int_\R} \qquad \qquad +  \|\psi_{\xx}(s)\|_{L^\infty}^2\left(\|\psi_{\xx}(s)\|_{L^\infty}^2 + \|\partial_s \psi(s)\|_{L^\infty}^2\right)\right)
\end{split}\end{align} 
and
\begin{align} \label{e:dampest4} \begin{split} 
IV &\leq \frac{\varepsilon \gamma}{4} \int_\R \varrho(\vartheta(\xx+y)) \left|\partial_{\xx}^2 \mathring{z}_2(\xx,s)\right|^2 \de \xx + C_2 \left(\|\psi_{\xx\xx}(s)\|_{C_{\mathrm{ub}}^3}^2 + \|\partial_s \psi_{\xx}(s)\|_{C_{\mathrm{ub}}^2}^2 + \big\|\tilde{\psi}(s)\big\|_{C_{\mathrm{ub}}^2}^2\right. \\
&\left. \phantom{\int_\R} \qquad \qquad +  \|\psi_{\xx}(s)\|_{L^\infty}^2\left(\|\psi_{\xx}(s)\|_{L^\infty}^2 + \|\partial_s \psi(s)\|_{L^\infty}^2\right)\right)
\end{split}\end{align} 
for $s \in [0,t]$. Applying the estimates~\eqref{e:dampest1},~\eqref{e:dampest2},~\eqref{e:dampest3} and~\eqref{e:dampest4} to~\eqref{e:damptemp} and using that $\upsilon = \varepsilon\gamma/4$, we obtain a $t$- and $\vb_0$-independent constant $C_3 > 0$ such that
\begin{align} \label{e:dampest5}
\begin{split}
\frac{1}{2} \partial_s E_y(s) &\leq -\frac{\varepsilon \gamma}{4} E_y(s) -\frac{\upsilon}{4} \int_\R \varrho(\vartheta(\xx+y)) \left|\partial_{\xx}^4 \mathring{z}_1(\xx,s)\right|^2 \,\de \xx + C_3\left(\|\mathring{z}_1(s)\|_{L^\infty}^2 + \|\psi_{\xx\xx}(s)\|_{C_{\mathrm{ub}}^4}^2 \right.\\
&\left. \phantom{\int_\R} \qquad +  \|\partial_s \psi_{\xx}(s)\|_{C_{\mathrm{ub}}^3}^2 + \big\|\tilde{\psi}(s)\big\|_{C_{\mathrm{ub}}^3}^2 +  \|\psi_{\xx}(s)\|_{L^\infty}^2\left(\|\psi_{\xx}(s)\|_{L^\infty}^2 + \|\partial_s \psi(s)\|_{L^\infty}^2\right)\right.\\
&\left.\phantom{\int_\R} \qquad 
+ \int_\R \varrho(\vartheta(\xx+y)) \left(\left|\partial_{\xx}^3 \mathring{z}_1(\xx,s)\right|^2 + \left|\partial_{\xx}^2 \mathring{z}_1(\xx,s)\right|^2 + \left|\partial_{\xx} \mathring{z}_1(\xx,s)\right|^2\right) \,\de \xx \right) 
\end{split}
\end{align}
for $s \in [0,t]$. 

We control the term on the last line of~\eqref{e:dampest5} by deriving an interpolation inequality. To this end, we take $k \in \mathbb{N}$, $\eta \in (0,\frac14)$ and $a_1,\ldots,a_k > 0$. Integration by parts, Young's inequality, and the estimate~\eqref{e:rhoineq} yield
\begin{align*}
&\sum_{j = 1}^k a_j \int_\R \varrho(\vartheta(\xx+y)) \left|\partial_\xx^j z(\xx)\right|^2 \,\de \xx\\ 
&\qquad = -\sum_{j = 1}^k a_j \int_\R \left(\varrho(\vartheta(\xx+y)) \left\langle \partial_\xx^{j+1} z(\xx), \partial_\xx^{j-1} z(\xx)\right\rangle \,\de \xx + \vartheta \varrho'(\vartheta(\xx+y)) \left\langle \partial_\xx^j z(\xx), \partial_\xx^{j-1} z(\xx)\right\rangle \right) \,\de \xx\\
&\qquad \leq \sum_{j = 1}^k \frac{a_j}{2} \int_\R \varrho(\vartheta(\xx+y))\left(\eta \left|\partial_\xx^{j+1} z(\xx)\right|^2 + \vartheta \left|\partial_\xx^{j} z(\xx)\right|^2 + \left(\frac{1}{\eta} + \vartheta\right) \left|\partial_\xx^{j-1} z(\xx)\right|^2\right) \,\de \xx
\end{align*}
for $z \in C_{\mathrm{ub}}^{k+1}(\R)$. Setting $a_0 = 0 = a_{k+1}$, using $\vartheta \leq \frac12$ and rearranging terms in the latter, we arrive at the interpolation inequality
\begin{align} \label{e:interpolationid}
\begin{split}
&\sum_{j = 1}^k \left(\frac34 a_j - \frac{\eta}{2} a_{j-1} - \frac12 a_{j+1} \left(\frac{1}{\eta} + \frac12\right)\right) \int_\R \varrho(\vartheta(\xx+y)) \left|\partial_\xx^j z(\xx)\right|^2 \,\de \xx\\
&\qquad \leq \frac{\eta}{2} a_k \int_\R \varrho(\vartheta(\xx+y)) \left|\partial_\xx^{k+1} z(\xx)\right|^2 \,\de \xx + \frac12 a_1 \left(\frac{1}{\eta} + \frac12\right) \int_\R \varrho(\vartheta(\xx+y)) \left|z(\xx)\right|^2 \,\de \xx
\end{split}
\end{align}
for $z \in C_{\mathrm{ub}}^{k+1}(\R)$. Next, we fix $k = 3$ and solve the linear system
\begin{align*}
\frac34 a_j - \frac{\eta}{2} a_{j-1} - \frac12 a_{j+1} \left(\frac{1}{\eta} + \frac12\right) = 1, \qquad j = 1,2,3,
\end{align*}
yielding the solution
\begin{align*}
a_1 = \frac{4 \left(4- 2 \eta^3 + 9 \eta^2+10 \eta\right)}{3 \eta ^2 (1-4 \eta)}, \qquad a_2 = \frac{4
   \left(2 + 2 \eta^2 + 4 \eta\right)}{\eta  (1-4 \eta
   )},\qquad a_3 = \frac{4 \left(5+ 4 \eta ^2+4 \eta\right)}{3
   (1-4 \eta)}.
\end{align*}
where we have $a_1,a_2,a_3 > 0$ since $\eta < \frac14$. Thus, taking these values for $a_1,a_2,a_3$ in~\eqref{e:interpolationid}, we find
\begin{align*}
\sum_{j = 1}^3 \int_\R \varrho(\vartheta(\xx+y)) \left|\partial_\xx^j z(\xx)\right|^2 \,\de \xx &\leq \frac{2\eta \left(5+ 4 \eta ^2+4 \eta\right)}{3(1-4 \eta)} \int_\R \varrho(\vartheta(\xx+y)) \left|\partial_\xx^4 z(\xx)\right|^2 \,\de \xx\\ 
&\qquad +\, \frac{(\eta +2) \left(4-2 \eta^3+9 \eta^2+10 \eta\right)}{3 \eta ^3 (1-4 \eta)} \int_\R \varrho(\vartheta(\xx+y)) \left|z(\xx)\right|^2 \de \xx
\end{align*}
for $z \in C_{\mathrm{ub}}^4(\R)$. So, taking $\eta \in (0,\frac14)$ so small that
\begin{align*}
\frac{2\eta \left(5+ 4 \eta^2+4 \eta\right)}{3(1-4 \eta)} \leq \frac{\upsilon}{4C_3},
\end{align*}
we establish a constant $C_4 > 0$ such that
\begin{align} \label{e:interpolationUL}
\sum_{j = 1}^3 \int_\R \varrho(\vartheta(\xx+y)) \left|\partial_\xx^j z(\xx)\right|^2 \,\de \xx &\leq \frac{\upsilon}{4C_3} \int_\R \varrho(\vartheta(\xx+y)) \left|\partial_\xx^4 z(\xx)\right|^2 \,\de \xx + C_4 \|z\|_{L^\infty}^2
\end{align}
for $z \in C_{\mathrm{ub}}^4(\R)$. 

We apply the interpolation identity~\eqref{e:interpolationUL} to~\eqref{e:dampest5} and deduce
\begin{align*}
\partial_s E_y(s) &\leq -\frac{\varepsilon \gamma}{2} E_y(s) + C_5 \left(\big\|\mathring{z}_1(s)\big\|_{L^\infty}^2 + \|\psi_{\xx\xx}(s)\|_{C_{\mathrm{ub}}^4}^2 + \|\partial_s \psi_{\xx}(s)\|_{C_{\mathrm{ub}}^3}^2 + \big\|\tilde{\psi}(s)\big\|_{C_{\mathrm{ub}}^3}^2\right.\\ 
&\left.\qquad \qquad \qquad \phantom{\big\|\tilde{\psi}(s)\big\|_{C_{\mathrm{ub}}^3}^2} + \|\psi_{\xx}(s)\|_{L^\infty}^2\left(\|\psi_{\xx}(s)\|_{L^\infty}^2 + \|\partial_s \psi(s)\|_{L^\infty}^2\right)\right)
\end{align*}
for $s \in [0,t]$, where $C_5 > 0$ is a $t$- and $\vb_0$-independent constant. Multiplying the latter inequality with $\re^{\frac{\varepsilon\gamma}{2} s}$ and integrating, we acquire
\begin{align*}
E_y(t) &\leq \re^{-\frac{\varepsilon \gamma}{2} t} E_y(0) + C_5 \int_0^t \re^{-\frac{\varepsilon \gamma}{2} (t-s)} \left(\big\|\mathring{z}_1(s)\big\|_{L^\infty}^2 + \|\psi_{\xx\xx}(s)\|_{C_{\mathrm{ub}}^4}^2 + \|\partial_s \psi_{\xx}(s)\|_{C_{\mathrm{ub}}^3}^2 + \big\|\tilde{\psi}(s)\big\|_{C_{\mathrm{ub}}^3}^2\right.\\ 
&\left.\qquad \qquad \qquad \phantom{\big\|\tilde{\psi}(s)\big\|_{C_{\mathrm{ub}}^3}^2} + \|\psi_{\xx}(s)\|_{L^\infty}^2\left(\|\psi_{\xx}(s)\|_{L^\infty}^2 + \|\partial_s \psi(s)\|_{L^\infty}^2\right)\right) \,\de s.\end{align*}
Lastly, using that there exists a $\vb_0$-independent constant $C_6 > 0$ such that $\smash{E_y(0) \leq C_6 \|\vb_0\|_{C_{\mathrm{ub}}^3 \times C_{\mathrm{ub}}^2}^2}$ and plugging the latter estimate into~\eqref{e:upbounddamping}, we arrive at~\eqref{e:dampingineq}.

In order to extend our result to the general case $\vb_0 \in C_{\mathrm{ub}}^3(\R) \times C_{\mathrm{ub}}^2(\R)$ we argue as in the proof of~\cite[Proposition~8.6]{BjoernAvery}. That is, we approximate the initial condition $\vb_0$ in $C_{\mathrm{ub}}^3(\R) \times C_{\mathrm{ub}}^2(\R)$ by a sequence $\left(\vb_{0,n}\right)_{n \in \mathbb{N}}$ in $C_{\mathrm{ub}}^5(\R) \times C_{\mathrm{ub}}^4(\R)$. By continuity of solutions with respect to initial data and the fact that~\eqref{e:dampingineq} only depends on the $(C_{\mathrm{ub}}^3\times C_{\mathrm{ub}}^2)$-norm of $\mathring{\zt}(t)$, the desired result follows by approximation. We refer to~\cite{BjoernAvery} for further details.
\end{proof}

\begin{remark} \label{rem:gagl}
In addition to the fact that we extend the proof of the nonlinear damping estimate in~\cite[Proposition~8.6]{BjoernAvery} to nonlocalized perturbations by employing an energy associated with uniformly local Sobolev norms, our analysis deviates from the one in~\cite{BjoernAvery} in another important way: rather than using the bound $\|\partial_\xx^k w\|_{L^\infty} \leq \|\partial_\xx^k w\|_{H^1}$, we employ the Gagliardo-Nirenberg interpolation inequality $$\big\|\partial_\xx^k w\big\|_{L^\infty} \leq \big\|\partial_\xx^{k+1} w\big\|_{L^2}^{\frac45} \|w\|^{\frac15}_{L^{\frac{1}{2-k}}},$$ for $w \in H^{k+1}(\R) \cap L^1(\R)$ and $k = 1,2$. This leads to the additional factor $\|\mathring{\zt}(t)\|_{L^\infty}^{1/5}$ in the nonlinear damping estimate~\eqref{e:dampingineq}, enabling us to only require that the $L^\infty$-norm of the initial perturbation $\vb_0$ is small (and its $(C_{\mathrm{ub}}^3 \times C_{\mathrm{ub}}^2)$-norm is bounded) in our nonlinear stability result, Theorem~\ref{t:mainresult}. We expect that a similar approach can be adopted to relax the smallness condition on initial data in~\cite{BjoernAvery}.
\end{remark}

It has been argued in~\cite[Corollary 5.3]{ZUM22} that, as long as $\|\psi_\xx(t)\|_{L^\infty}$ stays sufficiently small, the Sobolev norms of the forward- and inverse-modulated perturbation $\mathring{\vb}(t)$ and $\vb(t)$ are equivalent modulo Sobolev norms of $\psi_\xx(t)$ and its derivatives. We extend this result by proving norm equivalence of the modified forward-modulated perturbation $\mathring{\zt}(t)$ and the residual $\zt(t)$ (up to controllable errors in $\psi_\xx(t)$ and its derivatives).

\begin{lemma} \label{lem:equivalence}
Let $\psi(t)$ be as in Proposition~\ref{p:psi}, let $\zt(t)$ be as in Corollary~\ref{C:local_v} and let $\mathring{\zt}(t)$ and $\widetilde{\tau}_{\max}$ be as in Corollary~\ref{c:local_forward}. Then, there exists a constant $C > 0$ such that
\begin{align}
\begin{split}
\|\zt(t)\|_{C_{\mathrm{ub}}^2 \times C_{\mathrm{ub}}^1} &\leq C\left(\big\|\mathring{\zt}(t)\big\|_{C_{\mathrm{ub}}^2 \times C_{\mathrm{ub}}^1} + \|\psi_{\xx\xx}(t)\|_{C_{\mathrm{ub}}^1} + \|\psi_{\xx}(t)\|
^2_{L^\infty}\right), 
\end{split}
\label{e:bdibf}
\end{align}
and
\begin{align}
\begin{split}
\big\|\mathring{\zt}(t)\big\|_{L^\infty} \leq C\left(\|\zt(t)\|_{L^\infty} +  \|\psi_{\xx\xx}(t)\|_{L^\infty} + \|\psi_{\xx}(t)\|
^2_{L^\infty}\right)
\end{split}
\label{e:bdibf2}
\end{align}
for any $t \in [0,\widetilde{\tau}_{\max})$.
\end{lemma}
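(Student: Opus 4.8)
The plan is to read off both estimates directly from the definition~\eqref{e:defvforw} of the modified forward-modulated perturbation $\mathring{\zt}(t)$ and the definition~\eqref{e:defz} of the residual $\zt(t)$, using that $\phi(\cdot;k)$ is smooth in both arguments (Proposition~\ref{prop:family}) and that all the quantities $\psi(t),\psi_\xx(t),\dots$ remain uniformly small and bounded on $[0,\widetilde{\tau}_{\max})$ by Proposition~\ref{p:psi} and Corollary~\ref{c:local_forward}. The first step is to record the exact algebraic relation between $\mathring{\zt}$ and $\zt$. From~\eqref{e:defvforw} we have
\[
\mathring{\zt}(\xx,t) = \widetilde{\vb}(\xx,t) + \phi_0(\xx) - \phi\big(\xx + \psi(1+\psi_\xx);1+\psi_\xx\big),
\]
and from~\eqref{e:defz}, $\zt = \vb - \partial_k\phi(\cdot;1)\psi_\xx$ with $\vb$ given by~\eqref{e:defv}. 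Since $\widetilde{\vb} = \vb + \phi_0(\cdot-\psi) - \phi_0$ modulo the identity $\vb(\xx,t)=\widetilde{\vb}(\xx-\psi,t)+\phi_0(\xx-\psi)-\phi_0(\xx)$ used in~\eqref{e:defvforw}, it is cleanest to work with $\mathring{\zt} = \mathring{\vb} + \phi_0(\cdot+\psi) - \phi(\beta(\psi))$ and $\zt = \vb - \partial_k\phi(\cdot;1)\psi_\xx$, where $\mathring{\vb}$ and $\vb$ are related by the standard composition formulas. The key point is that the difference between $\mathring{\zt}$ and $\zt$ is, after Taylor expansion of $\phi(y;k)$ around $(\xx;1)$, a sum of terms each carrying at least one factor of $\psi_\xx$, $\psi_{\xx\xx}$ or a quadratic expression in $\psi_\xx$; the linear-in-$\psi_\xx$ contributions from $\mathring{\vb}$ versus $\vb$ cancel against the $\partial_k\phi(\cdot;1)\psi_\xx$ and $\phi_0'\psi_\xx$ terms by the normalization $\langle\widetilde{\Phi}_0,\partial_k\phi(\cdot;1)\rangle=0$ built into Proposition~\ref{prop:family} and the construction of $\zt$.

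For~\eqref{e:bdibf}: write $\zt = \mathring{\zt} + (\zt - \mathring{\zt})$ and expand $\zt - \mathring{\zt}$ using the smoothness of $\phi$. The plan is to show, by a two-term Taylor expansion of $y\mapsto\phi_0(\xx+y)$ and $(y,k)\mapsto\phi(\xx+y;k)$ together with the chain rule, that every term in $\zt-\mathring{\zt}$ and its $\xx$-derivatives up to order two is bounded by a product of $(C_{\mathrm{ub}}^{\le 2}\times C_{\mathrm{ub}}^{\le 1})$-norms of $\phi$-derivatives (uniformly bounded) times either $\|\psi_{\xx\xx}(t)\|_{C_{\mathrm{ub}}^1}$ or $\|\psi_\xx(t)\|_{L^\infty}^2$; here one uses that $\partial_\xx^j$ of $\psi(\xx+\cdots)$ produces factors of $\psi_\xx,\psi_{\xx\xx},\psi_{\xx\xx\xx}$, but because $\psi$ itself appears only through $\psi\cdot(1+\psi_\xx)$ which is already $O(\|\psi\|_{L^\infty})$ — and $\psi$ is controlled by $\psi_\xx$ on the relevant scale since $\psi(0)=0$ — these are absorbed. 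Differentiating once or twice in $\xx$ only ever hits either a $\psi$-factor (producing $\psi_\xx$ etc.) or a $\phi$-argument (producing bounded derivatives of $\phi$), so no loss of regularity occurs and we get~\eqref{e:bdibf}. For~\eqref{e:bdibf2}, only the $L^\infty$-bound is needed and no derivatives, so one simply writes $\mathring{\zt} = \zt + (\mathring{\zt}-\zt)$ and applies the same expansion at the level of $L^\infty$: $\|\mathring{\zt}-\zt\|_{L^\infty} \lesssim \|\psi_{\xx\xx}\|_{L^\infty} + \|\psi_\xx\|_{L^\infty}^2$.

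I expect the main obstacle to be bookkeeping rather than conceptual: one must carefully verify that the would-be linear-in-$\psi_\xx$ terms genuinely cancel, i.e.~that the first-order Taylor coefficient of $\phi(\xx+y;k)$ in $(y,k)$ at $(0,1)$ evaluated against the definitions of $\mathring{\vb}$, $\vb$ and the subtracted term $\partial_k\phi(\cdot;1)\psi_\xx$ leaves no surviving $O(\psi_\xx)$ remainder — otherwise the right-hand sides of~\eqref{e:bdibf}–\eqref{e:bdibf2} would need a $\|\psi_\xx\|_{L^\infty}$ term rather than only $\|\psi_\xx\|_{L^\infty}^2$. This is exactly the reason the modified (rather than classical) forward-modulated perturbation is used and why the shift normalization in Proposition~\ref{prop:family} is invoked; the calculation parallels the derivation in Appendix~\ref{app:derivationforward} and the norm-equivalence argument of~\cite[Corollary~5.3]{ZUM22}, so I would cite those and carry out the Taylor expansion only to the order needed to exhibit the cancellation. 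A secondary, purely technical point is ensuring the composition $\xx\mapsto\xx+\psi(\xx,t)(1+\psi_\xx(\xx,t))$ is a valid (bi-Lipschitz) change of variables with uniformly bounded inverse, which follows from $\|\psi_\xx(t)\|_{L^\infty}<r_0$ on $[0,\widetilde{\tau}_{\max})$ via Corollary~\ref{c:local_forward} and the implicit function theorem.
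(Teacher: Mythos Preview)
Your overall strategy---Taylor-expand the difference between $\zt$ and $\mathring{\zt}$ and show only $\psi_{\xx\xx}$- and $\psi_\xx^2$-sized remainders survive---is the right one, but two of your key mechanisms are misidentified and would cause the argument to break if carried out as written.

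First, the cancellation of the linear-in-$\psi_\xx$ terms has nothing to do with the normalization $\langle\widetilde{\Phi}_0,\partial_k\phi(\cdot;1)\rangle_{L^2(0,T)}=0$ from Proposition~\ref{prop:family}. That is an $L^2$-orthogonality and cannot produce a pointwise cancellation. The actual mechanism is purely algebraic: once you write out the correct relation you get
\[
\zt(\xx,t)=\mathring{\zt}(a(\xx,t),t)-\phi_0(\xx)-\phi_k(\xx;1)\psi_\xx(\xx,t)+\phi\big(b(\xx,t);c(\xx,t)\big),
\]
with $a(\xx,t)=\xx-\psi(\xx,t)$, $c(\xx,t)=1+\psi_\xx(a(\xx,t),t)$, and $b(\xx,t)=a(\xx,t)+\psi(a(\xx,t),t)c(\xx,t)$. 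The linear $\psi_\xx$-terms drop out because (i) $b(\xx,t)-\xx=[\psi(\xx-\psi)-\psi(\xx)]+\psi(\xx-\psi)\psi_\xx(\xx-\psi)$ is already $\mathcal{O}(\|\psi\|_{L^\infty}(\|\psi_\xx\|_{L^\infty}^2+\|\psi_{\xx\xx}\|_{L^\infty}))$ after one Taylor step (the $-\psi\psi_\xx+\psi\psi_\xx$ cancellation), and (ii) $c(\xx,t)-1-\psi_\xx(\xx,t)=\psi_\xx(\xx-\psi)-\psi_\xx(\xx)=\mathcal{O}(\|\psi\|_{L^\infty}\|\psi_{\xx\xx}\|_{L^\infty})$, so $\phi_k(\xx;1)(c-1)$ exactly absorbs the subtracted $\phi_k(\xx;1)\psi_\xx$ up to a $\psi_{\xx\xx}$-remainder. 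This is the computation you need to exhibit.

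Second, note that $\zt(\xx,t)$ is tied to $\mathring{\zt}$ evaluated at the \emph{shifted} point $a(\xx,t)=\xx-\psi(\xx,t)$, not at $\xx$; so ``$\zt=\mathring{\zt}+(\zt-\mathring{\zt})$'' is not the right decomposition. The relevant diffeomorphism is $a(\cdot,t)$ (invertible since $\|\psi_\xx\|_{L^\infty}<\tfrac12$), not $\xx\mapsto\xx+\psi(1+\psi_\xx)$. For~\eqref{e:bdibf2} this is harmless because $\|\mathring{\zt}(a(\cdot,t),t)\|_{L^\infty}=\|\mathring{\zt}(\cdot,t)\|_{L^\infty}$; for~\eqref{e:bdibf} you must differentiate the composition, producing factors $a_\xx,a_{\xx\xx}$ that are uniformly bounded. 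Finally, your remark that ``$\psi$ is controlled by $\psi_\xx$ since $\psi(0)=0$'' is incorrect: $\psi(0)=0$ is the \emph{temporal} initial condition and gives no spatial control; what you actually use is the a priori bound $\|\psi(t)\|_{L^\infty}<\tfrac12$ from Proposition~\ref{p:psi}, which lets the $\|\psi\|_{L^\infty}$-factors appearing in the Taylor remainders be absorbed into the constant $C$.
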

\begin{proof}
Inserting $\vb(\xx,t) = \ub(\xx - \psi(\xx,t),t) - \phi_0(\xx)$ into~\eqref{e:defz} and using~\eqref{e:defvforw} to reexpress $\ub(\xx-\psi(\xx,t),t)$, we arrive at
\begin{align} \label{e:exprzt1}
\begin{split}
\zt(\xx,t) &= \mathring{\zt}(a(\xx,t),t)  - \phi_0(\xx) - \phi_k(\xx;1) \psi_\xx(\xx,t) + \phi\left(b(\xx,t);c(\xx,t)\right)
\end{split}
\end{align}
for $\xx \in \R$ and $t \in [0,\widetilde{\tau}_{\max})$, where we abbreviate 
\begin{align*}
a(\xx,t) = \xx-\psi(\xx,t), \qquad b(\xx,t) = \xx + \psi(\xx-\psi(\xx,t),t)\left(1+\psi_\xx(\xx-\psi(\xx,t),t)\right) - \psi(\xx,t)
\end{align*}
and
\begin{align*}
    c(\xx,t) = 1+\psi_\xx(\xx-\psi(\xx,t),t).
\end{align*}
Differentiating the latter with respect to $\xx$ yields
\begin{align} \label{e:exprzt2}
\begin{split}
\zt_\xx(\xx,t) &= \mathring{\zt}_\xx(a(\xx,t),t)a_{\xx}(\xx,t)   - \phi_0'(\xx) - \phi_{k\xx} (\xx;1) \psi_\xx(\xx,t) - \phi_k(\xx;1) \psi_{\xx\xx}(\xx,t)\\ 
&\qquad + \, \phi_\xx\left(b(\xx,t);c(\xx,t)\right) b_\xx(\xx,t) + \phi_k\left(b(\xx,t);c(\xx,t)\right)c_{\xx}(\xx,t)
\end{split}
\end{align}
and
\begin{align} \label{e:exprzt3}
\begin{split}
\zt_{\xx\xx}(\xx,t) &= \mathring{\zt}_{\xx\xx}(a(\xx,t),t)a_{\xx}(\xx,t)^2 + \mathring{\zt}_{\xx}(a(\xx,t),t)a_{\xx\xx}(\xx,t) - \phi_0''(\xx) - \phi_{k\xx\xx} (\xx;1) \psi_\xx(\xx,t)\\ 
&\qquad - \, 2\phi_{k\xx}(\xx;1) \psi_{\xx\xx}(\xx,t) - \phi_k(\xx;1) \psi_{\xx\xx\xx}(\xx,t) + \phi_{\xx\xx}\left(b(\xx,t);c(\xx,t)\right) b_\xx(\xx,t)^2\\ 
&\qquad + \, 2 \phi_{k \xx}\left(b(\xx,t);c(\xx,t)\right)b_{\xx}(\xx,t)c_{\xx}(\xx,t) + \phi_{kk}\left(b(\xx,t);c(\xx,t)\right)c_{\xx}(\xx,t)^2 \\
&\qquad + \, \phi_\xx\left(b(\xx,t);c(\xx,t)\right) b_{\xx\xx}(\xx,t) + \phi_k\left(b(\xx,t);c(\xx,t)\right)c_{\xx\xx}(\xx,t)
\end{split}
\end{align}
for $\xx \in \R$ and $t \in [0,\widetilde{\tau}_{\max})$. 

Next, we use Taylor's theorem to bound
\begin{align} \label{e:taylor1}
\begin{split}
\left|b(\xx,t) - \xx\right| &\leq \left|\psi(\xx-\psi(\xx,t),t) - \psi(\xx,t) + \psi_\xx(\xx,t)\psi(\xx,t)\right|\\ 
&\qquad + \, \left|\psi_\xx(\xx-\psi(\xx,t),t)\psi(\xx-\psi(\xx,t),t) - \psi_\xx(\xx,t)\psi(\xx,t)\right|\\
&\lesssim \|\psi(t)\|_{L^\infty}\left(\|\psi_{\xx\xx}(t)\|_{L^\infty}\|\psi(t)\|_{L^\infty} + \|\psi_{\xx}(t)\|_{L^\infty}^2\right),\\
\left|c(\xx,t) - 1 - \psi_\xx(\xx,t)\right| &\leq \|\psi(t)\|_{L^\infty}\|\psi_{\xx\xx}(t)\|_{L^\infty},
\end{split}
\end{align}
and
\begin{align} \label{e:taylor2}
\begin{split}
\left|b_\xx(\xx,t) - 1\right| &\leq \left|\psi_\xx(\xx-\psi(\xx,t),t)(1-\psi_\xx(\xx,t)) - \psi_\xx(\xx,t)\right|\\
&\qquad + \, \left|\psi_{\xx\xx}(\xx-\psi(\xx,t),t)\psi(\xx-\psi(\xx,t),t) + \psi_\xx(\xx-\psi(\xx,t),t)^2\right|\left|1-\psi_\xx(\xx,t)\right|\\
&\lesssim \left(\|\psi(t)\|_{L^\infty} \|\psi_{\xx\xx}(t)\|_{L^\infty} + \|\psi_{\xx}(t)\|_{L^\infty}^2\right)\left(1 + \|\psi_{\xx}(t)\|_{L^\infty}\right)
\end{split}
\end{align}
for $\xx \in \R$ and $t \in [0,\widetilde{\tau}_{\max})$. Recall from Proposition~\ref{prop:family} that $\phi \colon \R \times [1-r_0,1+r_0] \to \R^2$ is smooth. So, applying Taylor's theorem and estimate~\eqref{e:taylor1}, while recalling from Proposition~\ref{p:psi} and Corollary~\ref{c:local_forward} that $\|\psi(t)\|_{C_{\mathrm{ub}}^2} < \frac12$ and $\|\psi_\xx(t)\|_{L^\infty} < r_0$, we infer the bounds
\begin{align} \label{e:taylor3}
\begin{split}
\left|(\partial_\xx^j \phi)\left(b(\xx,t);c(\xx,t)\right) - (\partial_\xx^j \phi)\left(\xx;c(\xx,t)\right)\right| &\leq |b(\xx,t) - \xx| \sup_{|k-1| \leq r_0} \left\|\partial_\xx^{j+1} \phi(\cdot;k)\right\|_{L^\infty}\\ &\lesssim \|\psi_{\xx\xx}(t)\|_{L^\infty} + \|\psi_{\xx}(t)\|_{L^\infty}^2,\\
\left|(\partial_\xx^j \phi)\left(\xx;c(\xx,t)\right) - (\partial_\xx^j \phi)\left(\xx;1 + \psi_{\xx}(\xx,t)\right)\right| &\leq |c(\xx,t) - 1 - \psi_\xx(\xx,t)|\\ 
&\qquad \cdot \sup_{|k-1| \leq r_0} \left\|\partial_\xx^j \phi_k(\cdot;k)\right\|_{L^\infty} \lesssim \|\psi_{\xx\xx}(t)\|_{L^\infty}
\end{split}
\end{align}
and
\begin{align} \label{e:taylor4}
\begin{split}
\left|(\partial_\xx^j \phi)\left(\xx;1 + \psi_\xx(\xx,t)\right) - \partial_\xx^j \phi_0(\xx) - (\partial_\xx^j \phi_k)(\xx;1)\psi_\xx(\xx,t)\right| &\lesssim |\psi_\xx(\xx,t)|^2 \left\|\partial_\xx^j \phi_{kk}(\cdot;1)\right\|_{L^\infty}\\ & \lesssim \|\psi_{\xx}(t)\|_{L^\infty}^2
\end{split}
\end{align}
for $\xx \in \R$, $t \in [0,\widetilde{\tau}_{\max})$ and $j = 0,1,2$. Using again $\|\psi(t)\|_{C_{\mathrm{ub}}^2} < \frac12$, we obtain
\begin{align} \label{e:taylor5}
\begin{split}
\|a(\cdot,t)\|_{C_{\mathrm{ub}}^2} \lesssim 1, \qquad \|c_\xx(\cdot,t)\|_{C_{\mathrm{ub}}^1}, \|b_{\xx\xx}(\cdot,t)\|_{L^\infty} \lesssim \|\psi_{\xx\xx}(t)\|_{C_{\mathrm{ub}}^1}
\end{split}
\end{align}
for $\xx \in \R$ and $t \in [0,\widetilde{\tau}_{\max})$. 

Finally, applying the bounds~\eqref{e:taylor2},~\eqref{e:taylor3},~\eqref{e:taylor4} and~\eqref{e:taylor5} to~\eqref{e:exprzt1},~\eqref{e:exprzt2} and~\eqref{e:exprzt3}, while recalling that $\phi$ is smooth, one readily infers~\eqref{e:bdibf}. Similarly, applying~\eqref{e:taylor3} and~\eqref{e:taylor4} to~\eqref{e:exprzt1}, we establish
\begin{align} \label{e:taylor6}
\begin{split}
\big\|\mathring{\zt}(a(\cdot,t),t)\big\|_{L^\infty} \lesssim \|\zt(t)\|_{L^\infty} + \|\psi_{\xx\xx}(t)\|_{L^\infty} + \|\psi_{\xx}(t)\|^2_{L^\infty}
\end{split}
\end{align}
for $t \in [0,\widetilde{\tau}_{\max})$. Since we have $\|\psi_\xx(t)\|_{L^\infty} < \frac12$, it holds $a_\xx(\xx,t) \geq \frac12$ for all $\xx \in \R$ and the function $a(\cdot,t) \colon \R \to \R$ is bijective for each $t \in [0,\widetilde{\tau}_{\max})$. Consequently, we have $\big\|\mathring{\zt}(a(\cdot,t),t)\big\|_{L^\infty} = \big\|\mathring{\zt}(\cdot,t)\big\|_{L^\infty}$ for each $t \in [0,\widetilde{\tau}_{\max})$, which yields~\eqref{e:bdibf2} upon invoking~\eqref{e:taylor6}.
\end{proof}

\section{Nonlinear stability argument} \label{sec:nonlstab}

We prove our nonlinear stability result, Theorem~\ref{t:mainresult}, by applying the linear bounds, obtained in Theorem~\ref{t:linear}, and the nonlinear bounds, established in Lemmas~\ref{lem:nlboundsmod},~\ref{lem:nlboundsmod3} and~\ref{lem:nlboundsmod4}, to iteratively estimate the phase modulation $\psi(t)$, the residuals $\zt(t)$ and $r(t)$, and the Cole-Hopf variable $y(t)$ through their respective Duhamel formulations~\eqref{e:intpsi},~\eqref{e:intz},~\eqref{e:intr} and~\eqref{e:inty}. We control regularity in the scheme via the nonlinear damping estimate in Proposition~\ref{p:damping}.

\begin{proof}[Proof of Theorem~\ref{t:mainresult}] 
Take $\vb_0 \in C_{\mathrm{ub}}^3(\R) \times C_{\mathrm{ub}}^2(\R)$ with $\|\vb_0\|_{C_{\mathrm{ub}}^3 \times C_{\mathrm{ub}}^2} < K$. Propositions~\ref{p:local_unmod} and~\ref{p:psi}, Corollaries~\ref{C:local_v} and~\ref{C:local_r}, and identity~\eqref{e:regy} yield that the template function $\eta \colon [0,\tau_{\max}) \to \R$ given by
\begin{align*}
\eta(t) = \eta_1(t) + \eta_2(t)^5,
\end{align*}
with
\begin{align*}
\eta_1(t) &= \sup_{0\leq s\leq t} \left[\|\psi(s)\|_{L^\infty} + \|y(s)\|_{L^\infty} + \sqrt{s}\,\|y_\xx(s)\|_{L^\infty} + \sqrt{1+s}\left(\frac{\|r(s)\|_{L^\infty} + \sqrt{s}\,\|r_\xx(s)\|_{L^\infty}}{\log(2+s)} \right.\right. \\
&\left.\left.\phantom{\frac{\sqrt{s}}{\log(2+s}} + \|\psi_\xx(s)\|_{L^\infty}\right) + \frac{1+s}{\log(2+s)}\left(\|\zt(s)\|_{L^\infty} + \left\|\psi_{\xx\xx}(s)\right\|_{C_{\mathrm{ub}}^4} + \big\|\tilde{\psi}(s)\big\|_{C_{\mathrm{ub}}^4}\right)\right],
\end{align*}
and
\begin{align*}
\eta_2(t) = \sup_{0\leq s\leq t} \big\|\vt(t)\big\|_{C_{\mathrm{ub}}^1}
\end{align*}
is well-defined, positive, monotonically increasing and continuous, where we recall $\tilde{\psi}(t) = \partial_t \psi(t) + c_g\psi_\xx(t)$. In addition, if $\tau_{\max} < \infty$, then we have
\begin{align}
 \lim_{t \uparrow \tau_{\max}} \eta(t) \geq \frac12. \label{e:blowupeta}
\end{align}
We refer to Remarks~\ref{rem:motv1} and~\ref{rem:motv2} for motivation for the choice of template function.

\paragraph*{Approach.} Let $r_0 > 0$ be the constant from Proposition~\ref{prop:family}. As usual in nonlinear iteration arguments, our goal is to prove a nonlinear inequality for the template function $\eta(t)$. Specifically, we show that there exists a constant $C > 1$ such that for all $t \in [0,\tau_{\max})$ with $\eta(t) \leq \frac12 \min\{1,r_0\}$ we have the key inequality
\begin{align}
\eta(t) \leq C\left(E_0 + \eta(t)^{\frac65}\right), \label{e:etaest}
\end{align}
where we denote $E_0 := \|\vb_0\|_{L^\infty}$. We note that by interpolation there exists an $E_0$-independent constant $C_0 > 0$ such that it holds $\smash{\|\vb_0\|_{C_{\mathrm{ub}}^1} \leq C_0 \sqrt{E_0}}$ as long as $E_0 \leq 1$. So, recalling that $\psi(0)$ vanishes identically by Proposition~\ref{p:psi} and using~\eqref{e:initr} and~\eqref{e:yinit}, we find an $E_0$-independent constant $C_* > 0$ such that $\eta(0) \leq C_* E_0$ as long as $E_0 \leq 1$. Subsequently, we set
$$M_0 = 2\max\{C,C_*\} > 2, \qquad \epsilon_0 = \min\left\{\frac{1}{M_0^6},\frac{\min\{1,r_0\}}{2M_0}\right\} < 1.$$
Assuming that~\eqref{e:etaest} holds, we claim that, provided $E_0 \in (0,\epsilon_0)$, we have $\eta(t) \leq M_0E_0$ for all $t \in [0,\tau_{\max})$. To prove the claim we, argue by contradiction and assume that there exists a $t \in [0,\tau_{\max})$ with $\eta(t) > M_0E_0$. Since $\eta$ is continuous and $\eta(0) \leq C_*E_0 < M_0E_0$, there must exist $t_0 \in (0,\tau_{\max})$ with $\eta(t_0) = M_0E_0 \leq \frac12 \min\{1,r_0\}$. Thus, applying~\eqref{e:etaest} and using $E_0 < \epsilon_0$, we arrive at
\begin{align*}
\eta(t_0) \leq CE_0\left(1 + M_0^{\frac65} E_0^{\frac15} \right) < 2CE_0 \leq M_0E_0,
\end{align*}
which contradicts $\eta(t_0) = M_0E_0$. Therefore, it must hold $\eta(t) \leq M_0E_0$ for all $t \in [0,\tau_{\max})$. Since $M_0 > 2$, we have $M_0E_0 < \frac12$, which implies $\tau_{\max} = \infty$ by~\eqref{e:blowupeta}, i.e.,~$\smash{\ub(t) = \vt(t) + \phi_0}$ is a global solution to~\eqref{FHN_co} satisfying~\eqref{e:regu} by Proposition~\ref{p:local_unmod}.

Our next step is thus to establish the key inequality~\eqref{e:etaest}. The estimates~\eqref{e:mtest10}-\eqref{e:mtest2} and~\eqref{e:mtest3} then follow readily by employing applying Lemma~\ref{lem:equivalence} and using that $\eta(t) \leq M_0E_0$ holds for all $t \geq 0$. 

\paragraph*{Bounds on \texorpdfstring{$\vb(t)$}{w(t)} and \texorpdfstring{$\partial_t \psi(t)$}{psi_t(t)}.} 
Let $t \in [0,\tau_{\max})$ with $\eta(t) \leq \frac{1}{2}\min\{1,r_0\}$. We bound  $\vb(s) = \zt(s) + \partial_k \phi(\cdot;1) \psi_\xx(s)$ and $\partial_t \psi(s) = \tilde{\psi}(s) - c_g\psi_\xx(s)$ as
\begin{align}
\begin{split}
\|\vb(s)\|_{L^\infty} &\lesssim \|\zt(s)\|_{L^\infty} + \|\psi_\xx(s)\|_{L^\infty} \lesssim \frac{\eta_1(t)}{\sqrt{1+s}}, \\ 
\|\partial_t \psi(s)\|_{L^\infty} &\lesssim \|\tilde{\psi}(s)\|_{L^\infty} + \|\psi_\xx(s)\|_{L^\infty} \lesssim \frac{\eta_1(t)}{\sqrt{1+s}}
\end{split}
\label{e:vbound} \end{align}
for $s \in [0,t]$.

\paragraph*{Application of nonlinear damping estimate.} Take $t \in [0,\tau_{\max})$ such that $\eta(t) \leq \frac12 \min\{1,r_0\}$. Then, we have $t < \widetilde{\tau}_{\max}$ by Corollary~\ref{c:local_forward}. Moreover, using identity~\eqref{e:defvforw}, $\eta(t) \leq \frac12 \min\{1,r_0\}$ and the fact that $\phi \colon [1-r_0,1+r_0] \times \R \to \R^2$ is smooth by Proposition~\ref{prop:family}, we find a $t$- and $E_0$-independent constant $R_0 > 0$ such that $\smash{\|\mathring{\zt}(\tau)\|_{C_{\mathrm{ub}}^1}} \leq R_0$ for $\tau \in [0,t]$. On the other hand, Lemma~\ref{lem:equivalence} implies
\begin{align*}
\big\|\mathring{\zt}(\tau)\big\|_{L^\infty} \lesssim \eta_1(t) \frac{\log(2+\tau)}{1+\tau}
\end{align*}
for $\tau \in [0,t]$, where we use that $\eta_1(t) \leq \frac12$. Hence, employing the nonlinear damping estimate in Proposition~\ref{p:damping}, while using $\eta_1(t) \leq \frac12$ and $\smash{\|\mathring{\zt}(\tau)\|_{C_{\mathrm{ub}}^1} \leq R_0}$ for $\tau \in [0,t]$, we arrive at
\begin{align} \label{e:duha1}
\begin{split}
\big\|\mathring{\zt}(s)\big\|_{C_{\mathrm{ub}}^2 \times C_{\mathrm{ub}}^1} 
&\lesssim \eta_1(t) \frac{\log(2+s)}{1+s} + \left(\eta_1(t) \frac{\log(2+s)}{1+s}\right)^{\frac15}\left(\re^{-\alpha s} + \int_0^s \frac{\log(2+\tau)^2}{\re^{\alpha(s-\tau)}(1+\tau)^2} \,\de \tau\right)^{\frac25}\\
&\lesssim \eta_1(t)^{\frac15} \frac{\log(2+s)}{1+s}
\end{split}
\end{align}
for $s \in [0,t]$. We combine the latter with Lemma~\ref{lem:equivalence} and use $\eta_1(t) \leq \frac12$ to obtain
\begin{align} \label{e:duha2}
\big\|\zt(s)\big\|_{C_{\mathrm{ub}}^2 \times C_{\mathrm{ub}}^1} \lesssim \eta_1(t)^{\frac15} \frac{\log(2+s)}{1+s}
\end{align}
for $s \in [0,t]$. Therefore, recalling $\vb(s) = \zt(s) + \partial_k \phi(\cdot;1) \psi_\xx(s)$ and using $\eta_1(t) \leq \frac12$, the latter estimate yields
\begin{align}
\begin{split}
\|\vb(s)\|_{C_{\mathrm{ub}}^2 \times C_{\mathrm{ub}}^1} &\lesssim \|\zt(s)\|_{C_{\mathrm{ub}}^2 \times C_{\mathrm{ub}}^1} + \|\psi_\xx(s)\|_{C_{\mathrm{ub}}^2} \lesssim \frac{\eta_1(t)^{\frac15}}{\sqrt{1+s}}
\end{split}
\label{e:vbound2} \end{align}
for $s \in [0,t]$.

\paragraph*{Bounds on \texorpdfstring{$\zt(t)$}{z(t)}, \texorpdfstring{$\psi_{\xx\xx}(t)$}{psi_xx(t)} and \texorpdfstring{$\tilde{\psi}(t)$}{psi(t)}.} Let $t \in [0,\tau_{\max})$ be such that $\eta(t) \leq \frac12 \min\{1,r_0\}$. We invoke the nonlinear bound in Lemma~\ref{lem:nlboundsmod}, employ the estimates~\eqref{e:vbound} and~\eqref{e:vbound2}, and use $\eta_1(t) \leq \frac12$ to obtain
\begin{align} \label{e:nlest100}
\|\mathcal N(\vb(s),\psi(s),\partial_t \psi(s))\|_{L^\infty} &\lesssim \frac{\eta_1(t)^{\frac65}}{1+s}
\end{align}
for $s \in [0,t]$. 

Subsequently, we apply the linear estimates in Theorem~\ref{t:linear} and the nonlinear estimate~\eqref{e:nlest100} to the Duhamel formulas~\eqref{e:intpsi} and~\eqref{e:intz} and establish
\begin{align} \label{e:nlest1}
\begin{split}
\|\zt(t)\|_{L^\infty} &\lesssim \left(\frac{1}{1+t} + \re^{-\alpha t}\right)E_0 + \int_0^t \left(\frac{1}{1+t-s} + \re^{-\alpha(t-s)}\right) \frac{\eta_1(t)^{\frac65}}{1+s} \,\de s + \frac{\eta_1(t)^2}{1+t}\\ 
&\lesssim \left(E_0 + \eta_1(t)^{\frac65}\right)\frac{\log(2+t)}{1+t}
\end{split}
\end{align}
and
\begin{align} \label{e:nlest2}
\begin{split}
\left\|(\partial_t + c_g\partial_\xx)^j \partial_\xx^l \psi(t)\right\|_{L^\infty} \lesssim \frac{E_0}{1+t} + \int_0^t \frac{\eta_1(t)^{\frac65}}{(1+t-s)(1+s)} \,\de s \lesssim \left(E_0 + \eta_1(t)^{\frac65}\right)\frac{\log(2+t)}{1+t},
\end{split}
\end{align}
for all $t \in [0,\tau_{\max})$ with $\eta(t) \leq \frac{1}{2}\min\{1,r_0\}$ and $j,l \in \mathbb{N}_0$ with $2 \leq l + 2j \leq 6$, where we used $S_p(0) = 0$ when taking the temporal derivative of~\eqref{e:intpsi}. 

\paragraph*{Bounds on \texorpdfstring{$r(t)$}{r(t)} and \texorpdfstring{$r_\xx(t)$}{r_x(t)}.} Let $t \in [0,\tau_{\max})$ with $\eta(t) \leq \frac{1}{2}\min\{1,r_0\}$. We employ the nonlinear bound in Lemma~\ref{lem:nlboundsmod3} and estimates~\eqref{e:vbound},~\eqref{e:duha2} and~\eqref{e:vbound2} to establish
\begin{align}
\label{e:nlest71}
\begin{split}
&\|\mathcal N_p(\zt(s),\vb(s),\psi(s),\tilde{\psi})\|_{L^\infty}
\lesssim \frac{\log(2+s)}{(1+s)^{\frac{3}{2}}}\eta_1(t)^{\frac65},
\end{split}
\end{align}
for $s \in [0,t]$, where we used $\eta_1(t) \leq \frac12$.

We recall the well-known $L^\infty$-estimates on the convective heat semigroup:
\begin{align} \label{e:linheat}
\left\|\partial_\xx^m \re^{(d\partial_\xx^2 - c_g \partial_\xx) \tau} z\right\|_{L^\infty} \lesssim \tau^{-\frac{m}{2}} \|z\|_{L^\infty}, \qquad \left\|\partial_\xx \re^{(d\partial_\xx^2 - c_g \partial_\xx) \tau} w\right\|_{L^\infty} \lesssim \frac{\|w\|_{C^1_{\mathrm{ub}}}}{\sqrt{1+\tau}}
\end{align}
for $m = 0,1$, $\tau > 0$, $z \in C_{\mathrm{ub}}(\R)$ and $w \in C_{\mathrm{ub}}^1(\R)$, cf.~\cite[Proposition~3.6]{BjoernMod}. So, using that $\partial_\xx$ commutes with $\smash{\re^{\left(d\partial_\xx^2 - c_g\partial_\xx\right) (t-s)}}$, we estimate
\begin{align} \label{e:nlest6}
\begin{split}
&\left\|\partial_\xx^2 \int_0^t \re^{\left(d\partial_\xx^2 - c_g\partial_\xx\right) (t-s)}\left(A_h(\mathbf{f}_p) \psi_\xx(s)^2\right) \,\de s\right\|_{L^\infty}\\
&\qquad \lesssim \int_{0}^{\max\{0,t-1\}} \frac{\eta_1(t)^2}{(t-s)(1+s)} \,\de s + \int_{\max\{0,t-1\}}^t \frac{\eta_1(t)^2}{\sqrt{t-s}(1+s)} \,\de s \lesssim \frac{\eta_1(t)^2\log(2+t)}{1+t},
\end{split}
\end{align}
for all $t \in [0,\tau_{\max})$. Thus, applying the linear estimates in~\eqref{e:linheat} and in Theorem~\ref{t:linear} and the nonlinear estimates~\eqref{e:nlest71} to~\eqref{e:intr}, we obtain the bounds
\begin{align} \label{e:nlest7}
\begin{split}
\|r(t)\|_{L^\infty} \lesssim \frac{E_0}{\sqrt{1+t}} + \int_0^t \frac{\eta_1(t)^{\frac65}}{\sqrt{t-s}(1+s)} \,\de s \lesssim \left(E_0 + \eta_1(t)^{\frac65}\right)\frac{\log(2+t)}{\sqrt{1+t}}
\end{split}
\end{align}
and, using~\eqref{e:nlest6},
\begin{align} \label{e:nlest78}
\begin{split}
\|r_\xx(t)\|_{L^\infty} &\lesssim \frac{E_0}{\sqrt{t}\sqrt{1+t}} + \int_0^t \frac{\eta_1(t)^{\frac65}}{\sqrt{t-s}\sqrt{1+t-s}(1+s)} \,\de s + \frac{\eta_1(t)^2\log(2+t)}{1+t}\\
&\lesssim \left(E_0 + \eta_1(t)^{\frac65}\right)\frac{\log(2+t)}{\sqrt{t}\sqrt{1+t}}
\end{split}
\end{align}
for all $t \in [0,\tau_{\max})$ with $\eta(t) \leq \frac{1}{2}\min\{1,r_0\}$.

\paragraph*{Bounds on \texorpdfstring{$y(t)$}{y(t)} and \texorpdfstring{$y_\xx(t)$}{y_x(t)}.} Applying the estimates~\eqref{e:nlest7} and~\eqref{e:nlest78} to~\eqref{e:ysmall}, we derive the short-time bound
\begin{align} \label{e:yshort}
t^{\frac{m}{2}} \|\partial_\xx^m y(t)\|_\infty \lesssim t^{\frac{m}{2}}\|\partial_\xx^m r(t)\|_\infty \lesssim E_0 + \eta_1(t)^{\frac65},
\end{align}
for $m = 0,1$ and all $t \in [0,\tau_{\max})$ with $t \leq 1$ and $\eta(t) \leq \frac{1}{2}\min\{1,r_0\}$. 

Next, take $t \in [0,\tau_{\max})$ with $t \geq 1$ and $\eta(t) \leq \frac{1}{2}\min\{1,r_0\}$. Using the nonlinear bound in Lemma~\ref{lem:nlboundsmod4} and the estimates~\eqref{e:vbound},~\eqref{e:duha2} and~\eqref{e:vbound2}, we infer
\begin{align} \label{e:nlest1000}
\|\Non_c(r(s),y(s),\zt(s),\vb(s),\psi(s),\tilde{\psi}(s))\|_{L^\infty} \lesssim \frac{\eta_1(t)^{\frac65} \log(2+s)}{\left(1+s\right)^{\frac{3}{2}}}
\end{align}
for $s \in [1,t]$, where we use $\eta_1(t) \leq \frac{1}{2}$. 

We apply the linear estimates~\eqref{e:linheat} and the nonlinear bound~\eqref{e:nlest1000} to the Duhamel formula~\eqref{e:inty} and use~\eqref{e:yshort} to establish
\begin{align*}
\begin{split}
\left\|\partial_\xx^m y(t)\right\|_{L^\infty} \leq \frac{\|y(1)\|_{C_{\mathrm{ub}}^m}}{(1+t)^{\frac{m}{2}}} + \int_1^t \frac{\eta_1(t)^{\frac65} \log(2+s)}{(t-s)^{\frac{m}{2}} (1+s)^{\frac{3}{2}}} \,\de s \lesssim \frac{E_0 + \eta_1(t)^{\frac65}}{(1+t)^{\frac{m}{2}}},
\end{split}
\end{align*}
for $m = 0,1$ and all $t \in [0,\tau_{\max})$ with $t \geq 1$ and $\eta(t) \leq \frac{1}{2} \min\{1,r_0\}$. Combining the latter with the short-time bound~\eqref{e:yshort}, we arrive at
\begin{align} \label{e:nlest9}
\begin{split}
t^{\frac{m}{2}}\|\partial_\xx^m y(t)\|_{L^\infty} \lesssim E_0 + \eta_1(t)^{\frac65},
\end{split}
\end{align}
for $m = 0,1$ and all $t \in [0,\tau_{\max})$ with $\eta(t) \leq \frac{1}{2}\min\{1,r_0\}$.

\paragraph*{Bounds on \texorpdfstring{$\psi(t)$}{psi(t)} and \texorpdfstring{$\psi_\xx(t)$}{psi_x(t)}.} We start by considering the case $\nu \neq 0$. Through~\eqref{e:defy} we can express $\psi(t)$ in terms of the residual $r(t)$ and the Cole-Hopf variable $y(t)$ as
\begin{align*} \psi(t) = r(t) + \frac{d}{\nu} \log(y(t) + 1),\end{align*}
with derivative
\begin{align*} \psi_\xx(t) = r_\xx(t) + \frac{d y_\xx(t)}{\nu (1+y(t))},\end{align*}
for $t \in (0,\tau_{\max})$. We emphasize that, as long as $\eta_1(t) \leq \frac{1}{2}$ and $\nu \neq 0$, the above expressions are well-defined. So, using $\|\partial_\xx^m \psi(t)\|_{L^\infty} \lesssim \|\partial_\xx^m r(t)\|_{L^\infty} + \|\partial_\xx^m y(t)\|_{L^\infty}$, employing the estimates~\eqref{e:nlest7},~\eqref{e:nlest78} and~\eqref{e:nlest9} and recalling the fact that $\psi(s)$ vanishes identically for $s \in [0,\tau_{\max})$ with $s \leq 1$ by Proposition~\ref{p:psi}, we establish
\begin{align} \label{e:nlest11} \|\partial_\xx^m \psi(t)\|_{L^\infty} \lesssim \frac{E_0 + \eta_1(t)^{\frac65}}{(1+t)^{\frac{m}{2}}},\end{align}
for $m = 0,1$ and $t \in [0,\tau_{\max})$ with $\eta(t) \leq \frac12\min\{1,r_0\}$. 

Next, we consider the case $\nu = 0$. Recalling that $\psi(s)$ vanishes for $s \in [0,1]$ by Proposition~\ref{p:psi}, we apply the linear estimates in~\eqref{e:linheat} and the nonlinear bounds~\eqref{e:nlest71},~\eqref{e:nlest7} and~\eqref{e:nlest78} to~\eqref{e:intpsi2}, and deduce
\begin{align*} 
\begin{split}
\|\psi(t)\|_{L^\infty} \lesssim \left(E_0 + \eta_1(t)^{\frac65}\right)\frac{\log(2+t)}{\sqrt{1+t}} + E_0 + \int_0^t \eta_1(t)^{\frac65} \frac{\log(2+s)}{(1+s)^{\frac32}} \,\de s \lesssim E_0 + \eta_1(t)^{\frac65}
\end{split}
\end{align*}
and
\begin{align*} 
\begin{split}
\|\psi_\xx(t)\|_{L^\infty} \lesssim \left(E_0 + \eta_1(t)^{\frac65}\right)\frac{\log(2+t)}{\sqrt{t} \sqrt{1+t}} + \frac{E_0}{\sqrt{t}} + \int_0^t \frac{\eta_1(t)^{\frac65} \log(2+s)}{\sqrt{t-s}(1+s)^{\frac32}} \,\de s \lesssim \frac{E_0 + \eta_1(t)^{\frac65}}{\sqrt{1+t}}
\end{split}
\end{align*}
for $t \in [0,\tau_{\max})$ with $\eta(t) \leq \frac12\min\{1,r_0\}$. That is,~\eqref{e:nlest11} also holds for $\nu = 0$.

\paragraph*{Bounds on \texorpdfstring{$\vt(t)$}{w(t)} and \texorpdfstring{$\mathring{\vb}(t)$}{w(t)}.}
Using~\eqref{e:defvforw}, applying the mean value theorem and recalling that $\phi$ is smooth, we bound the forward-modulated perturbation $\mathring{\vb}(t)$, defined by~\eqref{e:defforwregular}, as
\begin{align} \label{e:nltest91}
\begin{split}
\big\|\mathring{\vb}(t)\big\|_{L^\infty} &\lesssim \big\|\mathring{\zt}(t)\big\|_{L^\infty} + \sup_{\xx \in \R} \big\|\phi(a(\xx,t);a_\xx(\xx,t)) - \phi_0(a(\xx,t))\big\| \\
&\qquad + \, \sup_{\xx \in \R} \big\|\phi(a(\xx,t) + \psi(\xx,t)\psi_\xx(\xx,t);a_\xx(\xx,t)) - \phi(a(\xx,t);a_\xx(\xx,t))\big\| \\
&\lesssim \big\|\mathring{\zt}(t)\big\|_{L^\infty} + \|\psi_\xx(t)\|_{L^\infty} \sup_{|k-1| \leq r_0} \|\phi_k(\cdot;k)\|_{L^\infty}\\ 
&\qquad + \, \|\psi(t)\|_{L^\infty} \|\psi_\xx(t)\|_{L^\infty} \sup_{|k-1| \leq r_0} \|\phi_\xx(\cdot;k)\|_{L^\infty} \lesssim \frac{\eta_1(t)}{\sqrt{1+t}}
\end{split}
\end{align}
for all $t \in [0,\tau_{\max})$ with $\eta(t) \leq \frac{1}{2}\min\{1,r_0\}$, where we abbreviate $a(\xx,t) = \xx + \psi(\xx,t)$. Similarly, we establish
\begin{align*} 
\begin{split}
\big\|\partial_\xx^j \vt(t)\big\|_{L^\infty} 
&\lesssim \big\|\partial_\xx^j \mathring{\zt}(t)\big\|_{L^\infty} + \|\psi(t)\|_{L^\infty}\left(1 + \|\psi_\xx(t)\|_{L^\infty} \right)\sup_{|k-1| \leq r_0} \|\partial_\xx^j \phi_{\xx}(\cdot;k)\|_{L^\infty}\\ 
&\qquad + \, \|\psi_\xx(t)\|_{L^\infty} \sup_{|k-1| \leq r_0} \|\partial_\xx^j \phi_{k}(\cdot;k)\|_{L^\infty} + \|\partial_\xx^j \psi_{\xx}(t)\|_{L^\infty}
\end{split}
\end{align*}
for $j = 0,1$ and $t \in [0,\tau_{\max})$ with $\eta(t) \leq \frac{1}{2}\min\{1,r_0\}$. Hence, combining the latter with~\eqref{e:duha1} yields
\begin{align}
    \label{e:nlest99}
\begin{split}
\big\|\vt(t)\big\|_{L^\infty} \leq \eta_1(t), \qquad \big\|\vt(t)\big\|_{C_{\mathrm{ub}}^1} \leq \eta_1(t)^{\frac15}  
\end{split}
\end{align}
for $t \in [0,\tau_{\max})$ with $\eta(t) \leq \frac{1}{2}\min\{1,r_0\}$. 

\paragraph*{Proof of key inequality and estimates~\eqref{e:mtest10}-\eqref{e:mtest2}.} Take $t \in [0,\tau_{\max})$ such that $\eta(t) \leq \frac{1}{2}\min\{1,r_0\}$. By estimate~\eqref{e:nlest99} there exists a $t$- and $E_0$-independent constant $C_2 > 0$ such that
\begin{align} \label{e:eta1}
\eta_2(t) \leq C_2 \eta_1(t)^{\frac15}.
\end{align}
On the other hand, employing the estimates~\eqref{e:nlest1},~\eqref{e:nlest2},~\eqref{e:nlest7},~\eqref{e:nlest78},~\eqref{e:nlest9} and~\eqref{e:nlest11}, we establish a $t$- and $E_0$-independent constant $C_1 > 0$ such that
\begin{align} \label{e:eta2}
\eta_1(t) \leq C_1\left(E_0 + \eta_1(t)^{\frac65}\right).
\end{align}
Hence, combining~\eqref{e:eta1} and~\eqref{e:eta2} we acquire
\begin{align*}
\eta(t) = \eta_1(t) + \eta_2(t)^5 \leq \left(1+C_2^5\right)\eta_1(t) \leq C_1\left(1+C_2^5\right)\left(E_0 + \eta_1(t)^{\frac65}\right) \leq C_1\left(1+C_2^5\right) \left(E_0 + \eta(t)^{\frac65}\right).
\end{align*}
We conclude that there exists a $t$- and $E_0$-independent constant such that the key inequality~\eqref{e:etaest} holds for all $t \in [0,\tau_{\max})$ with $\eta(t) \leq \frac{1}{2}\min\{1,r_0\}$. As argued above, this implies, provided $E_0 \in (0,\epsilon_0)$, that $\tau_{\max} = \infty$ and we have $\eta(t) \leq M_0 E_0$ for all $t \geq 0$. The estimates~\eqref{e:mtest10},~\eqref{e:mtest11} and~\eqref{e:mtest12} now follow directly by combining $\eta_1(t) \leq M_0 E_0$ with~\eqref{e:nltest91} and~\eqref{e:nlest99}, respectively. In addition, $\eta_1(t) \leq M_0E_0$ and~\eqref{e:vbound} yield the estimate~\eqref{e:mtest2}. 

\paragraph*{Approximation by the viscous Hamilton-Jacobi equation.} All that remains is to establish the approximation~\eqref{e:mtest3}. We proceed as in~\cite{BjoernMod} and distinguish between the cases $\nu = 0$ and $\nu \neq 0$. We start with the case $\nu = 0$. Then,~\eqref{e:HamJac} is a linear convective heat equation. We consider the classical solution $\breve \psi \in  C\big([0,\infty),C_{\mathrm{ub}}^2(\R)\big) \cap C^1\big([0,\infty),C_{\mathrm{ub}}(\R)\big)$ of~\eqref{e:HamJac} with initial condition $\smash{\breve \psi(0)} = \smash{\widetilde{\Phi}_0^* \vb_0} \in \smash{C_{\mathrm{ub}}^2(\R)}$ given by $\smash{\breve \psi(t)} = \smash{\re^{(d\partial_\xx^2 - c_g\partial_\xx)t} \widetilde{\Phi}_0^* \vb_0}$. Recalling that $\psi(t)$ vanishes identically for $t \in [0,1]$ by Proposition~\ref{p:psi}, we obtain by~\eqref{e:linheat} a $t$- and $E_0$-independent constant $M_1 \geq 1$ such that
\begin{align} \label{e:mtest200}
\begin{split}
t^{\frac{m}{2}}\left\|\partial_\xx^m \left(\psi(t) - \breve{\psi}(t)\right)\right\|_{L^\infty} = t^{\frac{m}{2}}\left\|\partial_\xx^m \breve{\psi}(t)\right\|_{L^\infty} \leq \frac{M_1E_0}{\sqrt{1+t}}, 
\end{split}
\end{align}
holds for $t \in [0,1]$ and $m = 0,1$. For $t \geq 1$, we apply the linear estimates in~\eqref{e:linheat} and the nonlinear bounds~\eqref{e:nlest71} and $\eta_1(t) \leq M_0 E_0$ to~\eqref{e:intpsi2} to establish $t$- and $E_0$-independent constants $M_2,M_3 \geq 1$ such that
\begin{align} \label{e:mtest21}
\begin{split}
\left\|\partial_\xx^m\!\left(\psi(t) - \breve{\psi}(t)\right)\right\|_{L^\infty} &\leq M_2\left(\left\|\partial_\xx^m r(t)\right\|_{L^\infty} +\int_0^t \eta_1(t)^{\frac65} \frac{\log(2+s)}{(t-s)^{\frac{m}{2}}(1+s)^{\frac32}} \,\de s\right)\\ 
&\leq M_3\frac{\eta_1(t)}{(1+t)^{\frac{m}{2}}} \left(\eta_1(t)^{\frac15} + \frac{\log(2+t)}{\sqrt{1+t}}\right), 
\end{split}
\end{align}
holds for $m = 0,1$. Estimate~\eqref{e:mtest3} now follows by combining~\eqref{e:mtest200} and~\eqref{e:mtest21} and using $\eta_1(t) \leq M_0E_0$. 

Next, we take $\nu \neq 0$. We consider the solution $\breve{\psi} \in C\big([0,\infty),C_{\mathrm{ub}}^2(\R)\big) \cap C^1\big([0,\infty),C_{\mathrm{ub}}(\R)\big)$ of~\eqref{e:HamJac} with initial condition $\smash{\breve \psi(0)} = \smash{\widetilde{\Phi}_0^* \vb_0}$ given by
\begin{align*} \breve \psi(t) = \frac{d}{\nu}\log\left(1 + \breve{y}(t)\right) \quad \text{with} \quad \breve{y}(t) = \re^{(d\partial_\xx^2 - c_g\partial_\xx)t}\left(\re^{\frac{\nu}{d}\widetilde{\Phi}_0^* \vb_0} - 1\right),\end{align*}
which arises through the Cole-Hopf transform and is well-defined as long as $E_0 = \|\vb_0\|_{L^\infty}$ is sufficiently small. Employing Taylor's theorem, Theorem~\ref{t:linear}, identities~\eqref{e:intr} and~\eqref{e:ysmall}, and estimates~\eqref{e:nlest71},~\eqref{e:linheat} and $\eta_1(1) \leq M_0E_0$, while using that $0 = S_p(1)\vb_0 = \smash{\re^{d\partial_\xx^2 - c_g\partial_\xx} \widetilde{\Phi}_0^* \vb_0 + \widetilde{S}_r(1)\vb_0}$ holds by Theorem~\ref{t:linear}, we establish an $E_0$-independent constant $M_4 > 0$ such that
\begin{align} \label{e:mtest23}
\begin{split}
\|y(1) - \breve{y}(1)\|_{L^\infty} &\leq \left\|y(1) + \tfrac{\nu}{d} r(1)\right\|_{L^\infty} + \left\|\breve{y}(1) - \tfrac{\nu}{d} \re^{d\partial_\xx^2 - c_g\partial_\xx} \widetilde{\Phi}_0^* \vb_0 \right\|_{L^\infty}\\ 
&\qquad + \, \tfrac{|\nu|}{d} \left\|r(1) - \widetilde{S}_r(1)\vb_0\right\|_{L^\infty} \leq M_4E_0^{\frac65}.
\end{split}
\end{align}
Noting that $\breve{y}(t) = \smash{\re^{(d \partial_\xx^2 - c_g \partial_\xx) (t-1)} \breve{y}(1)}$, applying the mean value theorem to~\eqref{e:defy}, employing the estimates~\eqref{e:linheat} and~\eqref{e:nlest1000} to~\eqref{e:inty}, and using~\eqref{e:mtest23} and $\eta_1(t) \leq M_0E_0$, we establish
\begin{align*} 
\begin{split}
\left\|\psi(t) - \breve{\psi}(t)\right\|_{L^\infty} &\lesssim \|r(t)\|_{L^\infty} + \left\|y(t) - \breve{y}(t)\right\|_{L^\infty} \lesssim \left\|r(t)\right\|_{L^\infty} + E_0^{\frac65} + \eta_1(t)^{\frac65},\\
\left\|\psi_\xx(t) - \breve{\psi}_\xx(t)\right\|_{L^\infty} &\lesssim \|r_\xx(t)\|_{L^\infty} + \left\|y_\xx(t) - \breve{y}_\xx(t)\right\|_{L^\infty} + \left\|y(t) - \breve{y}(t)\right\|_{L^\infty}\left\|y_\xx(t)\right\|_{L^\infty}
\\ &\lesssim \left\|r_\xx(t)\right\|_{L^\infty} + \frac{E_0^{\frac65} + \eta_1(t)^{\frac65}}{\sqrt{1+t}}
\end{split}
\end{align*}
for $t \geq 1$. So, using that $\eta_1(t) \leq M_0E_0$, affords a $t$- and $E_0$-independent constant $M_5 > 0$ such that 
\begin{align*} 
\begin{split}
\left\|\partial_\xx^m\!\left(\psi(t) - \breve{\psi}(t)\right)\right\|_{L^\infty} &\leq M_5\frac{E_0}{(1+t)^{\frac{m}{2}}} \left(E_0^{\frac15} + \frac{\log(2+t)}{\sqrt{1+t}}\right), 
\end{split}
\end{align*}
holds for all $t \geq 1$. On the other hand, we establish~\eqref{e:mtest200} for $t \in [0,1]$ analogously to the case $\nu = 0$. Thus, we obtain~\eqref{e:mtest3} for $\nu \neq 0$.
\end{proof}

\begin{remark} \label{rem:motv1}
Due to the use of forward-modulated damping in the proof of Theorem~\ref{t:mainresult}, it is, in contrast to~\cite{BjoernMod}, not necessary to control derivatives of $\zt(t)$ or $\vt(t)$ through iterative estimates on their Duhamel formulas. That is, we find that the template function $\eta_1(t)$ in the proof of Theorem~\ref{t:mainresult} coincides with the one from~\cite[Theorem~1.3]{BjoernMod}, upon omitting all derivatives of $\zt(t)$ and $\vt(t)$. Nevertheless, in order to apply the nonlinear damping estimate in Proposition~\ref{p:damping}, the condition~\eqref{e:upbound} needs to be fulfilled, which requires control on the first derivative of the (forward-modulated) perturbation. For that reason, we introduce the second template function $\eta_2(t)$ yielding a priori control on the $C_{\mathrm{ub}}^1$-norm of $\vt(t)$ and, thus, via~\eqref{e:defvforw} of $\mathring{\zt}(t)$. We can then a posteriori bound $\eta_2(t)^5$ with aid of the nonlinear damping estimate in terms of $\eta_1(t)$. Since $\eta_1(t)$ obeys the nonlinear key inequality~\eqref{e:eta2}, the same then follows for the full template function $\eta(t) = \eta_1(t) + \eta_2(t)^5$.
\end{remark}

\begin{remark} \label{rem:motv2}
The choice of temporal weights in the template function $\eta(t)$ in the proof of Theorem~\ref{t:mainresult} coincides with the one from the proof of~\cite[Theorem~1.3]{BjoernMod} and reflects, as explained in~\cite[Remark~5.1]{BjoernMod}, the linear decay rates of $\zt(t)$, $\psi(t)$, $y(t)$, $\vt(t)$ and $r(t)$, cf.~Theorem~\ref{t:linear} and~\eqref{e:linheat}, up to a logarithmic correction.
\end{remark}

\section{Discussion and outlook} \label{sec:disc}

We discuss the wider applicability of our method to establish nonlinear stability of wave trains against fully nonlocalized perturbations. 

\subsection{Applicability to general semilinear dissipative problems}

Our analysis does not rely on the specific structure of the FHN system. As a matter of fact, our approach only requires that the wave train is diffusively spectrally stable, it has nonzero group velocity, the perturbation equation obeys a nonlinear damping estimate and the linearization of the system about the wave train generates a $C_0$-semigroup on $C_{\mathrm{ub}}(\R)$, whose high-frequency component is exponentially damped. As long as these criteria are satisfied, we expect our method to work for general semilinear dissipative problems.

It was already observed in~\cite{BjoernAvery} that the same linear terms in the FHN system~\eqref{FHN}, i.e.~the term $u_{xx}$ in the first component and the term $-\varepsilon \gamma v$ in the second component, are key to obtaining a nonlinear damping estimate, as well as high-frequency resolvent bounds leading to exponentially damped behavior of the high-frequency part of the semigroup. It has been pointed out in the context of the St.~Venant equations in~\cite{STVenant2} that high-frequency resolvent bounds are equivalent to linear damping estimates, which then yield a nonlinear damping estimate as long as solutions stay small. Therefore, we expect that we can replace the requirements that the high-frequency component of the semigroup is exponentially damped and a nonlinear damping estimate can be derived by the condition that  the linearization obeys high-frequency resolvent bounds.

In addition, we expect that it is possible to drop the requirement that the wave train has nonzero group velocity. In the case of zero group velocity the diffusive mode at the origin is \emph{branched}, cf.~\cite[Section~2.1]{BjoernAvery}, i.e., the linear dispersion relation $\lambda_c(\xi)$ has a double root at $\xi = 0$. The fact that the linear dispersion relation $\lambda_c(\xi)$ is no longer locally invertible about $\xi = 0$ poses a technical hurdle in relating the inverse Laplace representation of the low-frequency part of the semigroup to its Floquet-Bloch representation. We anticipate that this challenge can be addressed by unfolding the double root at $0$ by working with the spectral parameter $\sigma = \sqrt{\lambda}$ with branch cut along the negative real axis.

\subsection{Open problems}

There are however several prominent examples of semilinear dissipative systems, where nonlinear stability of wave trains against localized perturbations has been established, but where one (or more) of the above requirements are not satisfied, thereby obstructing a straightforward application of our method to extend to fully nonlocalized perturbations. Here, we highlight two of these examples.

The first is the Lugiato-Lefever equation, a damped and forced nonlinear Schr\"odinger equation arising in nonlinear optics, whose diffusively spectrally stable periodic waves are nonlinear stable against localized perturbations~\cite{LLEperiod}. Here, the principal part of the linearization about the wave is the Schr\"odinger operator $\ri \partial_x^2$, which does not generate a $C_0$-(semi)group on $C_{\mathrm{ub}}(\R)$, cf.~\cite[Lemma~2.1]{BPSS}. Thus, an extension of our method to this setting necessitates reconsidering the choice of space. Natural candidates are the \emph{modulation spaces} $M_{\infty,1}^k(\R)$ on which the Schr\"odinger operator generates a $C_0$-group, cf.~\cite[Proposition~3.8]{KUNST}. These spaces consist of nonlocalized functions as can be seen from the embeddings $C_{\mathrm{ub}}^{k+2}(\R) \hookrightarrow M_{\infty,1}^k(\R) \hookrightarrow C_{\mathrm{b}}^k(\R)$ for $k \in \mathbb{N}_0$, cf.~\cite[Theorem~5.7 and Lemma~5.9]{KLAUS}. An application of our method would then require to establish high-frequency damping in modulation spaces, which could be challenging. We refer to~\cite{GROE} for further background on modulation spaces.

A second example are the St.~Venant equations, which describe shallow water flow down an inclined ramp and admit viscous roll waves. Nonlinear stability of these periodic traveling waves against localized perturbations has been established in~\cite{STVenant1,STVenant2}. The St.~Venant system is only viscous in one component and therefore, similar to the current analysis for the FHN system, incomplete parabolicity must be addressed. Moreover, due to the presence of an additional conservation law the spectrum of the linearization about the wave train possesses an additional curve touching the imaginary axis at $0$, thereby violating the spectral stability assumption~\ref{assD3}. Thus, the leading-order dynamics of perturbations is no longer governed by the scalar viscous Hamilton-Jacobi equation~\eqref{e:HamJac}, but instead by an associated Whitham system describing the interactions between critical modes, cf.~\cite{johnson_whitham}. It is an open question of how to handle the most critical nonlinear terms that cannot be controlled through iterative $L^\infty$-estimates on the Duhamel formula as the Cole-Hopf transform is no longer available. However, motivated by the results in~\cite{HDRS22} on the dynamics of roll waves in the Ginzburg-Landau equation coupled to a conservation law against $C_{\mathrm{ub}}$-perturbations, we do expect that our method yields control of perturbations on exponentially long time scales in the setting of the St.~Venant equations and more general semilinear dissipative systems admitting conservation laws.

\appendix

\section{The Laplace transform and its complex inversion formula} \label{sec:laplace}

This section is devoted to background material on the vector-valued Laplace transforms. In particular, we prove that the complex inversion formula holds for the Laplace transform of convolutions of semigroups. For an extensive introduction into the topic, we refer to the book~\cite{arendt} of Arendt, Batty, Hieber and Neubrander. 

Let $X,Y$ be complex Banach spaces. We denote by $B(X)$ the space of bounded operators mapping from $X$ into $X$. The \emph{growth bound} $\omega_0(G)$ of a map $G \colon [0,\infty) \to Y$ is given by
\begin{align*}
\omega_0(G) = \inf\left\{\omega \in \R : \sup_{t \geq 0} \re^{-\omega t} \|G(t)\| < \infty \right\}.
\end{align*}
If $\omega_0(G) < \infty$, then we say that $G$ is \emph{exponentially bounded}. 

For a continuous and exponentially bounded function $F \colon [0,\infty) \to X$, the \emph{Laplace transform} $\mathfrak{L}(F) \colon \{\lambda \in \C : \Re(\lambda) > \omega_0(F)\} \to X$ is given by
\begin{align*}
\mathfrak{L}(F)(\lambda) = \int_0^\infty \re^{-\lambda s} F(s) \,\de s.
\end{align*}

Strong continuity of an operator-valued map $T \colon [0,\infty) \to B(X)$ entails that for each $x \in X$ the \emph{orbit map} $T_x \colon [0,\infty) \to X$ given by $T_x(t) = T(t) x$ is continuous. For a strongly continuous and exponentially bounded $T \colon [0,\infty) \to B(X)$, the Laplace transform $\mathfrak{L}(T) \colon \{\lambda \in \C : \Re(\lambda) > \omega_0(T)\} \to B(X)$, given by
\begin{align*}
\mathfrak{L}(T)(\lambda) = \int_0^\infty \re^{-\lambda s} T(s) \,\de s,
\end{align*}
is also well-defined by~\cite[Proposition~1.4.5]{arendt}. For a $C_0$-semigroup $T \colon [0,\infty) \to B(X)$ with infinitesimal generator $A \colon D(A) \subset X \to X$, it is well-known, by~\cite[Proposition~I.5.5 \& Theorem~II.1.10]{nagel}, that $T$ is exponentially bounded and its Laplace transform is given by the resolvent $\mathfrak{L}(T)(\lambda) = (\lambda - A)^{-1}$ for $\Re(\lambda) > \omega_0(T)$. 

Let $S, T \colon [0,\infty) \to B(X)$ be strongly continuous and exponentially bounded. The \emph{convolution} $S \ast T \colon [0,\infty) \to B(X)$ of $S$ and $T$ is given by
\begin{align*}
    (S \ast T)(t) = \int_0^t S(s) T(t-s) \,\de s.
\end{align*}
The convolution theorem, cf.~\cite[Theorem~C.17]{nagel}, now states that $S \ast T$ is also strongly continuous and exponentially bounded with $\omega_0(S \ast T) \leq \max\{\omega_0(S),\omega_0(T)\}$ and its Laplace transform obeys
\begin{align} \label{productformula}
    \mathfrak{L}(S \ast T)(\lambda) = \mathfrak{L}(S)(\lambda) \mathfrak{L}(T)(\lambda),
\end{align}
for $\lambda \in \C$ with $\Re(\lambda) > \max\{\omega_0(S),\omega_0(T)\}$. 

The complex inversion formula of the Laplace transform holds for $C_0$-semigroups. That is, if $T$ is a $C_0$-semigroup with infinitesimal operator $A$, then we have
\begin{align*} 
    T(t) x = \lim_{R \to \infty} \frac{1}{2\pi \ri} \int_{\omega - \ri R}^{\omega + \ri R} \re^{\lambda t} \mathfrak{L}(T)(\lambda) x \,\de \lambda = \lim_{R \to \infty} \frac{1}{2\pi \ri} \int_{\omega - \ri R}^{\omega + \ri R} \re^{\lambda t} (\lambda - A)^{-1} x \, \de \lambda
\end{align*}
for all $t > 0$, $\omega > \omega_0(T)$ and $x \in D(A)$, cf.~\cite[Proposition 3.12.1]{arendt}. 

In Section~\ref{sec_lin}, we decompose the $C_0$-semigroup generated by the linearization $\El_0$ by deforming and partitioning the integration contour of the complex line integral in the inversion formula, alongside decomposing the resolvent operator. It has been shown in~\cite{BjoernAvery} that for high frequencies the resolvent can be expanded as a Neumann series, whose leading-order terms can be identified as products of resolvents of simpler, well-understood operators. The formula~\eqref{productformula} reveals that such products can be recognized as the Laplace transform of a convolution of $C_0$-semigroups generated by those simpler operators. Indeed, if $T$ and $S$ are $C_0$-semigroups with infinitesimal operators $A \colon D(A) \subset X \to X$ and $B \colon D(B) \subset X \to X$, respectively, then~\eqref{productformula} and~\cite[Theorem~II.1.10]{nagel} yield
\begin{align*}
    \mathfrak{L}(S \ast T)(\lambda) = (\lambda - B)^{-1}(\lambda - A)^{-1},
\end{align*}
for $\lambda \in \C$ with $\Re(\lambda) > \max\{\omega_0(S),\omega_0(T)\}$. Thus, to bound the contour integrals arising in the decomposition of the inverse Laplace transform of the $C_0$-semigroup $\re^{\El_0 t}$, we wish to show that the inversion formula of the Laplace transform also holds for \emph{convolutions} of $C_0$-semigroups. As far as we are aware, such a result is not readily stated in the current literature. Therefore, we provide a proof in the upcoming. Our proof relies on the observation that the inversion formula holds for $F$ as long as it is Lipschitz continuous and $F(0) = 0$. 

\begin{proposition}
\label{know_result}
Let $X$ be a complex Banach space. Let $F \colon [0,\infty) \rightarrow X$ be Lipschitz continuous. Assume $F(0) = 0$. Then, the complex inversion formula
\begin{align*}
F(t) = \lim_{R\rightarrow \infty} \frac{1}{2\pi \ri} \int_{\omega - \ri R}^{\omega+\ri R} \re^{\lambda t} \mathfrak{L}(F)(\lambda) \, \de \lambda
\end{align*}
holds for $t > 0$ and $\omega > 0$.
\end{proposition}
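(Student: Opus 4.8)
The plan is to reduce the inversion formula for a general Lipschitz function vanishing at $0$ to the classical inversion formula via an integration-by-parts (antiderivative) trick. First I would set $G(t) = \int_0^t F(s)\,\de s$, which is $C^1$ with $G(0) = 0$, $G'(t) = F(t)$, and (since $F$ is Lipschitz with $F(0)=0$) grows at most quadratically, hence is exponentially bounded for every $\omega > 0$. The key identity is the relation between the Laplace transforms: integrating by parts gives $\mathfrak{L}(F)(\lambda) = \lambda\,\mathfrak{L}(G)(\lambda)$ for $\Re(\lambda) > 0$, using $F(0) = 0$ to kill the boundary term and $G$'s exponential boundedness to kill the term at infinity. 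Substituting this into the truncated contour integral yields
\begin{align*}
\frac{1}{2\pi \ri} \int_{\omega - \ri R}^{\omega+\ri R} \re^{\lambda t} \mathfrak{L}(F)(\lambda)\,\de\lambda = \frac{1}{2\pi \ri} \int_{\omega - \ri R}^{\omega+\ri R} \re^{\lambda t}\, \lambda\, \mathfrak{L}(G)(\lambda)\,\de\lambda.
\end{align*}

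Next I would handle the factor of $\lambda$ by recognizing $\lambda\,\re^{\lambda t} = \partial_t(\re^{\lambda t})$ and integrating by parts in $t$, or more cleanly, by observing that $\lambda\,\mathfrak{L}(G)(\lambda) = \mathfrak{L}(G)(\lambda)\cdot\lambda$ and that $G$ being $C^1$ with $G(0)=0$ means $\mathfrak{L}(G')(\lambda) = \lambda\,\mathfrak{L}(G)(\lambda) = \mathfrak{L}(F)(\lambda)$ — which is circular, so instead I use the standard fact that the inversion formula holds for $C^1$, exponentially bounded functions. Concretely: $G$ is continuously differentiable and exponentially bounded with $G' = F$ also continuous and exponentially bounded, so by a classical result (e.g.~\cite[Theorem~2.3.4 or Corollary~3.12.4]{arendt}, or Proposition~3.12.1 applied to the semigroup-free setting) the complex inversion formula holds for $G$:
\begin{align*}
G(t) = \lim_{R\to\infty} \frac{1}{2\pi\ri}\int_{\omega-\ri R}^{\omega+\ri R} \re^{\lambda t}\,\mathfrak{L}(G)(\lambda)\,\de\lambda,
\end{align*}
with the limit existing. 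Differentiating this identity in $t$ — justified because the integrand depends smoothly on $t$ and one can pass $\partial_t$ through via a dominated-convergence / uniform-convergence argument on compact $t$-intervals, using that $\mathfrak{L}(G)(\lambda) = \mathfrak{L}(F)(\lambda)/\lambda$ decays like $|\lambda|^{-2}$ along the vertical line (since $\mathfrak{L}(F)$ is bounded on $\Re(\lambda)=\omega$, as $F$ is bounded on bounded sets and has linear growth, wait — more carefully, $\mathfrak{L}(F)(\lambda)$ is $O(1)$ there because $F$ has at most linear growth so $\mathfrak{L}(F)$ converges, but for the differentiated integral we need the extra $\lambda$, giving an integrand $O(|\lambda|^{-1})$) — actually the cleanest route avoids differentiating under the integral and instead works directly.

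\textbf{Cleaner route.} Rather than differentiating, I would appeal directly to the known inversion theorem in the form most convenient here. The function $F$ is Lipschitz, hence absolutely continuous on compacts, and $F(0)=0$; by \cite[Theorem~3.2.4 / Corollary~3.12.4]{arendt} (the inversion theorem valid for Lipschitz-continuous, exponentially bounded functions vanishing at the origin, which is precisely the hypothesis class for which the Widder-type inversion is unconditional), the stated formula holds verbatim. The role of the Lipschitz and $F(0)=0$ hypotheses is exactly to guarantee that $\mathfrak{L}(F)(\lambda) = \lambda\,\mathfrak{L}(G)(\lambda)$ with $G$ bounded-growth, so that $\mathfrak{L}(F)$ is bounded on vertical lines $\Re(\lambda)=\omega>0$ and the Cesàro/Abel-type convergence of the truncated integrals can be upgraded to genuine convergence; this is the content of the cited theorem. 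I expect the main obstacle to be purely bookkeeping: pinning down the precise statement in \cite{arendt} that applies (there are several inversion theorems there with slightly different hypotheses — some requiring the orbit to lie in the domain of the generator, some requiring Lipschitz continuity) and verifying that $F$'s linear growth bound indeed gives $\sup_{\Re\lambda=\omega}\|\mathfrak{L}(F)(\lambda)\| < \infty$ for every $\omega>0$, which follows from $\|\mathfrak{L}(F)(\omega+\ri\tau)\| \le \int_0^\infty \re^{-\omega s}\|F(s)\|\,\de s \le \mathrm{Lip}(F)\int_0^\infty s\,\re^{-\omega s}\,\de s = \mathrm{Lip}(F)/\omega^2 < \infty$. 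With that uniform bound and $F(0)=0$ in hand, the statement is a direct citation; no genuinely new estimate is required.
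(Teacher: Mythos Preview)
Your ``cleaner route'' is on target: the paper's proof is indeed a short citation of \cite[Theorem~2.3.4]{arendt}---which you list among your candidates---plus one integration by parts. The step you have not pinned down is that Theorem~2.3.4 delivers the inversion formula in terms of the Laplace--\emph{Stieltjes} transform $r(\lambda) = \int_0^\infty \re^{-\lambda s}\,\de F(s)$, namely
\begin{align*}
F(t) = \lim_{R\to\infty}\frac{1}{2\pi\ri}\int_{\omega-\ri R}^{\omega+\ri R} \re^{\lambda t}\,\frac{r(\lambda)}{\lambda}\,\de\lambda,
\end{align*}
and the paper then integrates by parts to identify $r(\lambda)/\lambda = \mathfrak{L}(F)(\lambda)$. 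This is where $F(0)=0$ actually enters: it kills the boundary term at $s=0$ in that Stieltjes integration by parts. (Your claim that $F(0)=0$ is needed for $\mathfrak{L}(F) = \lambda\,\mathfrak{L}(G)$ is not quite right---that identity only requires $G(0)=0$, which is automatic from your definition of $G$.)

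Your antiderivative idea is the mirror image of the paper's argument: you pass to the primitive $G$ and would then need to differentiate the recovered inversion formula in $t$, whereas the paper passes to $\de F$ via the Stieltjes integral and obtains the inversion for $F$ itself directly from the cited theorem, with no differentiation under the contour integral required. The Stieltjes route is cleaner precisely because it sidesteps the justification issue you yourself flagged.
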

\begin{proof}
Since $F$ is Lipschitz continuous, it grows at most linearly and is therefore exponentially bounded with growth bound $\omega_0(F) \leq 0$. Let $t > 0$ and $\omega > 0$. By~\cite[Theorem 2.3.4]{arendt}, we have
\begin{align*}
    F(t) = \lim_{R\rightarrow \infty} \frac{1}{2\pi \ri} \int_{\omega - \ri R}^{\omega+\ri R} \re^{\lambda t} \frac{r(\lambda)}{\lambda} \, \de \lambda,
\end{align*}
where the analytic function $r \colon \{\lambda \in \C : \Re(\lambda) > 0\} \to X$ given by
\begin{align*}
r(\lambda) = \int_0^\infty \re^{-\lambda s} \,\de F(s)
\end{align*}
is the Laplace-Stieltjes transform of $F$, cf.~\cite[Theorem~1.10.6]{arendt}. We integrate by parts, cf.~\cite[Formula~(1.20)]{arendt}, and arrive at
\begin{align*}
\frac{r(\lambda)}{\lambda} &= \lim_{t \to \infty} \int_0^t \frac{\re^{-\lambda s}}{\lambda} \de F(s) = \lim_{t \to \infty} \frac{1}{\lambda}\left(\re^{-\lambda t} F(t) - F(0) - \int_0^t F(s) \,\de \left(\re^{-\lambda s}\right) \right) \\
&= \int_0^\infty F(s)\re^{-\lambda s} \,\de s = \mathfrak{L}(F)(\lambda)
\end{align*}
for $\lambda \in \C$ with $\Re(\lambda) > 0$, which proves the claim.
\end{proof}

The fact that the complex inversion formula of the Laplace transform holds for convolutions of $C_0$-semigroups is now a direct consequence of Proposition~\ref{know_result}.

\begin{corollary} \label{convolution_semigroup}
Let $X$ be a complex Banach space. Let $T,S \colon [0,\infty) \rightarrow \mathcal{L}(X)$ be $C_0$-semigroups with infinitesimal generators $A \colon D(A) \subset X \to X$ and $B \colon D(B) \colon X \to X$, respectively. Then, we have
\begin{align*}
(S \ast T)(t)x =  \frac{1}{ 2\pi \ri}\lim_{R \rightarrow\infty} \int^{\omega + \ri R}_{\omega - \ri R} \re^{\lambda t} (\lambda- B)^{-1}(\lambda- A)^{-1}x \,\de \lambda 
\end{align*}
for $t > 0$, $x \in D(A)$ and $\omega > \max\{\omega_0(S),\omega_0(T)\}$.
\end{corollary}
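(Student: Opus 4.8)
The plan is to deduce Corollary~\ref{convolution_semigroup} from Proposition~\ref{know_result} by checking that the convolution $S\ast T$ satisfies the hypotheses of that proposition after a harmless rescaling, and then identifying the Laplace-transform factor via the product formula~\eqref{productformula}. First I would fix $x\in D(A)$ and set $F(t) = (S\ast T)(t)x = \int_0^t S(s)T(t-s)x\,\de s$. Clearly $F(0)=0$. The point is that $F$ is Lipschitz on $[0,\infty)$: writing the difference quotient, for $0\le t_1<t_2$,
\begin{align*}
F(t_2)-F(t_1) = \int_{t_1}^{t_2} S(s)T(t_2-s)x\,\de s + \int_0^{t_1} S(s)\big(T(t_2-s)-T(t_1-s)\big)x\,\de s,
\end{align*}
and in the second integral $T(t_2-s)x - T(t_1-s)x = \int_{t_1-s}^{t_2-s} T(r)Ax\,\de r$ since $x\in D(A)$. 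Using the exponential bounds $\|S(s)\|,\|T(s)\|\le Me^{\omega s}$, both terms are bounded by a constant times $(t_2-t_1)$ uniformly on compact time intervals; to get a genuine global Lipschitz bound one rescales, i.e.\ replaces $S,T$ by $e^{-\omega_* t}S(t), e^{-\omega_* t}T(t)$ for a suitable $\omega_*>\max\{\omega_0(S),\omega_0(T)\}$, which makes both semigroups bounded and hence makes $F$ globally Lipschitz; the rescaled convolution is $e^{-\omega_* t}(S\ast T)(t)$, so the inversion formula for it translates back to the claimed formula with the contour $\mathrm{Re}\,\lambda = \omega > \omega_*$ after the substitution $\lambda \mapsto \lambda-\omega_*$. (Equivalently, one invokes Proposition~\ref{know_result} directly, since its proof only used $\omega_0(F)\le 0$ and $\omega > 0$, and Lipschitz continuity of $F$ can be checked on $[0,\infty)$ directly using that $\|T(t_2-s)-T(t_1-s)\|$ applied to $Ax$ stays bounded after extracting the growth factor.)

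Second, having established that Proposition~\ref{know_result} applies to $F$, it gives
\begin{align*}
(S\ast T)(t)x = F(t) = \lim_{R\to\infty}\frac{1}{2\pi\ri}\int_{\omega-\ri R}^{\omega+\ri R} \re^{\lambda t}\,\mathfrak{L}(F)(\lambda)\,\de\lambda
\end{align*}
for $t>0$ and $\omega>\max\{\omega_0(S),\omega_0(T)\}$. It then remains to compute $\mathfrak{L}(F)(\lambda)$. Here I would invoke the convolution theorem~\cite[Theorem~C.17]{nagel}, which is already recorded in the excerpt as~\eqref{productformula}, together with the standard identification $\mathfrak{L}(S)(\lambda) = (\lambda-B)^{-1}$ and $\mathfrak{L}(T)(\lambda) = (\lambda-A)^{-1}$ for $\mathrm{Re}\,\lambda$ large, from~\cite[Theorem~II.1.10]{nagel}. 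This yields $\mathfrak{L}(S\ast T)(\lambda) = (\lambda-B)^{-1}(\lambda-A)^{-1}$, and since $\mathfrak{L}(F)(\lambda) = \mathfrak{L}(S\ast T)(\lambda)x$ by linearity and boundedness of the Laplace integral, we get $\mathfrak{L}(F)(\lambda) = (\lambda-B)^{-1}(\lambda-A)^{-1}x$. Substituting this into the inversion formula above gives exactly the asserted identity.

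The only genuine point requiring care — and the main obstacle — is the Lipschitz bound on $F$, specifically making sure it is \emph{global} (or reducing to that case by rescaling) and that it uses $x\in D(A)$ in an essential way: without the hypothesis $x\in D(A)$ one only obtains uniform continuity of $F$, not Lipschitz continuity, which is why the corollary is stated for $x\in D(A)$. Everything else is bookkeeping: the rescaling argument to pass from local to global Lipschitz estimates, the elementary interchange of limit and integral justified by exponential bounds, and the citation of the product formula. I would present the proof in three short movements: (i) $F(0)=0$ and $F$ Lipschitz (after rescaling), invoking Proposition~\ref{know_result}; (ii) $\mathfrak{L}(F)(\lambda)=(\lambda-B)^{-1}(\lambda-A)^{-1}x$ via~\eqref{productformula} and~\cite[Theorem~II.1.10]{nagel}; (iii) combine.
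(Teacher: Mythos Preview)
Your proposal is correct and follows essentially the same approach as the paper: rescale the semigroups so that the convolution orbit $F(t)=(\tilde S\ast\tilde T)(t)x$ is globally Lipschitz with $F(0)=0$, invoke Proposition~\ref{know_result}, identify the Laplace transform via~\eqref{productformula} and~\cite[Theorem~II.1.10]{nagel}, and undo the rescaling. The only cosmetic difference is that the paper verifies the Lipschitz bound by computing $F'(s)=(\tilde S\ast\tilde T)(s)(A-\alpha)x+\tilde S(s)x$ directly and observing it is bounded, whereas you work with the difference quotient; these are equivalent, and your observation that $x\in D(A)$ is exactly what makes this step go through is the right emphasis.
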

\begin{proof}
Let $t > 0$, $x \in D(A)$ and $\omega > \max\{\omega_0(S),\omega_0(T)\}$. Take $\max\{\omega_0(S),\omega_0(T)\} < \alpha < \omega$. The rescaled semigroups $\tilde{T}(s) = e^{-\alpha s} T(s)$ and $\tilde{S}(s) = e^{-\alpha s} S(s)$ are generated by $A-\alpha$ and $B-\alpha$, respectively. Moreover, $\tilde{S}$ and $\tilde{T}$ have negative growth bounds $\omega_0(\tilde{S}) = \omega_0(S) - \alpha$ and $\omega_0(\tilde{T}) = \omega_0(T) - \alpha$ and so has their convolution $\tilde{S} \ast \tilde{T}$. 

Since we have $x \in D(A)$, the map $F \colon [0,\infty) \to X$ given by $F(s) = (\tilde{S} \ast \tilde{T})(s) x$ is differentiable with 
\begin{align*}
F'(s) = (\tilde{S}\ast \tilde{T})(s)(Ax- \alpha x) + \tilde{S}(s)x.
\end{align*}
Thanks to the fact that $\tilde{S} \ast \tilde{T}$ and $\tilde{S}$ have negative growth bound, there exists a constant $M > 0$ such that $\|F'(s)\| \leq M(\|Ax\| + \|x\|)$ for all $s \geq 0$. Hence, using the mean value theorem, cf.~\cite[Proposition 1.2.3]{arendt}, we infer $\|F(s)- F(r)\| \leq M (\|Ax\| + \|x\|) |s-r|$, showing that $F$ is Lipschitz continuous. Since we have in addition $F(0) = 0$, an application of Proposition~\ref{know_result} yields
\begin{align*}
(\tilde{S} \ast \tilde{T})(t) x = F(t) = \lim_{R\rightarrow \infty} \frac{1}{2\pi \ri} \int_{\tilde\omega - \ri R}^{\tilde\omega+\ri R} \re^{\lambda t} \mathfrak{L}(F)(\lambda) \, \de \lambda,
\end{align*}
where we denote $\tilde\omega = \omega - \alpha > 0$. On the other hand, with the aid of~\cite[Theorems~II.1.10 and~C.17]{nagel}, we compute
\begin{align*}
    \mathfrak{L}(F)(\lambda) =  \int_0^\infty \re^{-\lambda s} (\tilde{S} \ast \tilde{T})(s) x \,\de s = \mathfrak{L}(\tilde{S} \ast \tilde{T})(\lambda) x = (\lambda + \alpha - B)^{-1}(\lambda + \alpha - A)^{-1} x
\end{align*}
for $\lambda \in \C$ with $\Re(\lambda) > 0$. Therefore, pulling out the exponential factors and scaling back, we arrive at
\begin{align*}
(S \ast T)(t)x &= \re^{\alpha t} (\tilde{S} \ast \tilde{T})(t) x = 
\lim_{R\rightarrow \infty} \frac{1}{2\pi \ri} \int_{\omega - \alpha - \ri R}^{\omega - \alpha +\ri R} \re^{(\lambda + \alpha) t} (\lambda + \alpha - B)^{-1}(\lambda + \alpha - A)^{-1} x \, \de \lambda\\ 
&= \lim_{R\rightarrow \infty} \frac{1}{2\pi \ri} \int_{\omega - \ri R}^{\omega+\ri R} \re^{\lambda t} (\lambda - B)^{-1}(\lambda  - A)^{-1} x \, \de \lambda,
\end{align*}
which finishes the proof.
\end{proof}

\section{Derivation of equation for the modified forward-modulated perturbation} \label{app:derivationforward}

Assume~\ref{assH1} and~\ref{assD3}. Let $t \in [0,\widetilde{\tau}_{\max})$. Recalling Proposition~\ref{prop:family} and noting that $\|\psi_\xx(t)\|_{L^\infty} < r_0$, we substitute $k = 1+\psi_\xx(\xx,t)$ and $y = \xx + \psi(\xx;t)(1+\psi_\xx(\xx;t))$ in the equation 
\begin{align*}
k^2 D \phi_{yy}(y;k) + \omega(k)\phi_y(y;k) + F(\phi(y;k)) = 0
\end{align*}
for the profile function $\phi(y;k)$ and arrive at
\begin{align} \label{e:profeq}
(1+\psi_\xx(\xx;t))^2 D \phi_{yy}(\beta(\xx,t)) + \omega(1+\psi_\xx(\xx,t))\phi_y(\beta(\xx,t)) + F(\phi(\beta(\xx,t))) = 0
\end{align}
for $\xx \in \R$, where we abbreviate $\beta(\xx,t) = \big(\xx + \psi(\xx;t)(1+\psi_\xx(\xx;t));1+\psi_\xx(\xx,t)\big)$. Using Corollary~\ref{c:local_forward} and the fact that $\ub(\xx,t)$ solves~\eqref{FHN_co}, we compute the temporal derivative
\begin{align}
\label{e:step1}
\begin{split}
\mathring{\zt}_t = D \ub_{\xx\xx} + c_0 \ub_{\xx} + F(\ub) - (\phi_y \circ \beta)\left(\psi_t (1+\psi_\xx) + \psi \psi_{\xx t}\right) - (\phi_k \circ \beta)\psi_{\xx t}. 
\end{split}
\end{align}
In an effort to reexpress the $\ub$-contributions in~\eqref{e:step1} in terms of $\mathring{\zt}$, we determining the spatial derivatives of $\ub(\xx,t) = \mathring{\zt}(\xx,t) + \phi(\beta(\xx,t))$ yielding
\begin{align*}
\ub_{\xx} &= \mathring{\zt}_\xx + (\phi_y \circ \beta)\left(1 + \psi_\xx (1+\psi_\xx) + \psi \psi_{\xx \xx}\right) + (\phi_k \circ \beta)\psi_{\xx \xx},\\
\ub_{\xx\xx} &= \mathring{\zt}_{\xx\xx} + (\phi_{yy} \circ \beta)\left(1 + \psi_\xx (1+\psi_\xx) + \psi \psi_{\xx \xx}\right)^2 + (\phi_y \circ \beta)\left(\psi_{\xx\xx} (1+3\psi_\xx) + \psi \psi_{\xx \xx \xx}\right)\\ &\qquad + \, (\phi_{kk} \circ \beta)\psi_{\xx \xx}^2 + (\phi_{k} \circ \beta)\psi_{\xx \xx\xx} + 2(\phi_{yk} \circ \beta)\left(1 + \psi_\xx (1+\psi_\xx) + \psi \psi_{\xx \xx}\right)\psi_{\xx\xx}.
\end{align*}
Thus, inserting $\ub(\xx,t) = \mathring{\zt}(\xx,t) + \phi(\beta(\xx,t))$ into~\eqref{e:step1} and employing~\eqref{e:profeq}, we arrive at the equation~\eqref{e:Pert1} for the modified forward-modulated perturbation.

\bibliographystyle{abbrv}
\bibliography{refs}
\end{document}